\newtheorem{thm}{Theorem}[section]
\newtheorem{cor}[thm]{Corollary}
\newtheorem{prop}[thm]{Proposition}
\newtheorem{lem}[thm]{Lemma}
\newtheorem{claim}[thm]{Claim}
\theoremstyle{definition}
\newtheorem{defn}[thm]{Definition}
\newtheorem{exmp}[thm]{Example}
\newtheorem{obs}[thm]{Observation}
\theoremstyle{remark}
\newtheorem{rem}[thm]{Remark}
\def\N{\mathbb{N}}
\def\R{\mathbb{R}}
\def\Z{\mathbb{Z}}
\def\E{\mathbb{E}}
\def\P{\mathbb{P}}
\DeclareMathOperator{\Var}{Var}
\DeclareMathOperator{\sgn}{sgn}
\DeclareMathOperator{\idf}{\mathds{1}}
\DeclareMathOperator{\Leb}{Leb}
\DeclareMathOperator{\Id}{Id}
\DeclareMathOperator{\mult}{mult}
\DeclareMathOperator{\CL}{CL}
\DeclareMathOperator{\Cor}{Cor}
\DeclareMathOperator{\wcp}{WC}
\DeclareMathOperator{\cpp}{CP}
\DeclareMathOperator{\pw}{PW}
\DeclareMathOperator{\indperm}{pat}
\DeclareMathOperator{\resc}{std}
\DeclareMathOperator{\FV}{FV}
\DeclareMathOperator{\cov}{cov}
\DeclareMathOperator{\dist}{dist}
\DeclareMathOperator{\Perm}{Perm}
\newcommand{\Coals}{\mathfrak C}
\newcommand{\sym}{\mathfrak S}
\newcommand{\eps}{\varepsilon}
\newcommand\indep{\perp\!\!\!\perp}
\newcommand{\CCCC}{\mathcal{C}}
\newcommand{\leqz}{\leq_Z}
\newcommand{\Ztp}{Z^{(t)}}
\newcommand{\wScbp}{\wcp_{Sb}}
\newcommand{\rews}{\wcp_{sb}}
\newcommand{\ovbmZ}{\overline{\bm Z}}
\newcommand{\ovconw}{\overline{\conti W}}
\newcommand{\wconz}{\widetilde{\conti Z}}
\newcommand{\ovconz}{\overline{\conti Z}}
\newcommand{\contzu}{\ovconz^{(u)}}
\newcommand{\zgz}{\Z_{>0}}
\newcommand{\intnui}{\lceil n \bm u_i \rceil}
\newcommand{\intnuj}{\lceil n \bm u_j \rceil}
\newcommand{\xraninf}{\xrightarrow[n\to\infty]}
\newcommand{\czord}{\mathcal C([0,1],\R^2)}
\newcommand{\obmx}{\overline{\bm X}}
\newcommand{\obmy}{\overline{\bm Y}}
\newcommand{\conti}[1]{{\bm{\mathcal #1}}}
\title{The permuton limit of strong-Baxter and semi-Baxter permutations is the skew Brownian permuton}
\date{  }
\author[1]{Jacopo Borga\thanks{\href{mailto:jborga@stanford.edu}{jborga@stanford.edu}}}
\affil[1]{Department of Mathematics, Stanford University}
\newcommand{\subjclass}[2][1991]{%
	\let\@oldtitle\@title%
	\gdef\@title{\@oldtitle\footnotetext{#1 \emph{Mathematics subject classification.} #2.}}%
}
\newcommand{\keywords}[1]{%
	\let\@@oldtitle\@title%
	\gdef\@title{\@@oldtitle\footnotetext{\emph{Key words and phrases.} #1.}}%
}
\keywords{Scaling limits, permutations, permutons, stochastic differential equations, skew Brownian motions, two-dimensional random walks in cones}
\subjclass[2010]{60C05, 60G50, 05A05, 34K50}
\begin{document}

\maketitle

\begin{abstract}
We recently introduced a new universal family of permutons, depending on two parameters, called \emph{skew Brownian permuton}.
For some specific choices of the parameters, the skew Brownian permuton coincides with some previously studied permutons: the biased Brownian separable permuton and the Baxter permuton. 
The latter two permutons are degenerate cases of the skew Brownian permuton. 

In the present paper we prove the first convergence result towards a non-degenerate skew Brownian permuton. Specifically, we prove that \emph{strong-Baxter permutations} converge in the permuton sense to the skew Brownian permuton for a non-degenerate choice of the two parameters. In order to do that, we develop a robust technique to prove convergence  towards the skew Brownian permuton for various families of random constrained permutations. This technique relies on generating trees for permutations, allowing an encoding of permutations with multi-dimensional walks in cones. We apply this technique also to \emph{semi-Baxter permutations}.
\end{abstract}

\vspace{2cm}

\begin{figure}[htbp]
		\centering
		\includegraphics[scale=0.3]{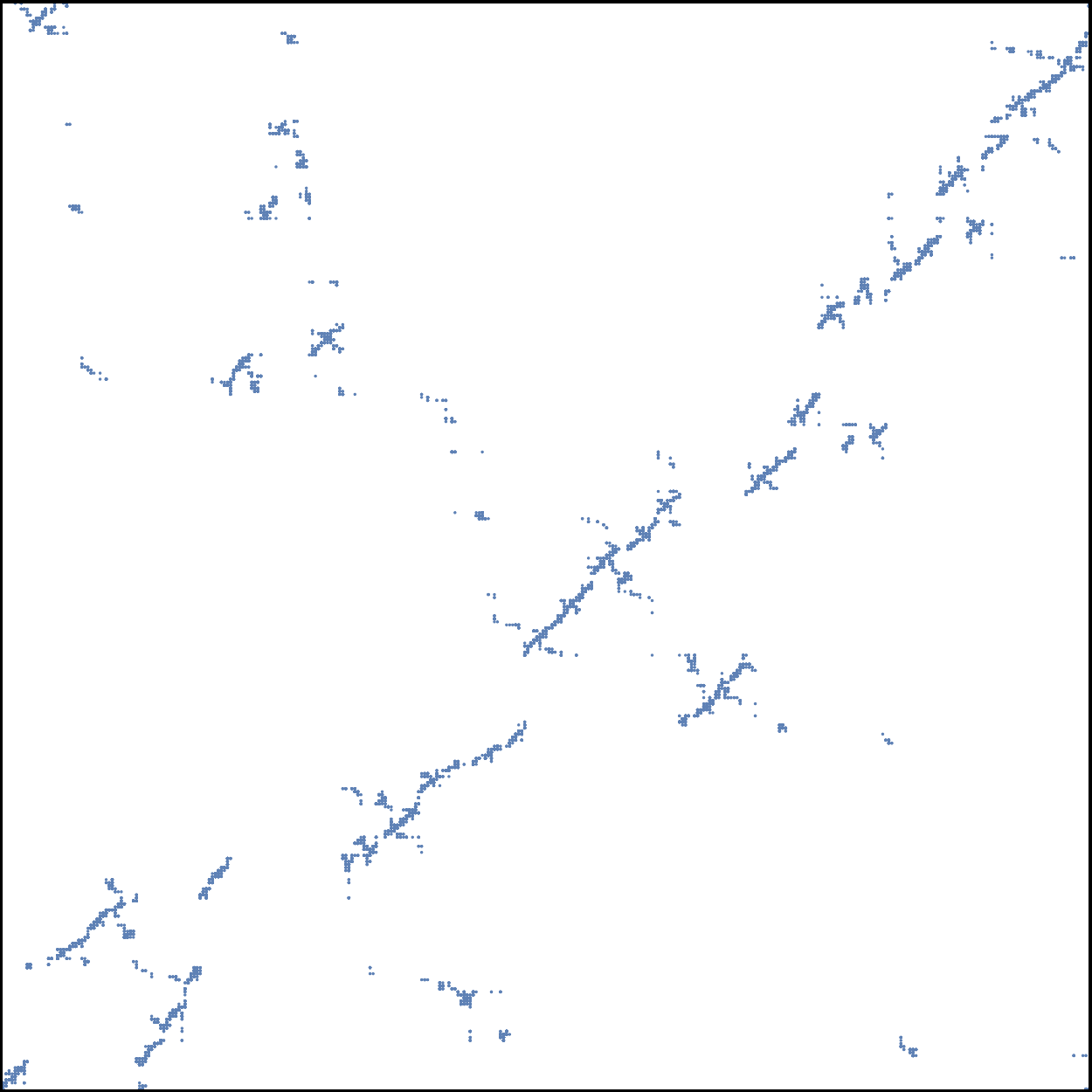}
		\includegraphics[scale=0.3]{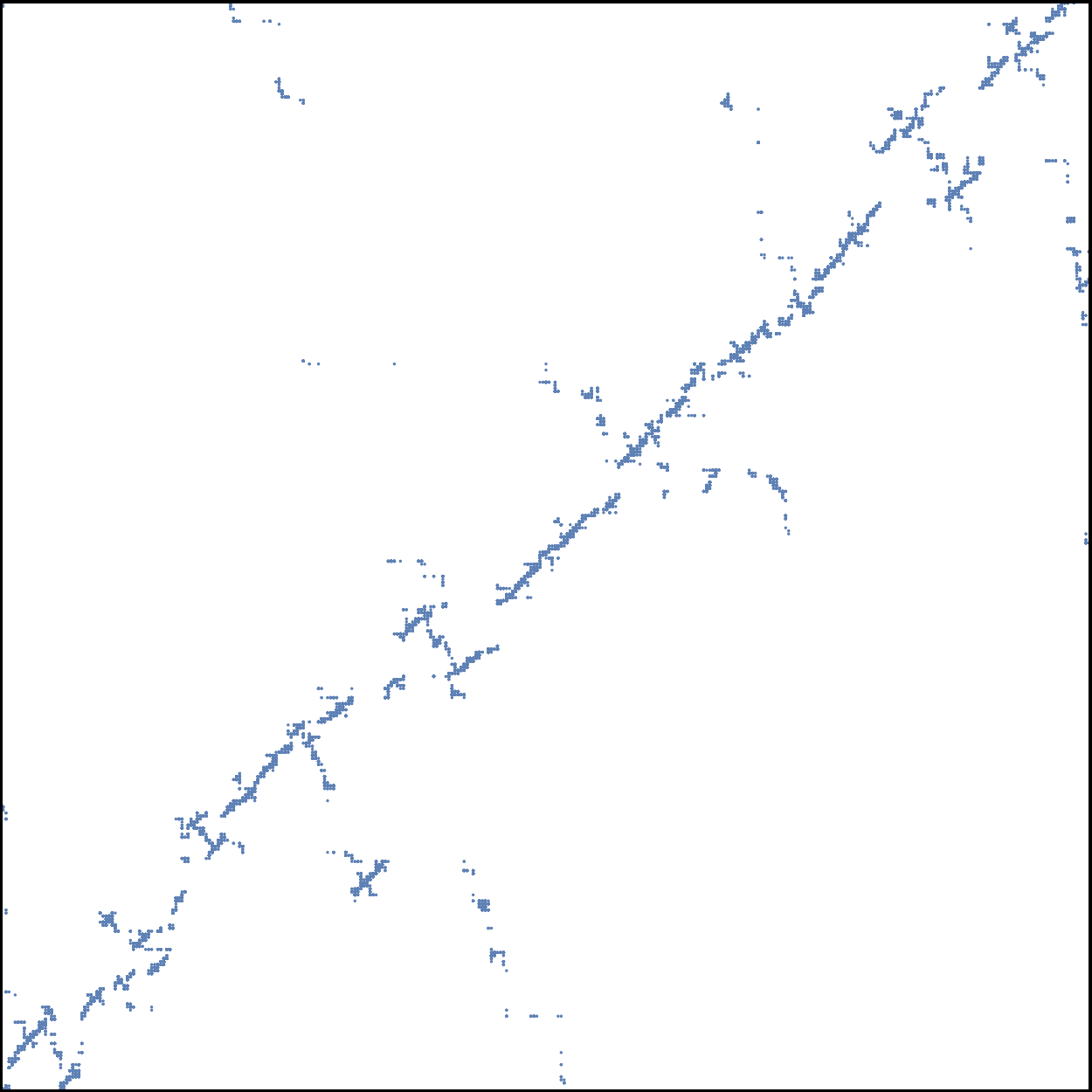}
		\includegraphics[scale=0.25]{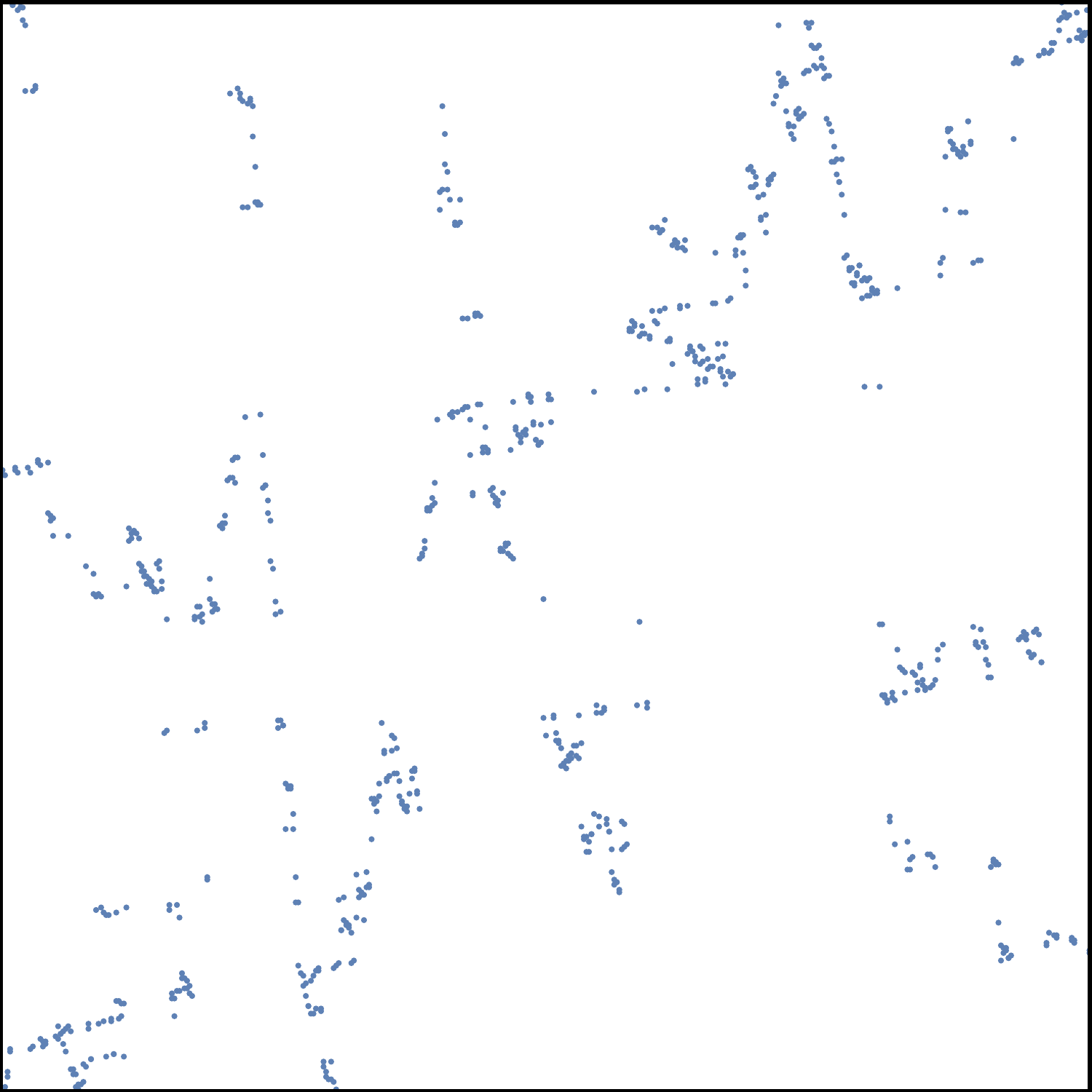}
		\includegraphics[scale=0.25]{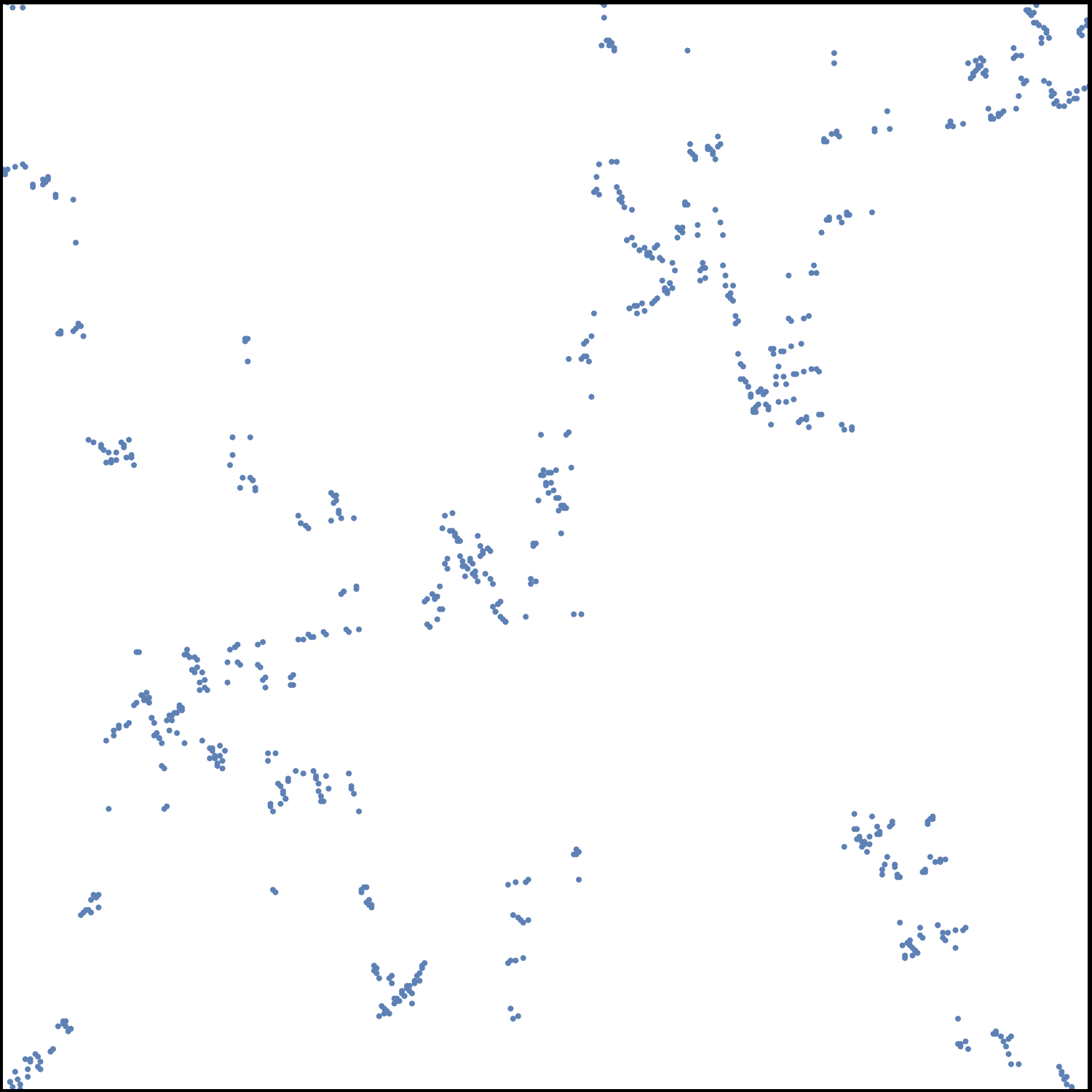}
		\caption{Simulations of the objects that are investigated in the present work. 
			\textbf{Left:} The diagrams of two uniform strong-Baxter permutations of size 17577 and 27574. 
			\textbf{Right:} The diagrams of two uniform semi-Baxter permutations of size 823 and 800. 
			\label{fig:Baxter_perm_3253}}
\end{figure}

\newpage

\tableofcontents

\section{Introduction}

Permutons are Borel probability measures on the unit square $[0,1]^2$ with uniform marginals and are the natural scaling limits of sequences of permutations. The \emph{skew Brownian permuton} is a new family of random permutons, recently introduced in \cite{borga2021skewperm}, describing the limit of various models of random constrained permutations, such as uniform separable permutations (\cite{bassino2018brownian}), uniform permutations in substitution-closed classes (\cite{bassino2017universal}) and uniform Baxter permutations (\cite{borga2020scaling}). As noticed in \cite{borga2021skewperm} (and also recalled later in this paper), for all these models, the permuton limit is a \emph{degenerate version} of the skew Brownian permuton.  

In the present paper, we show that there are natural models of random constrained permutations converging to the \emph{non-degenerate} skew Brownian permuton. To do that, we study the permuton limit of \emph{strong-Baxter permutations} and \emph{semi-Baxter permutations}.

This paper might be thought as a companion paper of \cite{borga2021skewperm}, but we tried to make the present work as much self-contained as we can. Nevertheless, we do not recall here all the motivations that justify the interest in studying the skew Brownian permuton and we refer the curious reader to \cite[Section 1.1]{borga2021skewperm}, where also a quick introduction to permuton convergence is provided. For a complete introduction to the theory of permutons, as well as bibliographic pointers, we refer to \cite[Section 2.1]{borga2021random}.

In the next sections, we first recall the construction of the skew Brownian permuton (see \cref{sect:def_sbp}) and then we further motivate the interest in studying the permuton limit of \emph{strong-Baxter permutations} and \emph{semi-Baxter permutations} (see \cref{sect:motivations}).

\subsection{The skew Brownian permuton}\label{sect:def_sbp}

We recall the construction of the skew Brownian permuton following \cite{borga2021skewperm}.

\medskip 

We start by recalling that a \emph{two-dimensional Brownian motion} \emph{of correlation} $\rho\in[-1,1]$, denoted\footnote{Here and throughout the paper we denote random quantities using \textbf{bold} characters.} $(\conti W_{\rho}(t))_{t\in \R_{\geq 0}}=(\conti X_{\rho}(t),\conti Y_{\rho}(t)))_{t\in \R_{\geq 0}}$, is a continuous 
%stochastic
two-dimensional Gaussian process such that the components $\conti X_{\rho}$ and $\conti Y_{\rho} $ are standard one-dimensional
%Brownian
Brownian motions, and $\mathrm{Cov}(\conti X_{\rho}(t),\conti Y_{\rho}(s)) = \rho \cdot \min\{t,s\}$. We also recall that a  \emph{two-dimensional Brownian excursion} $(\conti E_{\rho}(t))_{t\in [0,1]}$ \emph{of correlation}\footnote{We remark that we do not include the case $\rho=-1$ in the definition of the 
%coordinate
two-dimensional Brownian excursion. Indeed, when $\rho=-1$ it is not meaningful to condition a two-dimensional Brownian motion of correlation $\rho=-1$ to stay in the non-negative quadrant.} $\rho\in(-1,1]$ \emph{in the non-negative quadrant} (here simply called a  \emph{two-dimensional Brownian excursion of correlation} $\rho$) is a 
%space
two-dimensional Brownian motion of correlation $\rho$ conditioned to stay in the non-negative quadrant $\R_{\geq 0}^2$ and to end at the origin, i.e.\ $\conti E_{\rho}(1)=(0,0)$. The latter 
%natural
process was formally constructed in various works (see for instance \cite[Section 3]{MR4010949} and \cite{MR4102254}).

\medskip

Let $(\conti E_{\rho}(t))_{t\in [0,1]}$ be a two-dimensional Brownian excursion of correlation $\rho\in(-1,1]$ and  let $q\in[0,1]$ be a further parameter.
Consider the solutions (that exist and are unique thanks to \cite[Theorem 1.7]{borga2021skewperm}) of the following family of stochastic differential equations (SDEs) indexed by $u\in [0,1]$ and driven by $\conti E_{\rho} = (\conti X_{\rho},\conti Y_{\rho})$:
\begin{equation}\label{eq:flow_SDE_gen}
\begin{cases}
d\conti Z_{\rho,q}^{(u)}(t) 
=
\idf_{\{\conti Z_{\rho,q}^{(u)}(t)> 0\}} d\conti Y_{\rho}(t) - \idf_{\{\conti Z_{\rho,q}^{(u)}(t)< 0\}} 
d \conti X_{\rho}(t)+(2q-1)\cdot d\conti L^{\conti Z_{\rho,q}^{(u)}}(t),& t\in(u,1),\\
\conti Z_{\rho,q}^{(u)}(t)=0,&  t\in[0,u],
\end{cases} 
\end{equation}
where $\conti L^{\conti Z_{\rho,q}^{(u)}}(t)$ is the symmetric local-time process at zero of $\conti Z_{\rho,q}^{(u)}$, i.e.\ 
$$\conti L^{\conti Z^{(u)}_{\rho,q}}(t)=\lim_{\varepsilon\to 0}\frac{1}{2\varepsilon}\int_0^t\idf_{\{\conti Z^{(u)}_{\rho,q}(s)\in[-\varepsilon,\varepsilon]\}}ds.$$
\begin{defn}\label{eq:defcoalproc}
	We call \emph{continuous coalescent-walk process driven by $(\conti E_{\rho},q)$} the collection of stochastic processes $\left\{\conti Z^{(u)}_{\rho,q}\right\}_{u\in[0,1]}$.
\end{defn}
We consider the following additional stochastic process defined in terms of $\left\{\conti Z^{(u)}_{\rho,q}\right\}_{u\in[0,1]}$: 
\begin{equation}\label{eq:random_skew_function}
\varphi_{\conti Z_{\rho,q}}(t)\coloneqq
\Leb\left( \big\{x\in[0,t)|\conti Z_{\rho,q}^{(x)}(t)<0\big\} \cup \big\{x\in[t,1]|\conti Z_{\rho,q}^{(t)}(x)\geq0\big\} \right), \quad t\in[0,1].
\end{equation}

\begin{defn}\label{defn:ibfwebfowfpwenfiweop}
	Fix $\rho\in(-1,1]$ and $q\in[0,1]$. The \emph{skew Brownian permuton} of parameters $\rho, q$, denoted $\bm \mu_{\rho,q}$, is the push-forward of the Lebesgue measure on $[0,1]$ via the mapping $(\Id,\varphi_{\conti Z_{\rho,q}})$, that is
	\begin{equation}
		\bm \mu_{\rho,q}(\cdot)\coloneqq(\Id,\varphi_{\conti Z_{\rho,q}})_{*}\Leb (\cdot)= \Leb\left(\{t\in[0,1]|(t,\varphi_{\conti Z_{\rho,q}}(t))\in \cdot \,\}\right).
	\end{equation} 
\end{defn}

In \cite[Theorem 1.11]{borga2021skewperm} we prove that the skew Brownian permuton $\bm \mu_{\rho,q}$ is well-defined for all $(\rho,q)\in(-1,1]\times[0,1]$.

\begin{rem}
	We point out that the continuous coalescent-walk process $\left\{\conti Z^{(u)}_{\rho,q}\right\}_{u\in[0,1]}$ in \cref{eq:defcoalproc} is defined in the following sense: for almost every $\omega,$ $\conti Z^{(u)}_{\rho,q}$ is a solution for almost every $u\in[0,1]$.
	
	We also recall that in order to define the skew Brownian permuton $\bm \mu_{1,q}$ there are some additional technical difficulties due to the fact that the SDEs in \cref{eq:flow_SDE_gen} when $\rho=1$ do not admit strong solutions. See \cite[Section 1.4.2]{borga2021skewperm} for further details. We  however note that such details are not needed in the present paper.
\end{rem}

\subsection{Motivations and statements of the main results}\label{sect:motivations}

We start by recalling that the Baxter permuton, introduced in \cite{borga2020scaling} as the permuton limit of uniform Baxter permutations, coincides with the skew Brownian permuton of parameters $\rho=-1/2$ and $q=1/2$. Therefore the corresponding SDEs in \cref{eq:flow_SDE_gen} take the simplified form
\begin{equation}\label{eq:flow_SDE_erjiberoug}
	\begin{cases}
		d\conti Z^{(u)}(t) = \idf_{\{\conti Z^{(u)}(t)> 0\}} d\conti Y_{\rho}(t) - \idf_{\{\conti Z^{(u)}(t)< 0\}} d \conti X_{\rho}(t),& t\in(u,1),\\
		\conti Z^{(u)}(t)=0,&  t\in[0,u].
	\end{cases} 
\end{equation}

\medskip

We also recall that the biased Brownian separable permuton $\{\bm \mu^S_p\}_{p\in[0,1]}$, introduced in \cite{bassino2017universal} as the permuton limit of uniform permutations in substitution-closed classes (see also \cite{MR4115736}), is a one-parameter family of random permutons that can be constructed (see \cite{MR4079636}) from a one-dimensional Brownian excursion and a parameter $p\in[0,1]$. In \cite[Theorem 1.12]{borga2021skewperm}, we proved that for all $p\in[0,1]$, the biased Brownian separable permuton $\bm \mu^S_p$ has the same distribution as the skew Brownian permuton $\bm \mu_{1,1-p}$.

Note that when $\rho=1$ and $q=1-p$, the SDEs in \cref{eq:flow_SDE_gen} take another simplified form. We denote by $(\bm e(t))_{t\in [0,1]}$ a one-dimensional Brownian excursion on $[0,1]$ and note that $\conti X_{\rho}(t)=\conti Y_{\rho}(t)=\bm e(t)$ when $\rho=1$. We also set $\sgn(x)\coloneqq\mathds{1}_{\{x>0\}}-\mathds{1}_{\{x\leq 0\}}$. Then the SDEs in \cref{eq:flow_SDE_gen} rewrite as
\begin{equation}\label{eq:flow_SDE_gen_Tanaka}
\begin{cases}
d\conti Z_{1,1-p}^{(u)}(t) = \sgn(\conti Z_{1,1-p}^{(u)}(t)) d \bm e(t)+(1-2p)\cdot d\conti L^{\conti Z_{1,1-p}^{(u)}}(t),& t\in (u,1),\\
\conti Z_{1,1-p}^{(u)}(t)=0,&  t\in [0,u].
\end{cases} 
\end{equation}

\medskip

The unsatisfactory feature of the various instances of skew Brownian permutons above is that either $\rho=1$ or $q=1/2$, and in both cases the SDEs in \cref{eq:flow_SDE_gen} take a simplified form: either the driving process is a one-dimensional Brownian excursion (see \cref{eq:flow_SDE_gen_Tanaka}) or the local time term in \cref{eq:flow_SDE_gen} cancels (see \cref{eq:flow_SDE_erjiberoug}). As mentioned before, the main goal of this paper is to show that uniform \emph{strong-Baxter permutations} converge in the permuton sense to the skew Brownian permuton for a \emph{non-trivial} choice of the two parameters $\rho$ and $q$. This will give a new evidence that the skew Brownian permuton is a natural family of limiting random permutons for random constrained permutations\footnote{We remark that in \cite[Theorem 1.17]{borga2021skewperm} we also showed that the skew Brownian permuton arise also from SLE-decorated Liouville quantum spheres. This result give an orthogonal motivation (compared to the ones coming from the study of random constrained permutations) for studying the skew Brownian permuton.}. We start by defining strong-Baxter permutations.

\begin{defn}\label{defn:str-b}
	\emph{Strong-Baxter permutations} are permutations avoiding the three vincular patterns $2\underbracket[.5pt][1pt]{41}3$,  $3\underbracket[.5pt][1pt]{14}2$ and $3\underbracket[.5pt][1pt]{41}2$, i.e.\ permutations $\sigma$ such that there are no indices $1\leq i<j<k-1<n$ such that $\sigma(j+1) < \sigma(i) < \sigma(k) < \sigma(j)$ or $\sigma(j) < \sigma(k) < \sigma(i) < \sigma(j+1)$ or $\sigma(j+1) < \sigma(k) < \sigma(i) < \sigma(j)$.
\end{defn}

Strong-Baxter permutations were introduced in \cite{MR3882946} as a natural generalization of Baxter permutations. The main result in the current paper is the following one.

\begin{thm}\label{thm:strong-baxter}
	Let $\bm \sigma_n$ be a uniform strong-Baxter permutation of size $n$. The following convergence in distribution in the permuton sense holds:
	\begin{equation}
	\bm \mu_{\bm \sigma_n}\xrightarrow{d}\bm \mu_{\rho,q},
	\end{equation}  
	where $\rho\approx-0.2151$ is the unique real solution of the polynomial 
	\begin{equation}
	1+6\rho+8\rho^2+8\rho^3,
	\end{equation} 
	and $q\approx0.3008$ is the unique real solution of the polynomial
	\begin{equation}
	-1+6q-11q^2+7q^3.
	\end{equation} 
\end{thm}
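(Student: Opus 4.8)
The plan is to follow the general scheme the paper advertises in the abstract: encode uniform strong-Baxter permutations by a labelled generating tree, read off from that generating tree a multidimensional walk confined to a cone, establish a scaling limit for the walk towards a suitable conditioned Brownian motion in the non-negative quadrant, and then transfer this convergence through the ``coalescent-walk'' machinery to the permuton level, matching the limit with $\bm\mu_{\rho,q}$ for the claimed $\rho$ and $q$. Concretely, the first step is combinatorial: produce a generating-tree (or succession-rule) description of strong-Baxter permutations, where the label of a permutation of size $n$ records the number of ``active sites'' of each relevant type for insertion of a new maximal value. For Baxter permutations this label is two-dimensional; for strong-Baxter permutations, since one forbids an extra vincular pattern, I expect the label to be genuinely higher-dimensional (this kind of statistic is exactly what was worked out in \cite{MR3882946}). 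One then reinterprets the sequence of labels along a uniform path from the root to a node at level $n$ as a random walk $\bm W_n$ in $\zgz^{d}$ (or a suitable sub-cone), and one must identify its drift, covariance structure, and the cone geometry.

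The second step is the local-to-global analysis of this constrained walk. The crucial analytic input is a functional limit theorem: the rescaled walk $\big(\tfrac{1}{\sqrt n}\bm W_n(\lceil nt\rceil)\big)_{t\in[0,1]}$, conditioned to stay in the cone and to return near the tip at time $n$, converges in distribution to a two-dimensional Brownian excursion $\conti E_\rho$ in the non-negative quadrant of a well-chosen correlation $\rho$. This is where the specific polynomial $1+6\rho+8\rho^2+8\rho^3$ will emerge: after computing the one-step mean drift of the label chain under the uniform (Boltzmann-type) measure on the generating tree — which requires solving a fixed-point/functional equation for the asymptotic shape of the label of a uniform large strong-Baxter permutation — and the one-step covariance matrix, a linear change of variables flattens the cone to $\R_{\geq0}^2$ and the transformed covariance has off-diagonal correlation equal to the real root of that cubic. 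I would invoke the theory of random walks in cones (Denisov–Wachtel and its functional-CLT refinements, as already used in \cite{borga2020scaling}) to get the conditioned invariance principle, after checking the hypotheses (zero drift in the flattened coordinates, genuinely $d$-dimensional law, aperiodicity, moment bounds, the exponent condition $p_{\mathrm{exc}}\in(0,1)$ so that the excursion is well-defined).

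The third step is to push this invariance principle through to the permuton. Here I would mirror the strategy of \cite{borga2021skewperm}: the generating tree, together with the insertion rule, canonically produces a \emph{discrete coalescent-walk process} whose associated permutation is exactly $\bm\sigma_n$; one shows that the joint law of (walk, coalescent-walk process, permutation) converges to (continuous) $(\conti E_\rho, \{\conti Z^{(u)}_{\rho,q}\}, \bm\mu_{\rho,q})$, with the parameter $q$ entering precisely as the asymptotic proportion governing how the discrete coalescent trajectories split at the boundary — i.e., the limiting fraction of insertion choices of one type versus the other when a trajectory hits $0$. Computing this proportion from the generating rule and the stationary label distribution yields the second cubic $-1+6q-11q^2+7q^3$, whose unique real root is $q\approx0.3008$. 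The transfer of convergence from walks to coalescent-walk processes to permutons uses the continuity properties of the map $\varphi_{\conti Z_{\rho,q}}$ established in \cite{borga2021skewperm}, plus a tightness/Skorokhod argument and an approximation of the discrete $\varphi$ by the continuous one.

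The main obstacle I anticipate is two-fold and lives in the first two steps. First, finding a generating tree for strong-Baxter permutations whose label chain is a Markov chain with a \emph{tractable} stationary behaviour: unlike Baxter permutations, where the label is two-dimensional and the structure is classical, the extra forbidden vincular pattern is likely to force either a larger label or a non-local rewriting rule, and one must argue that after the correct linear reduction the effective walk is still two-dimensional with a nondegenerate covariance — otherwise the limit would not be a genuine (non-degenerate) skew Brownian permuton. Second, the precise identification of $\rho$ and $q$: this hinges on solving the functional equations for the asymptotic label distribution and then carrying out an exact (not merely asymptotic) computation of the limiting drift, covariance, and boundary-splitting proportion; verifying that these coincide with the stated cubics, and that each cubic indeed has a unique real root in the relevant range, is the delicate bookkeeping at the heart of the theorem. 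Everything else — tightness, the invariance principle in cones, the continuity of the permuton map — is by now robust technology that the companion paper \cite{borga2021skewperm} and \cite{borga2020scaling} make available essentially off the shelf.
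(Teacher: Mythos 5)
Your overall architecture coincides with the paper's: a $\Z^2$-labelled generating tree (the succession rule of Bouvel--Guerrini--Rechnitzer--Rinaldi) encodes $\bm\sigma_n$ as a uniform walk in the non-negative quadrant, which is realized as a centered random walk with a Boltzmann-tilted step law conditioned to stay in the quadrant and return to the origin; Denisov--Wachtel-type estimates give convergence to $\conti E_\rho$, where $\rho$ is the correlation of the tilted step distribution and satisfies the stated cubic; and the coalescent-walk machinery transfers this to the permuton. Two small corrections to your first step: the label stays two-dimensional (no blow-up of the label dimension), and the endpoint of the conditioned walk is not pinned at a fixed lattice point, so one needs uniform-in-endpoint local bounds (the content of the paper's Appendix A) to sum over terminal labels before applying the excursion invariance principle.

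The one place where your plan would not go through as written is the identification of $q$. You propose to read $q$ off as a boundary-splitting proportion computed ``from the generating rule and the stationary label distribution.'' There is no such local computation available: unlike the Baxter case, a single trajectory $\ovbmZ^{(u)}$ of the discrete coalescent-walk process here is not a simple random walk and is not even a martingale, so its excursion signs are not determined by a one-step transition probability at $0$. The paper's actual argument (its main technical novelty) is to prove that $\ovbmZ^{(u)}$, suitably rescaled, converges to a skew Brownian motion of parameter $q$ by decomposing the trajectory into alternating positive and negative excursions, establishing the tail asymptotics $\P(\bm\tau^+_1=k)\sim\beta k^{-3/2}$ for the first return time to $0$ via local limit theorems for one-dimensional walks conditioned to stay positive, and then applying a renewal theorem to the number of returns; the parameter $q$ emerges as the ratio of the ``positive-excursion'' contribution to $\beta$ over $\beta$ itself, and only after substituting the explicit values of the tilting parameters does one recover the cubic $-1+6q-11q^2+7q^3$. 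Without this excursion/renewal analysis the value of $q$ cannot be pinned down, so this step is a genuine gap in the proposal rather than ``robust technology off the shelf.''
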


As mentioned in the abstract, the techniques developed to prove \cref{thm:strong-baxter} are quite robust (see \cref{sect:techn} for further details). We apply them also to the case of \emph{semi-Baxter permutations}.

\begin{defn}\label{defn:semi-b}
	\emph{Semi-Baxter permutations} are permutations avoiding the vincular pattern $2\underbracket[.5pt][1pt]{41}3$, i.e.\ permutations $\sigma$ such that there are no indices $1\leq i<j<k-1<n$ such that $\sigma(j+1)<\sigma(i)<\sigma(k)<\sigma(j)$.
\end{defn}

We have the following second result.

\begin{thm}\label{thm:semi-baxter}
	Let $\bm \sigma_n$ be a uniform semi-Baxter permutation of size $n$. The following convergence in distribution in the permuton sense holds:
	\begin{equation}
	\bm \mu_{\bm \sigma_n}\xrightarrow{d}\bm \mu_{\rho,q},
	\end{equation}  
	where
	\begin{equation}
	\rho=-\frac{1+\sqrt 5}{4}\approx -0.8090
	\quad\text{and}\quad
	q=\frac{1}{2}.
	\end{equation}  
\end{thm}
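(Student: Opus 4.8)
The plan is to mirror the proof of \cref{thm:strong-baxter} but with the simpler combinatorial input coming from semi-Baxter permutations, whose generating tree is well understood in the literature (it is the so-called semi-Baxter or \emph{slicing} generating tree). The overall strategy has three stages. First, I would recall (or re-derive) the succession rule for semi-Baxter permutations: each permutation of size $n$ generated from one of size $n-1$ carries a two-dimensional label, and the children of a node with label $(h,k)$ (counting, say, active sites of two types arising from the insertion of a new maximum) are described by an explicit local rule. This realizes uniform semi-Baxter permutations as the ends of a random walk in the quarter plane $\Z_{>0}^2$ with i.i.d.-type increments governed by the succession rule, exactly as in the general technique of \cref{sect:techn}. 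The bijective dictionary between the permutation and its ``coalescent-walk'' encoding is the same one used for \cref{thm:strong-baxter}, so the permuton limit of $\bm\sigma_n$ is controlled by the scaling limit of this labelled walk.

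Second, I would identify the scaling limit of the walk. Because the semi-Baxter rule is lower-dimensional (one vincular pattern rather than three), the associated two-dimensional walk in the cone has an increment distribution whose covariance structure I can compute explicitly. The correlation of the limiting two-dimensional Brownian motion is then read off from the normalized covariance of the increments; the claim is that this yields $\rho=-(1+\sqrt5)/4$. The appearance of the golden-ratio-type value is exactly what one expects from the semi-Baxter enumeration, whose growth rate and algebraic structure involve $\sqrt 5$ — concretely, the stationary distribution of the label Markov chain (or the relevant quadratic from the kernel method for the bivariate generating function) produces a covariance ratio equal to $-(1+\sqrt5)/4$. Verifying that the conditioned walk converges to the two-dimensional Brownian excursion of this correlation in the non-negative quadrant is where I would invoke the invariance-principle machinery set up earlier in the paper (the general convergence theorem for walks in cones, together with tightness and the identification of the drift-free, centered limit).

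Third, and this is the cheap part for semi-Baxter, I would pin down $q$. In the general technique, $q$ is determined by the asymptotic proportion with which the coalescent-walk encoding places a new point ``below'' versus ``above'' — equivalently by a symmetry or asymmetry parameter in the local rule. For semi-Baxter permutations the relevant local move is symmetric in the two coordinate roles (unlike strong-Baxter, where the extra avoided patterns break the symmetry and force $q\approx0.3008$), so the limiting skew parameter is $q=1/2$. Concretely I would check that the increment distribution and the rule for deciding the coalescence direction are invariant under swapping the two coordinates, which forces the local-time coefficient $2q-1$ in \cref{eq:flow_SDE_gen} to vanish, i.e.\ $q=1/2$; this also reassures us that the resulting permuton, while genuinely two-dimensional in $\rho$, sits on the ``$q=1/2$ slice'' like the Baxter permuton, but with a different correlation $\rho=-(1+\sqrt5)/4\neq-1/2$.

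The main obstacle, as in \cref{thm:strong-baxter}, is not the combinatorics of the generating tree but the analytic step of establishing the scaling limit of the conditioned random walk in the quarter plane: one must show convergence of the unconditioned walk to a two-dimensional Brownian motion of the prescribed correlation, control the probability of staying in the cone (which requires the correlation to lie in $(-1,1)$, satisfied here since $-(1+\sqrt5)/4\approx-0.809>-1$), and upgrade this to convergence of the conditioned walk to the two-dimensional Brownian excursion, then transfer this through the bijection to permuton convergence. Once the general technique of \cref{sect:techn} is in place, the semi-Baxter case is essentially a matter of computing one covariance matrix and checking one symmetry, so the bulk of the proof is a direct specialization; I would present it as such, emphasizing the computation giving $\rho=-(1+\sqrt5)/4$ and the symmetry argument giving $q=1/2$, and citing the earlier sections for everything else.
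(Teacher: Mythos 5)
Your first two stages match the paper's actual route: the succession rule of \cite[Proposition 3]{MR3882946} gives the bijection with walks in the quadrant with increments $I_{sb}=\{(-i,1):i\geq 0\}\cup\{(i,-i+1):i\geq 1\}$, the weights $\alpha=\sqrt5-2$, $\gamma=(\sqrt5-1)/2$ make the step distribution centered, and the covariance matrix \cref{eq:cov_semi-baxter} yields $\rho=-(1+\sqrt5)/4$; the conditioned invariance principle is then the same cone-walk argument as for strong-Baxter. So far so good.

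Your third stage, however, rests on a claim that is false, and it is precisely the point the paper is at pains to emphasize (see \cref{rem:traj} and the last paragraph of \cref{sect:techn}). There is no symmetry under swapping the two coordinates: the increment set $I_{sb}$ is not invariant under $(x,y)\mapsto(y,x)$, the marginal variances differ ($\E[\bm X^2]=2(2+\sqrt5)$ versus $\E[\bm Y^2]=1+\sqrt5$), and the coalescent rule in \cref{eq:distrib_incr_coal_2} treats positive and negative excursions asymmetrically (positive excursions always restart at $1$). In fact a one-dimensional walk of the discrete coalescent-walk process for semi-Baxter permutations is not even a martingale, so the "check the local rule is coordinate-exchangeable, hence $q=1/2$" step would fail the moment you tried to verify it. Nor can you recover the symmetry in the continuum: the skew parameter $q$ is a boundary-local quantity that is not determined by the law of the driving Brownian motion, so exchangeability of $(\overline{\conti X}_\rho,\overline{\conti Y}_\rho)$ after rescaling tells you nothing about $q$. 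The paper instead runs the same renewal-theoretic computation as for strong-Baxter (\cref{lem:comput_param_semi} and \cref{cor:asympt_estim_rew}): one computes the tail constant $\beta$ of the return time $\bm\tau^+_2$ of the coalescent walk to the positive side and obtains $q=\frac{1/\sigma'}{1/\sigma'+(1+\gamma)/\sigma}$, which only after substituting $\sigma'=\sqrt{1+\sqrt5}$, $\sigma=\sqrt{2(2+\sqrt5)}$, $\gamma=(\sqrt5-1)/2$ evaluates to $1/2$. That $q=1/2$ is an outcome of this computation, not of a symmetry; replacing the computation by a symmetry argument is a genuine gap in your proposal.
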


In the next section we explain the techniques used to prove Theorems \ref{thm:strong-baxter} and \ref{thm:semi-baxter}. Doing that, we will also explain the organization of the remaining sections of this paper.

\subsection{Outline of the strategy for the proofs and generality of our techniques}\label{sect:techn}

Despite the strategies to prove Theorems \ref{thm:strong-baxter} and \ref{thm:semi-baxter} are quite involved, in this section we try to give a brief idea of the main steps in these proofs. In addition we highlight the substantial differences with the proofs in \cite{borga2020scaling}, where convergence in the permuton sense of Baxter permutations is established.

\bigskip

We start by briefly recalling how we proved in \cite{borga2020scaling} that the scaling limit of Baxter permutations is the skew Brownian permuton $\bm \mu_{-1/2,1/2}$. The starting point consisted in using two bijections already available in the literature, one between Baxter permutations and bipolar orientations (\cite{MR2734180}) and a second one between bipolar orientations and a family of two-dimensional random walks in the non-negative quadrant, called \emph{tandem walks} (\cite{kenyon2015bipolar}). Composing these two bijections, we obtained a bijection between Baxter permutations and tandem walks. 

To prove the convergence towards the skew Brownian permuton $\bm \mu_{-1/2,1/2}$, we first studied the scaling limit of the random discrete coalescent-walk process associated with a random tandem walk and we showed that it converges in law to the continuous random coalescent-walk process encoded by the SDEs in \cref{eq:flow_SDE_erjiberoug}. This discrete coalescent-walk process is defined from the tandem walk in a manner similar to \cref{eq:flow_SDE_erjiberoug}, but in the discrete setting (see \cref{sect:coalproc} for a proper definition of discrete coalescent-walk processes). For the moment, the reader can simply think of this discrete coalescent-walk process as a collection of discrete one-dimensional walks driven by the increments of the corresponding tandem walk.

The last step in the proof was to transfer this coalescent-walk process convergence result to Baxter permutations using the bijection with tandem walks mentioned above.

\bigskip

We can now explain the techniques used in this paper to prove Theorems \ref{thm:strong-baxter} and \ref{thm:semi-baxter}. The first main difference between Baxter permutations and semi/strong-Baxter permutations is that bijections between the latter families of permutations and some families of walks are not available in the literature (as it was the case for Baxter permutations). Our first contribution in this paper is the following one: in \cref{sect:disc_obj_strong,sect:defn_discrete} we develop a general technique to encode families of permutations enumerated using generating trees with two-dimensional labels (see \cref{sect:gen_tree_for_perms} for an introduction to generating trees) first by some families of two-dimensional random walks\footnote{This technique builds on another recent work of the author, see \cite{borga2020asymptotic}.} and then by some specific discrete coalescent-walk processes. Our main combinatorial results are proved in \cref{thm:The_diagram_commutes_strong} -- for strong-Baxter permutations -- and \cref{thm:The_diagram_commutes} -- for semi-Baxter permutations.

\medskip

Building on these new combinatorial results, in order to prove \cref{thm:strong-baxter} (resp.\ \cref{thm:semi-baxter}), we will show in \cref{sect:prob_part} (resp.\ \cref{sect:semi_baxter_prob_res}) that the discrete coalescent-walk processes for strong-Baxter permutations (resp.\ semi-Baxter permutations) converge to the continuous coalescent-walk process introduced in \cref{eq:defcoalproc} for the specific choices of the parameters stated in \cref{thm:strong-baxter} (resp.\ \cref{thm:semi-baxter}). These coalescent-walk process convergence results rely on proving the following two key results.
\begin{itemize}
	\item The first step is to show that a uniform two-dimensional walk encoding a uniform strong-Baxter permutation (resp.\ semi-Baxter permutation) converges to a two-dimensional Brownian excursion $\conti E_{\rho}$ in the non-negative quadrant with a specific correlation parameter $\rho$. This is proved in \cref{prop:scaling_strong_walk} (resp.\ \cref{prop:weiugfweougfwoeu}).
	\item The second step is to show that a one-dimensional walk in the discrete coalescent-walk process associated with a uniform strong-Baxter permutation (resp.\ semi-Baxter) converges to the process $\conti Z_{\rho,q}^{(u)}(t)$ defined by the SDEs in \cref{eq:flow_SDE_gen} driven by the two-dimensional Brownian excursion $\conti E_{\rho}$ obtained in the previous step and with parameter $q$ as in \cref{thm:strong-baxter} (resp.\ \cref{thm:semi-baxter}). This is proved in \cref{thm:fvwuofbgqfipqhfqfpq_strongb} (resp. \cref{thm:fvwuofbgqfipqhfqfpq_semib}).
\end{itemize}

We now explain why in both of the two key steps above we need to develop some new strategies of proof (compared to the one used for Baxter permutations in \cite{borga2020scaling}).

\begin{itemize}
	\item The convergence of uniform tandem walks (the walks encoding Baxter permutations) towards the two-dimensional Brownian excursion $\conti E_{-1/2}$ was obtained in \cite{borga2020scaling} as a simple application of Donsker's theorem thanks to a remarkable (but very peculiar) property of uniform tandem walks (see \cite[Proposition 3.2]{borga2020scaling}). Such a property is not valid for the walks encoding semi/strong-Baxter permutations.
	Therefore the convergence of uniform two dimensional walks encoding uniform semi/strong-Baxter permutations is less straightforward: we will need to use precise bounds on some probabilities related with random walks in cones (this bounds are obtained in \cref{sect:prob_walk_cones} building on the remarkable results of Denisov and Wachtel \cite{MR3342657} and are applied in the proofs of  Propositions \ref{prop:scaling_strong_walk} and \ref{prop:weiugfweougfwoeu}).

	\item The convergence of a one-dimensional walk in the discrete coalescent-walk process associated with a uniform Baxter permutation towards the process $\conti Z^{(u)}(t)$ defined by the SDEs in \cref{eq:flow_SDE_erjiberoug} was obtained in \cite{borga2020scaling} quite easily: indeed such walk turns out to be (surprisingly) a simple random walk (as proved in \cite[Proposition 3.3]{borga2020scaling}).
	
	A one-dimensional walk in the discrete coalescent-walk process associated with a uniform semi-Baxter or strong-Baxter permutation is \emph{not} a simple random walk. Therefore proving its convergence towards the process $\conti Z_{\rho,q}^{(u)}(t)$ defined by the SDEs in \cref{eq:flow_SDE_gen} is more challenging and will force us to develop some new tools. For instance, in \cref{prop:skew_part_strong}, building on some results of Ngo and Peign\'e \cite{MR4105264}, we prove that an unconditioned version of a one-dimensional walk in the discrete coalescent-walk process associated with a uniform strong-Baxter permutation converges towards a skew Brownian motion of parameter $q$, with $q$ as in Theorems \ref{thm:strong-baxter}. This will be the key-step to then prove convergence towards $\conti Z_{\rho,q}^{(u)}(t)$ in \cref{thm:fvwuofbgqfipqhfqfpq_strongb}.
	
	Also the case of semi-Baxter permutations displays some remarkable phenomena that are too complicated to explain in this introduction. Therefore, for the moment, we just refer the reader to \cref{rem:traj}.
\end{itemize}

We conclude this section with a quick comment on the generality of the techniques used in this paper and a quick remark on the possible implications of our results on decorated planar maps. 

As we explained above, the starting point to obtain our results in Theorems \ref{thm:strong-baxter} and \ref{thm:semi-baxter} is the possibility to encode permutations with two-dimensional walks. This is done using generating trees with two-dimensional labels. There are several other families of permutations that are encoded by generating trees with two-dimensional labels, such as \emph{plane permutations} (see \cite[Proposition 5]{MR3882946}) or some particular subfamilies of Baxter permutations (see for instance \cite[Theorem 16]{MR3992288} and \cite{MR2028288}). We believe that the techniques developed in the present paper allow to prove convergence towards the skew Brownian permuton also for these other families of permutations.

We finally remark that this is the first paper where the parameter $q$ in Theorems \ref{thm:strong-baxter} and \ref{thm:semi-baxter} has been determined without using \emph{symmetry arguments}. We plan to use this new method also in a future work on decorated planar maps.
Indeed, the parameter $q$ of the skew Brownian permuton (implicitly) determines the angle between two SLE-curves decorating a Liouville quantum sphere, as explained in \cite[Theorem 1.17, Remark 1.18]{borga2021skewperm}. In the planar map literature there are only two models of decorated planar maps where convergence to SLE-decorated Liouville quantum spheres is proved: random bipolar orientations converge to a $\sqrt{4/3}$-Liouville quantum sphere decorated with two SLE$_{12}$-curves of angle $\pi/2$ (\cite{gwynne2016joint,borga2020scaling}) and random Schnyder woods converge to a $1$-Liouville quantum sphere decorated with two SLE$_{16}$-curves of angle $2\pi/3$ (\cite{li2017schnyder}). In both cases the angle was determined by symmetry arguments. We believe\footnote{See also the last open problem in \cite[Section 1.6]{borga2020scaling} for further explanations.} that the method developed in the present paper can be applied to some models of decorated planar maps, such as bipolar posets (see \cite{fusy2021enumeration}), in order to determine limiting angles that cannot be identified using symmetry arguments.

\subsection{Notation}

We denote by $\sym_n$ the set of permutations of size $n$ and  by $\sym$ the set of permutations of finite size.
Given a permutation $\sigma\in \sym_n$, its \emph{diagram} is the sets of points of
the Cartesian plane at coordinates $(i, \sigma(i))_{i\in[n]}$.

If $x_1,\dots ,x_n$ is a sequence of distinct numbers, let $\resc(x_1,\dots ,x_n)$ be the unique permutation $\pi$ in $\sym_n$ that is in the same relative order as $x_1,\dots ,x_n,$ \emph{i.e.}, $\pi(i)<\pi(j)$ if and only if $x_i<x_j.$
Given a permutation $\sigma\in\sym_n$ and a subset of indices $ I \subseteq [ n ] $, let $\indperm_I(\sigma)$ be the permutation induced by $(\sigma(i))_{i\in I},$ namely, $\indperm_I(\sigma)\coloneqq\resc\left((\sigma(i))_{i\in I}\right).$
For example, if $\sigma=57832164$ and $I=\{2,4,7\}$ then $\indperm_{\left\{2,4,7\right\}}(57832164)=\text{std}(736)=312$.

\medskip

Given a random variable $\bm X$ and an event $A$ we write $(\bm X;A)$ for the random variable $\bm X\mathds{1}_A$.
 
\section*{Acknowledgements}
The author is very grateful to Mathilde Bouvel, Valentin Féray and Grégory Miermont for some enlightening discussions.
Thanks to Mireille Bousquet-Mélou, Kilian Raschel, and Antoine Lejay for pointing out some interesting papers.
Thanks also to Denis Denisov for suggesting several pointers for the proof of \cref{prop:upper_bound_walks}.

\section{Permutations, generating trees, two-dimensional walks, and coalescent-walk processes}\label{sect:defn_discrete}

The goal of this section is to introduce the definitions of generating tree and coalescent-walk process. Then we also recall the construction of two mappings, the first one defined between permutations and $d$-dimensional walks (introduced in \cite{borga2020asymptotic}) and the second one defined between coalescent-walk processes and permutations (introduced in \cite{borga2020scaling}).

\subsection{Generating trees for permutations}\label{sect:gen_tree_for_perms}

\subsubsection{Definitions}

We quickly summarize here all the definitions that we need about generating trees. For a more detailed and complete description we refer to \cite[Section 1.5]{borga2020asymptotic}.
We start with the following preliminary construction and some definitions.

\begin{defn}
	For a permutation $\sigma\in\sym^n$ and an integer $m\in [n+1],$ let $\sigma^{*m}$ denote the permutation
	\begin{equation}
		\sigma^{*m}\coloneqq\text{std}(\sigma(1),\dots,\sigma(n),m-1/2).
	\end{equation}
	In words, $\sigma^{*m}$ is obtained from $\sigma$ by \emph{appending a new final value} equal to $m$ and shifting by $+1$ all the other values larger than or equal to $m.$

	A family of permutations $\CCCC$ \emph{grows on the right} if for all permutations $\sigma\in\sym$ such that $\sigma^{*m}\in\CCCC$ for some $m\in [|\sigma|+1]$, it holds that $\sigma\in\CCCC.$
\end{defn}

\begin{defn}
	The \emph{generating tree} associated with a family of permutations $\CCCC$ that grows on the right is the infinite rooted tree whose vertices correspond to the permutations of $\CCCC$ (each appearing exactly once in the tree) and such that permutations of size $n$ are at level $n$.
	The children of a vertex corresponding to a permutation $\sigma\in\CCCC$ are the vertices corresponding to the permutations obtained by appending a new final value to $\sigma$.
\end{defn}

\begin{defn}
	Consider a set $S$ and an $S$-valued statistics on $\CCCC$ whose value determines the number of children in the generating tree of $\CCCC$ and the values of such statistics for these children. Then we label each vertex of the generating tree with the value of the statistics taken by the permutation corresponding to the vertex. The associated \emph{succession rule} is determined by the label of the root $\lambda$ and, for every label $k$, by the labels $e_1(k),\dots,e_{h(k)}(k)$ of the $h(k)$ children of a vertex labeled by $k$. In full generality, the associated succession rule is represented by the formula
	\begin{equation}\label{eq:laruletadesuccession}
	\begin{cases} 
	\text{Root label}: (\lambda) \\
	(k)\to (e_1{\scriptstyle(k)}),\dots,(e_{h(k)}{\scriptstyle(k)}).
	\end{cases}
	\end{equation}
	We denote by $\mathcal{L}$ the set of all labels appearing in a generating tree and for all $k\in\mathcal{L},$ we denote by $\CL(k)$ the multiset of labels of the children $\{e_1(k),\dots,e_{h(k)}(k)\}$.
\end{defn}

An example of a generating tree for the class of 123-avoiding\footnote{A permutation $\sigma$ avoids $\pi$ if there are no occurrences of the pattern $\pi$ in $\sigma$.} permutations is given in \cref{gen_tree_321}. Considering the $\zgz$-valued statistics that counts the number of possible insertions of a new point, then the succession rule is 
\begin{equation}
	\begin{cases} 
		\text{Root label}: (2) \\
		(k)\to (2),(3)\dots,(k+1).
	\end{cases}
\end{equation}

\begin{figure}[htbp]
	\begin{minipage}[c]{0.55\textwidth}
		\centering
		\includegraphics[scale=0.4]{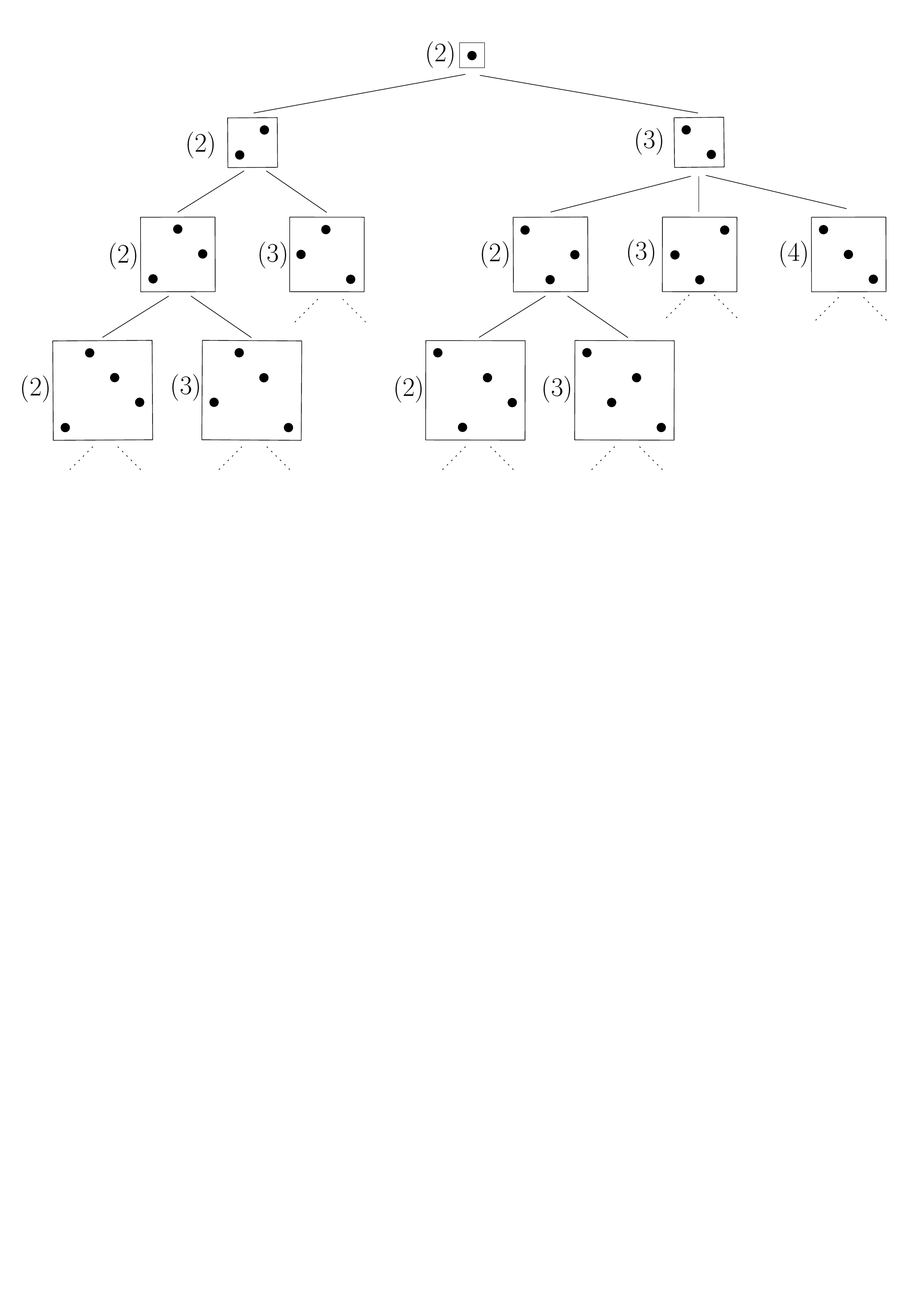}
	\end{minipage}
	\begin{minipage}[c]{0.45\textwidth}
		\caption{The generating tree for $123$-avoiding permutations. The children of a permutation are obtained by adding a new point on the right of the diagram in such a way that it does not create a pattern 123. 
		For each diagram we report (on the left) the label given by the statistics that counts the number of possible insertions of a new point. Note that every permutation with label $(k)$ has $k$ children with labels $(2),(3),\dots,(k+1)$. \label{gen_tree_321}}
	\end{minipage}
\end{figure}

\subsubsection{A bijection between permutations encoded by a generating tree and multi-dimensional walks}
\label{sect:bij_walk_perm}

We start by assuming that the children labels $e_1(k),\dots,e_{h(k)}(k)$ appearing in the succession rule in \cref{eq:laruletadesuccession} are distinct\footnote{A general bijection can be constructed without assuming that the children labels $e_1(k),\dots,e_{h(k)}(k)$ are distinct, see \cite[Section 1.5.2]{borga2020asymptotic}. In this paper we only need the present simplified version.}. 

There is a simple bijection between a family of permutations encoded by a generating tree and the set of paths in the tree from the root. We can associate to the endpoint of each path the permutation corresponding to that vertex. Encoding each path in the tree with the list of labels appearing on the path, every permutation of size $n$ is bijectively encoded by a sequence of $n$ labels $(k_{1} = \lambda , k_{2} , \dots , k_{n} )$, where $k_{i}$ tracks the value of the label, such that every pair of two consecutive labels $( k_{i} , k_{i+1}),$ $1\leq i\leq n-1,$ is \emph{consistent with the succession rule} in \cref{eq:laruletadesuccession}, i.e.\ for all $1\leq i\leq n-1,$ there exist $j= j({i}) \in [ 1 , h(k_{i}) ]$ such that $k_{i+1}=e_{j}(k_{i})$. We denote by $\pw$ this bijection between permutations in the generating tree and sequences of labels. 

Note that whenever	$\mathcal{L}$ is a subset of $\Z^n$ for some $n\in \zgz$ (this will be the case of all the permutation families considered in the article), then these sequences of labels can be naturally interpreted as $n$-dimensional walks. The bijection $\pw$ is then a map from permutations (P) to walks (W).

\subsection{Discrete coalescent-walk processes and permutations}\label{sect:coalproc}

\emph{Coalescent-walk processes} were introduced in \cite{borga2020scaling} (see in particular Section 2.4) to study the scaling limit of Baxter permutations. We now recall their definition and their relations with permutations. Then in \cref{sect:coal_proc_for_strong_baxt} we show that there exists a natural coalescent-walk process encoding strong-Baxter permutations (and in \cref{sect:coal_proc_for_semi_baxt} we will do the same for semi-Baxter permutations).

\begin{defn}
	Consider a (finite or infinite) interval $I$ of $\Z$. A \emph{coalescent-walk process} on $I$ is a family  of one-dimensional walks $\{(\Ztp_s)_{s\geq t, s\in I}\}_{t\in I}$ such that:
	\begin{itemize}
		\item $\Ztp_t=0$, for every $t\in I$;
		\item for $t'\geq t \in I$, if $\Ztp_k\geq Z^{(t')}_k$ (resp.\ $\Ztp_k\leq Z^{(t')}_k$) then $\Ztp_{k'}\geq Z^{(t')}_{k'}$ (resp.\ $\Ztp_{k'}\leq Z^{(t')}_{k'}$) for every $k'\geq k$.
	\end{itemize}
	 The set of coalescent-walk processes on some interval $I$ is denoted by $\Coals(I)$.
\end{defn}

We highlight the following consequence of the definition above: if $\Ztp_k= Z^{(t')}_k$ for some time $k$, then $\Ztp_{k'}=Z^{(t')}_{k'}$ for all $k'\geq k$.
If this is the case, we say that the walks $\Ztp$ and $Z^{(t')}$ \emph{coalesce} and we call \emph{coalescent point} of $\Ztp$ and $Z^{(t')}$ the point $(s,\Ztp_s)$ such that  $s = \min \{ k\geq \max\{ t , t' \}| \Ztp_k=Z^{(t')}_k \}$.

\bigskip

For a coalescent-walk process $Z = \{\Ztp\}_{t\in I} \in \Coals(I)$ defined on a (finite or infinite) interval $I$, we define the following binary relation $\leqz$ on $I$:

\begin{equation}
\begin{cases}
i \leqz i,\\
i\leqz j,&\text{ if } i<j \text{ and } Z^{(i)}_j\leq0,\\
j\leqz i,&\text{ if } i<j \text{ and } Z^{(i)}_j>0.
\end{cases}
\end{equation}

In \cite[Proposition 2.9]{borga2020scaling} it was shown that $\leqz$ defines a total order on $I$. This total order allows to associate a permutation with a coalescent-walk process on the interval $[n]$.

\begin{defn}\label{defn:coalandpermrel}
	Let $n\in\Z_{\geq 0}$ and $Z = \{\Ztp\}_{i\in [n]} \in \Coals([n])$ be a coalescent-walk process on $[n]$. We denote by $\cpp(Z)$ the only permutation $\sigma \in \sym_n$ such that for all $1\leq i, j\leq n$, 
	\begin{equation}
		\sigma(i)\leq\sigma(j) \iff i \leqz j.
	\end{equation}
	As a consequence, for all $j\in[n]$,
	 $$\sigma(j)=\#\{i\in[n]|i\leqz j\}=\#\{i\in [j]:Z^{(i)}_j\leq 0\}+\#\{i\in [j+1,n]:Z^{(j)}_i> 0\}.$$
\end{defn}

In \cite[Proposition 2.11]{borga2020scaling} it was shown that pattern extraction in the permutation $\cpp(Z)$ depends only on a finite number of walks. This is a key tool to prove permuton convergence results.

\begin{prop}[Proposition 2.11 in \cite{borga2020scaling}]
	\label{prop:patternextraction}
	Let $Z=\{\Ztp\}_{t\in[n]}$ be a coalescent-walk process and $\sigma=\cpp(Z)$ the corresponding permutation of size $n$. We fix a set of indexes $I=\{i_1<\dots<i_k\}\subseteq[n]$. Then $\indperm_I(\sigma)=\pi$ if and only if the following condition is satisfied: 
	for every $1\leq \ell< s \leq k,$
	\begin{equation}
		Z^{(i_\ell)}_{i_s} \geq 0   
		\iff  
		\pi(s)<\pi(\ell).
	\end{equation}
\end{prop}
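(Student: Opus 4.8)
The plan is to prove Proposition \ref{prop:patternextraction} by unwinding the definition of $\indperm_I(\sigma)$ via the characterization of the total order $\leqz$ given in \cref{defn:coalandpermrel}. Recall that $\indperm_I(\sigma) = \resc((\sigma(i_1),\dots,\sigma(i_k)))$, so $\indperm_I(\sigma) = \pi$ means precisely that for all $1\le \ell, s \le k$ we have $\pi(\ell) < \pi(s) \iff \sigma(i_\ell) < \sigma(i_s)$. By \cref{defn:coalandpermrel}, $\sigma(i_\ell) \le \sigma(i_s) \iff i_\ell \leqz i_s$, and since $\sigma$ is a permutation (all values distinct), strict inequalities on $\sigma$-values correspond to the order $\leqz$ between distinct indices. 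Hence the statement $\indperm_I(\sigma) = \pi$ is equivalent to: for all $\ell \ne s$, $\pi(\ell) < \pi(s) \iff i_\ell \leqz i_s$. It suffices to check this for $\ell < s$ (the case $\ell > s$ follows by swapping roles, and $\ell = s$ is trivial), and for $\ell < s$ we have $i_\ell < i_s$, so by the definition of $\leqz$, $i_\ell \leqz i_s \iff Z^{(i_\ell)}_{i_s} \le 0$.

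The one remaining subtlety is the difference between $Z^{(i_\ell)}_{i_s} \le 0$ appearing implicitly in the definition of $\leqz$ and the $Z^{(i_\ell)}_{i_s} \ge 0$ written in the statement of the proposition. First I would note that the relevant walk values are never exactly zero in a way that matters: if $i_\ell < i_s$ then $i_\ell \leqz i_s$ iff $Z^{(i_\ell)}_{i_s} \le 0$, and $i_s \leqz i_\ell$ iff $Z^{(i_\ell)}_{i_s} > 0$; these are genuinely complementary. So $\pi(\ell) < \pi(s)$ corresponds to $i_\ell \leqz i_s$ corresponds to $Z^{(i_\ell)}_{i_s} \le 0$, which is the negation of $Z^{(i_\ell)}_{i_s} > 0$, equivalently of $Z^{(i_\ell)}_{i_s} \ge 1$; but since the walk is integer-valued (it lives on $\Z$), $Z^{(i_\ell)}_{i_s} > 0 \iff Z^{(i_\ell)}_{i_s} \ge 1$, and hence $Z^{(i_\ell)}_{i_s} \le 0 \iff Z^{(i_\ell)}_{i_s} \ge 0$ is \emph{false} in general — so I would instead read the proposition's condition the other way: $Z^{(i_\ell)}_{i_s} \ge 0 \iff \pi(s) < \pi(\ell)$. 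Thus $\pi(s) < \pi(\ell) \iff i_s \leqz i_\ell \iff Z^{(i_\ell)}_{i_s} > 0$, and since the strict inequality $Z^{(i_\ell)}_{i_s} > 0$ is equivalent to $Z^{(i_\ell)}_{i_s} \ge 0$ only up to the value $0$, I need to rule out $Z^{(i_\ell)}_{i_s} = 0$. This is handled by the coalescence observation recorded right after the definition of coalescent-walk process: if two walks agree at a time, they agree forever after; in particular, if $Z^{(i_\ell)}_{i_s} = 0 = Z^{(i_s)}_{i_s}$, the walks $Z^{(i_\ell)}$ and $Z^{(i_s)}$ coalesce at time $i_s$, which is consistent and poses no contradiction — but the order $\leqz$ is still well-defined because the definition uses the strict/non-strict split $\le 0$ versus $> 0$. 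So the clean way is: $\pi(s) < \pi(\ell) \iff i_s \leqz i_\ell \iff Z^{(i_\ell)}_{i_s} > 0$, and I must argue $Z^{(i_\ell)}_{i_s} > 0 \iff Z^{(i_\ell)}_{i_s} \ge 0$, i.e.\ that $Z^{(i_\ell)}_{i_s} \ne 0$.

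This last point — showing $Z^{(i_\ell)}_{i_s} \neq 0$, or more precisely reconciling the $\ge$ in the statement with the $>$ in the definition of $\leqz$ — is the only real obstacle, and I expect it is resolved by the convention that the coalescent-walk processes arising here (built in \cref{sect:coal_proc_for_strong_baxt} and \cref{sect:coal_proc_for_semi_baxt}) have increments bounded below by $-1$ or satisfy some parity/non-degeneracy condition ensuring walks started at different times never touch at value exactly $0$ except upon coalescence, combined with the fact that $\cpp$ is well-defined only because $\leqz$ is a total order. Alternatively, and more robustly, I would observe that the displayed formula in \cref{defn:coalandpermrel} for $\sigma(j)$ already encodes the answer: from that formula, for $i_\ell < i_s$, whether $i_s$ contributes $+1$ to the count computing $\sigma(i_\ell)$ is governed by $\{Z^{(i_\ell)}_{i_s} > 0\}$, and the comparison of $\sigma(i_\ell)$ and $\sigma(i_s)$ reduces to exactly one of $Z^{(i_\ell)}_{i_s} \le 0$ or $> 0$ holding; reading off which inequality corresponds to $\pi(s) < \pi(\ell)$ then gives the stated equivalence verbatim. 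The structure of the write-up would therefore be: (1) translate $\indperm_I(\sigma) = \pi$ into pairwise comparisons of $\sigma$-values; (2) apply \cref{defn:coalandpermrel} to convert each comparison into a statement about $\leqz$; (3) apply the definition of $\leqz$ (using $i_\ell < i_s$) to convert into a statement about the sign of $Z^{(i_\ell)}_{i_s}$; (4) check the boundary case to match the exact inequality $\ge 0$ in the statement.
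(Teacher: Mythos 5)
Your core argument --- translate $\indperm_I(\sigma)=\pi$ into pairwise comparisons of $\sigma$-values, convert these via \cref{defn:coalandpermrel} into statements about $\leqz$, and then unwind the definition of $\leqz$ using $i_\ell<i_s$ --- is the right (and essentially the only) approach; note that the present paper does not reprove this proposition but quotes it from \cite{borga2020scaling}, so the comparison is with that definitional unwinding. Steps (1)--(3) of your outline are correct and complete.

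The problem is step (4), which you correctly identify as the only obstacle but do not actually resolve. With the definition of $\leqz$ as printed in this paper ($i\leqz j$ when $i<j$ and $Z^{(i)}_j\leq 0$; $j\leqz i$ when $i<j$ and $Z^{(i)}_j>0$), the chain of equivalences you derive is $\pi(s)<\pi(\ell)\iff \sigma(i_s)<\sigma(i_\ell)\iff Z^{(i_\ell)}_{i_s}>0$, and this is \emph{not} the same as the displayed condition $Z^{(i_\ell)}_{i_s}\geq 0$. Your first proposed fix --- that the specific coalescent-walk processes of this paper never take the value $0$ at a later time --- is false: the proposition is stated for an arbitrary coalescent-walk process, and even for the strong-Baxter construction a walk re-enters $0$ from below (Case 3 of \cref{eq:icre_coal_strong}) and coalescence points have height $0$ or $-1$, so $Z^{(i_\ell)}_{i_s}=0$ with $i_\ell<i_s$ genuinely occurs. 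Your second proposed fix (``reading off the formula'' for $\sigma(j)$) only reproduces the same $\leq 0$ versus $>0$ dichotomy and cannot upgrade $>0$ to $\geq 0$. The honest conclusion, which you should state rather than leave hanging, is that the inequality in the quoted statement is calibrated to the sign convention of \cite{borga2020scaling}, which splits the value $0$ to the other side; with the convention reproduced in this paper the displayed condition must be read as $Z^{(i_\ell)}_{i_s}>0\iff\pi(s)<\pi(\ell)$ (equivalently $Z^{(i_\ell)}_{i_s}\leq 0\iff\pi(\ell)<\pi(s)$). This reading is also the one consistent with how the proposition is applied in the proof of \cref{thm:strong-baxter}, where $\sgn(x)=\mathds{1}_{\{x>0\}}-\mathds{1}_{\{x\leq 0\}}$ implements exactly the split $>0$ versus $\leq 0$. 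With that one correction your proof is complete; as written, the final equivalence is asserted but not established.
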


\section{Discrete objects associated with strong-Baxter permutations}\label{sect:disc_obj_strong}

Recall that strong-Baxter permutations were defined in \cref{defn:str-b}. 
These permutations have been enumerated using $\Z^2$-labeled generating trees and therefore they can be bijectively encoded by a specific family of two-dimensional walks.

The goal of this section is to specify the maps $\pw$ and $\cpp$ introduced in \cref{sect:defn_discrete} to the specific case of strong-Baxter permutations ($\mathcal{S}_{Sb}$) and the corresponding families of walks $(\mathcal{W}_{Sb})$ and coalescent-walk processes ($\CCCC_{Sb}$). While doing that, we further introduce a third map $\wScbp$ between walks and coalescent-walk processes. Thus, we are going to properly define the following diagram:
\begin{equation}\label{eq:diagrm_strong_baxter}
\begin{tikzcd}
\mathcal{S}_{Sb} \arrow{r}{\pw}  & \mathcal{W}_{Sb} \arrow{d}{\wScbp} \\
& \CCCC_{Sb} \arrow{ul}{\cpp}
\end{tikzcd}.
\end{equation}
Actually we will show that this is a commutative diagram of bijections (see \cref{thm:The_diagram_commutes_strong}).

\subsection{Succession rule for strong-Baxter permutations and the corresponding family of two-dimensional walks}\label{sect:strong_bac_obj}

We introduce some terminology.

\begin{defn}\label{defn:appending}
	Given a family of permutations $\CCCC,$ a permutation $\sigma\in\CCCC^n$ and an integer $m\in [n+1],$ we say that $m$ is an \emph{active site} of $\sigma$ if $\sigma^{*m}\in\CCCC.$ We denote by $\text{AS}(\sigma)$ the set of active sites of $\sigma$.
\end{defn}

We will adopt the following useful convention. Given a strong-Baxter permutation $\pi\in \mathcal{S}_{Sb}^n$ with $x+1$ active sites smaller than or equal to $\pi(n)$ and $y+1$ active sites greater than $\pi(n)$, we write
$$\text{AS}(\pi)=\{s_{-x}<\dots<s_0\}\cup\{s_{1}<\dots<s_{y+1}\},$$
where the first set corresponds to the $x+1$ active sites smaller than or equal to $\pi(n)$ and the second set corresponds to the $y+1$ active sites greater than $\pi(n)$.

\medskip

In \cite[Proposition 23]{MR3882946} it was shown that a generating tree for semi-Baxter permutations can be defined by the following succession rule\footnote{Note that the succession rule in our paper is obtained from the succession rule in \cite[Proposition 23]{MR3882946} by shifting all the labels by a factor $(-1,-1)$. This choice is more convenient for our purposes.}:
\begin{equation}\label{eq:efobwe9ubf0wif}
\begin{cases} 
\text{Root label}: (0,0) \\
(h,k)\to\begin{cases}
(0,k),(1,k),\dots,(h-1,k),\\
(h,k+1),\\ 
(h+1,0),(h+1,1)\dots,(h+1,k),
\end{cases}
\quad \text{for all }h,k\geq 0.
\end{cases}
\label{eq:laruletadesuccession_strong_baxter}
\end{equation}
In particular, the $\Z^2_{\geq 0}$-valued statistics that determines this succession rule is defined on every permutation $\sigma\in\mathcal{S}_{Sb}$ by
$$\Big(\# \{m\in\text{AS}(\sigma)|m\leq\sigma(n)\}-1\;,\;\# \{m\in\text{AS}(\sigma)|m>\sigma(n)\}-1\Big).$$

Using the strategy described in \cref{sect:bij_walk_perm}, we can define a bijection $\pw$ between strong-Baxter permutations and the set of two-dimensional walks in the non-negative quadrant, starting at $(0,0)$, with increments in
\begin{equation}\label{eq:strong_bax_increm}
I_{Sb}\coloneqq\{(-i,0): i\geq 1\}\cup\{(0,1)\}\cup\{(1,-i): i\geq 0\}.
\end{equation}
Note that the latter family is determined by the succession rule in \cref{eq:efobwe9ubf0wif}.
We denote with $\mathcal{W}_{Sb}$ the set of two-dimensional walks in the non-negative quadrant, starting at $(0,0)$, with increment in $I_{Sb}$. We also denote with $\mathcal{W}_{Sb}^n$ the subset of walks in $\mathcal{W}_{Sb}$ of size $n$.

\medskip

We now want to investigate the relations between the increments of a walk $W\in\mathcal{W}^n_{Sb}$ and the active sites of the sequence of permutations:

$$\Big(\pw^{-1}((W_i)_{i\in[1]}),\pw^{-1}((W_i)_{i\in[2]}),\dots,\pw^{-1}((W_i)_{i\in[n]})\Big).$$

First of all, it holds that $\pw^{-1}((W_i)_{i\in[1]})$ is the unique permutation of size 1 and its active sites are 1 and 2. Now assume that for some $m<n$, $W_m=(x,y)\in\Z^2_{\geq 0}$ and  $\pw^{-1}((W_i)_{i\in[m]})=\pi$. By definition, $\pi$ has $x+1$ active sites smaller or equal to $\pi(n)$ and $y+1$ active sites greater than $\pi(n)$, i.e.,
$$\text{AS}(\pi)=\{s_{-x}<\dots<s_{0}\}\cup\{s_{1}<\dots<s_{y+1}\}.$$
We now distinguish three cases (for a proof of the following results see the proof of \cite[Proposition 23]{MR3882946}, compare also with \cref{fig:schema_perm_strong}):
\begin{itemize}
	\item \textbf{Case 1:} $W_{m+1}-W_m=(1,-i)$ for some $i\in \{0\}\cup[y]$. 
	
	\noindent In this case $\pw^{-1}((W_i)_{i\in[m+1]})=\pi^{*s_{i+1}}$ and the active sites of $\pi^{*s_{i+1}}$ are 
	$$\{s_{-x}<\dots<s_{0}<s_{i+1}\}\cup\{s_{i+1}+1,s_{i+2}+1<\dots<s_{y+1}+1\}.$$

	\item \textbf{Case 2:} $W_{m+1}-W_m=(0,1)$. 
	
	\noindent In this case $\pw^{-1}((W_i)_{i\in[m+1]})=\pi^{*s_{0}}$ and the active sites of $\pi^{*s_{0}}$ are 
	$$\{s_{-x}<\dots<s_{0}\}\cup\{s_{0}+1<s_{1}+1<\dots<s_{y+1}+1\}.$$
	
	\item \textbf{Case 3:} $W_{m+1}-W_m=(-i,0)$ for some $i\in [x]$. 
	
	\noindent In this case $\pw^{-1}((W_i)_{i\in[m+1]})=\pi^{*s_{-i}}$ and the active sites of $\pi^{*s_{-i}}$ are 
	$$\{s_{-x}<\dots<s_{-i}\}\cup\{s_{1}+1<\dots<s_{y+1}+1\}.$$
\end{itemize} 

An example of the various constructions above is given in \cref{fig:schema_perm_strong}.
We also point out the following important consequence of the discussion above.

\begin{obs}\label{obs:final_point_strong_bax}
	Let $\pi$ be a strong-Baxter permutation of size $n$ and assume that 
	$$\text{AS}(\pi)=\{s_{-x}<\dots<s_0\}\cup\{s_{1}<\dots<s_{y+1}\}.$$
	Then $\pi(n)=s_0$.
\end{obs}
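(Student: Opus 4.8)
\textbf{Proof plan for \cref{obs:final_point_strong_bax}.}

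The statement claims that the last value $\pi(n)$ of a strong-Baxter permutation $\pi$ of size $n$ coincides with $s_0$, the largest active site among those that are $\leq \pi(n)$. The plan is to argue by induction on $n$, tracking the position of $\pi(n)$ in the active-site decomposition along the generating-tree path that produces $\pi$. For the base case $n=1$, the unique permutation of size $1$ has active sites $\{1,2\}$, so $x=0$, $y=0$, and $s_0 = 1 = \pi(1)$, as required.

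For the inductive step, suppose $\pi = \rho^{*s}$ where $\rho$ has size $n-1$ and active sites $\text{AS}(\rho) = \{s_{-x}<\dots<s_0\}\cup\{s_1<\dots<s_{y+1}\}$, and where $s \in \text{AS}(\rho)$ is the active site of $\rho$ chosen to append the new final value. I would first recall that appending a new final value equal to $m$ means, by the definition of $\sigma^{*m}$, that $\pi(n) = m$ (after the relabeling, the new final entry literally takes the value $m$, since all entries $\geq m$ were shifted up by one to make room). So $\pi(n) = s$, and it remains to identify which active site of $\pi$ equals $s$ — specifically, that it is the largest active site of $\pi$ lying weakly below $\pi(n)=s$. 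Here I would invoke the explicit description of $\text{AS}(\pi)$ given in the three cases preceding the observation:
\begin{itemize}
\item In Case 1, $s = s_{i+1}$ for some $i \in \{0\}\cup[y]$, and $\text{AS}(\pi) = \{s_{-x}<\dots<s_0<s_{i+1}\}\cup\{s_{i+1}+1<s_{i+2}+1<\dots<s_{y+1}+1\}$; the active sites $\leq \pi(n) = s_{i+1}$ are exactly $\{s_{-x},\dots,s_0,s_{i+1}\}$, whose maximum is $s_{i+1} = \pi(n)$.
\item In Case 2, $s = s_0$ and $\text{AS}(\pi) = \{s_{-x}<\dots<s_0\}\cup\{s_0+1<s_1+1<\dots<s_{y+1}+1\}$; the active sites $\leq \pi(n) = s_0$ are $\{s_{-x},\dots,s_0\}$, with maximum $s_0 = \pi(n)$.
\item In Case 3, $s = s_{-i}$ for some $i\in[x]$ and $\text{AS}(\pi) = \{s_{-x}<\dots<s_{-i}\}\cup\{s_1+1<\dots<s_{y+1}+1\}$; the active sites $\leq \pi(n) = s_{-i}$ are $\{s_{-x},\dots,s_{-i}\}$, with maximum $s_{-i} = \pi(n)$.
\end{itemize}
In every case the largest active site that is $\leq \pi(n)$ equals $\pi(n)$ itself, which by the convention fixed just before the observation (and used in \cref{defn:appending} and the paragraph introducing $\text{AS}(\pi)=\{s_{-x}<\dots<s_0\}\cup\{s_1<\dots<s_{y+1}\}$) is precisely the active site denoted $s_0$. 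This closes the induction.

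The only genuinely delicate point is bookkeeping: one must be careful that the "$s_0$" appearing in the statement refers to the decomposition of $\text{AS}(\pi)$ (the size-$n$ permutation), not of $\rho$, and that the shift-by-$1$ convention in the definition of $\rho^{*m}$ is applied consistently so that the appended entry indeed carries the value $s$ rather than $s - 1$ or $s+1$. Once the indexing conventions are pinned down, the result is immediate from the case analysis already carried out in the text — indeed, the observation is essentially a repackaging of that case analysis, and no new combinatorial input is needed. Alternatively, and even more directly, one can bypass the induction entirely: by definition of $\sigma^{*m}$ the final value of $\pw^{-1}((W_i)_{i\in[n]})$ is the site $m$ used at the last step, and the three-case description shows this site is always the maximal element of $\text{AS}(\pi)$ among sites $\leq \pi(n)$, i.e. $s_0$; this one-line argument suffices.
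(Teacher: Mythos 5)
Your proof is correct and matches the paper's (implicit) justification: the paper states the observation as a direct consequence of the preceding three-case description of how $\text{AS}(\pi^{*s_j})$ is obtained from $\text{AS}(\pi)$, which is exactly the case analysis you spell out. Your inductive packaging and the remark that $\pi(n)=s$ by the definition of $\sigma^{*m}$ are just a more explicit write-up of the same argument, so no new input is needed.
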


\begin{figure}[htbp]
	\centering
	\includegraphics[scale=.77]{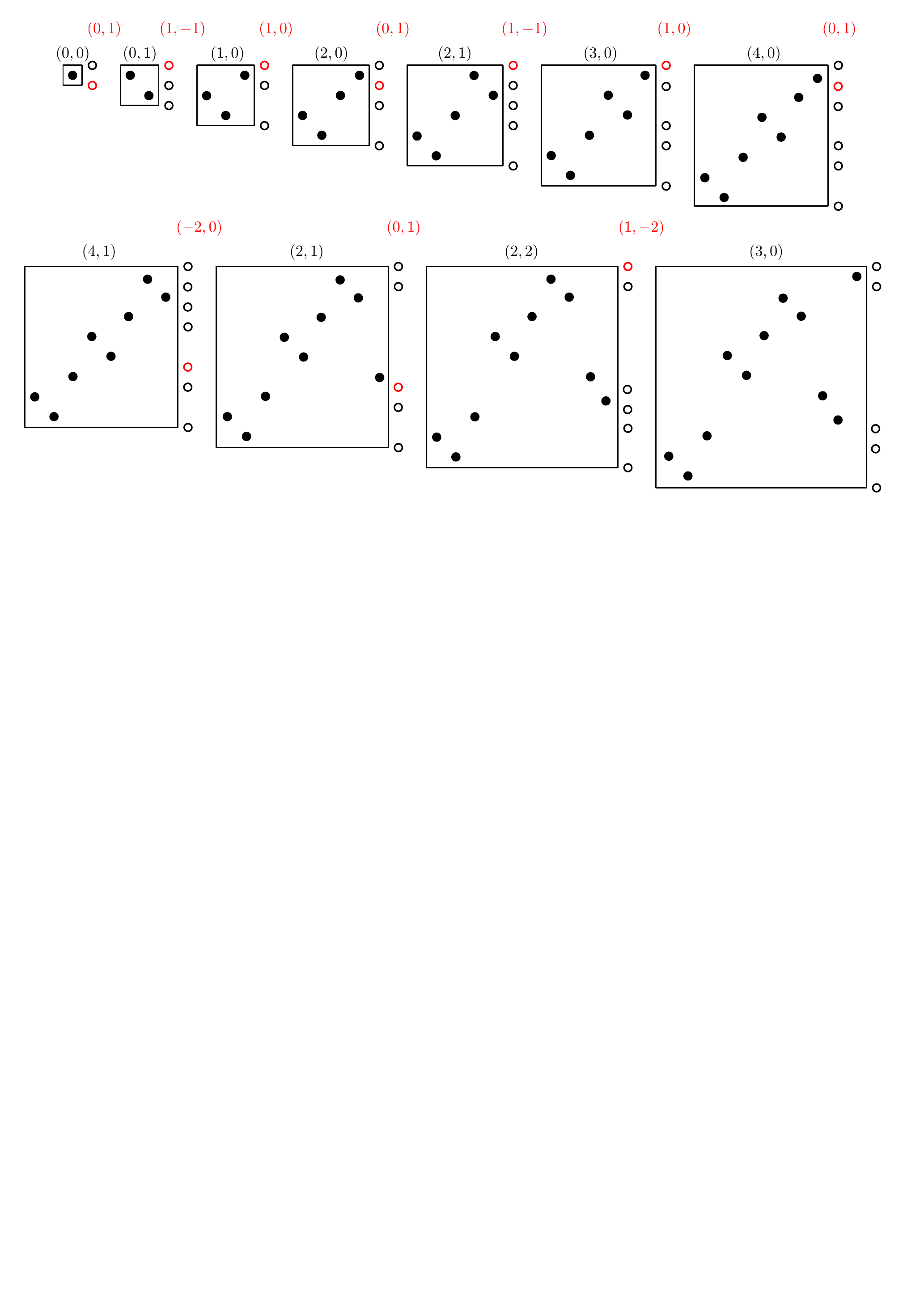}\\
	\caption{We consider the walk $W\in\mathcal{W}^{11}_{Sb}$ given by the eleven black $\Z^2_{\geq 0}$-labels in the picture. The increments $W_{m+1}-W_{m}$ of the walk $W$ are written in red between two consecutive diagrams. For each black label $W_m$ we draw the diagram of the corresponding permutation $\pw^{-1}((W_i)_{i\in[m]})$. On the right-hand side of each diagram we draw with small circles the active sites of the permutation and we highlight in red the site that will be activated by the corresponding red increment $W_{m+1}-W_m$.  \label{fig:schema_perm_strong}}
\end{figure}

\subsection{A coalescent-walk process for strong-Baxter permutations}\label{sect:coal_proc_for_strong_baxt}

We now define a family of coalescent-walk processes driven by a set of two-dimensional walks that contains $\mathcal{W}_{Sb}$. 
We consider a (finite or infinite) interval $I$ of $\Z$. 
Let $\mathfrak{W}_{Sb}(I)$ denote the set of two-dimensional walks indexed by $I$, with increments in $I_{Sb}$, and considered up to an additive constant.

\begin{defn}\label{eq:icre_coal_strong}
	Let $W\in\mathfrak{W}_{Sb}(I)$. The \emph{coalescent-walk process associated with} $W$
	is the family of walks $\wScbp(W) = \{\Ztp\}_{t\in I}$, defined for $t\in I$ by $\Ztp_t=0,$ and for all $\ell\geq t$ such that $\ell+1 \in I$,
	\begin{itemize}
		\item \textbf{Case 1:} $W_{\ell+1}-W_\ell=(1,-i)$ for some $i\geq 0$, then
		
		\begin{equation}
		\Ztp_{\ell+1}=
		\begin{cases}
		\Ztp_{\ell}-i, &\quad\text{if}\quad \Ztp_{\ell}>0\text{ and }\Ztp_{\ell}-i>0,\\
		\Ztp_{\ell}-1, &\quad\text{if}\quad \Ztp_{\ell}\leq 0,\\
		-1,&\quad\text{otherwise}.
		\end{cases}
		\end{equation} 
		
		\item \textbf{Case 2:} $W_{\ell+1}-W_\ell=(0,1)$, then
		
		\begin{equation}
		\Ztp_{\ell+1}=
		\begin{cases}
		\Ztp_{\ell}+1, &\quad\text{if}\quad \Ztp_{\ell}\geq 0,\\
		\Ztp_{\ell},&\quad\text{otherwise}.
		\end{cases}
		\end{equation} 
		
		\item \textbf{Case 3:} $W_{\ell+1}-W_\ell=(-i,0)$ for some $i\geq 1$, then 
		
		\begin{equation}
		\Ztp_{\ell+1}=
		\begin{cases}
		\Ztp_{\ell},&\quad\text{if}\quad \Ztp_{\ell}\geq 0,\\
		\Ztp_{\ell}+i,&\quad\text{if}\quad \Ztp_{\ell}<0\text{ and }\Ztp_{\ell}+i\geq 0,\\
		0, &\quad\text{otherwise}.
		\end{cases}
		\end{equation} 
	\end{itemize} 
\end{defn}

Note that $\wScbp$ is a mapping form $\mathfrak{W}_{Sb}(I)$ to $\Coals(I)$ .
We set 
$\CCCC_{Sb}= \wScbp(\mathcal W_{Sb})$. For two examples, one for a walk in $\mathfrak{W}_{Sb}(I)$ and one for a walk in $\mathcal W_{Sb}$, the reader can look at \cref{fig:didwveiwvedbwedi_strong} and \cref{fig:ievdiuwbduwob_strong}. 

\begin{figure}[ht]
	\centering
	\includegraphics[scale=.8]{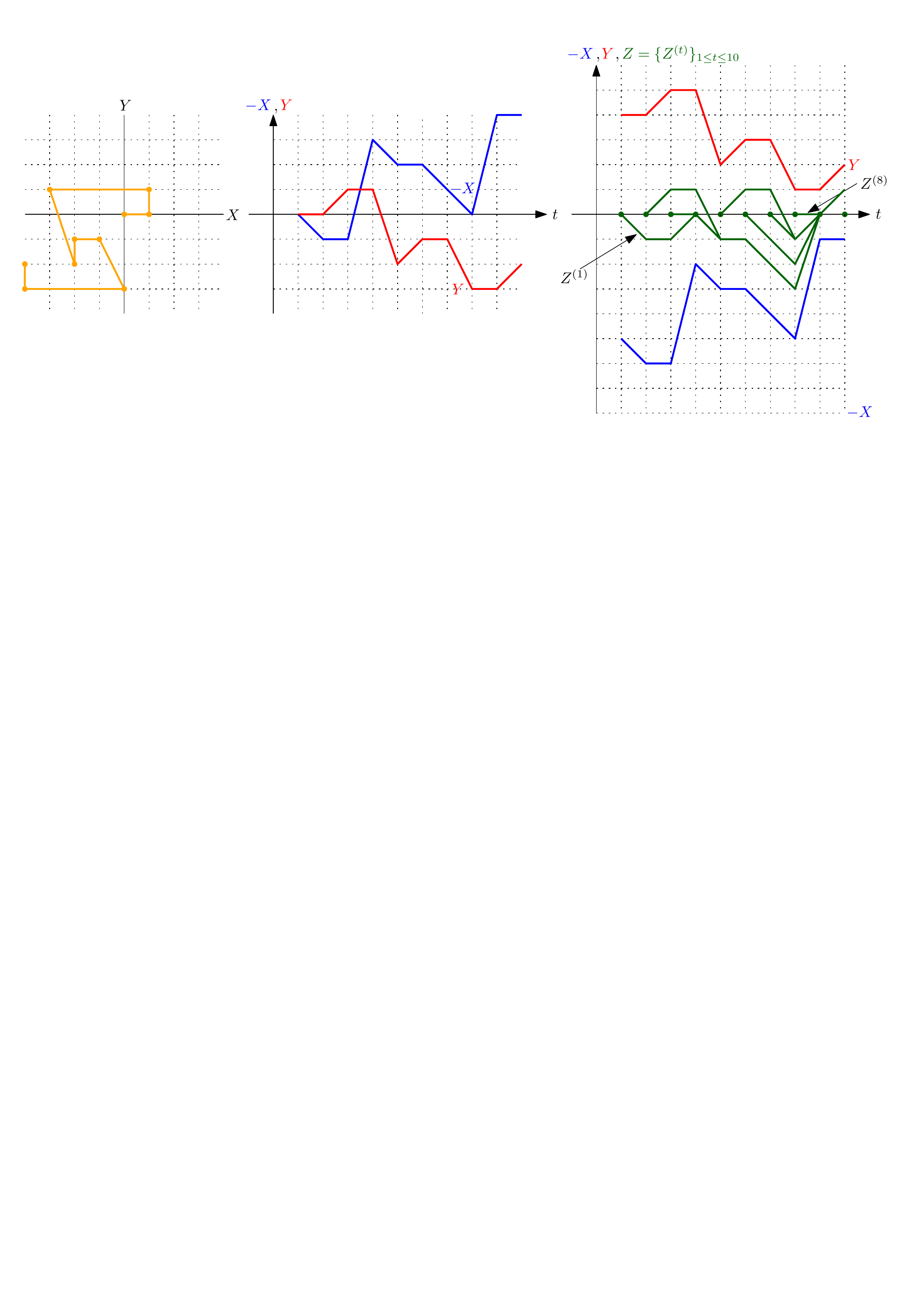}\\
	\caption{We explain the construction of a coalescent-walk process.
		\textbf{Left:} A two dimensional walk $W=(W_t)_{t\in[10]}=(X_t,Y_t)_{t\in[10]}\in \mathfrak{W}_{Sb}([10])$.
		\textbf{Middle:} The two marginals $-X$ (in blue) and $Y$ (in red).
		\textbf{Right:} The two marginals are shifted and the ten walks of the coalescent-walk process are constructed in green. \label{fig:didwveiwvedbwedi_strong}}
\end{figure}

\begin{figure}[ht]
	\centering
	\includegraphics[scale=.6]{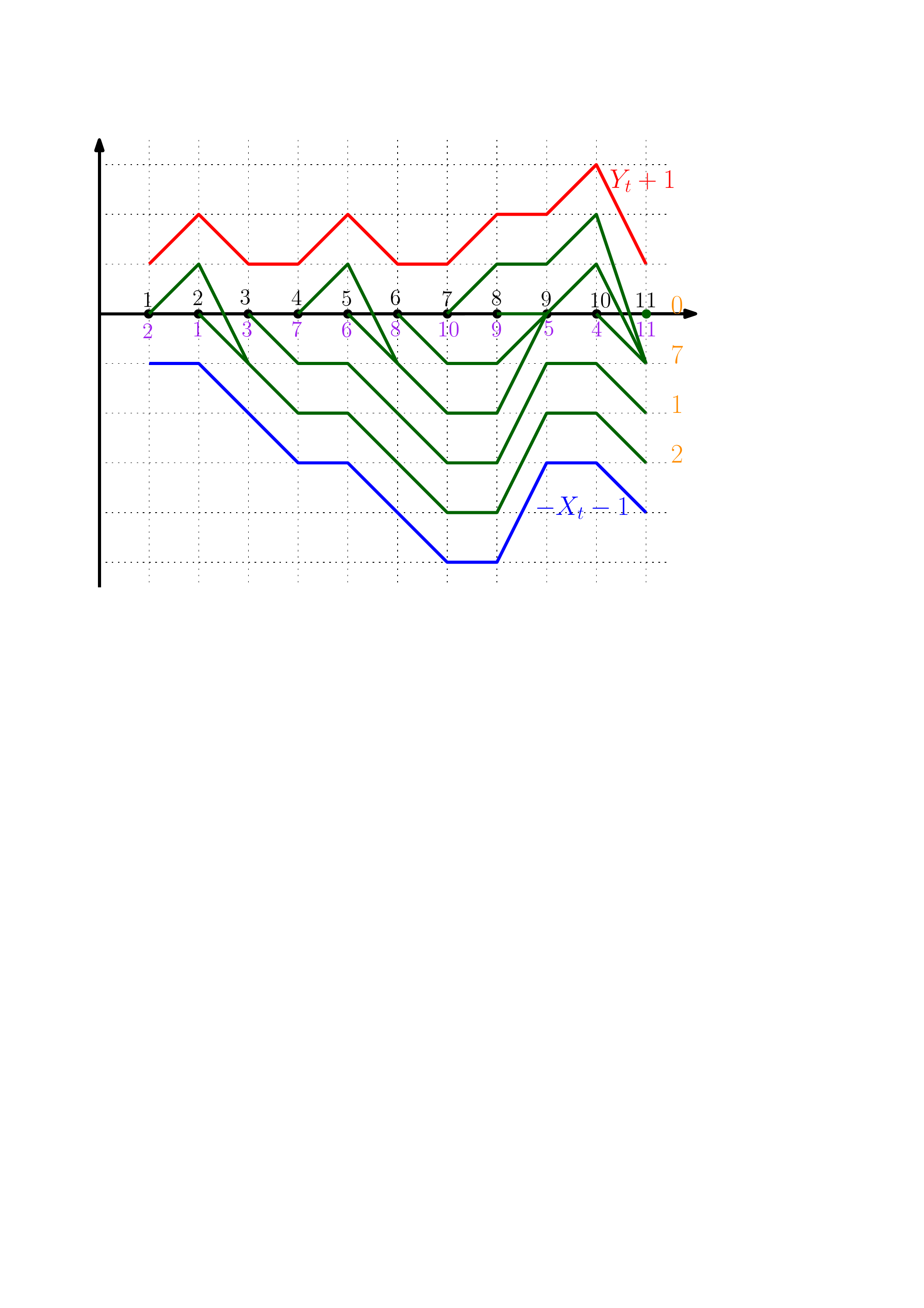}\\
	\caption{The coalescent walk process $\wScbp(W)$ for the walk $W$ considered in \cref{fig:schema_perm_strong}. In purple we plot the corresponding semi-Baxter permutation $\cpp((\wScbp(W)))$. Note that the latter permutation is equal to the permutation $\pw^{-1}(W)$ obtained in the last diagram in \cref{fig:schema_perm_strong}. In orange we plot the multiplicities of the final values of $\wScbp(W)$. \label{fig:ievdiuwbduwob_strong}}
\end{figure}

We give the following equivalent definition for later convenience.

\begin{defn}\label{defn:distrib_incr_coal_strong}
	Let $W\in\mathfrak{W}_{Sb}(I)$ and denote by $W_t = (X_t,Y_t)$ for $t\in I$. The \emph{coalescent-walk process associated with} $W$
	is the family of walks $\wScbp(W) = \{\Ztp\}_{t\in I}$, defined for $t\in I$ by $\Ztp_t=0,$ and for all $\ell\geq t$ such that $\ell+1 \in I$,
	\begin{equation}
	\Ztp_{\ell+1}=
	\begin{cases}
	\Ztp_{\ell}+(Y_{\ell+1}-Y_{\ell}), &\quad\text{if}\quad 
	\begin{cases}
	\Ztp_{\ell}= 0\text{ and }\Ztp_{\ell}-(X_{\ell+1}-X_{\ell})\geq 0,\\
	\Ztp_{\ell}>0\text{ and }\Ztp_{\ell}+(Y_{\ell+1}-Y_{\ell})>0,
	\end{cases}\\
	-1,&\quad\text{if}\quad 
	\Ztp_{\ell}> 0\text{ and }\Ztp_{\ell}+(Y_{\ell+1}-Y_{\ell})\leq0,\\
	\Ztp_{\ell}-(X_{\ell+1}-X_{\ell}),&\quad\text{if}\quad 
	\Ztp_{\ell}\leq 0\text{ and }\Ztp_{\ell}-(X_{\ell+1}-X_{\ell})<0,\\
	0,&\quad\text{if}\quad \Ztp_{\ell}< 0\text{ and }\Ztp_{\ell}-(X_{\ell+1}-X_{\ell})\geq 0.
	\end{cases}
	\end{equation} 
\end{defn}

\begin{obs}
	We note that the coalescent points of a coalescent-walk process obtained in this way have $y$-coordinates that are always equal either to 0 or to $-1$. 
\end{obs}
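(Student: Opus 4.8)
The plan is to prove the observation by a short finite case analysis on the three types of increments of the driving walk, using that each increment acts on every individual trajectory of the coalescent-walk process through a \emph{weakly increasing} update map of one integer variable.

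First I would fix $W\in\mathfrak{W}_{Sb}(I)$ and write $\wScbp(W)=\{\Ztp\}_{t\in I}$. Let $(s,c)$, with $c=\Ztp_s=Z^{(t')}_s$, be a coalescent point of two walks $\Ztp$ and $Z^{(t')}$ with $t\neq t'$, so that $s=\min\{k\geq\max\{t,t'\}\mid\Ztp_k=Z^{(t')}_k\}$. If $s=\max\{t,t'\}$, then $c$ equals the value of one of the two walks at its own starting time, hence $c=0$ and there is nothing to prove. So assume $s>\max\{t,t'\}$. Then $\Ztp_{s-1}\neq Z^{(t')}_{s-1}$, and by the order-preservation property in the definition of $\Coals(I)$ (applicable since $\wScbp$ takes values in $\Coals(I)$) we may assume without loss of generality that $a\coloneqq\Ztp_{s-1}>b\coloneqq Z^{(t')}_{s-1}$.

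The crux is that both walks take their step at time $s$ driven by the \emph{same} increment $\delta\coloneqq W_s-W_{s-1}\in I_{Sb}$, so by \cref{defn:distrib_incr_coal_strong} there is a map $f_\delta\colon\Z\to\Z$ with $\Ztp_s=f_\delta(a)$ and $Z^{(t')}_s=f_\delta(b)$. I would then read $f_\delta$ off explicitly in each of the three cases: for $\delta=(0,1)$ one gets $f_\delta(z)=z$ for $z<0$ and $f_\delta(z)=z+1$ for $z\geq0$, which is \emph{strictly} increasing, so $f_\delta(a)\neq f_\delta(b)$ and coalescence with this increment is impossible; for $\delta=(-i,0)$ with $i\geq1$ one gets $f_\delta(z)=z+i$ for $z<-i$, $f_\delta(z)=0$ for $-i\leq z<0$, $f_\delta(z)=z$ for $z\geq0$; for $\delta=(1,-i)$ with $i\geq0$ one gets $f_\delta(z)=z-1$ for $z\leq0$, $f_\delta(z)=-1$ for $0<z\leq i$, $f_\delta(z)=z-i$ for $z>i$ (so $f_\delta$ is strictly increasing when $i=0$). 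In all three cases $f_\delta$ is weakly increasing, and its only non-injective level, when it has one, is the value $0$ or the value $-1$. Since $a>b$ and $f_\delta(a)=f_\delta(b)=c$, the weak monotonicity forces $f_\delta$ to be constant, equal to $c$, on the integer interval $[b,a]$; hence $c$ is a non-injective level of $f_\delta$, so $c\in\{0,-1\}$. Together with the trivial case $s=\max\{t,t'\}$ this proves the observation.

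This is essentially a bookkeeping verification, so the only point requiring a bit of care is to extract the three piecewise formulas for $f_\delta$ from \cref{defn:distrib_incr_coal_strong} correctly at the boundary values $z\in\{0,-i\}$ and in the degenerate sub-case $i=0$ for $\delta=(1,-i)$. I do not anticipate any genuine obstacle: once the ``weakly increasing with a single flat level lying in $\{0,-1\}$'' structure of the three update maps is recorded, the claim is immediate.
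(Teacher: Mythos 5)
Your proof is correct. The paper states this as an observation without proof, and your argument --- reading off the one-step update map $f_\delta$ for each of the three increment types, noting that it is weakly increasing with at most one non-injective level, which lies in $\{0,-1\}$, and observing that a coalescence at a time $s>\max\{t,t'\}$ forces the common value $c$ to be such a level --- is exactly the routine verification being left implicit, and your handling of the trivial case $s=\max\{t,t'\}$ and of the degenerate sub-case $i=0$ is also right. One caution worth recording: you correctly extracted the Case 3 map from \cref{defn:distrib_incr_coal_strong}; the literal text of Case 3 in \cref{eq:icre_coal_strong} has the conditions of its second and third branches interchanged (as printed it would send a walk at $-1$ to $i-1$ and a walk below $-i$ to $0$, which is not order-preserving and contradicts \cref{lem:techn_for_comm_diagram_strong}), so your reading is the intended one.
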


\begin{obs}\label{obs:alternating_excursions_strong}
	Note that every walk $\Ztp$ must pass through 0  between a non-positive and a strictly positive excursion. In addition the strictly positive excursions always start at one. See for instance at the walk $Z^{(1)}$ in \cref{fig:didwveiwvedbwedi_strong}.
	
	Another similar remarkable fact is that every walk $\Ztp$ must pass through $-1$ between a strictly positive  and a non-positive excursion. See for instance at the walk $Z^{(2)}$ in \cref{fig:didwveiwvedbwedi_strong}.
\end{obs}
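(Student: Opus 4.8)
The plan is to prove both assertions by a short case analysis on the one-step transition rule defining $\wScbp$. The key observation is that, for a fixed walk $\Ztp$ of the coalescent-walk process and a step $\ell \to \ell+1$, the rule (in the form of \cref{eq:icre_coal_strong}, or equivalently \cref{defn:distrib_incr_coal_strong}) determines $\Ztp_{\ell+1}$ from the sign of $\Ztp_\ell$ together with the type of the increment $W_{\ell+1} - W_\ell \in I_{Sb}$, of which there are only three: $(-i,0)$ with $i \ge 1$, $(0,1)$, and $(1,-i)$ with $i \ge 0$. Substituting each of these three increment types into the rule, I would first establish the following sign dictionary: if $\Ztp_\ell < 0$, then $\Ztp_{\ell+1} \le 0$ for every increment; if $\Ztp_\ell = 0$, then $\Ztp_{\ell+1} \in \{-1, 0, 1\}$, and the value $1$ is attained exactly when the increment is $(0,1)$; and if $\Ztp_\ell > 0$, then $\Ztp_{\ell+1} > 0$ unless the increment is $(1,-i)$ with $i \ge \Ztp_\ell$, in which case $\Ztp_{\ell+1} = -1$.

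Granting this dictionary, the first assertion is immediate. A walk $\Ztp$ starts at $\Ztp_t = 0 \le 0$, so each trajectory begins inside a (possibly trivial) non-positive excursion, and, since excursions are maximal runs of constant sign-type, consecutive excursions of $\Ztp$ are necessarily of opposite type. It therefore suffices to examine a step $\ell \to \ell+1$ at which a non-positive excursion is immediately followed by a strictly positive one, i.e.\ $\Ztp_\ell \le 0 < \Ztp_{\ell+1}$. By the dictionary this forces $\Ztp_\ell = 0$ and $\Ztp_{\ell+1} = 1$: the walk sits at $0$ at the last time of the non-positive excursion (so it "passes through $0$"), and the strictly positive excursion opens at the value $1$.

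The second assertion follows symmetrically by examining a step $\ell \to \ell+1$ at which a strictly positive excursion is immediately followed by a non-positive one, i.e.\ $\Ztp_\ell > 0 \ge \Ztp_{\ell+1}$; the dictionary shows this can occur only for an increment $(1,-i)$ with $i \ge \Ztp_\ell$, and then $\Ztp_{\ell+1} = -1$, so the non-positive excursion opens at $-1$. I do not anticipate any genuine difficulty here: the entire argument is a finite verification over the three increment types crossed with the sign of the current value. The only point deserving mild care is the treatment of the boundary value $\Ztp_\ell = 0$, which belongs to the non-positive regime and can be lifted to a strictly positive value only by the increment $(0,1)$, the outcome then being exactly $1$; a completely parallel remark (the step from $>0$ to $\le 0$ always lands at $-1$) underlies the second assertion.
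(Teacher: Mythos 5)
Your proof is correct and coincides with the paper's (implicit) justification: the paper states this as an observation following directly from \cref{eq:icre_coal_strong} without writing out a proof, and your sign dictionary is exactly the case-by-case verification over the three increment types that is left to the reader. (Your dictionary is also robust to the slight discrepancy between Case 3 of \cref{eq:icre_coal_strong} and \cref{defn:distrib_incr_coal_strong}, since under either reading a step started from a strictly negative value lands at a value $\leq 0$.)
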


The following definition and the consecutive lemma should clarify our definition of coalescent-walk process associated with a walk in $\mathcal{W}^n_{Sb}$ and the link with the corresponding strong-Baxter permutation.

\begin{defn}\label{defn:quantities_coal_walk_proc_strong}
	Let $W\in\mathcal{W}^n_{Sb}$ and consider the corresponding coalescent-walk process $\cpp(W)=\{\Ztp\}_{t\in [n]}\eqqcolon Z$. Assume that the set of final values $\{\Ztp_n\}_{t\in [n]}$ of the $n$ walks is equal to $$\FV(Z)\coloneqq\{f_{-x}<\dots<f_{-1}<f_{0}=0<f_1<\dots<f_{y}\},$$
	for some $x,y\in\Z_{\geq 0}$.
	For all $\ell\in[-x,y]$ we set $\mult(f_\ell)\coloneqq\#\{t\in [m]:\Ztp_n=f_{\ell}\}$ and by convention, we set $\mult(f_{\ell})=0$ for all $\ell\notin[-x,y]$.
\end{defn}

\begin{exmp}
	Consider the coalescent-walk process $Z$ in \cref{fig:ievdiuwbduwob_strong}. The set of final values $\FV(Z)$ is equal to $$\FV(Z)=\{-3,-2,-1,0\}.$$
	Moreover, $\mult(-3)=2$, $\mult(-2)=1$, $\mult(-1)=7$, and $\mult(0)=1$ because there are 2 green walks ending at -3, 1 green walk ending at -2, 7 green walks ending at -1, and 1 green walk ending at 0.
\end{exmp}

\begin{lem}\label{lem:techn_for_comm_diagram_strong}
	Let $W\in\mathcal{W}^n_{Sb}$. Fix $m\in[n]$, and consider the corresponding coalescent-walk process $\cpp(W_{|_{[m]}})=\{\Ztp\}_{t\in [m]}\eqqcolon Z$ and the corresponding strong-Baxter permutation $\pw^{-1}(W_{|_{[m]}})=\pi$. Assume that $W_m=(x,y)\in\Z^2_{\geq 0}$, i.e.\ $\pi$ has $x+1$ active sites smaller than or equal to $\pi(m)$ and $y+1$ active sites greater than $\pi(m)$,  denoted by
	$$\{s_{-x}<\dots<s_0\}\cup\{s_{1}<\dots<s_{y+1}\}.$$ 
	Then
	$$\FV(Z)=\{f_{-x}<\dots<f_{-1}<f_{0}=0<f_1<\dots<f_{y}\}=[-x,y],$$
	and in particular, $f_\ell-f_{\ell-1}=1$, for all $\ell\in[-x+1,y]$.
	Moreover, it holds that 
	\begin{equation}\label{eq:rel_active_sites_strong}
	s_\ell=1+\sum_{j\leq \ell-1}\mult(f_j),\quad\text{for all}\quad \ell\in[-x,y+1].
	\end{equation}
\end{lem}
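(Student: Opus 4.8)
The plan is to argue by induction on $m\in[n]$, tracking simultaneously the active sites of $\pi = \pw^{-1}(W_{|_{[m]}})$, the final values $\FV(Z)$ of the coalescent-walk process $Z = \cpp(W_{|_{[m]}})$, and their multiplicities. For the base case $m=1$, the permutation $\pi$ has size $1$ with active sites $\{s_0=1\}\cup\{s_1=2\}$ (so $x=y=0$), while $Z$ consists of the single walk $Z^{(1)}$ with $Z^{(1)}_1=0$, hence $\FV(Z)=\{0\}=[0,0]$, $\mult(f_0)=1$, and \cref{eq:rel_active_sites_strong} reads $s_0 = 1$ and $s_1 = 1+\mult(f_0)=2$, as required.

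For the inductive step, assume the statement holds for $m$ with $W_m=(x,y)$, active sites $\{s_{-x}<\dots<s_0\}\cup\{s_1<\dots<s_{y+1}\}$, and $\FV(Z)=[-x,y]$ with the stated multiplicity relation. We pass from $m$ to $m+1$ by examining the increment $W_{m+1}-W_m$ and comparing the three cases of the active-site update (Cases 1--3 in \cref{sect:strong_bac_obj}) with the three cases of the coalescent-walk update (\cref{eq:icre_coal_strong}). The key point is that the new walk $Z^{(m+1)}$ started at time $m+1$ contributes a new final value, while each pre-existing walk $Z^{(t)}$ with $Z^{(t)}_m = f_\ell$ gets updated according to its current position; one checks case-by-case that the bijection between the set of final values and the interval $[-x',y']$ (where $(x',y')=W_{m+1}$) is preserved, that consecutive final values still differ by $1$, and that the multiplicity of each final value counts exactly the active site it corresponds to under \cref{eq:rel_active_sites_strong}. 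For instance, in Case 1 with increment $(1,-i)$ and $i\in\{0\}\cup[y]$: a walk at height $f_\ell>0$ with $f_\ell - i > 0$ drops to $f_{\ell-i}$, a walk at height $f_\ell>0$ with $f_\ell-i\le 0$ drops to $-1=f_{-1}$, and a walk at height $f_\ell\le 0$ drops to $f_{\ell-1}$ (using that the values are consecutive integers so $f_\ell - 1 = f_{\ell-1}$ whenever $\ell > -x$, and a new minimum is created at $-x-1$ exactly when $\ell=-x$); the new walk $Z^{(m+1)}_{m+1}=0$ lands at $f_0 = 0$. Summing these reallocations of multiplicities and matching them against the active-site update in Case 1 (the sites $s_{i+1}, s_{i+2}+1,\dots$ and the insertion of $s_{i+1}$) yields precisely \cref{eq:rel_active_sites_strong} at step $m+1$. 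Cases 2 and 3 are handled by the analogous bookkeeping, using \cref{obs:alternating_excursions_strong} to control where walks sit relative to $0$ and $-1$.

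The main obstacle I expect is purely organizational rather than conceptual: one must carefully verify in each of the three cases that (i) the reindexing of final values is consistent — i.e. that the map sending old heights to new heights, together with the newborn walk at $0$, produces exactly the interval $[-x',y']$ with no gaps — and (ii) that the multiplicities redistribute in exact correspondence with how the active sites $s_\ell$ shift. The subtle sub-cases are the ones creating a new extreme final value (a new minimum $-x-1$ in Cases 1 and 3, or a new maximum in Case 2 when a walk at height $0$ moves to $+1$), since these correspond to activating a genuinely new active site. Keeping the translation-invariance convention (walks considered up to an additive constant) aligned with the normalization $\Ztp_t = 0$ and $f_0 = 0$ requires some care but no new ideas. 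A side-by-side figure tracking one step of both updates (in the spirit of \cref{fig:schema_perm_strong} and \cref{fig:ievdiuwbduwob_strong}) makes the verification transparent.
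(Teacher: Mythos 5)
Your proposal is correct and follows essentially the same route as the paper: induction on $m$ with base case $m=1$, then a three-case analysis of the increment $W_{m+1}-W_m$ that tracks in parallel the update of the active sites (from \cref{sect:strong_bac_obj}) and the update of the final values and their multiplicities (from \cref{eq:icre_coal_strong}), concluding by matching the two via \cref{eq:rel_active_sites_strong}. Your explicit reallocation of multiplicities in Case 1 agrees with the paper's computation, so no further comparison is needed.
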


\begin{proof}
	We prove the statement by induction over $m$.
	
	For $m=1$ then $x=0$, $y=0$, $\FV(Z)=\{0\}=\{f_0\}$ and $\mult(f_0)=1$. On the other hand, $\pi=1$ and the set of active sites is given by $\{s_0=1,s_1=2\}$. Note that  \cref{eq:rel_active_sites_strong} holds.
	
	Now assume that $1\leq m<n$ and that $Z$ and $\pi$ verify the statement of the lemma. We are going to show that also $\cpp(W_{|_{[m+1]}})=\{{Z'}^{(t)}\}_{t\in [m+1]}\eqqcolon Z'$ and the corresponding strong-Baxter permutation $\pw^{-1}(W_{|_{[m+1]}})=\pi'$ also verify the statement of the lemma.  We distinguish three cases:
	\begin{itemize}
		\item \textbf{Case 1:} $W_{m+1}-W_m=(1,-i)$ for some $i\in \{0\}\cup[y]$ (see the left-hand side of \cref{fig:example_for_proof_strong}). 
		
		\noindent As explained in \cref{sect:strong_bac_obj}, in this case $\pi'=\pi^{*s_{i+1}}$ and its active sites of $\pi'$ are 
		$$\{s'_{-x-1}<\dots <s'_0\}\cup\{s'_1<\dots<s'_{y-i+1}\}.$$
		
		where $s'_\ell=s_{\ell+1}$ for $\ell\in[-x-1,-1]$, $s'_0=s_{i+1}$, and $s'_\ell=s_{\ell+i}+1$ for $\ell\in[1,y-i+1]$.
		
		On the other hand, looking at Case 1 in \cref{eq:icre_coal_strong}, we immediately have that 
		$$\FV(Z')=\{f'_{-x-1}<\dots<f'_{-1}<f'_{0}=0<f'_1<\dots<f'_{y-i}\}=[-x-1,y-i],$$
		and $\mult(f'_\ell)=\mult(f_{\ell+1})$ for all $\ell\in[-x-1,-2]$, $\mult(f'_{-1})=\sum_{\ell=0}^{i}\mult(f_{\ell})$, $\mult(f'_0)=1$, and $\mult(f'_\ell)=\mult(f_{\ell+i})$ for all $\ell\in[1,y-i]$.

		\item \textbf{Case 2:} $W_{m+1}-W_m=(0,1)$ (see the left-hand side of \cref{fig:example_for_proof_strong}). 
		
		\noindent As explained in \cref{sect:strong_bac_obj}, in this case $\pi'=\pi^{*s_{0}}$ and its active sites of $\pi'$ are 
		$$\{s'_{-x}<\dots <s'_0\}\cup\{s'_1<\dots<s'_{y+2}\}.$$
		
		where $s'_\ell=s_{\ell}$ for $\ell\in[-x,0]$, and $s'_\ell=s_{\ell-1}+1$ for $\ell\in[1,y+2]$.
		
		On the other hand, looking at Case 2 in \cref{eq:icre_coal_strong}, we immediately have that 
		$$\FV(Z')=\{f'_{-x}<\dots<f'_{-1}<f'_{0}=0<f'_1<\dots<f'_{y+1}\}=[-x,y+1],$$
		and $\mult(f'_\ell)=\mult(f_{\ell})$ for all $\ell\in[-x,-1]$, $\mult(f'_0)=1$, and $\mult(f'_\ell)=\mult(f_{\ell-1})$ for all $\ell\in[1,y+2]$.

		\item \textbf{Case 3:} $W_{m+1}-W_m=(-i,0)$ for some $i\in [x]$ (see the right-hand side of \cref{fig:example_for_proof_strong}). 
		
		\noindent  As explained in \cref{sect:strong_bac_obj}, in this case $\pi'=\pi^{*s_{-i}}$ and its active sites of $\pi'$ are 
		$$\{s'_{-x+i}<\dots <s'_0\}\cup\{s'_1<\dots<s'_{y+1}\},$$
		with $s'_{\ell}=s_{\ell-i}$ for all $\ell\in[-x+i,0]$, and $s'_{\ell}=s_{\ell}+1$ for all $\ell\in[1,y+1]$.
		
		On the other hand, looking at Case 3 in \cref{eq:icre_coal_strong}, we immediately have that 
		$$\FV(Z')=\{f'_{-x+i}<\dots<f'_{-1}<f'_{0}=0<f'_1<\dots<f'_{y}\}=[-x+i,y],$$
		and $\mult(f'_\ell)=\mult(f_{\ell-i})$ for all $\ell\in[-x+i,-1]$,
		 $\mult(f'_0)=1+\sum_{\ell=-i}^{0}\mult(f_{\ell})$, and $\mult(f'_\ell)=\mult(f_{\ell})$ for all $\ell\in[1,y]$.
	\end{itemize} 
	With a straightforward computation, based on the expressions of the $s'_\ell$ and $\mult(f'_\ell)$ in terms of $s_\ell$ and $\mult(f_\ell)$, it can be checked that \cref{eq:rel_active_sites} holds.
\end{proof}

\begin{figure}[htbp]
	\centering
	\includegraphics[scale=.76]{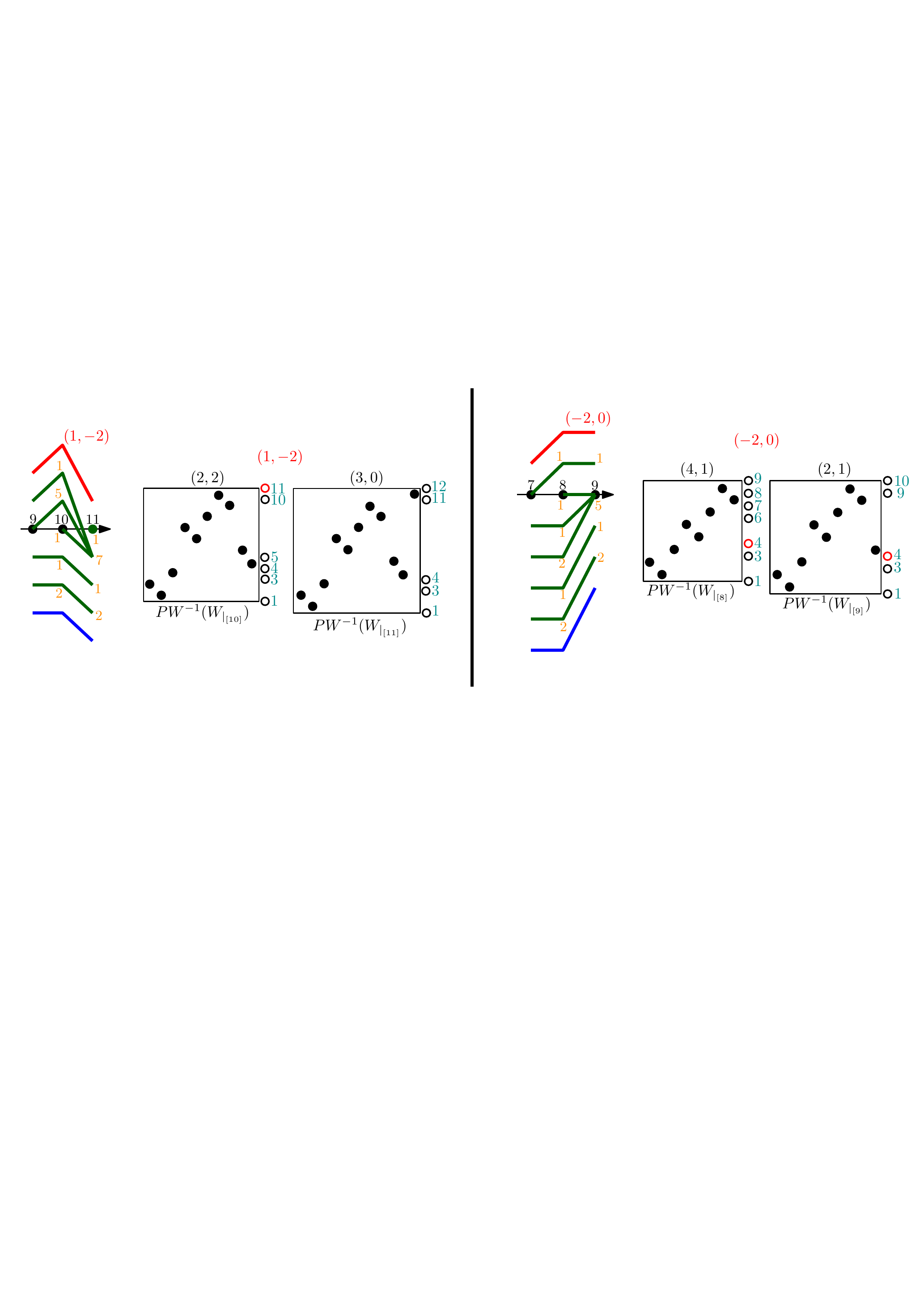}\\
	\caption{\textbf{Left:} The final steps of the coalescent-walk process $\wScbp(W_{|_{[11]}})$ in \cref{fig:ievdiuwbduwob_strong} and the corresponding permutations $\pw^{-1}(W_{|_{[10]}})$ and $\pw^{-1}(W_{|_{[11]}})$ from \cref{fig:schema_perm_strong} with the values of the active sites highlighted in cyan. We have that $W_{11}-W_{10}=(1,-2)$. Note that $\FV(\wScbp(W_{|_{[10]}}))=[-2,2]$ and from \cref{fig:ievdiuwbduwob_strong}, we can determine that $\mult(-2)=2$, $\mult(-1)=\mult(0)=\mult(2)=1$ and $\mult(1)=5$ (these numbers are plotted in orange close to the final values of the various walks). Note also that $\FV(\wScbp(W_{|_{[11]}}))=[-3,0]$ and we have that $\mult(-3)=2$, $\mult(-2)=\mult(0)=1$, and $\mult(-1)=7$.
		\textbf{Right: }The final steps of the coalescent-walk process $\wScbp(W_{|_{[9]}})$ in \cref{fig:ievdiuwbduwob_strong} and the corresponding permutations $\pw^{-1}(W_{|_{[8]}})$ and $\pw^{-1}(W_{|_{[9]}})$ from \cref{fig:schema_perm_strong}. We have that $W_9-W_8=(-2,0)$. Here, $\FV(\wScbp(W_{|_{[8]}}))=[-4,1]$ and $\mult(-3)=\mult(-1)=\mult(0)=\mult(1)=1$, and  $\mult(-4)=\mult(-2)=2$. Note also that $\FV(\wScbp(W_{|_{[9]}}))=[-2,1]$ and we have that $\mult(-2)=2$, $\mult(-1)=\mult(1)=1$, and $\mult(0)=5$. It is easy to check, comparing the orange and cyan numbers, that in both cases \cref{eq:rel_active_sites_strong} holds.
		\label{fig:example_for_proof_strong}}
\end{figure}

\subsection{The diagram commutes}

The goal of this section is to prove the following result.

\begin{thm}\label{thm:The_diagram_commutes_strong}
	The diagram in \cref{eq:diagrm_strong_baxter} commutes.
\end{thm}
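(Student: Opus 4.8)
The plan is to show that the diagram \cref{eq:diagrm_strong_baxter} commutes by proving two things: first, that $\wScbp$ restricts to a bijection from $\mathcal W_{Sb}$ to $\CCCC_{Sb}$ (which is almost automatic from the definition $\CCCC_{Sb}=\wScbp(\mathcal W_{Sb})$, provided $\wScbp$ is injective on $\mathcal W_{Sb}$), and second, the genuine content: that $\cpp\circ\wScbp\circ\pw=\Id$ on $\mathcal S_{Sb}$, i.e.\ that $\cpp(\wScbp(\pw(\sigma)))=\sigma$ for every strong-Baxter permutation $\sigma$. Since $\pw$ is already known to be a bijection (\cref{sect:bij_walk_perm}), the commutativity of the triangle, together with this identity, will force all three maps to be bijections. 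So the core of the proof is the equality $\cpp(\wScbp(W))=\pw^{-1}(W)$ for all $W\in\mathcal W^n_{Sb}$.

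First I would fix $W\in\mathcal W^n_{Sb}$, write $Z=\wScbp(W)$ and $\sigma=\pw^{-1}(W)$, and prove $\cpp(Z)=\sigma$ by induction on $n$, using \cref{lem:techn_for_comm_diagram_strong} as the main engine. The base case $n=1$ is trivial. For the inductive step, I would compare $Z=\wScbp(W_{|_{[n]}})$ with $Z'=\wScbp(W_{|_{[n-1]}})$ and $\sigma=\pw^{-1}(W_{|_{[n]}})$ with $\sigma'=\pw^{-1}(W_{|_{[n-1]}})$, where $\sigma=(\sigma')^{*s}$ for the appropriate active site $s$ of $\sigma'$ depending on which of the three cases of \cref{eq:strong_bax_increm} the last increment $W_n-W_{n-1}$ falls into. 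By the inductive hypothesis $\cpp(Z')=\sigma'$, so by \cref{defn:coalandpermrel} the total order $\leq_{Z'}$ on $[n-1]$ is exactly the order of the values of $\sigma'$. I then need to check two things: (i) that $\sigma(i)\le\sigma(j)\iff i\leq_Z j$ for $i,j\in[n-1]$, which amounts to showing that $Z^{(i)}_j\le 0\iff Z'^{(i)}_j\le 0$ for $i<j\le n-1$ — this holds because the walks $Z^{(t)}$ for $t\le n-1$ agree with $Z'^{(t)}$ up to time $n-1$; and (ii) that the new index $n$ is inserted into the order $\leq_Z$ at exactly the rank $s=\sigma(n)$, i.e.\ that $\#\{i\in[n-1]: Z^{(i)}_n\le 0\}+1 = \sigma(n)$. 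This second point is where \cref{lem:techn_for_comm_diagram_strong} does the work: the identity $\sigma'(n-1)=s_0=1+\sum_{j\le -1}\mult(f_j)$ from \cref{eq:rel_active_sites_strong} (more precisely, the general formula $s_\ell=1+\sum_{j\le\ell-1}\mult(f_j)$) tells us exactly how the final values $\{Z'^{(t)}_{n-1}\}_t$ stratify $[n-1]$ into blocks sized by the multiplicities, and the definition of $\wScbp$ in \cref{eq:icre_coal_strong} tells us precisely which of these blocks have $Z^{(\cdot)}_n\le 0$ after the last step. Matching this count against the active-site index $s_{i+1}$, $s_0$, or $s_{-i}$ in Cases 1–3 of \cref{sect:strong_bac_obj} yields $\#\{i\in[n-1]:Z^{(i)}_n\le 0\}+1$ equal to the correct value $\sigma(n)$.

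Concretely, in Case 2 (increment $(0,1)$), every walk $Z'^{(t)}$ with nonnegative final value goes up by $1$ and every walk with negative final value stays, so the new walk $Z^{(n)}\equiv 0$ at time $n$ has $Z^{(n)}_n=0\le 0$ and slots in just above the negatively-ending walks — matching $\sigma(n)=s_0=1+\sum_{j\le -1}\mult(f_j)$. In Case 1 (increment $(1,-i)$), the positively-ending walks with final value $\le i$ drop to $-1\le 0$, the rest stay positive, and the new walk ends at $0$: one checks the count $\#\{i:Z^{(i)}_n\le 0\}+1=\sum_{j\le i}\mult(f_j)+1 = s_{i+1}$, which is exactly $\sigma(n)$ in this case by \cref{obs:final_point_strong_bax}. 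Case 3 (increment $(-i,0)$) is symmetric, using that the negatively-ending walks with final value $\ge -i$ climb to $\ge 0$. Each of these is a short bookkeeping computation with the multiplicities, already essentially carried out inside the proof of \cref{lem:techn_for_comm_diagram_strong}.

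I expect the main obstacle to be purely notational: keeping straight simultaneously (a) the relabelling of active sites $s_\ell\mapsto s'_\ell$ under $\cdot^{*m}$, (b) the relabelling of final values $f_\ell\mapsto f'_\ell$ and multiplicities $\mult(f_\ell)\mapsto\mult(f'_\ell)$ under $\wScbp$, and (c) the position at which index $n$ enters the total order $\leq_Z$. The three-way case analysis and the $+1$ shifts of values above the insertion point are exactly the kind of place where an off-by-one slip is easy, so I would lean heavily on \cref{lem:techn_for_comm_diagram_strong} and \cref{obs:final_point_strong_bax} to package the bookkeeping, and I would cross-check the argument against the worked example in \cref{fig:ievdiuwbduwob_strong} (where $\cpp(\wScbp(W))$ is shown to coincide with $\pw^{-1}(W)$ from \cref{fig:schema_perm_strong}). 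Finally, once $\cpp\circ\wScbp\circ\pw=\Id$ is established and $\pw$, $\cpp$ are seen to be bijections on their respective domains (the latter because the identity exhibits $\cpp$ as a left inverse of the bijection $\wScbp\circ\pw$, hence a bijection), the commutativity of the diagram of bijections follows.
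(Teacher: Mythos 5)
Your strategy is the same as the paper's: induct on the length of the walk, note that the pattern on $[n-1]$ is unchanged on both sides, and reduce to checking that the last value matches, via \cref{obs:final_point_strong_bax} together with the identity \cref{eq:rel_active_sites_strong} of \cref{lem:techn_for_comm_diagram_strong}. Your Cases 1 and 2 are correct. The gap is Case 3, which is \emph{not} ``symmetric'' and is exactly where the argument as you state it breaks. Under a Case-3 increment $(-i,0)$, the walks of $\wScbp(W_{|_{[n-1]}})$ whose value at time $n-1$ lies in $[-i,-1]$ are absorbed at exactly $0$, the walks already at $0$ stay at $0$, and the new walk $Z^{(n)}$ also sits at $0$; only the walks with value $<-i$ remain strictly negative. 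Hence with the counting rule you state, $\#\{i\in[n-1]:Z^{(i)}_n\le 0\}+1=1+\sum_{j\le 0}\mult(f_j)=s_1$, whereas the correct value is $\pi'(n)=s_{-i}=1+\sum_{j\le -i-1}\mult(f_j)$. A minimal counterexample: for $W=((0,0),(1,0),(0,0))$ one has $\pw^{-1}(W)=231$, while $Z^{(1)}_3=Z^{(2)}_3=Z^{(3)}_3=0$, so the rule ``$Z^{(i)}_j\le 0\Rightarrow i\leqz j$'' of \cref{defn:coalandpermrel} produces the increasing permutation $123$.

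The count does close in all three cases once ties at $0$ are resolved the other way: one must use $\#\{i<n:Z^{(i)}_n< 0\}+1$, i.e.\ read the order as ``$i\leqz j$ iff $Z^{(i)}_j<0$'' and ``$j\leqz i$ iff $Z^{(i)}_j\ge 0$'' for $i<j$. Unlike the Baxter and semi-Baxter settings, here the walks $Z^{(t)}$ genuinely revisit $0$ (\cref{obs:alternating_excursions_strong}), so the tie-breaking is not cosmetic. You are in good company: the paper's own proof applies \cref{lem:techn_for_comm_diagram_strong} at step $m+1$ and writes $\sum_{\ell\le 0}\mult(f_\ell)=1+\sum_{\ell\le -1}\mult(f_\ell)$, which silently assumes $\mult(f_0)=1$; this holds after a Case-1 or Case-2 step but fails after a Case-3 step, where the lemma itself gives $\mult(f'_0)=1+\sum_{\ell=-i}^{0}\mult(f_\ell)$. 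So your write-up must either adopt the strict-inequality convention in the insertion-rank formula and re-verify Cases 1 and 2 under it (they still work, since there every old walk ends either strictly negative or strictly positive at time $n$), or else explicitly amend the tie-breaking in \cref{defn:coalandpermrel}; as written, Case 3 of your argument does not go through.
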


\begin{proof}
	We show that $\pw^{-1}=\cpp\circ\wScbp$. Fix $n\in\N$ and let $W\in\mathcal{W}^n_{Sb}$. We are going to prove the following.

	\noindent\textbf{Claim.} Assume that $m<n$ and $\pw^{-1}((W_i)_{i\in[m]})=\cpp\circ\wScbp((W_i)_{i\in[m]})$. 
	Then $$\pw^{-1}((W_i)_{i\in[m+1]})=\cpp\circ\wScbp((W_i)_{i\in[m+1]}).$$
	
	\medskip
	
	\noindent\textbf{Proof of the claim.} Since by assumption $\pw^{-1}((W_i)_{i\in[m]})=\cpp\circ\wScbp((W_i)_{i\in[m]})$ it is enough to show that $\pw^{-1}((W_i)_{i\in[m+1]})(m+1)=\cpp\circ\wScbp((W_i)_{i\in[m+1]})(m+1)$. By \cref{obs:final_point_strong_bax} and assuming that 
	$$\text{AS}(\pw^{-1}((W_i)_{i\in[m+1]}))=\{s_{-x}<\dots<s_0\}\cup\{s_{1}<\dots<s_{y+1}\},$$
	we have that  $\pw^{-1}((W_i)_{i\in[m+1]})(m+1)=s_0$. 
	
	On the other hand, setting $\wScbp((W_i)_{i\in[m+1]})=\{\Ztp\}_{t\in [m+1]}=Z$ and using \cref{lem:techn_for_comm_diagram_strong}, we have that
	$$\FV(Z)=\{f_{-x}<\dots<f_{-1}<f_{0}=0<f_1<\dots<f_{y}\}=[-x,y].$$ 
 	Then by \cref{defn:coalandpermrel} and \cref{defn:quantities_coal_walk_proc_strong} we have that 
	$$\cpp\circ\wScbp((W_i)_{i\in[m+1]})(m+1)=\#\{t\in [m+1]:\Ztp_{m+1}\leq 0\}=\sum_{\ell\leq 0}\mult(f_\ell)=1+\sum_{\ell\leq -1}^{}\mult(f_\ell).$$
	
	Using \cref{eq:rel_active_sites_strong} in \cref{lem:techn_for_comm_diagram_strong} we can conclude that $1+\sum_{\ell\leq -1}^{}\mult(f_\ell)=s_0$, concluding the proof of the claim.
\end{proof}

\section{Probabilistic results for strong-Baxter permutations}\label{sect:prob_part}

The main goal of this section is to prove \cref{thm:strong-baxter}. The proof of this result is divided in the following steps:

\begin{itemize}
	\item In \cref{sect:strong_bax_walks} we explain how to sample a uniform strong-Baxter permutation as a two-dimensional walk conditioned to stay in a cone.
	
	\item Then, in \cref{sect:den_wash_scal}, we prove a scaling limit result for this conditioned two-dimensional walk.
	
	\item In \cref{sect:coal_scaling}, we prove a scaling limit result for the coalescent-walk process associated with this conditioned two-dimensional walk.
	
	\item Finally, in \cref{sect:permlim} we explain how to deduce permuton convergence for strong-Baxter permutations (\cref{thm:strong-baxter}) from the latter result.
\end{itemize}

\subsection{Sampling a uniform strong-Baxter permutation as a conditioned two-dimensional walk}\label{sect:strong_bax_walks}

Since the map $\pw:\mathcal{S}_{Sb}\to \mathcal{W}_{Sb}$ is a size-preserving bijection and using the definition of $\mathcal{W}_{Sb}$, in order to sample a uniform strong-Baxter permutation of size $n$, it is enough to sample a uniform two-dimensional walk in the non-negative quadrant of size $n$, starting at $(0,0)$, with increments in $I_{Sb}$.

Consider the following probability measure on  $I_{Sb}$ (see also the left-hand side of \cref{fig:jumps_strong_baxter}):
\begin{equation}\label{eq:distristep_strongbaxter}
\mu_{Sb}=\sum_{i=1}^{\infty}\alpha\gamma^{i}\cdot \delta_{(-i,0)}
+\alpha\theta^{-1}\cdot \delta_{(0,1)}+\sum_{i=0}^{\infty}\alpha\gamma^{-1}\theta^{i}\cdot\delta_{(1,-i)},
\end{equation}
where $\delta$ denotes the delta-Dirac measure and $\alpha,\theta,\gamma$ are the unique solutions of the following system of equations (the first equation guarantees that $\mu_{Sb}$ is a probability measure; the second and the third equation that $\mu_{Sb}$ is centered):
\begin{equation}\label{eq:system_strong}
\begin{cases}
\frac{1}{\gamma(1-\theta)}=\frac{\gamma}{(1-\gamma)^2},\\
\frac{1}{\theta}=\frac{\theta}{\gamma(1-\theta)^2},\\
\alpha=\frac{1}{\frac{1}{\theta}+\frac{\gamma}{1-\gamma}+\frac{1}{\gamma(1-\theta)}},\\
\alpha>0,\theta>0,\gamma>0,\alpha\theta^{-1}\leq 1.
\end{cases}
\end{equation}
With standard computations, we get that $\gamma$ is the unique real root of the polynomial $-1+2\gamma-\gamma^2+\gamma^3$, $\theta=-7+18\gamma-14\gamma^2+11\gamma^3-3\gamma^4$, and $\alpha=\frac{36}{11}-\frac{83}{11}\gamma+\frac{61}{11}\gamma^2-4\gamma^3+\frac{12}{11}\gamma^4$.
With a computer one can estimate that $\theta\approx0.43016$ , $\gamma\approx0.56984$, and $\alpha\approx0.14861$.

Let $(\bm X,\bm Y)$ be a random variable such that $\mathcal{L}aw(\bm X,\bm Y)=\mu_{Sb}$.  With standard computations we have that:
\begin{align}\label{eq:computations_parm_strong}
\E[\bm X]=\E[\bm Y]=0,&\qquad\E[\bm X\bm Y]=-\frac{\alpha\theta}{\gamma(1-\theta)^2}, \\
\E[\bm X^2]=\alpha\left(\frac{1}{\gamma(1-\theta)}+\frac{\gamma(1+\gamma)}{(1-\gamma^3)}\right)\eqqcolon\sigma^2,&\qquad
\E[\bm Y^2]=\alpha\left(\frac{1}{\theta}+\frac{\theta(1+\theta)}{\gamma(1-\theta^3)}\right)\eqqcolon{\sigma'}^2.
\end{align}
Therefore
\begin{equation}\label{eq:cov_strong-baxter}
\Var((\bm X,\bm Y))=
\alpha
\begin{pmatrix}
\frac{1}{\gamma(1-\theta)}+\frac{\gamma(1+\gamma)}{(1-\gamma^3)} & -\frac{\theta}{\gamma(1-\theta)^2} \\
-\frac{\theta}{\gamma(1-\theta)^2} & \frac{1}{\theta}+\frac{\theta(1+\theta)}{\gamma(1-\theta^3)}
\end{pmatrix},
\end{equation}
and so 
\begin{equation}\label{eq:corr_strong_baxt}
\rho=\Cor((\bm X,\bm Y))=
-\frac{\theta}{(1-\theta)^2\gamma\sqrt{\left(\frac{1}{\gamma(1-\theta)}+\frac{\gamma(1+\gamma)}{(1-\gamma^3)}  \right)\left(\frac{1}{\theta}+\frac{\theta(1+\theta)}{\gamma(1-\theta^3)}\right)}}\approx-0.21508.
\end{equation}
Equivalently, with some standard computations, we obtain that $\rho$ is the unique real solution of the polynomial $1+6\rho+8\rho^2+8\rho^3$.

\medskip

We now denote by
\begin{equation}
\stackrel{\leftarrow}{I_{Sb}}\coloneqq\{(i,0): i\geq 1\}\cup\{(0,-1)\}\cup\{(-1,i): i\geq 0\},
\end{equation}
i.e.\ the set of ``reversed" increments (recall the definition of the set $I_{Sb}$ in \cref{eq:strong_bax_increm}). We further denote by 
\begin{equation}
\stackrel{\leftarrow}{\mu_{Sb}}=\sum_{i=1}^{\infty}\alpha\gamma^{i}\cdot \delta_{(i,0)}
+\alpha\theta^{-1}\cdot \delta_{(0,-1)}+\sum_{i=0}^{\infty}\alpha\gamma^{-1}\theta^{i}\cdot\delta_{(-1,i)},
\end{equation}
the ``reversed" distribution on $\stackrel{\leftarrow}{I_{Sb}}$ induced by $\mu_{Sb}$ (see also the right-hand side of \cref{fig:jumps_strong_baxter}).
 
\begin{figure}[t]
	\centering
	\hspace{3.1cm}\includegraphics[scale=0.7]{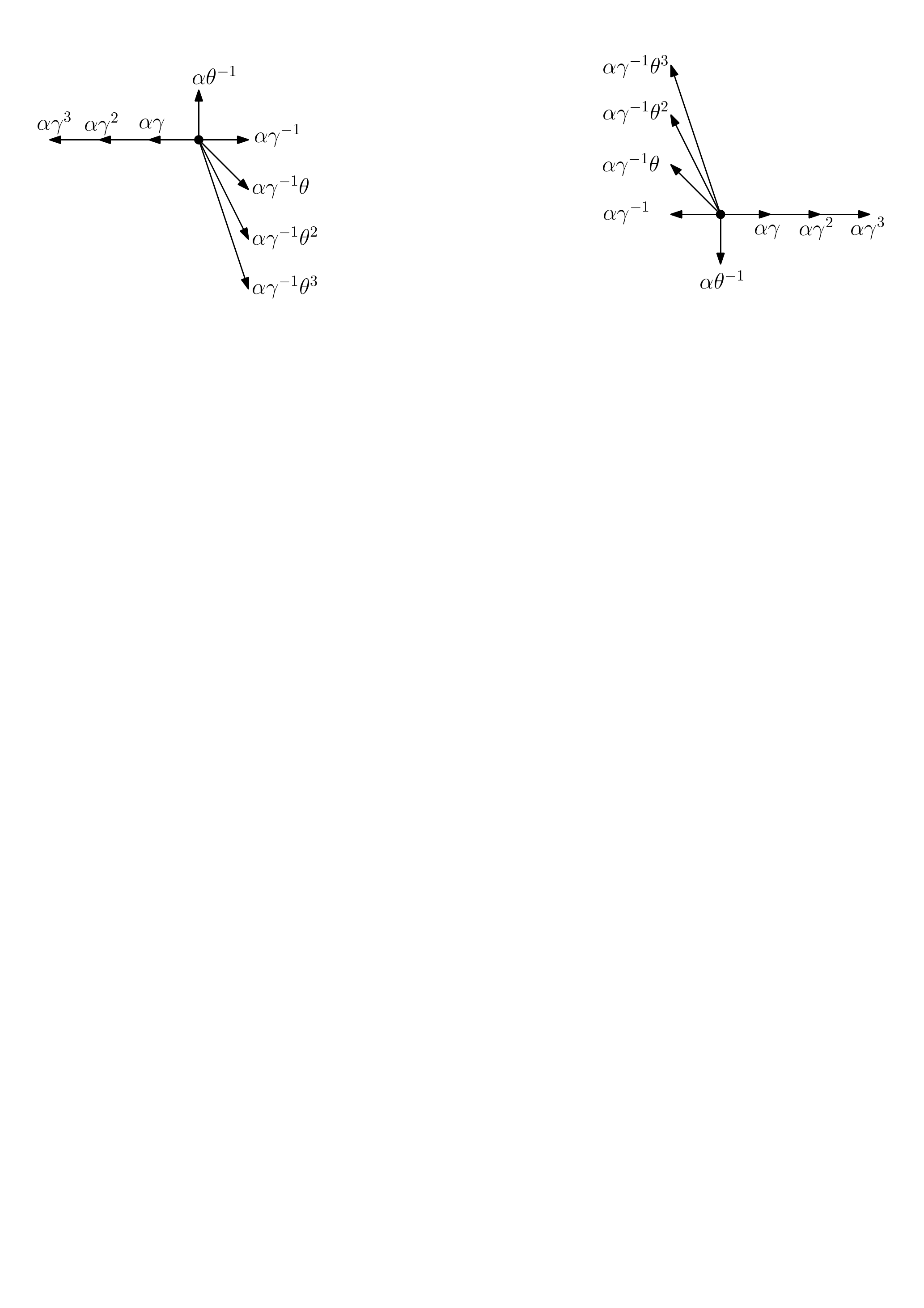}
	\caption{\textbf{Left:} Some of the increments in the set $I_{Sb}$ are plotted together with the corresponding probability weights given by $\mu_{Sb}$. \textbf{Right:} Some increments in the set $\stackrel{\leftarrow}{I_{Sb}}$ are plotted together with the corresponding probability weights given by $\stackrel{\leftarrow}{\mu_{Sb}}$.\label{fig:jumps_strong_baxter}}
\end{figure}

For all $n\in\zgz$, we define the following additional probability measure
\begin{equation}\label{eq:starting_prop_strong}
\nu_{Sb}^n=\frac{1}{Z_n}\sum_{(h,\ell)\in\mathcal{L}^n_{Sb}}\gamma^{h}\theta^{\ell} \delta_{(h,\ell)},
\end{equation}
where 
$
\mathcal{L}_{Sb}^n\coloneqq\{\text{Labels at level $n$ in the generating tree for strong-Baxter permutations}\}
$
and the normalizing constant satisfies
$
Z_n=\sum_{(h,\ell)\in\mathcal{L}^n_{Sb}}\gamma^{h}\theta^{\ell}.
$

Let $(\stackrel{\leftarrow}{\bm W_n}(i))_{i\geq 1}$ be a two-dimensional random walk with increments distributed as $\stackrel{\leftarrow}{\mu_{Sb}}$ and starting probability $\nu_{Sb}^n(h,\ell)=\P(\stackrel{\leftarrow}{\bm W_n}(1)=(h,\ell))$.
Denote by $\stackrel{\leftarrow}{\mathcal{W}_{Sb}^n}$ the set of two-dimensional walks $(x_i)_{i\in[n]}$ in the non-negative quadrant with increments in $\stackrel{\leftarrow}{I_{Sb}}$ and such that $x_n=(0,0)$.
\begin{prop}\label{prop:the_walk_is_uniform_strong}
	Conditioning on the event $\left\{(\stackrel{\leftarrow}{\bm W_n}(i))_{i\in [n]}\in\stackrel{\leftarrow}{\mathcal{W}_{Sb}^n}\right\}$, the walk $(\stackrel{\leftarrow}{\bm W_n}(i))_{i\in [n]}$ is a uniform walk in $\stackrel{\leftarrow}{\mathcal{W}_{Sb}^n}$.
\end{prop}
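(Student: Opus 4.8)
The plan is to show that the conditional law of $(\stackrel{\leftarrow}{\bm W_n}(i))_{i\in[n]}$ given the event $A\coloneqq\{(\stackrel{\leftarrow}{\bm W_n}(i))_{i\in[n]}\in\stackrel{\leftarrow}{\mathcal{W}_{Sb}^n}\}$ is the uniform measure on $\stackrel{\leftarrow}{\mathcal{W}_{Sb}^n}$, by computing $\P\big((\stackrel{\leftarrow}{\bm W_n}(i))_{i\in[n]}=(x_i)_{i\in[n]}\big)$ for an arbitrary walk $(x_i)_{i\in[n]}\in\stackrel{\leftarrow}{\mathcal{W}_{Sb}^n}$ and checking it does not depend on the choice of $(x_i)$. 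Since the probability of $A$ is a fixed constant, it suffices to show the unconditioned probability is the same for every element of $\stackrel{\leftarrow}{\mathcal{W}_{Sb}^n}$.

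First I would write out this probability as a product: by the definition of $(\stackrel{\leftarrow}{\bm W_n}(i))_{i\ge 1}$,
\begin{equation}
\P\big((\stackrel{\leftarrow}{\bm W_n}(i))_{i\in[n]}=(x_i)_{i\in[n]}\big)=\nu_{Sb}^n(x_1)\cdot\prod_{i=1}^{n-1}\stackrel{\leftarrow}{\mu_{Sb}}(x_{i+1}-x_i).
\end{equation}
The key computation is to show the right-hand side telescopes. Writing $x_i=(a_i,b_i)$, the weight $\nu_{Sb}^n(x_1)$ contributes a factor $\gamma^{a_1}\theta^{b_1}/Z_n$ (here one uses that $x_1=(a_1,b_1)$ is a label at level $n$ in the generating tree, which holds because a walk in $\stackrel{\leftarrow}{\mathcal{W}_{Sb}^n}$ read backwards is exactly a path of labels from the root, i.e.\ an element of $\mathcal{W}_{Sb}^n$ via $\pw^{-1}$). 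Each increment $x_{i+1}-x_i\in\stackrel{\leftarrow}{I_{Sb}}$ is one of $(j,0)$ with $j\ge1$, $(0,-1)$, or $(-1,j)$ with $j\ge0$; by the formula for $\stackrel{\leftarrow}{\mu_{Sb}}$, its weight equals $\alpha$ times $\gamma^{a_{i+1}-a_i}\theta^{b_{i+1}-b_i}$ in every one of these three cases — this is the crucial algebraic coincidence built into the exponents appearing in $\mu_{Sb}$ in \cref{eq:distristep_strongbaxter}. Hence $\prod_{i=1}^{n-1}\stackrel{\leftarrow}{\mu_{Sb}}(x_{i+1}-x_i)=\alpha^{n-1}\gamma^{a_n-a_1}\theta^{b_n-b_1}$, and multiplying by $\nu_{Sb}^n(x_1)$ gives
\begin{equation}
\P\big((\stackrel{\leftarrow}{\bm W_n}(i))_{i\in[n]}=(x_i)_{i\in[n]}\big)=\frac{\alpha^{n-1}}{Z_n}\,\gamma^{a_n}\theta^{b_n}=\frac{\alpha^{n-1}}{Z_n},
\end{equation}
using $x_n=(0,0)$, i.e.\ $a_n=b_n=0$. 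This value is independent of the chosen walk, so the unconditioned law is uniform on $\stackrel{\leftarrow}{\mathcal{W}_{Sb}^n}$ up to the normalization, and therefore so is the conditioned law.

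The only real subtlety — which I would handle carefully — is the matching of exponents: one must verify that for each of the three jump types the weight $\stackrel{\leftarrow}{\mu_{Sb}}(\Delta)$ genuinely equals $\alpha\gamma^{\Delta_1}\theta^{\Delta_2}$, in particular for $\Delta=(j,0)$ the weight $\alpha\gamma^{j}$ matches, for $\Delta=(0,-1)$ the weight $\alpha\theta^{-1}$ matches, and for $\Delta=(-1,j)$ the weight $\alpha\gamma^{-1}\theta^{j}$ matches. Given this, the telescoping is immediate and the argument closes. A minor point worth remarking is that the event $A$ has positive probability (so the conditioning is well-defined), which follows since $\stackrel{\leftarrow}{\mathcal{W}_{Sb}^n}$ is nonempty and each of its elements carries the positive mass $\alpha^{n-1}/Z_n$ computed above.
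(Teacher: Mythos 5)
Your proposal is correct and follows essentially the same route as the paper: compute the unconditioned probability of a fixed walk in $\stackrel{\leftarrow}{\mathcal{W}_{Sb}^n}$ as $\nu_{Sb}^n(x_1)$ times the product of the increment weights, observe that each weight is $\alpha\gamma^{\Delta_1}\theta^{\Delta_2}$ so the product telescopes to $\alpha^{n-1}\gamma^{-h}\theta^{-\ell}$, and cancel against $\nu_{Sb}^n(h,\ell)=\gamma^h\theta^\ell/Z_n$ to get the constant $\alpha^{n-1}/Z_n$. The paper states this more tersely, but your extra care about the exponent matching and about $x_1$ lying in the support $\mathcal{L}^n_{Sb}$ of $\nu_{Sb}^n$ only makes the argument more complete.
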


\begin{proof}
	Fix $(x_i)_{i\in[n]}\in\stackrel{\leftarrow}{\mathcal{W}_{Sb}^n}$. It is enough to show that $\P\left((\stackrel{\leftarrow}{\bm W_n}(i))_{i\in [n]}=(x_i)_{i\in[n]}\right)$ is independent of the choice of $(x_i)_{i\in[n]}$. To do that, assume that $x_1=(h,\ell)$ and recall that $x_n=(0,0)$. By definition of $\stackrel{\leftarrow}{\mu_{Sb}}$ and $\nu_{Sb}^n$ we have that
	\begin{equation}
	\P\left((\stackrel{\leftarrow}{\bm W_n}(i))_{i\in [n]}=(x_i)_{i\in[n]}\right)=\nu_{Sb}^n(h,\ell)\cdot\alpha^{n-1}\gamma^{-h}\theta^{-\ell}=\frac{\gamma^{h}\theta^{\ell}}{Z_n}\cdot\alpha^{n-1}\gamma^{-h}\theta^{-\ell}=\frac{\alpha^{n-1}}{Z_n},
	\end{equation}
	and this concludes the proof.
\end{proof}

Let $(\bm W_n(i))_{i\geq 1}$ be the reversed walk obtained from $(\stackrel{\leftarrow}{\bm W_n}(i))_{i\geq 1}$. An easy consequence of \cref{prop:the_walk_is_uniform_strong} is the following.

\begin{cor}\label{cor:the_walk_is_uniform_strong}
	Conditioning on the event $\left\{(\bm W_n(i))_{i\in[n]}\in\mathcal{W}_{Sb}^n\right\}$, the walk $(\bm W_n(i))_{i\in [n]}$ is a uniform walk in $\mathcal{W}_{Sb}^n$.
\end{cor}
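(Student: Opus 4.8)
The plan is to derive \cref{cor:the_walk_is_uniform_strong} from \cref{prop:the_walk_is_uniform_strong} by a straightforward time-reversal bijection. First I would make precise the meaning of ``the reversed walk'': given a walk $(\stackrel{\leftarrow}{\bm W_n}(i))_{i\in[n]}$ with increments in $\stackrel{\leftarrow}{I_{Sb}}$, define $\bm W_n(i) \coloneqq \stackrel{\leftarrow}{\bm W_n}(n+1-i)$ for $i\in[n]$, so that the $i$-th increment of $\bm W_n$ is $\bm W_n(i+1)-\bm W_n(i) = \stackrel{\leftarrow}{\bm W_n}(n-i) - \stackrel{\leftarrow}{\bm W_n}(n+1-i)$, i.e.\ minus the reversed walk's increment read backwards. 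Since $\stackrel{\leftarrow}{I_{Sb}} = -I_{Sb}$ by the very definition of $\stackrel{\leftarrow}{I_{Sb}}$, these increments lie in $I_{Sb}$, so $\bm W_n$ is a walk of the right type. I would also note the map $(x_i)_{i\in[n]} \mapsto (x_{n+1-i})_{i\in[n]}$ (composed with the appropriate additive shift so that the image starts at $(0,0)$) is an involutive bijection between $\stackrel{\leftarrow}{\mathcal{W}_{Sb}^n}$ — walks in the non-negative quadrant with increments in $\stackrel{\leftarrow}{I_{Sb}}$ ending at $(0,0)$ — and $\mathcal{W}_{Sb}^n$ — walks in the non-negative quadrant with increments in $I_{Sb}$ starting at $(0,0)$. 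Indeed staying in the non-negative quadrant is preserved under reversal, and the endpoint/startpoint conditions are swapped.

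Next I would translate the conditioning events through this bijection. The event $\{(\bm W_n(i))_{i\in[n]} \in \mathcal{W}_{Sb}^n\}$ is, by construction of $\bm W_n$ from $\stackrel{\leftarrow}{\bm W_n}$, exactly the event $\{(\stackrel{\leftarrow}{\bm W_n}(i))_{i\in[n]} \in \stackrel{\leftarrow}{\mathcal{W}_{Sb}^n}\}$ (both say the underlying trajectory stays in the quadrant and its two endpoints are $(0,0)$ and some label in $\mathcal{L}_{Sb}^n$, with the roles of ``first'' and ``last'' interchanged — but since reversal is the bijection relating the two families, the events coincide as subsets of the probability space). Hence for any fixed $(x_i)_{i\in[n]} \in \mathcal{W}_{Sb}^n$,
\begin{equation}
\P\!\left((\bm W_n(i))_{i\in[n]}=(x_i)_{i\in[n]} \,\Big|\, (\bm W_n(i))_{i\in[n]}\in\mathcal{W}_{Sb}^n\right)
= \P\!\left((\stackrel{\leftarrow}{\bm W_n}(i))_{i\in[n]}=(x_{n+1-i})_{i\in[n]} \,\Big|\, (\stackrel{\leftarrow}{\bm W_n}(i))_{i\in[n]}\in\stackrel{\leftarrow}{\mathcal{W}_{Sb}^n}\right),
\end{equation}
and since $(x_{n+1-i})_{i\in[n]}$ ranges over all of $\stackrel{\leftarrow}{\mathcal{W}_{Sb}^n}$ as $(x_i)_{i\in[n]}$ ranges over $\mathcal{W}_{Sb}^n$, \cref{prop:the_walk_is_uniform_strong} tells us the right-hand side is the constant $1/\#\stackrel{\leftarrow}{\mathcal{W}_{Sb}^n}$, which equals $1/\#\mathcal{W}_{Sb}^n$ by the bijection. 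Therefore the conditional law of $(\bm W_n(i))_{i\in[n]}$ is uniform on $\mathcal{W}_{Sb}^n$.

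There is no serious obstacle here; the only point requiring a little care is the bookkeeping of the additive shift, since the walks in $\mathfrak{W}_{Sb}(I)$ and its subsets are sometimes considered up to an additive constant while $\mathcal{W}_{Sb}^n$ and $\stackrel{\leftarrow}{\mathcal{W}_{Sb}^n}$ pin down one endpoint. I would resolve this simply by using the representative of the reversed trajectory that starts at $(0,0)$ (so the last value of $\bm W_n$ is the label in $\mathcal{L}_{Sb}^n$), and dually for $\stackrel{\leftarrow}{\bm W_n}$; once the normalization convention is fixed the reversal map is a genuine bijection and the argument above goes through verbatim. The upshot is that \cref{cor:the_walk_is_uniform_strong} is an immediate corollary of \cref{prop:the_walk_is_uniform_strong} via the elementary observation that reversing a walk with steps in $\stackrel{\leftarrow}{I_{Sb}} = -I_{Sb}$ produces a walk with steps in $I_{Sb}$, swapping the ``free endpoint'' from the start to the end.
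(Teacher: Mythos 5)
Your argument is correct and is precisely the time-reversal bijection the paper has in mind when it states the corollary as "an easy consequence" of \cref{prop:the_walk_is_uniform_strong} (the paper gives no further proof). The only superfluous point is the worry about an additive shift: since every walk in $\stackrel{\leftarrow}{\mathcal{W}_{Sb}^n}$ ends at $(0,0)$, its reversal automatically starts at $(0,0)$ and no renormalization is needed.
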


\subsection{Scaling limit of the conditioned two-dimensional walks for strong-Baxter permutations}\label{sect:den_wash_scal}

We define a rescaled version of the walk $(\bm W_n(i))_{i\geq 1}=(\bm X_n(i),\bm Y_n(i))_{i\geq 1}$: for all $n\geq 1,$ let $\conti W_n:[0,1]\to \R^2$ be the continuous function defined by linearly interpolating the points
\begin{equation}\label{eq:resc_walk}
	\conti W_n\left(\frac kn\right) =  \left(\frac {\bm X_n(k)} {\sigma\sqrt{n}},\frac {\bm Y_n(k)} {\sigma'\sqrt{n}}\right),\quad\text{ for all } k\in [n],
\end{equation}
where $\sigma$ and $\sigma'$ were defined in \cref{eq:computations_parm_strong}.

\medskip

All the spaces of continuous functions considered below are implicitly endowed with the
topology of uniform convergence on every compact set.

\begin{prop}\label{prop:scaling_strong_walk}
	Let $\rho$ be the unique real solution of the polynomial 
	\begin{equation}
		1+6\rho+8\rho^2+8\rho^3.
	\end{equation}
	Conditioning on the event $\left\{(\bm W_n(i))_{i\in[n]}\in\mathcal{W}_{Sb}^n\right\}$, we have the following convergence in $\czord$, 
	\begin{equation}\label{eq:conv_to_brow_excurs_strong}
	\conti W_n
	\xraninf{d}
	\conti E_{\rho},
	\end{equation}
	where we recall that $\conti E_{\rho}=(\conti E_{\rho}(t))_{t\in [0,1]}$ denotes a two-dimensional Brownian excursion of correlation $\rho$ in the non-negative quadrant.
\end{prop}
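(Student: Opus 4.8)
The plan is to derive this invariance principle from the general results of Denisov and Wachtel \cite{MR3342657} on random walks in cones, applied to the reversed walk $\stackrel{\leftarrow}{\bm W_n}$, together with the combinatorial identification from \cref{cor:the_walk_is_uniform_strong}. First I would note that by \cref{cor:the_walk_is_uniform_strong} the conditioned walk $(\bm W_n(i))_{i\in[n]}$ is uniform in $\mathcal W_{Sb}^n$; hence it suffices to prove the scaling limit for the Boltzmann-type walk with step distribution $\mu_{Sb}$ (equivalently, via time-reversal, for $\stackrel{\leftarrow}{\mu_{Sb}}$) conditioned to stay in the non-negative quadrant for $n$ steps and to return to $(0,0)$ at time $n$. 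Since $\mu_{Sb}$ has finite variance, is centered, and has exponential tails (the geometric-type weights $\gamma^i,\theta^i$ in \cref{eq:distristep_strongbaxter} decay geometrically), all moment hypotheses needed in \cite{MR3342657} are satisfied. The key structural point is that, after the linear change of coordinates that diagonalizes the covariance matrix $\Var((\bm X,\bm Y))$ in \cref{eq:cov_strong-baxter}, the non-negative quadrant is mapped to a wedge whose opening angle is determined by the correlation $\rho$ computed in \cref{eq:corr_strong_baxt}; the resulting limiting object is precisely the Brownian motion of correlation $\rho$ conditioned to stay in the quadrant, i.e.\ the two-dimensional Brownian excursion $\conti E_\rho$ of \cref{sect:def_sbp}.

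The main steps, in order, would be: (1) reduce, via \cref{cor:the_walk_is_uniform_strong} and time reversal, to a statement about the $\stackrel{\leftarrow}{\mu_{Sb}}$-walk with the tilted starting measure $\nu_{Sb}^n$ conditioned to stay in the quadrant and end at the origin; (2) check the hypotheses of the Denisov--Wachtel local limit theorem and the associated functional limit theorem for walks conditioned to stay in a cone --- centered increments, finite (indeed exponential) moments, aperiodicity/lattice structure, and the fact that the cone is convex with the covariance structure non-degenerate; (3) invoke their results to get, first, the right polynomial asymptotics for $Z_n$ (the partition function of walks staying in the quadrant and returning to the origin) and, second, the functional convergence of the conditioned and bridged walk to the Brownian excursion in the cone; (4) translate back through the linear map to identify the limit as $\conti E_\rho$ with $\rho$ as in \cref{eq:corr_strong_baxt}, using that $\rho$ is the unique real root of $1+6\rho+8\rho^2+8\rho^3$; (5) undo the time reversal, noting that reversing a Brownian excursion in the quadrant gives again a Brownian excursion in the quadrant (in law), and that the deterministic rescaling by $\sigma\sqrt n$, $\sigma'\sqrt n$ in \cref{eq:resc_walk} is exactly the one that standardizes the two marginals.

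I expect step (3)--(4) to be the main obstacle: the results of \cite{MR3342657} are stated for walks conditioned merely to stay in a cone, whereas here we need the \emph{bridge} version (walk conditioned to additionally return near the apex at time $n$), and we need it with the \emph{non-uniform} initial law $\nu_{Sb}^n$ rather than a fixed starting point. Handling this requires combining the Denisov--Wachtel tail asymptotics for the exit time from the cone with a local limit theorem at the endpoint, and controlling the initial distribution $\nu_{Sb}^n$ --- which itself depends on $n$ through the label set $\mathcal L_{Sb}^n$ --- uniformly enough to conclude. This is precisely where the ``precise bounds on some probabilities related with random walks in cones'' advertised in \cref{sect:techn} (and proved in \cref{sect:prob_walk_cones}, e.g.\ \cref{prop:upper_bound_walks}) enter: one uses them to show the starting measure is, in the relevant scaling regime, concentrated at scale $O(\sqrt n)$ and does not affect the limit, and to justify the absolute continuity / tightness arguments needed to upgrade stay-in-cone convergence to bridge convergence. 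Once these cone estimates are in hand, the identification of the limit and the extraction of the value of $\rho$ are routine computations with the covariance matrix \cref{eq:cov_strong-baxter}.
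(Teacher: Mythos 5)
Your plan follows the same overall architecture as the paper's proof -- reduce via \cref{cor:the_walk_is_uniform_strong} to the tilted walk conditioned to stay in the quadrant, invoke an invariance principle for walks in cones, and use the appendix cone estimates to control the ($n$-dependent) endpoint law -- but it diverges on where the functional limit theorem comes from. The paper does not build the bridge version out of Denisov--Wachtel \cite{MR3342657}: it quotes \cite[Theorem 6]{MR4102254} directly, which already gives the convergence of \cref{eq:conv_to_brow_excurs_strong} conditionally on $\{(\bm W_n(i))_{i\in[n]}\in\mathcal{W}_{Sb}^n\}\cap\{\bm W_n(n)=(h,\ell)\}$ for each fixed $(h,\ell)$. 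The entire remaining content of the proof is then to show that the conditional law of the endpoint $\bm W_n(n)$ is tight on $\Z_{\geq0}^2$, which is done by a Bayes computation combining the upper bound of \cref{cor:upper_bound_walks_strong}, the matching lower bound of \cref{cor:upper_bound_walks_strong2} applied to the single label $(1,0)$, and the geometric bound $\P(\bm W_n(n)=(h,\ell))\leq C''\gamma^h\theta^\ell$; the resulting domination by the summable function $C'''(1+|(h,\ell)|^p)\gamma^h\theta^\ell$ gives uniform tightness and hence \cref{eq:goal_of_proof_strong}. So what you flag as ``the main obstacle'' (upgrading stay-in-cone convergence to a bridge with a local limit theorem at the endpoint) is exactly the part that is outsourced to the literature; redoing it from \cite{MR3342657} would amount to reproving that result. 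One genuine imprecision in your write-up: saying the starting measure is ``concentrated at scale $O(\sqrt n)$'' is not sufficient -- an endpoint genuinely living at scale $\sqrt n$ would produce a limit that does not return to the origin, so it would not be $\conti E_\rho$. What is actually needed, and what the paper proves, is tightness of the endpoint law on $\Z_{\geq 0}^2$ (scale $O(1)$, hence negligible after the $\sqrt n$ rescaling), and this is precisely the role of the polynomial-times-geometric bound above.
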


\begin{proof}
	An immediate consequence of \cite[Theorem 6]{MR4102254} is that, for every choice of $(h,\ell)\in\Z^2_{\geq0}$, \cref{eq:conv_to_brow_excurs_strong} holds conditioning on 
	$$\left\{(\bm W_n(i))_{i\in[n]}\in\mathcal{W}_{Sb}^n\right\}\cap\left\{\bm W_n(n)=(h,\ell)\right\}.$$
	Therefore in order to prove the proposition it is enough to show that 
	\begin{equation}\label{eq:goal_of_proof_strong}
	\sum_{(h,\ell)\in\Z^2_{\geq 0}}\liminf_{n\to\infty}\P\left(\bm W_n(n)=(h,\ell)\middle|(\bm W_n(i))_{i\in[n]}\in\mathcal{W}_{Sb}^n\right)=1.
	\end{equation}
	Note that
	\begin{equation}\label{eq:fbbfowenfwe}
	\P\left(\bm W_n(n)=(h,\ell)\middle|(\bm W_n(i))_{i\in[n]}\in\mathcal{W}_{Sb}^n\right)
	=
	\frac{\P\left((\bm W_n(i))_{i\in[n]}\in\mathcal{W}_{Sb}^n\middle|\bm W_n(n)=(h,\ell)\right)\cdot \P\left(\bm W_n(n)=(h,\ell)\right)}{\P\left((\bm W_n(i))_{i\in[n]}\in\mathcal{W}_{Sb}^n\right)}.
	\end{equation}
	From \cref{cor:upper_bound_walks_strong}, there exist a constant $C>0$ independent of $h,\ell,n$ and a parameter $p>0$ such that
	\begin{equation}\label{eq:fbuofbwoif}
	\P\left((\bm W_n(i))_{i\in[n]}\in\mathcal{W}_{Sb}^n\middle|\bm W_n(n)=(h,\ell)\right)\leq  C(1+ |(h,\ell)|^p) n^{-p-1}, \quad\text{for all}\quad n,h,\ell\in \Z_{\geq 0},
	\end{equation}
	where $|(h,\ell)|$ denotes the Euclidean norm of the vector $(h,\ell)$.
	Moreover, using the definition of the measure $\nu_{Sb}^n$ in \cref{eq:starting_prop_strong}, we have that
	\begin{equation}
	\P\left((\bm W_n(i))_{i\in[n]}\in\mathcal{W}_{Sb}^n\right)=\frac{1}{Z_n}\sum_{(h,\ell)\in\mathcal{L}^n_{Sb}}\P\left((\bm W_n(i))_{i\in[n]}\in\mathcal{W}_{Sb}^n\middle|\bm W_n(n)=(h,\ell)\right)\gamma^{h}\theta^{\ell}.
	\end{equation} 
	Noting that $(1,0)\in\mathcal{L}^n_{Sb}$ for all $n\in\N$, and that $Z_n\leq \frac{1}{(1-\gamma)(1-\theta)}$ for all $n\in\N$, we have for all $n\in\N$,
	\begin{equation}\label{eq:ivbfweubfewnfoew}
	\P\left((\bm W_n(i))_{i\in[n]}\in\mathcal{W}_{Sb}^n\right)\geq\gamma(1-\gamma)(1-\theta)\P\left((\bm W_n(i))_{i\in[n]}\in\mathcal{W}_{Sb}^n\middle|\bm W_n(n)=(1,0)\right)\geq C'n^{-p-1},
	\end{equation}
	where in the last inequality we used \cref{cor:upper_bound_walks_strong2}.
	Substituting in \cref{eq:fbbfowenfwe} the bounds obtained in \cref{eq:fbuofbwoif,eq:ivbfweubfewnfoew}, together with the trivial bound $\P\left(\bm W_n(n)=(h,\ell)\right)\leq C''\gamma^{h}\theta^{\ell}$, we can conclude that
	\begin{equation}
	\P\left(\bm W_n(n)=(h,\ell)\middle|(\bm W_n(i))_{i\in[n]}\in\mathcal{W}_{Sb}^n\right)
	\leq
	C'''(1+ |(h,\ell)|^p)\gamma^{h}\theta^{\ell}\quad\text{for all}\quad n,h,\ell\in \Z_{\geq 0}.
	\end{equation}
	Therefore for every $\varepsilon>0$ there exists a compact set $K_{\varepsilon}\subseteq \Z^2$ such that 
	\begin{equation}
	\sum_{(h,\ell)\in\Z^2_{\geq 0}\setminus K_{\varepsilon}}\P\left(\bm W_n(n)=(h,\ell)\middle|(\bm W_n(i))_{i\in[n]}\in\mathcal{W}_{Sb}^n\right)
	\leq \varepsilon, \quad\text{for all}\quad n \in \Z_{\geq 0}.
	\end{equation}
	As a consequence, for every $\varepsilon>0$,
	\begin{multline}
	\sum_{(h,\ell)\in\Z^2_{\geq 0}}\liminf_{n\to\infty}\P\left(\bm W_n(n)=(h,\ell)\middle|(\bm W_n(i))_{i\in[n]}\in\mathcal{W}_{Sb}^n\right)\\
	\geq 
	\liminf_{n\to\infty}
	\sum_{(h,\ell)\in K_{\varepsilon}}\P\left(\bm W_n(n)=(h,\ell)\middle|(\bm W_n(i))_{i\in[n]}\in\mathcal{W}_{Sb}^n\right)\\
	\geq 1-
	\limsup_{n\to\infty}\sum_{(h,\ell)\in \Z^2_{\geq 0}\setminus K_{\varepsilon}}\P\left(\bm W_n(n)=(h,\ell)\middle|(\bm W_n(i))_{i\in[n]}\in\mathcal{W}_{Sb}^n\right)\geq 1-\varepsilon.
	\end{multline}
	This proves \cref{eq:goal_of_proof_strong} and concludes the proof of the proposition.
\end{proof}

\subsection{Scaling limit of coalescent-walk processes for strong-Baxter permutations}
\label{sect:coal_scaling}
In this section we first prove a scaling limit result for coalescent-walk processes associated with strong-Baxter permutation, both in the unconditioned (see \cref{thm:fvwuofbgqfipqhfqfpq_strongb}) and conditioned (see \cref{prop:jfwvbvouvwefuow} and \cref{prop:jfbeouwfboufwe}) case. Then, in \cref{sect:permlim}, we explain how to deduce permuton convergence for strong-Baxter permutations (\cref{thm:strong-baxter}) from \cref{prop:jfbeouwfboufwe}.

\subsubsection{The unconditioned scaling limit}

Let $\overline{\bm W} = (\obmx,\obmy) =(\obmx(k),\obmy(k))_{k\in \Z}$ be a random bi-infinite two-dimensional walk with step distribution $\mu_{Sb}$ (defined in \cref{eq:distristep_strongbaxter}), and let $\ovbmZ = \wScbp(\overline{\bm W})$ be the corresponding discrete coalescent-walk process. For convenience, we set $\ovbmZ^{(j)}_{i} = 0$ for $i,j \in \Z$, $i < j$.

We introduce the following rescaled processes: for all $n\geq 1, u\in \R$, let $\ovconw_n:\R\to \R^2$, and $\contzu_n:\R\to\R$ be the continuous processes that interpolate the following points:
\begin{equation}\label{eq:rescaled_version_strong}
\ovconw_n\left(\frac kn\right) = \left(\frac {\obmx(k)} {\sigma\sqrt {n}},\frac {\obmy(k)} {\sigma'\sqrt {n}}\right), \quad\text{for all}\quad k\in \Z,
\end{equation}
where $\sigma$ and $\sigma'$ are defined in \cref{eq:computations_parm_strong}, and
\begin{equation}\label{eq:rescaled_version2_strong}
\contzu_{n}\left(\frac kn\right) =
\begin{cases}
\frac {\ovbmZ^{(\lceil nu\rceil)}_{k}} { \sigma'\sqrt {n}},\quad&\text{when}\quad\ovbmZ^{(\lceil nu\rceil)}_{k}\geq 0,\\
\frac {\ovbmZ^{(\lceil nu\rceil)}_{k}} {\sigma\sqrt{ n}},\quad&\text{when}\quad\ovbmZ^{(\lceil nu\rceil)}_{k}< 0,
\end{cases}
\quad\text{for all}\quad k\in \Z.
\end{equation}

We also introduce the potential limiting processes. Fix $\rho\in[-1,1]$ and $q\in[0,1]$. Consider the solutions (that exist and are unique thanks to \cite[Theorem 2.1]{borga2021skewperm}) of the following family\footnote{Note that the SDEs in \cref{eq:flow_SDE_inf_vol_strong} are the unconditioned version of the SDEs in \cref{eq:flow_SDE_gen}, p.\ \pageref{eq:flow_SDE_gen}.} of SDEs indexed by $u\in \R$ and driven by a two-dimensional Brownian motion $\ovconw_{\rho} = (\overline{\conti X}_{\rho},\overline{\conti Y}_{\rho})$ of correlation $\rho$:
\begin{equation}\label{eq:flow_SDE_inf_vol_strong}
\begin{cases}
d\ovconz_{\rho,q}^{(u)}(t) = \idf_{\{\ovconz_{\rho,q}^{(u)}(t)> 0\}} d\overline{\conti Y}_{\rho}(t) - \idf_{\{\ovconz_{\rho,q}^{(u)}(t)< 0\}} d \overline{\conti X}_{\rho}(t)+(2q-1)\cdot d\conti L^{\ovconz_{\rho,q}^{(u)}}(t),& t\geq u,\\
\ovconz_{\rho,q}^{(u)}(t)=0,&  t\leq u,
\end{cases} 
\end{equation}
where we recall that $\conti L^{\ovconz_{\rho,q}^{(u)}}(t)$ is the symmetric local-time process at zero of $\ovconz_{\rho,q}^{(u)}$. We remark that, as stated in \cite[Theorem 2.1]{borga2021skewperm}, the processes $\left\{{(\ovconz}_{\rho,q}^{(u)}(t))_{t\geq u}\right\}_{u\in\R}$ are skew Brownian motions of parameter $q.$ We recall that a \emph{skew Brownian motion} of parameter $q\in[0,1]$ is a standard one-dimensional Brownian motion where each excursion is flipped independently to the positive side with probability $q$ (see for instance \cite[Theorem 6]{MR2280299}).

\medskip

We now prove a scaling limit result for a walk of the coalescent-walk process.

\begin{thm}\label{thm:fvwuofbgqfipqhfqfpq_strongb}
	Let $u\in \R$. 
	The following joint convergence holds in the space $\mathcal C(\R,\R)^{3}$: 
	\begin{equation}
	\label{eq:fvwuofbgqfipqhfqfpq_strongb}
	\left(\ovconw_n,\contzu_n\right) 
	\xraninf{d}
	\left(\ovconw_{\rho},\ovconz_{\rho,q}^{(u)}\right),
	\end{equation}
	where $\rho$ is the unique real solution of the polynomial 
	\begin{equation}
	1+6\rho+8\rho^2+8\rho^3,
	\end{equation} 
	and $q$ is the unique real solution of the polynomial
	\begin{equation}\label{eq:param_q_strong}
	-1+6q-11q^2+7q^3.
	\end{equation} 
\end{thm}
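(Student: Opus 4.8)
The strategy is to establish the joint convergence in two stages, first controlling the unconditioned two-dimensional walk $\ovconw_n$ and then the associated one-dimensional coalescent-walk trajectory $\contzu_n$. The first marginal convergence $\ovconw_n \xraninf{d} \ovconw_\rho$ is a direct application of Donsker's theorem in $\mathcal C(\R,\R^2)$, since $\overline{\bm W}$ is a genuine random walk with step distribution $\mu_{Sb}$, which by \cref{eq:computations_parm_strong} and \cref{eq:cov_strong-baxter} is centered with the stated covariance matrix; the rescaling by $\sigma\sqrt n$ and $\sigma'\sqrt n$ in \cref{eq:rescaled_version_strong} normalizes each coordinate to unit variance and produces correlation exactly $\rho$, the real root of $1+6\rho+8\rho^2+8\rho^3$ as computed in \cref{eq:corr_strong_baxt}. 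So the real content is the joint statement together with the identification of the limit of $\contzu_n$.

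\textbf{The skew Brownian motion step.} The key intermediate result — which I would isolate as a separate proposition (this is presumably \cref{prop:skew_part_strong} referred to in the introduction) — is that the single unconditioned trajectory $\contzu_n$ converges to a skew Brownian motion of parameter $q$, where $q$ is the real root of $-1+6q-11q^2+7q^3$. Examining \cref{eq:icre_coal_strong} (equivalently \cref{defn:distrib_incr_coal_strong}), the walk $\ovbmZ^{(u)}$ behaves, away from $0$, like $-\obmx$ on negative excursions and like $\obmy$ on positive excursions, and at the origin its sign is resolved by the type of the next increment of $\overline{\bm W}$. Because the two coordinates of $\overline{\bm W}$ are not independent and the conditional law of the next increment given a visit to $0$ is not symmetric, the process spends an asymmetric amount of time on the two sides: this is precisely the mechanism producing a skew (rather than ordinary) Brownian limit. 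To make this rigorous I would appeal to the results of Ngo and Peign\'e \cite{MR4105264} on random walks that behave like different random walks on the two half-lines (''oscillating random walks'' / random walks with two-sided behavior), after checking their hypotheses: centering and finite variance on each side, and computing the asymptotic excursion-flipping probability, which should come out to be $q$ with the stated minimal polynomial. The scaling of negative values by $\sigma$ and positive values by $\sigma'$ in \cref{eq:rescaled_version2_strong} is exactly what is needed so that both one-sided pieces rescale to standard Brownian excursions and glue into a standard skew Brownian motion.

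\textbf{Upgrading to joint convergence and identifying the SDE limit.} Given tightness of the pair (which follows from tightness of each coordinate, the first from Donsker, the second from the previous step), any subsequential limit $(\ovconw_\rho, \conti Z)$ has $\conti Z$ a skew Brownian motion of parameter $q$ adapted to the filtration of $\ovconw_\rho$, and $\conti Z$ is frozen at $0$ on $(-\infty,u]$. The task is to show $\conti Z$ solves the SDE \cref{eq:flow_SDE_inf_vol_strong}. I would pass to the limit in the discrete dynamics: writing the increments of $\ovbmZ^{(\lceil nu\rceil)}$ in terms of the increments of $\obmx, \obmy$ via \cref{defn:distrib_incr_coal_strong}, the positive-excursion part accumulates $d\overline{\conti Y}_\rho$, the negative-excursion part accumulates $-d\overline{\conti X}_\rho$, and the boundary corrections at $0$ and at $-1$ (cf.\ \cref{obs:alternating_excursions_strong}) aggregate, in the limit, into the local-time drift $(2q-1)\,d\conti L^{\conti Z}$; the coefficient $2q-1$ is forced by the skew parameter already identified. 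Uniqueness of solutions to \cref{eq:flow_SDE_inf_vol_strong} from \cite[Theorem 2.1]{borga2021skewperm} then pins down the limit and removes the need for subsequences, giving \cref{eq:fvwuofbgqfipqhfqfpq_strongb}.

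\textbf{Main obstacle.} I expect the hard part to be the skew Brownian motion step: verifying in detail that the Ngo--Peign\'e framework applies to $\ovbmZ^{(u)}$ — in particular handling the fact that the ''driving'' increments on the two sides are correlated with the very event (a visit to $0$) that switches sides, correctly computing the limiting flip probability $q$ and checking it is the claimed algebraic number, and dealing with the non-standard reflection/truncation rules at $0$ and $-1$ in \cref{eq:icre_coal_strong} (the walk does not simply reflect; it is sent to $-1$ or to $0$ depending on overshoot). Controlling these overshoots and showing they do not affect the diffusive limit — together with the joint coupling with $\ovconw_n$ so that the limiting skew Brownian motion is driven by the same Brownian motion $\ovconw_\rho$ — is where the technical weight of the argument lies.
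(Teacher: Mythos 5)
Your overall architecture matches the paper's: Donsker for $\ovconw_n$, an isolated proposition (\cref{prop:skew_part_strong}) giving convergence of $\contzu_n$ to a skew Brownian motion of parameter $q$, then tightness, identification of any subsequential limit as a solution of \cref{eq:flow_SDE_inf_vol_strong} by showing $\wconz-\overline{\conti Y}_\rho$ (resp.\ $\wconz+\overline{\conti X}_\rho$) is locally constant on excursions above $\eps$ (resp.\ below $-\eps$), letting $\eps\to0$, using the skew-BM identity $\int\idf_{\{\wconz=0\}}d\wconz=(2q-1)\int d\conti L^{\wconz}$, and concluding by pathwise uniqueness. That part of your plan is essentially the paper's proof.

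The genuine gap is in the skew-Brownian-motion step itself. You propose to obtain \cref{prop:skew_part_strong} by ``checking the hypotheses'' of Ngo--Peign\'e and applying their theorem, but the paper states explicitly that the models of \cite{MR4105264} do \emph{not} include these walks; only their three-step \emph{method} (one-dimensional marginals, finite-dimensional marginals, tightness) is borrowed, and the one-dimensional marginal convergence has to be redone from scratch. That redone argument is the technical core of the theorem and is entirely absent from your proposal: one decomposes $\E[\varphi(\ovbmZ_{\lfloor nt\rfloor}/(\sigma'\sqrt n));\ovbmZ_{\lfloor nt\rfloor}>0]$ over the return times $\bm\tau^+_\ell$ to zero, proves via the local estimates of \cref{lem:local_est_one_dim} that $\P(\bm\tau^+_1=k)\sim\beta k^{-3/2}$ with the explicit constant $\beta$ of \cref{eq:fbouewfboiwenfewf}, upgrades this to $\sum_\ell\P(\bm\tau^+_\ell=k)\sim\frac{1}{\beta}\frac{1}{2\pi\sqrt k}$ by a renewal theorem, and convolves with the Brownian-meander asymptotics to land on $q=\frac{\alpha\theta^{-1}}{(1-\theta)\sqrt{2\pi}\sigma'\beta}$, from which the minimal polynomial $-1+6q-11q^2+7q^3$ follows. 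Without this computation the value of $q$ --- which is the whole point of the theorem, as emphasized in \cref{sect:techn} --- is not established; your proposal only asserts that it ``should come out'' to the stated algebraic number. You correctly flagged this as the main obstacle, but the route you chose for it (direct citation of \cite{MR4105264}) is not available, and no substitute derivation is given.
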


In the remaining part of this section we give the proof of \cref{thm:fvwuofbgqfipqhfqfpq_strongb}. We start by stating the following key proposition whose proof is postponed to the end of the section.

\begin{prop}\label{prop:skew_part_strong}
	Fix $u\in \R$. 
	We have the following convergence in $\mathcal C(\R,\R)$: 
	\begin{equation}
	\contzu_n 
	\xraninf{d} \overline{\conti B}_{q}^{(u)},
	\end{equation}
	where $\overline{\conti B}_{q}^{(u)}(t)=0$ for $t<u$ and $\overline{\conti B}_{q}^{(u)}(t)$ is a skew Brownian motion of parameter
	$q$ (defined in \cref{eq:param_q_strong}) for $t\geq u$.
\end{prop}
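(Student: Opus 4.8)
The plan is to recognize $\contzu_n$ as a rescaled one-dimensional random walk whose dynamics are those of a ``switching'' walk driven by the reflected marginals of $\overline{\bm W}$, and to apply an invariance principle for such walks converging to skew Brownian motion. First I would fix $u$ and, after a time-shift, assume $u=0$; since $\ovbmZ^{(\lceil nu\rceil)}$ has the law of $\ovbmZ^{(0)}$ up to translation, nothing is lost. Next I would describe the walk $\ovbmZ^{(0)}$ explicitly from \cref{defn:distrib_incr_coal_strong}: while the walk is strictly positive it evolves by adding the increments $Y_{\ell+1}-Y_\ell$ (so it behaves like the walk $\obmy$ reflected suitably), while it is non-positive it evolves by subtracting $X_{\ell+1}-X_\ell$ (so like $-\obmx$); at the times it crosses $0$ it is reset to $0$ or $-1$ according to the rule. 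By \cref{obs:alternating_excursions_strong}, positive excursions start at $1$ and negative (non-positive) excursions are entered through $-1$, so the walk is genuinely built out of i.i.d.\ excursions glued at these boundary points, with each excursion being positive or non-positive according to the sign of a fresh increment. This exhibits $\ovbmZ^{(0)}$ as an ``oscillating random walk'' in the sense of Ngo--Peign\'e \cite{MR4105264}.

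With this structure in place, the key computation is to identify the limiting skew parameter $q$. The point is that the rescaled positive excursions have the law of Brownian excursions of a certain intensity determined by $\E[(\bm Y')^2]$ restricted to the relevant conditioning, the rescaled negative excursions are Brownian excursions of a different intensity determined by $\E[(\bm X')^2]$, and — crucially — the normalizations $\sigma'\sqrt n$ above $0$ and $\sigma\sqrt n$ below $0$ in \cref{eq:rescaled_version2_strong} are chosen precisely so that both sides become standard Brownian excursions. What does \emph{not} cancel is the asymmetry in the \emph{frequency} with which the walk chooses to go up versus down after hitting the boundary: this is governed by the probabilities, under $\mu_{Sb}$, that an increment is of ``up type'' ($(0,1)$, keeping a positive walk positive or launching a new positive excursion) versus ``down type'' ($(-i,0)$ or $(1,-i)$). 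Combined with the mean durations (first-return times) of the two kinds of excursions, Ngo--Peign\'e's theorem gives convergence to a skew Brownian motion whose parameter $q$ is the ratio of the ``up'' mass to the total mass, each weighted by $1/\sqrt{\text{mean excursion length}}$. I would then carry out the explicit computation of these quantities in terms of $\alpha,\gamma,\theta$ (using \cref{eq:distristep_strongbaxter} and \cref{eq:system_strong}) and verify, via the stated relations $\theta=-7+18\gamma-14\gamma^2+11\gamma^3-3\gamma^4$ and $-1+2\gamma-\gamma^2+\gamma^3=0$, that the resulting $q$ is the real root of $-1+6q-11q^2+7q^3$.

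The main obstacle I anticipate is matching the precise hypotheses of the Ngo--Peign\'e invariance principle to the present walk. Two technical points need care: first, the increments of $\obmx$ and $\obmy$ have geometric-type tails (increments $(-i,0)$ and $(1,-i)$ for all $i$), so one must check the moment conditions — finiteness of $\E[\bm X^2]$ and $\E[\bm Y^2]$ is granted by \cref{eq:computations_parm_strong}, but the excursion-length distributions inherited from an oscillating walk with these steps also need a finite-mean (or the relevant regularity) hypothesis, which should follow from the fact that the conditioned one-dimensional marginals are centered with finite variance and hence recurrent with finite mean return time only after the correct conditioning — so one likely needs a coupling to a Bessel-type estimate or a direct verification that the mean duration of a positive (resp.\ non-positive) excursion is finite. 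Second, the reset-to-$-1$ rule (rather than reset-to-$0$) on downcrossings introduces a bounded perturbation that must be shown not to affect the scaling limit; this is routine since a shift by $1$ is $o(\sqrt n)$, but it should be stated. Once these are handled, the convergence $\contzu_n \xraninf{d}\overline{\conti B}^{(u)}_q$ follows, and the continuous-mapping/joint-tightness upgrade to the joint statement with $\ovconw_n$ in \cref{thm:fvwuofbgqfipqhfqfpq_strongb} is then obtained by the same bootstrap used there (writing $\ovconz^{(u)}_{\rho,q}$ as the unique solution of the SDE driven by the limit of $\ovconw_n$ and invoking uniqueness from \cite[Theorem 2.1]{borga2021skewperm}).
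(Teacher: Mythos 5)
Your high-level architecture matches the paper's: decompose $\ovbmZ^{(0)}$ into excursions glued at $0$ and $-1$, exploit the renewal structure of the returns to zero, and identify $q$ as the relative weight of the positive excursions. But two of your concrete steps would fail as stated. First, you propose to invoke the Ngo--Peign\'e invariance principle directly; the paper explicitly notes that the models treated in \cite{MR4105264} do \emph{not} include this walk, so only their three-step \emph{scheme} (one-dimensional marginals, finite-dimensional marginals, tightness) can be borrowed, and the one-dimensional marginal convergence has to be proved from scratch. The paper does this by conditioning on the last renewal time $\bm\tau^+_\ell=k$ before $\lfloor nt\rfloor$ and combining local limit theorems for one-dimensional walks conditioned to stay positive (resp.\ negative) with a convolution estimate (\cref{prop:asympt_estim_sum}).

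Second, and more seriously, your identification of $q$ rests on ``mean durations (first-return times) of the two kinds of excursions'' and you anticipate having to verify that these means are finite. They are not: the paper's \cref{ijrnfr9fnr39} shows $\P(\bm\tau^{+}_{1}=k)\sim\beta k^{-3/2}$, so $\P(\bm\tau^+_1>k)\sim 2\beta k^{-1/2}$ and $\E[\bm\tau^+_1]=\infty$. Consequently a weighting by $1/\sqrt{\text{mean excursion length}}$ is not meaningful here; the correct weights are the constants in the $k^{-3/2}$ local tail asymptotics of each excursion type (which reduce to one-step launch probabilities times expected overshoots $\E[-\bm S_{\bm N}]$, $\E[\bm S'_{\bm N'}]$, divided by $\sigma'$ resp.\ $\sigma$), and the renewal mass $\sum_\ell\P(\bm\tau^+_\ell=k)\sim\frac{1}{\beta}\frac{1}{2\pi\sqrt k}$ must come from the infinite-mean (stable index $1/2$) renewal theorem \cite[Theorem B]{MR1440141}, not classical renewal theory. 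A related structural point you gloss over: a renewal cycle is not an independently signed i.i.d.\ excursion but rather ``an optional strictly positive excursion started at $1$, followed by a non-positive excursion entered through $-1$'' (cf.\ \cref{obs:alternating_excursions_strong}), and this asymmetry enters the exact computation of $\beta$ and hence of $q=\frac{\alpha\theta^{-1}}{(1-\theta)\sqrt{2\pi}\sigma'\beta}$. Your final paragraph on upgrading to the joint convergence with $\ovconw_n$ concerns \cref{thm:fvwuofbgqfipqhfqfpq_strongb} rather than this proposition and is not needed here.
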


The proof of \cref{thm:fvwuofbgqfipqhfqfpq_strongb} is in some steps similar to the proof of \cite[Theorem 4.5]{borga2020scaling}, the main difference being that here we are dealing with a skew Brownian motion instead of a classical Brownian motion. Since the proof is quite short, we include all the details for the sake of completeness.
\begin{proof}[Proof of \cref{thm:fvwuofbgqfipqhfqfpq_strongb}]
	We want to show the following joint convergence in $\mathcal C(\R,\R)^{3}$:
	\begin{equation}\label{eq:fvwuofbgqfipqhfqfpq}
	\left(\ovconw_n,\contzu_n\right) 
	\xraninf{d}
	\left(\ovconw_{\rho},\ovconz_{\rho,q}^{(u)}\right).
	\end{equation}
	By Donsker's theorem and using the expression for the correlation $\rho$ of the distribution $\mu_{Sb}$ given in \cref{eq:corr_strong_baxt},
	we have that $\ovconw_n$ converges in distribution to $\ovconw_{\rho}$. The convergence of second component follows from \cref{prop:skew_part_strong} since $\ovconz_{\rho,q}^{(u)}$ is a skew Brownian motion of parameter $q$ started at time $u$.
	These results prove component-wise convergence in \cref{eq:fvwuofbgqfipqhfqfpq}.
	
	\medskip
	
	We now establish joint convergence. Using Prokhorov's theorem, the marginals $\ovconw_n$ and  $\contzu_n$ are tight and so the left-hand side of \cref{eq:fvwuofbgqfipqhfqfpq} is a tight sequence. Using a second time Prokhorov's theorem, in order to prove \cref{eq:fvwuofbgqfipqhfqfpq} it is sufficient to show that all joint subsequential limits have the same distribution. Assume now that on a subsequence, we have 
	\begin{equation}\label{eq:fbibnfipqefipepf}
	\left(\ovconw_n,\ovconz^{( u )}_n\right) 
	\xraninf{d}
	\left(\ovconw_{\rho},\widetilde {\conti Z}\right),
	\end{equation}
	where $\widetilde {\conti Z}$ is a skew Brownian motion of parameter $q$ started at time $u$. We want to show that $\widetilde {\conti Z} = \ovconz_{\rho,q}^{(u)}$ a.s.
	Thanks to Skorokhod's theorem, we can also assume that \cref{eq:fbibnfipqefipepf} is in fact an almost sure convergence.
	
	Let $(\mathcal G_t)_t$ be the completion of $\sigma(\ovconw_{\rho}(s),\wconz(s), s\leq t)$ by the negligible events. Then the processes $\ovconw_{\rho}$ and $\wconz$ are $\mathcal G_t$-adapted.
	We are going to show that $\ovconw_{\rho}$ is a $(\mathcal G_t)_t$-Brownian motion, i.e., 
	\begin{equation}
		\left(\ovconw_{\rho}(t+s)-\ovconw_{\rho}(t)\right) \indep \mathcal G_t,\quad\text{for}\quad t\in \R, s\in \R_{\geq 0}.
	\end{equation}
	For fixed $n$, $\left(\ovconw_n({t+s}) - \ovconw_n(t)\right) \indep \sigma\left(\overline{\bm W}(k), k\leq \lfloor n t \rfloor\right)$ and so
	$$
	\left(\ovconw_n({t+s})-\ovconw_n(t)\right)  \indep \left(\ovconw_n(r),\contzu_n(r)\right)_{r \leq n^{-1} \lfloor n  t \rfloor
	}.
	$$
	Since $\ovconw_n$ converges to $\ovconw$, we can conclude that $\left(\ovconw_{\rho}(t+s)-\ovconw_{\rho}(t)\right)
	\indep \left(\ovconw_{\rho}(r),\wconz(r)\right)_{r\leq t}$. Thus, $\ovconw_{\rho}$ is a $(\mathcal G_t)_t$-Brownian motion.
	
	Now fix $\eps>0, \eps\in\mathbb{Q}$ and an open interval $(\bm a,\bm b)$ with $\bm a,\bm b\in\mathbb{Q}$ on which $\wconz(t)>\eps$. We remark that $(\bm a,\bm b)$ depends on $\wconz(t)$. By a.s.\ convergence, there exists $N_0$ such that for all $n\geq N_0$, $\contzu_n>\eps/2$ on $(\bm a,\bm b)$. 
	The process $(\contzu_n - \overline{\conti Y}_n)|_{(\bm a+1/n,\bm b)}$ is constant by \cref{defn:distrib_incr_coal_strong} and so its limit $(\wconz - \overline{\conti Y}_{\rho})|_{(\bm a,\bm b)}$ is constant too a.s. We have shown that a.s.\ $\wconz - \overline{\conti Y}_{\rho}$ is locally constant on the set $\{t>u:\wconz(t)>\eps\}$. Thus we have that a.s.
	\begin{equation}
		\int_{u}^t \idf_{\{\wconz(r)>\eps\}} d\wconz(r) = \int_u^t \idf_{\{\wconz(r)>\eps\}} d\overline{\conti Y}_{\rho}(r), \quad t\geq u. 
	\end{equation}
	The two stochastic integrals above are well-defined: for the first one it is enough to consider the filtration of $\widetilde {\conti Z}$, for the second one, the filtration $(\mathcal G_t)_t$.	
	Using similar arguments for negative values, we obtain that 
	\begin{equation}
		\int_u^t \idf_{\{|\wconz(r)|>\eps\}} d\wconz(r)  =  \int_{u}^t \idf_{\{\wconz(r)>\eps\}} d\overline{\conti Y}_{\rho}(r) -  \int_{u}^t \idf_{\{\wconz(r)<-\eps\}} d\overline{\conti X}_{\rho}(r).
	\end{equation}
	By stochastic dominated convergence theorem, we are allowed to take the limit as $\eps\to 0$, \cite[Thm. IV.2.12]{revuz2013continuous}, and deduce that 
	\begin{equation}
		\int_u^t \idf_{\{\wconz(r)\neq0\}}d\wconz(r)  =  \int_{u}^t \idf_{\{\wconz(r)>0\}} d\overline{\conti Y}_{\rho}(r) -  \int_{u}^t \idf_{\{\wconz(r)<0\}} d\overline{\conti X}_{\rho}(r).
	\end{equation}
	Since $\wconz$ is a skew Brownian motion of parameter $q$, we have that (see for instance \cite[Eq.(38-39)]{MR2280299})
	$$\int_u^t \idf_{\{\wconz(r)=0\}}d\wconz(r) = (2q-1) \int_u^td\conti L^{\wconz}(r),$$
	where we recall that $\conti L^{\wconz}$ denotes the symmetric local-time process at zero of $\wconz$.
	Since 
	$$\int_u^t \idf_{\{\wconz(r)\neq0\}}d\wconz(r)+\int_u^t \idf_{\{\wconz(r)=0\}}d\wconz(r)=\wconz(t),$$ 
	using the two displayed equations above, we obtain that
	\begin{equation}
	\wconz(t)=\int_{u}^t \idf_{\{\wconz(r)>0\}} d\overline{\conti Y}_{\rho}(r) -  \int_{u}^t \idf_{\{\wconz(r)<0\}} d\overline{\conti X}_{\rho}(r)+(2q-1) \int_u^td\conti L^{\wconz}(r),
	\end{equation}
	As a result $\widetilde {\conti Z}$ verifies \cref{eq:flow_SDE_inf_vol_strong} almost surely and we can apply pathwise uniqueness (\cite[Theorem 2.1]{borga2021skewperm}) to complete the proof that $\widetilde {\conti Z} = \ovconz_{\rho,q}^{(u)}$ a.s.
\end{proof}

It remains to prove \cref{prop:skew_part_strong}. We recall that (see for instance \cite[Equation (17)]{MR2280299}) for a skew Brownian motion $(\overline{\conti B}_{q}(t))_{t\geq 0}$ of parameter $q$ it holds that for any non-negative and continuous function $\varphi$ with compact support,
\begin{equation}
\E\left[\varphi(\overline{\conti B}_{q}(t))\right]=q\int_{0}^{+\infty}\varphi(y)\frac{2e^{-y^2/2t}}{\sqrt{2\pi t}}dy+(1-q)\int_{-\infty}^{0}\varphi(y)\frac{2e^{-y^2/2t}}{\sqrt{2\pi t}}dy.
\end{equation}

\begin{proof}[Proof of \cref{prop:skew_part_strong}]
	In order to prove our result it is enough to show:
	\begin{itemize}
		\item convergence of one-dimensional marginal distributions;
		\item convergence of finite-dimensional marginal distributions;
		\item tightness.
	\end{itemize}
	
	This \emph{three-steps standard approach} is also used in \cite{MR4105264} to show convergence of some specific random walks to a skew Brownian permuton. Unfortunately the models of random walks considered in \cite{MR4105264} do not include our walks.
	Here we show all the details of the convergence of one-dimensional marginal distributions for the walk $\contzu_n$, then convergence of finite-dimensional marginal distributions and tightness follows as in \cite{MR4105264}, using the same kind of modifications presented here for the convergence of one-dimensional marginal distributions.
	
	We consider the case $u=0$, the general proof being similar. We recall that $\overline {\conti Z}_n\coloneqq\overline {\conti Z}^{(0)}_n:\R\to\R$ is the continuous process defined by linearly interpolating the following points:
	\begin{equation}
	\ovconz_{n}\left(\frac kn\right) =
	\begin{cases}
	\frac {\ovbmZ_{k}} {\sigma'\sqrt {n}},\quad&\text{when}\quad\ovbmZ_{k}\geq 0,\\
	\frac {\ovbmZ_{k}} {\sigma\sqrt{n}},\quad&\text{when}\quad\ovbmZ_{k}< 0,
	\end{cases}
	\quad\text{for all}\quad k\in \Z,
	\end{equation}
	where ${\ovbmZ_{k}}\coloneqq {\ovbmZ^{(0)}_{k}}$. 
	For the rest of the proof we fix a non-negative and Lipschitz continuous function $\varphi$ with compact support.
	We want to show that for all $t\geq 0$,
	\begin{equation}
	\E\left[\varphi\left(\overline {\conti Z}_n(t)\right)\right]\to
	q\int_{0}^{+\infty}\varphi(y)\frac{2e^{-y^2/2t}}{\sqrt{2\pi t}}dy+(1-q)\int_{-\infty}^{0}\varphi(y)\frac{2e^{-y^2/2t}}{\sqrt{2\pi t}}dy.
	\end{equation}
	Since $\varphi$ is Lipschitz continuous, as shown in the first lines of the proof of \cite[Lemma 3.1]{MR4105264}, it is enough to prove that
	\begin{equation}\label{eq:jbfgweiufbowfwfnoiweb_strong}
	\E\left[\varphi\left(\frac {\ovbmZ_{\lfloor nt\rfloor}} {\sigma'\sqrt  n}\right);\ovbmZ_{\lfloor nt\rfloor}> 0\right]\to q\int_{0}^{+\infty}\varphi(y)\frac{2e^{-y^2/2t}}{\sqrt{2\pi t}}dy,
	\end{equation}
	and that
	\begin{equation}\label{eq:bkjfkwebfoewnfpoew}
	\E\left[\varphi\left(\frac {\ovbmZ_{\lfloor nt\rfloor}} {\sigma\sqrt  n}\right);\ovbmZ_{\lfloor nt\rfloor}< 0\right]\to (1-q)\int_{-\infty}^{0}\varphi(y)\frac{2e^{-y^2/2t}}{\sqrt{2\pi t}}dy.
	\end{equation}
	We start with the first expectation. It can be decomposed as 
	\begin{equation}
	\E\left[\varphi\left(\frac {\ovbmZ_{\lfloor nt\rfloor}} {\sigma'\sqrt  n}\right);\ovbmZ_{\lfloor nt\rfloor}> 0\right]=
	\sum_{k=0}^{\lfloor nt\rfloor}\sum_{\ell=0}^{+\infty}\E\left[\varphi\left(\frac {\ovbmZ_{\lfloor nt\rfloor}} {\sigma'\sqrt  n}\right);\bm \tau^{+}_{\ell}=k,(\ovbmZ_{i})_{i\in[k+1,\lfloor nt\rfloor]}> 0\right],
	\end{equation}
	where $\bm \tau^{+}_{0}=0$ and $\bm \tau^{+}_{\ell+1}=\inf\{i\geq\bm \tau^{+}_{\ell}:\ovbmZ_{i}=0\}$ for all $\ell\in\Z_{\geq 0}$. Note that for all $\ell\in\Z_{\geq 0}$, using \cref{defn:distrib_incr_coal_strong} we have that
	$$\left(\ovbmZ_{\lfloor nt\rfloor};\bm \tau^{+}_{\ell}=k,(\ovbmZ_{i})_{i\in[k+1,\lfloor nt\rfloor]}> 0\right)\stackrel{d}{=}\left(\bm S_{\lfloor nt\rfloor-k};\tau^{+}_{\ell}=k,(\bm S_{i})_{i\in[\lfloor nt\rfloor-k]}> 0\right),$$ 
	where $(\bm S_i)_{i\geq 0}$ denotes a random walk started at zero at time zero, with step distribution equal to the distribution of $\obmy(1)-\obmy(0)$, and independent of $\bm \tau^{+}_{\ell}$. Therefore we can write
	\begin{multline}\label{eq:webhfyuwheribfiuewbfoweuf_strong}
	\E\left[\varphi\left(\frac {\ovbmZ_{\lfloor nt\rfloor}} {\sigma'\sqrt  n}\right);\ovbmZ_{\lfloor nt\rfloor}>0\right]=
	\sum_{k=0}^{\lfloor nt\rfloor}\left(\sum_{\ell=0}^{+\infty}\P(\bm \tau^{+}_{\ell}=k)\right)\E\left[\varphi\left(\frac {\bm S_{\lfloor nt\rfloor-k}} {\sigma'\sqrt  n}\right);(\bm S_{i})_{i\in[\lfloor nt\rfloor-k]}> 0\right]\\
	=\sum_{k=0}^{\lfloor nt\rfloor}\left(\sum_{\ell=0}^{+\infty}\P(\bm \tau^{+}_{\ell}=k)\right)\E\left[\varphi\left(\frac {\bm S_{\lfloor nt\rfloor-k}} {\sigma'\sqrt  n}\right)\middle |(\bm S_{i})_{i\in[\lfloor nt\rfloor-k]}> 0\right]\P\left((\bm S_{i})_{i\in[\lfloor nt\rfloor-k]}> 0\right).
	\end{multline}
	We now focus on studying $\P(\bm \tau^{+}_{\ell}=k)$. We have the following result.
	
	\begin{lem}\label{ijrnfr9fnr39}
		As $k\to\infty$,
		\begin{equation}
		\P(\bm \tau^{+}_{1}=k)\sim\frac{\beta}{k^{3/2}},
		\end{equation}
		where
		\begin{equation}\label{eq:fbouewfboiwenfewf}
		\beta=\frac{1}{\sqrt{2\pi}}\left(\frac{1}{\sigma'}\frac{\alpha\theta^{-1}}{1-\theta}+\frac{1}{\sigma}\left(\alpha\theta^{-1}+\frac{\alpha\gamma^{-1}}{1-\theta}\right)\frac{1}{1-\gamma}\right)\approx 0.730268.
		\end{equation}
	\end{lem}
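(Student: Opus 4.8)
The plan is to decompose the first return time $\bm\tau^+_1:=\inf\{k\ge 1:\ovbmZ_k=0\}$ according to the first increment of $\ovbmZ$, to recognise the resulting excursion pieces as first-passage (ladder) times of two explicit centred one-dimensional random walks, and then to combine a classical first-passage local limit theorem with a short memorylessness computation and a convolution estimate.

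\emph{First-step decomposition.} By \cref{defn:distrib_incr_coal_strong} and the alternating-excursion structure of \cref{obs:alternating_excursions_strong}: starting from $\ovbmZ_0=0$, an increment $(-i,0)$ keeps $\ovbmZ$ at $0$ (probability $p_0:=\alpha\gamma/(1-\gamma)$), an increment $(0,1)$ sends $\ovbmZ$ to $1$ (probability $p_+:=\alpha\theta^{-1}$), and an increment $(1,-i)$ sends $\ovbmZ$ to $-1$ (probability $p_-:=\alpha\gamma^{-1}/(1-\theta)$), with $p_0+p_++p_-=1$ by the third equation of \cref{eq:system_strong}. Moreover, as long as $\ovbmZ>0$ its increments are exactly those of a random walk with step law $\mathcal{L}aw(\obmy(1)-\obmy(0))$, run until it would be $\le 0$, whereupon $\ovbmZ$ jumps to $-1$; and as long as $\ovbmZ<0$ its increments are exactly those of a random walk with step law $\mathcal{L}aw(\obmx(0)-\obmx(1))$, run until it would be $\ge 0$, whereupon $\ovbmZ$ returns to $0$. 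Let $\bm T_Y$ be the first passage into $(-\infty,0]$ of a walk started at $1$ with step law $\mathcal{L}aw(\obmy(1)-\obmy(0))$, and $\bm T_X$ the first passage into $[0,\infty)$ of a walk started at $-1$ with step law $\mathcal{L}aw(\obmx(0)-\obmx(1))$; by the strong Markov property $\bm T_Y\indep\bm T_X$, and for every $k\ge 2$,
\[
\P(\bm\tau^+_1=k)=p_+\,\P(\bm T_Y+\bm T_X=k-1)+p_-\,\P(\bm T_X=k-1).
\]

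\emph{Ladder-epoch asymptotics.} After translating both auxiliary walks to start from $0$, $\bm T_Y$ is the first strict descending ladder epoch of the centred, aperiodic walk with step law $\mathcal{L}aw(\obmy(1)-\obmy(0))$, of variance ${\sigma'}^2$ by \cref{eq:computations_parm_strong}, and $\bm T_X$ is the first strict ascending ladder epoch of the centred, aperiodic walk with step law $\mathcal{L}aw(\obmx(0)-\obmx(1))$, of variance $\sigma^2$. For a centred, aperiodic random walk on $\Z$ with finite variance $\varsigma^2$, the first-passage local limit theorem (of the type exploited in \cite{MR4105264}) gives $\P(\tau=n)\sim\frac{\E[|H|]}{\sqrt{2\pi}\,\varsigma}\,n^{-3/2}$, where $H$ is the corresponding ladder height. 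The geometric structure of $\mu_{Sb}$ makes $\E[|H|]$ explicit: the negative part of $\mathcal{L}aw(\obmy(1)-\obmy(0))$ is proportional to $\sum_{i\ge 1}\theta^i\delta_{-i}$, so by memorylessness of the geometric law the overshoot of $0$ at the crossing time has law $1+\mathrm{Geom}(1-\theta)$, irrespectively of the level $\ge 0$ occupied just before the crossing; hence $\E[|H|]=(1-\theta)^{-1}$ and $\P(\bm T_Y=n)\sim\frac{1}{\sqrt{2\pi}\,\sigma'(1-\theta)}n^{-3/2}$. Symmetrically, the positive part of $\mathcal{L}aw(\obmx(0)-\obmx(1))$ is proportional to $\sum_{i\ge 1}\gamma^i\delta_{i}$, so $\P(\bm T_X=n)\sim\frac{1}{\sqrt{2\pi}\,\sigma(1-\gamma)}n^{-3/2}$.

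\emph{Conclusion and main obstacle.} Both $\bm T_Y$ and $\bm T_X$ are a.s.\ finite (centred recurrent walks), so $\sum_m\P(\bm T_Y=m)=\sum_m\P(\bm T_X=m)=1$, and each has a summable $n^{-3/2}$ tail; the standard convolution estimate then yields $\P(\bm T_Y+\bm T_X=k)\sim(c_Y+c_X)k^{-3/2}$ with $c_Y:=\frac{1}{\sqrt{2\pi}\sigma'(1-\theta)}$, $c_X:=\frac{1}{\sqrt{2\pi}\sigma(1-\gamma)}$. Substituting into the displayed identity,
\[
\P(\bm\tau^+_1=k)\sim\big(p_+c_Y+(p_++p_-)c_X\big)k^{-3/2},
\]
and since $p_+=\alpha\theta^{-1}$ and $p_++p_-=\alpha\theta^{-1}+\alpha\gamma^{-1}(1-\theta)^{-1}$, the bracket is exactly $\beta$ of \cref{eq:fbouewfboiwenfewf}. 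The delicate point is the first-passage local limit theorem with the exact constant $\E[|H|]/(\sqrt{2\pi}\varsigma)$ — in particular the verification that both auxiliary walks are aperiodic, so that no periodicity correction enters — together with the memorylessness identification $\E[|H|]=(1-\theta)^{-1}$ (resp.\ $(1-\gamma)^{-1}$); the first-step decomposition and the convolution step are routine.
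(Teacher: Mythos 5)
Your proof is correct and follows essentially the same route as the paper: the same decomposition of the return time into a possible strictly positive excursion followed by a non-positive one (the paper's \cref{eq:fgfgfdagfdgagagfggd} is exactly your first-step identity, expanded over the switch time and the intermediate positions $y,y'$), the same $n^{-3/2}$ fluctuation-theoretic input, and the same convolution step (\cref{prop:asympt_estim_sum}). The only difference is presentational: you invoke the first-passage local limit theorem with constant $\E[|H|]/(\sqrt{2\pi}\varsigma)$ as a black box, whereas the paper reconstructs that constant by summing the conditioned local estimates of \cref{lem:local_est_one_dim} over the exit position and then identifying $\E[-\bm S_{\bm N}]=(1-\theta)^{-1}$ and $\E[\bm S'_{\bm N'}]=(1-\gamma)^{-1}$ via \cref{lem:local_est_one_dim_expect}, which is the same memorylessness computation you use for the ladder heights.
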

	\begin{proof}
		Recall that $\ovbmZ_{0}=0$. Using Observation \cref{obs:alternating_excursions_strong}, we can write (see also \cref{fig:schema_prob_event_strong})
		\begin{align}\label{eq:fgfgfdagfdgagagfggd}
		&\P(\bm \tau^{+}_{1}=k)=\P(\ovbmZ_{k}=0,\ovbmZ_{1}=1,(\ovbmZ_{i})_{i\in[1,k-1]}\neq 0)+\P(\ovbmZ_{k}=0,(\ovbmZ_{i})_{i\in[1,k-1]}< 0)=\\
		&\P(\ovbmZ_{1}=1)\sum_{s=1}^{k-2}\sum_{\substack{y\in\zgz\\
				y'\in\Z_{<0}}}\P(\ovbmZ_{s}=y,(\ovbmZ_{i})_{i\in[s]}> 0|\ovbmZ_{1}=1)
		\P(\ovbmZ_{s+1}-\ovbmZ_{s}=-y-1)\\
		&\cdot \P(\ovbmZ_{k-1}=y',(\ovbmZ_{i})_{i\in[s+1,k-1]}< 0|\ovbmZ_{s+1}=-1)
		\P(\ovbmZ_{k}-\ovbmZ_{k-1}=-y')
		+\P(\ovbmZ_{k}=0,(\ovbmZ_{i})_{i\in[k-1]}< 0).
		\end{align}
		\begin{figure}[htbp]
			\centering
			\includegraphics[scale=1]{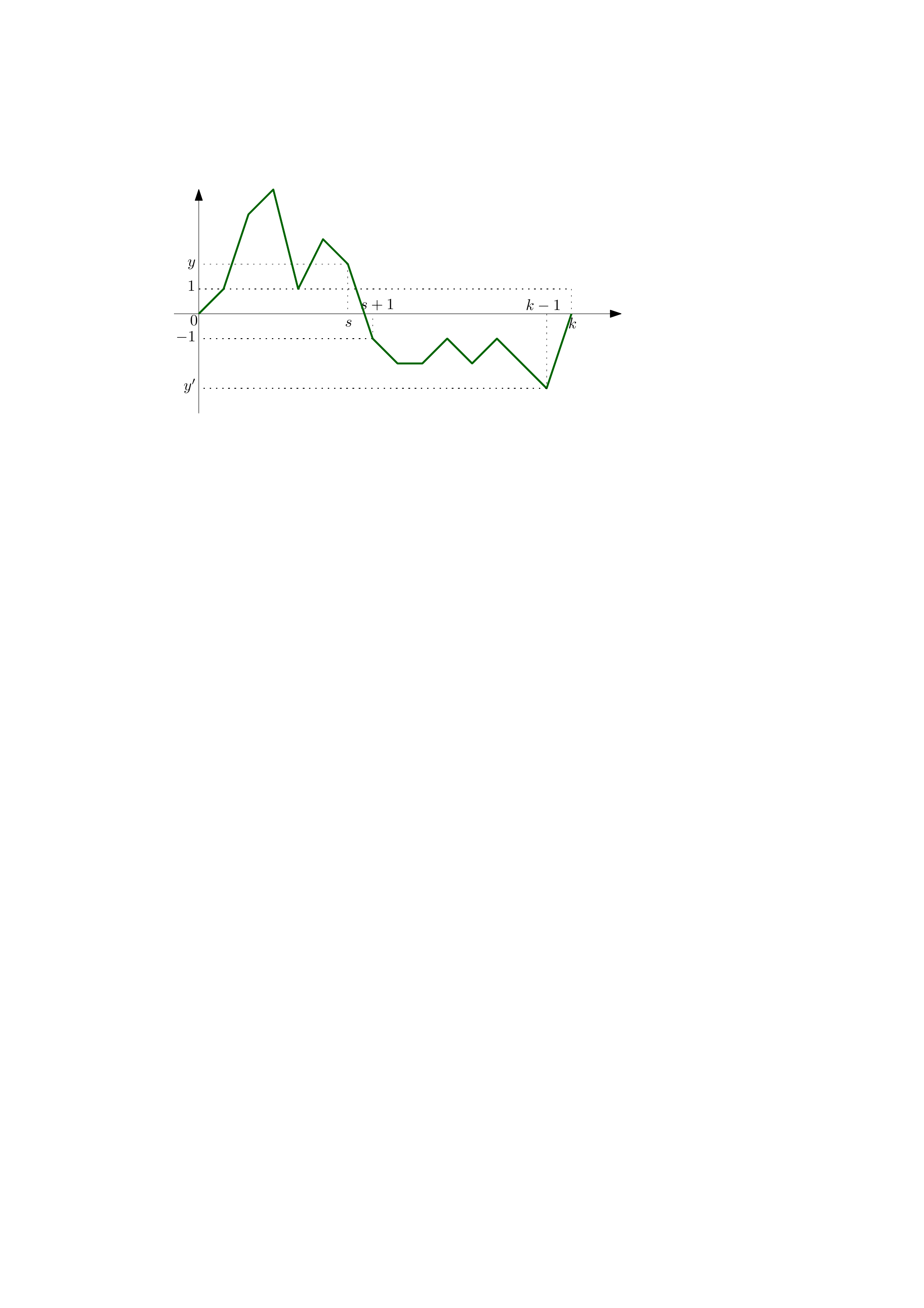}
			\caption{A schema for the event $\{\ovbmZ_{k}=0,\ovbmZ_{1}=1,(\ovbmZ_{i})_{i\in[1,k-1]}\neq 0\}$.\label{fig:schema_prob_event_strong}}
		\end{figure}

		Using \cref{defn:distrib_incr_coal_strong} and letting:
		\begin{itemize}
			\item  $(\bm S_i)_{i\geq 0}$ be a random walk started at zero at time zero, with step distribution equal to the distribution of $\obmy(1)-\obmy(0)$,
			\item  $(\bm S'_i)_{i\geq 0}$ be a random walk started at zero at time zero, with step distribution equal to the distribution of $-(\obmx(1)-\obmx(0))$,
			\item $(\bm S_i)_{i\geq 0}$ is independent of $(\bm S'_i)_{i\geq 0}$,
		\end{itemize}
		we can rewrite the previous expression as
		\begin{align}
		\P(\bm \tau^{+}_{1}=k)=&\P(\obmy(1)-\obmy(0)=1)\sum_{s=0}^{k-3}\sum_{\substack{y\in\zgz\\
				y'\in\Z_{<0}}}\P(1+\bm S_{s}=y,(1+\bm S_{i})_{i\in[s]}> 0)
		\P(\obmy(1)-\obmy(0)\leq -y)\\
		\cdot\;&\P(-1+\bm S'_{k-2}=y',(-1+\bm S'_{i})_{i\in[s+1,k-2]}< 0)
		\P(-(\obmx(1)-\obmx(0))\geq -y')\label{eq:wefiuwebfgrtgrtgfewbfqewbofq}\\
		+&\sum_{y'\in\Z_{<0}}\P(\bm S'_{k-2}=y',(\bm S'_{i})_{i\in[k-2]}< 0)\P(-(\obmx(1)-\obmx(0))\geq -y').
		\end{align}
		We now focus on
		$\sum_{s=0}^{k-3}\P(1+\bm S_{s}=y,(1+\bm S_{i})_{i\in[s]}> 0)
		\P(-1+\bm S'_{k-2}=y',(-1+\bm S'_{i})_{i\in[s+1,k-2]}< 0)$. From \cref{lem:local_est_one_dim} we know that as $m\to \infty$,
		\begin{align}\label{eq:ewvifewvifewnfieowpnfpewi}
		&\P(1+\bm S_{m}=y,(1+\bm S_{i})_{i\in[m]}> 0)\sim\frac{c_1}{\sigma'\sqrt{2\pi}}\frac{\tilde{h}(y)}{m^{3/2}},\\
		&\P(-1+\bm S'_{m}=y',(-1+\bm S'_{i})_{i\in[m]}< 0)\sim\frac{c_2}{\sigma\sqrt{2\pi}}\frac{\tilde{h'}(y')}{m^{3/2}},\\
		&\P(1+\bm S_{m}=y,(1+\bm S_{i})_{i\in[m]}> 0)\leq C\frac{\tilde{h}(y)}{m^{3/2}},\quad\text{for all}\quad y\in\Z_{> 0},\\
		&\P(-1+\bm S'_{m}=y',(-1+\bm S'_{i})_{i\in[m]}< 0)\leq C\frac{\tilde{h'}(y')}{m^{3/2}},\quad\text{for all}\quad y'\in\Z_{< 0},
		\end{align}
		where $\tilde{h}$ and $\tilde{h'}$ are the functions defined in \cref{eq:efdbuebfwebfowe} for the walks $-\bm S_{m}$ and $-\bm S'_{m}$ respectively and
		\begin{equation}\label{eq:corrconstants}
			c_1=\frac{\E[-\bm{S}_{\bm N}]}{\sum_{z\in\zgz} \tilde{h}(z)\P(\bm{S}_1\leq -z)}\quad\text{ and }\quad c_2=\frac{\E[\bm{S}'_{\bm N'}]}{\sum_{z\in\Z_{<0}} \tilde{h}'(z)\P(\bm{S}'_1\geq -z)},
		\end{equation}
		with $\bm N\coloneqq\inf\{n>0|\bm{S}_n< 0\}$ and $\bm N'\coloneqq\inf\{n>0|\bm{S'}_n> 0\}$.
		Using \cref{prop:asympt_estim_sum} and the estimates above, we have, as $k\to \infty$,
		\begin{multline}
		\sum_{s=0}^{k-3}\P(1+\bm S_{s}=y,(1+\bm S_{i})_{i\in[s]}> 0)
		\P(-1+\bm S'_{k-2}=y',(-1+\bm S'_{i})_{i\in[s+1,k-2]}< 0)\\
		\sim\frac{1}{\sqrt{2\pi}k^{3/2}}C(y,y'),
		\end{multline}
		where $C(y,y')$ is equal to
		\begin{equation}
			\frac{c_1\cdot\tilde{h}(y)}{\sigma'}\left(\sum_{s=0}^{\infty}\P(-1+\bm S'_{s}=y',(-1+\bm S'_{i})_{i\in[s]}< 0)\right)
			+\frac{c_2\cdot\tilde{h'}(y')}{\sigma}\left(\sum_{s=0}^{\infty}\P(1+\bm S_{s}=y,(1+\bm S_{i})_{i\in[s]}> 0)\right).
		\end{equation}
		Therefore from the uniform bounds in \cref{eq:ewvifewvifewnfieowpnfpewi} and the fact that (we are using again \cref{lem:local_est_one_dim})
		\begin{align}
		&\P(\bm S'_{k}=y',(\bm S'_{i})_{i\in[k-1]}< 0)\sim\frac{\P(-(\obmx(1)-\obmx(0))=-1)}{\sigma\sqrt{2\pi}}\frac{\tilde{h'}(y')}{k^{3/2}},\\
		&\P(\bm S'_{k}=y',(\bm S'_{i})_{i\in[k-1]}< 0)\leq C \frac{\tilde{h'}(y')}{k^{3/2}},\quad\text{for all}\quad y'\in\Z_{< 0},
		\end{align}
		we can conclude from \cref{eq:wefiuwebfgrtgrtgfewbfqewbofq} that
		\begin{multline}
		\P(\bm \tau^{+}_{1}=k)
		\sim\frac{1}{k^{3/2}}
		\frac{1}{\sqrt{2\pi}}
		\Bigg(\P(\obmy(1)-\obmy(0)=1)\sum_{\substack{y\in\zgz\\
				y'\in\Z_{< 0}}}
		\P(\obmy(1)-\obmy(0)\leq-y)
		\P(-(\obmx(1)-\obmx(0))\geq -y')C(y,y')\\
		+\P(-(\obmx(1)-\obmx(0))=-1)\sum_{y'\in\Z_{<0}}\frac{\tilde{h'}(y')}{\sigma}\P(-(\obmx(1)-\obmx(0))\geq -y')\Bigg)=\frac{\beta}{k^{3/2}}.
		\end{multline}
		We finally simplify the expression for the coefficient $\beta$ above, finding the formula in \cref{eq:fbouewfboiwenfewf}. Note that 
		\begin{multline}
		\sum_{y\in\zgz}\sum_{s=0}^{\infty}\P(1+\bm S_{s}=y,(1+\bm S_{i})_{i\in[s]}> 0)\P(\obmy(1)-\obmy(0)\leq-y)\\
		=\sum_{s=0}^{\infty}\P(1+\bm S_{s+1}\leq 0,(1+\bm S_{i})_{i\in[s]}> 0)=1,
		\end{multline}
		because a symmetric random walk started at $1$ becomes eventually non-positive almost surely.
		Similarly,
		\begin{equation}
		\sum_{y'\in\Z_{<0}}\sum_{s=0}^{\infty}\P(-1+\bm S'_{s}=y',(-1+\bm S'_{i})_{i\in[s]}< 0)\P(-(\obmx(1)-\obmx(0))\geq-y')=1.
		\end{equation}
		Therefore, using the expressions of the constants $c_1$ and $c_2$ in \cref{eq:corrconstants}, we obtain that
		\begin{equation}
		\beta=\frac{1}{\sqrt{2\pi}}
		\Bigg(\frac{\P(\obmy(1)-\obmy(0)=1)+\P(-(\obmx(1)-\obmx(0))=-1)}{\sigma}\E[\bm{S}'_{\bm N'}]
		+\frac{\P(\obmy(1)-\obmy(0)=1)}{\sigma'}\E[-\bm{S}_{\bm N}]\Bigg).
		\end{equation}
		Recalling that (see \cref{eq:distristep_strongbaxter})
		\begin{align}
		&\P(\obmy(1)-\obmy(0)=1)=\alpha\theta^{-1},\qquad\P(\obmy(1)-\obmy(0)=y)=\alpha\gamma^{-1}\theta^{-y},\quad\text{for all}\quad y\in\Z_{<0}\\
		&\P(-(\obmx(1)-\obmx(0))=-1)=\frac{\alpha\gamma^{-1}}{1-\theta},\qquad\P(-(\obmx(1)-\obmx(0))=y')=\alpha\gamma^{y'},\quad\text{for all}\quad y'\in\zgz,
		\end{align}
		and using \cref{lem:local_est_one_dim_expect}, we obtain that $\E\left[-\bm S_{\bm N}\right]=\frac{1}{1-\theta}$ and $\E\left[\bm S'_{\bm N'}\right]=\frac{1}{1-\gamma}$ and so
		\begin{equation}
		\beta
		=\frac{1}{\sqrt{2\pi}}
		\left(\frac{1}{\sigma}\left(\alpha\theta^{-1}+\frac{\alpha\gamma^{-1}}{1-\theta}\right)\frac{1}{1-\gamma}
		+\frac{1}{\sigma'}\frac{\alpha\theta^{-1}}{1-\theta}\right)
		\end{equation}
		as claimed in \cref{eq:fbouewfboiwenfewf}.
	\end{proof}

	\begin{cor}\label{cor:asympt_estim_rew_strong}
		As $k\to+\infty$,
		\begin{equation}
		\sum_{\ell=0}^{+\infty}\P(\bm \tau^{+}_{\ell}=k)\sim\frac{1}{\beta}\frac{1}{2\pi\sqrt k}\;.
		\end{equation}
	\end{cor}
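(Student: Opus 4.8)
The plan is to identify $\sum_{\ell=0}^{+\infty}\P(\bm\tau^+_\ell=k)$ with the renewal mass function of a classical renewal process whose step distribution has infinite mean and a regularly varying tail of index $1/2$, and then to apply a strong renewal theorem.

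First I would record the renewal structure. Since the coalescent-walk process $\ovbmZ=\ovbmZ^{(0)}$ is obtained from the i.i.d.\ increments of $\overline{\bm W}$ through the deterministic local rule in \cref{defn:distrib_incr_coal_strong}, the sequence $(\ovbmZ_i)_{i\ge0}$ started at $\ovbmZ_0=0$ is a (time-homogeneous) Markov chain, and $\bm\tau^+_\ell$ is its $\ell$-th return time to $0$. By the strong Markov property, $(\bm\tau^+_{\ell+1}-\bm\tau^+_\ell)_{\ell\ge0}$ are i.i.d.\ copies of $\bm\tau^+_1$; hence, setting $u_k:=\sum_{\ell\ge0}\P(\bm\tau^+_\ell=k)=\P\big(k\in\{\bm\tau^+_\ell\}_{\ell\ge0}\big)$ (a finite sum for each fixed $k$), the quantity $u_k$ is precisely the renewal mass function of this renewal process. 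Moreover $\P(\bm\tau^+_1<\infty)=1$: by \cref{obs:alternating_excursions_strong}, after leaving $0$ the walk $\ovbmZ$ performs a strictly positive excursion — which, by \cref{defn:distrib_incr_coal_strong}, follows the mean-zero random walk with steps $\obmy(1)-\obmy(0)$ until it would become non-positive, hence terminates a.s.\ — followed by a non-positive excursion following the mean-zero walk with steps $-(\obmx(1)-\obmx(0))$ until it reaches $0$, which also terminates a.s.; thus $0$ is recurrent. Since $\P(\bm\tau^+_1=k)>0$ for all large $k$ by \cref{ijrnfr9fnr39}, the renewal is aperiodic.

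Next, \cref{ijrnfr9fnr39} gives $\P(\bm\tau^+_1=k)\sim\beta k^{-3/2}$, so $\P(\bm\tau^+_1\ge k)\sim 2\beta k^{-1/2}$: the step distribution has infinite mean and lies in the domain of attraction of the one-sided stable law of index $\alpha=1/2$, with regularly varying local probabilities. I would then invoke the strong renewal theorem in this borderline regime (see e.g.\ Garsia--Lamperti, Erickson, Doney, Caravenna--Doney): since the local probabilities $\P(\bm\tau^+_1=k)$ are regularly varying, the required regularity condition holds, and the theorem yields $u_k\sim c\,k^{-1/2}$ for some $c>0$. The constant is pinned down by Karamata's Tauberian theorem applied to the renewal generating function: writing $P(s)=\sum_k\P(\bm\tau^+_1=k)s^k$, the step asymptotics give $1-P(s)\sim 2\sqrt\pi\,\beta\,(1-s)^{1/2}$ as $s\uparrow 1$ (using $\int_0^{\infty}y^{-3/2}(1-e^{-y})\,dy=2\sqrt\pi$), whence $\sum_k u_k s^k=(1-P(s))^{-1}\sim\tfrac{1}{2\sqrt\pi\,\beta}(1-s)^{-1/2}$ and therefore $\sum_{j\le k}u_j\sim\tfrac{1}{\pi\beta}\sqrt k$; comparing with $u_k\sim c\,k^{-1/2}$ forces $c=\tfrac{1}{2\pi\beta}$. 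This gives $\sum_{\ell=0}^{+\infty}\P(\bm\tau^+_\ell=k)=u_k\sim\tfrac1\beta\cdot\tfrac{1}{2\pi\sqrt k}$, which is the claim.

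The main obstacle is the last step: $\alpha=1/2$ is exactly the borderline of the classical Garsia--Lamperti range $(1/2,1)$ where the strong renewal theorem can fail in general, so one must use a version valid in the borderline case and verify its extra hypothesis — this is precisely where the sharp local estimate $\P(\bm\tau^+_1=k)\sim\beta k^{-3/2}$ of \cref{ijrnfr9fnr39} (rather than merely the tail bound) is used.
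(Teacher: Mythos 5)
Your proposal is correct and follows essentially the same route as the paper: the paper also treats $(\bm\tau^+_\ell)_\ell$ as a renewal sequence and applies Doney's strong renewal theorem (\cite[Theorem B]{MR1440141}), verifying its hypothesis $\sup_n n\,\P(\bm\tau^+_1=n)/\P(\bm\tau^+_1>n)<\infty$ exactly from the local estimate of \cref{ijrnfr9fnr39}, just as you do via regular variation of the local probabilities. The only cosmetic difference is that you pin down the constant $\tfrac{1}{2\pi\beta}$ by a Karamata/Tauberian computation, whereas the paper reads it off directly from the (corrected) constant $\tfrac{1}{\Gamma(\alpha)\Gamma(1-\alpha)}=\tfrac1\pi$ in Doney's statement; your computation is a sound consistency check of that correction.
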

	\begin{proof}
		Thanks to\footnote{This corresponds to \cite[Theorem B]{MR1440141} where the term in the right-hand side of Equation (1.10) should be replaced by $\frac{1}{\Gamma(1-\alpha)\Gamma(\alpha)}$} \cite[Theorem B]{MR1440141}, it is enough have to check that
		$$\sup_{n\in\Z_{\geq 0}}\frac{n\cdot \P(\bm \tau^{+}_{1}=n)}{\P(\bm \tau^{+}_{1}>n)}<\infty,$$
		and this follows from \cref{ijrnfr9fnr39}.
	\end{proof}
	
	With this result in our hands we can now continue to estimate \cref{eq:webhfyuwheribfiuewbfoweuf_strong}.
	We start by defining for $1\leq k\leq \lfloor nt\rfloor-2$ and $s\in[\frac k n,\frac{k+1}{n})$,
	\begin{equation}
		f^+_n(s)\coloneqq n\left(\sum_{\ell=0}^{+\infty}\P(\bm \tau^{+}_{\ell}=k)\right)\E\left[\varphi\left(\frac {\bm S_{\lfloor nt\rfloor-k}} {\sigma'\sqrt  n}\right)\middle|(\bm S_{i})_{i\in[\lfloor nt\rfloor-k]}> 0\right]\P\left((\bm S_{i})_{i\in[\lfloor nt\rfloor-k]}> 0\right),
	\end{equation}
	and $f^+_n(s)\coloneqq 0$ on $(0,\frac 1 n)\cup[\frac{\lfloor nt\rfloor-1}{n},t)$. We have that	
	\begin{multline}\label{eq:fbwebfowenfpwenmfpew}
	\E\left[\varphi\left(\frac {\ovbmZ_{\lfloor nt\rfloor}} {\sigma'\sqrt  n}\right);\ovbmZ_{\lfloor nt\rfloor}>0\right]\\
	=\sum_{k=0}^{\lfloor nt\rfloor}\left(\sum_{\ell=0}^{+\infty}\P(\bm \tau^{+}_{\ell}=k)\right)\E\left[\varphi\left(\frac {\bm S_{\lfloor nt\rfloor-k}} {\sigma'\sqrt  n}\right)\middle |(\bm S_{i})_{i\in[\lfloor nt\rfloor-k]}> 0\right]\P\left((\bm S_{i})_{i\in[\lfloor nt\rfloor-k]}> 0\right)\\
	=\int_0^tf_n^+(s) ds+O(1/\sqrt n).
	\end{multline}
	Indeed each term in the sum in \cref{eq:fbwebfowenfpwenmfpew} is $O(1/\sqrt n)$; this follows from \cref{lem:local_est_one_dim}, \cref{cor:asympt_estim_rew_strong}, and the fact that\footnote{As pointed out in \cite[Section 2.2]{MR4105264} the sequence $(\tau^{+}_{\ell})_{\ell\in\Z_{\geq 0}}$ is a strictly increasing random walk on $\Z_{\geq 0}$ with i.i.d.\ increments distributed as $\tau^{+}_{1}$. Thus, its potential $\sum_{\ell=0}^{+\infty}\P(\bm \tau^{+}_{\ell}=k)$ is finite for all $k\in\Z_{\geq 0}$.} $\sum_{\ell=0}^{+\infty}\P(\bm \tau^{+}_{\ell}=k)<\infty$ for all $k\in\Z_{\geq 0}$.
	Since the walk $\frac {\bm S_{\lfloor nt\rfloor}} {\sigma'\sqrt  n}$ converges in distribution to a Brownian meander (see for instance \cite{MR803677}), we have that
	\begin{equation}\label{buibdqweobdpqwdbnwq}
	\E\left[\varphi\left(\frac {\bm S_{\lfloor nt\rfloor}} {\sigma'\sqrt  n}\right)\middle|(\bm S_{i})_{i\in[\lfloor nt\rfloor]}> 0\right]\sim\frac{1}{t}\int_0^{+\infty}\varphi(u)ue^{-u^2/2t}du,
	\end{equation}
	Using the latter estimate and the fact that from \cref{lem:local_est_one_dim},
	\begin{equation}
	\P\left((\bm S_{i})_{i\in[n]}> 0\right)\sim 2\frac{\P(\obmy(1)-\obmy(0)=1)\E[-\bm{S}_{\bm N}]}{\sqrt{2\pi}\sigma'} \frac{1}{\sqrt{n}}=2\frac{\alpha\theta^{-1}}{\sqrt{2\pi}\sigma'(1-\theta)} \frac{1}{\sqrt{n}}, 
	\end{equation}
	together with \cref{cor:asympt_estim_rew_strong}, we obtain that for any $s\in(0,t)$,
	\begin{multline}
	f^+_n(s)\sim n\frac{1}{\beta}\frac{1}{2\pi\sqrt n\sqrt s}\frac{1}{t-s}\int_0^{+\infty}\varphi(u)ue^{-u^2/2(t-s)}du\cdot 2\frac{\alpha\theta^{-1}}{\sqrt{2\pi}\sigma'(1-\theta)}\frac{1}{\sqrt n\sqrt{t-s}}\\
	=\frac{\alpha\theta^{-1}}{(1-\theta)\sqrt{2\pi}\sigma'\beta}\frac{1}{\pi\sqrt{s(t-s)}(t-s)}\int_0^{+\infty}\varphi(u)ue^{-u^2/2(t-s)}du.
	\end{multline}
	Moreover, recalling that $\varphi$ is Lipschitz continuous with compact support, we have for $1\leq k\leq \lfloor nt\rfloor-2$ and $s\in[\frac k n,\frac{k+1}{n})$ the bound
	\begin{equation}
		\left|f^+_n(s)\right|\leq \frac{Cn}{\sqrt{\lfloor ns\rfloor\cdot (\lfloor nt\rfloor-\lfloor ns\rfloor)}}\leq \frac{C'}{\sqrt{s(t-s)}}.
	\end{equation}
	Therefore by dominated convergence we can conclude that
	\begin{multline}\label{eq:weijbfuiwefbewonfpewinf}
	\E\left[\varphi\left(\frac {\ovbmZ_{\lfloor nt\rfloor}} {\sigma'\sqrt  n}\right);\ovbmZ_{\lfloor nt\rfloor}>0\right]\\
	\xrightarrow{n\to\infty}
	\frac{\alpha\theta^{-1}}{(1-\theta)\sqrt{2\pi}\sigma'\beta}\int_{0}^{+\infty}\frac{1}{\pi\sqrt{s(t-s)}(t-s)}\int_0^{+\infty}\varphi(u)ue^{-u^2/2(t-s)}\;du\;ds\\
	=\frac{\alpha\theta^{-1}}{(1-\theta)\sqrt{2\pi}\sigma'\beta}\int_{0}^{+\infty}\varphi(y)\frac{2e^{-y^2/2t}}{\sqrt{2\pi t}}dy,
	\end{multline}
	where in the last equality we used some standard integral computations. Using the expression for $\beta$ in \cref{eq:fbouewfboiwenfewf} we conclude that
	\begin{equation}
	q=\frac{\alpha\theta^{-1}}{(1-\theta)\sqrt{2\pi}\sigma'\beta}
	=\frac{\frac{\alpha\theta^{-1}}{(1-\theta)\sigma'}}{\frac{1}{\sigma'}\frac{\alpha\theta^{-1}}{1-\theta}+\frac{1}{\sigma}\left(\alpha\theta^{-1}+\frac{\alpha\gamma^{-1}}{1-\theta}\right)\frac{1}{1-\gamma}}
	=\frac{1}{1+\frac{\sigma'}{\sigma}\left(\frac{\theta+\gamma-\gamma\theta}{\gamma(1-\gamma)}\right)}.
	\end{equation}
	The formula for $q$ given in \cref{eq:param_q_strong} follows substituting the expressions for $\sigma$ and $\sigma'$ given in \cref{eq:computations_parm_strong} and using the relations for the parameters $\theta$ and $\gamma$ given in \cref{eq:system_strong}.
	
	\medskip
	
	The convergence for the second expectation in \cref{eq:bkjfkwebfoewnfpoew} follows using the same type of estimates and therefore we omit the details. 
\end{proof}

\subsubsection{The conditioned scaling limit}
\label{sec:cond_conv}
We now look at the conditioned case. The arguments in this section are quite similar to the ones in \cite[Section 4.3]{borga2020scaling} and so we postpone the proofs of Propositions \ref{prop:jfwvbvouvwefuow} and \ref{prop:jfbeouwfboufwe} to \cref{sect:techproofs}.

\medskip

For all $n\in\Z_{> 0}$, let $\bm W_n=(\bm W_n(i))_{i\geq 1}=(\bm X_n(i),\bm Y_n(i))_{i\geq 1}$ be a uniform walk in $\mathcal W^n_{Sb}$ and\footnote{For convenience, we denote here the time variable of the walks of coalescent-walk processes between parenthesis.} $\bm Z_n =\{\bm \Ztp_n(\cdot)\}_{t\in [n]} =\wScbp(\bm W_n)$ be the corresponding uniform coalescent-walk process in $\CCCC_{Sb}$. Also here, for convenience, we set $\ovbmZ^{(j)}_n(i) = 0$ for $i,j \in \zgz$, $i<j$.

For all $n\geq 1, u\in (0,1)$, recall the function ${\conti W}_n:[0,1]\to \R^2$ defined in \cref{eq:resc_walk} and let $\conti{Z}^{(u)}_n:(0,1)\to\R$ be the continuous process that interpolates the following points defined for all $k\in [n]$,
\begin{equation}\label{eq:wifbjwbfowebn}
\conti{Z}^{(u)}_{n}\left(\frac kn\right) =
\begin{cases}
\frac {{\bm Z}^{(\lceil nu\rceil)}_{n}(k)} { \sigma'\sqrt {n}},\quad&\text{when}\quad{\bm Z}^{(\lceil nu\rceil)}_{n}(k)\geq 0,\\
\frac {{\bm Z}^{(\lceil nu\rceil)}_{n}(k)} {\sigma\sqrt{ n}},\quad&\text{when}\quad{\bm Z}^{(\lceil nu\rceil)}_{n}(k)< 0,
\end{cases}
\end{equation}
where $\sigma$ and $\sigma'$ are defined in \cref{eq:computations_parm_strong}.

\medskip

In the next proposition we state a scaling limit result for a one-dimensional walk in the conditioned coalescent-walk processes introduced in \cref{eq:wifbjwbfowebn}. This is a consequence of \cref{prop:scaling_strong_walk}, \cref{thm:fvwuofbgqfipqhfqfpq_strongb}, and absolute continuity arguments between correlated Brownian excursions and correlated Brownian motions.

\begin{prop}\label{prop:jfwvbvouvwefuow}
	Let $u\in (0,1)$. 
	The following joint convergence in $C([0,1],\R)^2\times C([0,1),\R)$ holds: 
	\begin{equation}
	\label{eq:coal_con_cond}
	\left({\conti W}_n,\conti{Z}^{(u)}_n \right)
	\xraninf{d}
	\left(\conti E_{\rho},\conti Z_{\rho,q}^{(u)}\right),
	\end{equation}
	where $\conti Z_{\rho,q}^{(u)}$ is the strong solution of the SDE in \cref{eq:random_skew_function} and the parameters $\rho,q$ are the same as in \cref{thm:fvwuofbgqfipqhfqfpq_strongb}.
\end{prop}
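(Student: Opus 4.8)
The plan is to transfer the unconditioned joint convergence of \cref{thm:fvwuofbgqfipqhfqfpq_strongb} to the conditioned setting by an absolute continuity argument, following closely the strategy of \cite[Section 4.3]{borga2020scaling}; this is why the full proof is deferred to \cref{sect:techproofs}. Fix $u\in(0,1)$, fix $\eps\in(0,1-u)$ and $a\in(0,u)$. Since $\conti W_n\to\conti E_\rho$ in $C([0,1],\R^2)$ is already granted by \cref{prop:scaling_strong_walk}, and since $\conti{Z}^{(u)}_n\equiv 0$ on $[0,u)$, it is enough to prove joint convergence of $\big({\conti W}_n,\conti{Z}^{(u)}_n\big)$ restricted to the window $[a,1-\eps]$ and then let $a\to 0$, $\eps\to 0$. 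The basic deterministic observation is that, because $\wScbp$ is a deterministic map, the pair $({\conti W}_n,\conti{Z}^{(u)}_n)$ is a measurable image of $\bm W_n$, and likewise $(\ovconw_n,\contzu_n)$ is a measurable image of $\overline{\bm W}$; hence it suffices to compare the laws of the two driving walks on the window.

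Next I would set up the absolute continuity on the window. By \cref{cor:the_walk_is_uniform_strong} the walk $\bm W_n$ is uniform in $\mathcal W_{Sb}^n$, so conditionally on the values $\bm W_n(\lfloor na\rfloor)$ and $\bm W_n(\lfloor n(1-\eps)\rfloor)$ its restriction to $[\lfloor na\rfloor,\lfloor n(1-\eps)\rfloor]$ is a uniform quadrant-confined walk bridge. Consequently the law of $(\bm W_n(i))_{\lfloor na\rfloor\le i\le \lfloor n(1-\eps)\rfloor}$ is absolutely continuous with respect to the law of the step-$\mu_{Sb}$ walk $\overline{\bm W}$ restricted to the event of staying in $\R^2_{\geq 0}$ on that window, with Radon--Nikodym derivative a function of the two endpoint values only: up to the global normalizing constant it is (the number of walk prefixes in the quadrant reaching the left endpoint) times (the number of walk suffixes in the quadrant issued from the right endpoint and ending at a $\nu^n_{Sb}$-typical point). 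Both counts are exactly the quantities controlled by the random-walk-in-cones estimates of \cref{sect:prob_walk_cones}, built on Denisov--Wachtel \cite{MR3342657} (the same bounds already used in the proof of \cref{prop:scaling_strong_walk}, cf. \cref{cor:upper_bound_walks_strong,cor:upper_bound_walks_strong2}). After rescaling, these converge to a constant times the product of the entrance and exit densities of the two-dimensional Brownian excursion $\conti E_\rho$ in the quadrant at times $a$ and $1-\eps$, i.e. to the corresponding Denisov--Wachtel harmonic functions evaluated at $\ovconw_\rho(a)$ and $\ovconw_\rho(1-\eps)$.

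Finally I would pass to the limit and identify it. Combining the convergence of the discrete Radon--Nikodym densities with the unconditioned joint convergence $(\ovconw_n,\contzu_n)\xraninf{d}(\ovconw_\rho,\ovconz_{\rho,q}^{(u)})$ of \cref{thm:fvwuofbgqfipqhfqfpq_strongb}, and using the uniform integrability furnished by the polynomial upper bounds of \cref{sect:prob_walk_cones}, one obtains that $({\conti W}_n,\conti{Z}^{(u)}_n)$ on $[a,1-\eps]$ converges to the law obtained from $(\ovconw_\rho,\ovconz_{\rho,q}^{(u)})$ by reweighting with that limiting density. Since the reweighting is a function of the driving path $\ovconw_\rho$ alone (its endpoint values and the staying-in-quadrant event), it leaves unchanged the conditional law of $\ovconz_{\rho,q}^{(u)}$ given $\ovconw_\rho$, which is degenerate: $\ovconz_{\rho,q}^{(u)}$ is the pathwise-unique, hence strong, solution of \cref{eq:flow_SDE_inf_vol_strong} driven by $\ovconw_\rho$, equal to $0$ before $u$. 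The reweighted driving process is, by the very construction of the two-dimensional Brownian excursion in the quadrant, $\conti E_\rho$ restricted to the window, so the reweighted second component is the strong solution of \cref{eq:flow_SDE_gen} driven by $\conti E_\rho$, namely $\conti Z_{\rho,q}^{(u)}$; the identification is unambiguous by pathwise uniqueness (\cite[Theorem 1.7]{borga2021skewperm}). Letting $a\to 0$ and $\eps\to 0$ and splicing with \cref{prop:scaling_strong_walk} for the walk component on $[0,1]$ yields \cref{eq:coal_con_cond}. I expect the main obstacle to be the convergence of the discrete Radon--Nikodym densities to the continuum harmonic density with enough uniformity to interchange limits and retain uniform integrability, which is precisely where the sharp random-walk-in-cones estimates of \cref{sect:prob_walk_cones} are indispensable; by comparison, checking that the change of measure commutes with the strong-solution map is soft but still needs to be spelled out.
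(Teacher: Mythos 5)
Your proposal has the same skeleton as the paper's proof: restrict to a window bounded away from the endpoints of $[0,1]$, transfer the unconditioned convergence of \cref{thm:fvwuofbgqfipqhfqfpq_strongb} to the conditioned walk by absolute continuity, identify the limit of the second component via pathwise uniqueness of the SDE, and then let the window grow (the walk component on all of $[0,1]$ being handled by \cref{prop:scaling_strong_walk}). Where you diverge is in how the absolute-continuity step is executed. The paper does not compute any discrete Radon--Nikodym densities: it observes that the restricted pair is a measurable functional of the walk increments on the window, invokes the already-proved convergence of the conditioned walk (\cref{prop:scaling_strong_walk}) together with a continuum-level absolute-continuity lemma (\cite[Proposition A.1]{borga2021skewperm}, comparing the excursion $\conti E_\rho$ to the Brownian motion $\ovconw_\rho$ on interior windows), and then uses Prokhorov and Skorokhod to splice the windows together. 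Your route instead establishes absolute continuity at the discrete level, with Radon--Nikodym derivatives given by quadrant-confinement counts converging to Denisov--Wachtel harmonic functions, plus uniform integrability. This is a legitimate and self-contained alternative, but be aware that it needs sharp local limit asymptotics for the walk in the cone (convergence of the normalized confinement probabilities, not just bounds), whereas the paper's \cref{sect:prob_walk_cones} only supplies the upper bound of \cref{cor:upper_bound_walks_strong} and the fixed-endpoint lower bound of \cref{cor:upper_bound_walks_strong2}; the missing asymptotics are in \cite[Theorem 6]{MR3342657}, but you would have to import and verify them, which is precisely the extra work the paper's citation of the continuum lemma is designed to avoid. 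Your observation that the reweighting acts only on the driving path and therefore commutes with the (pathwise-unique, hence measurable) solution map is correct and is the right way to see why the conditioned second component is $\conti Z_{\rho,q}^{(u)}$.
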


We also generalize the previous result for a countable number of one-dimensional walks in the conditioned coalescent-walk processes introduced in \cref{eq:wifbjwbfowebn}, chosen with uniform starting points. This result is the key-step for proving \cref{thm:strong-baxter} in the next section.

\begin{prop}
	\label{prop:jfbeouwfboufwe}
	Consider a sequence $(\bm u_i)_{i\in\zgz}$ of i.i.d.\ uniform random variables on $[0,1]$, independent of all other variables below. The following joint convergence in the space $\czord\times \mathcal C([0,1),\R)^{\mathbb Z_{>0}}$ holds:
	\begin{equation}
	\left(
	{\conti W}_n,\left({\conti Z}^{(\bm u_i)}_n\right)_{i\in\zgz}
	\right)
	\xraninf{d}
	\left( \conti E_{\rho},\left(\conti Z_{\rho,q}^{(\bm u_i)}\right)_{i\in\zgz}
	\right).
	\end{equation}
\end{prop}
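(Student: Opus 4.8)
\emph{Proof idea.} The plan is to first upgrade the single–starting–point convergence of \cref{prop:jfwvbvouvwefuow} to an arbitrary but fixed finite number of \emph{deterministic} starting points, and then to recover the statement with the random i.i.d.\ starting points $(\bm u_i)_i$ by conditioning on them and integrating. Since $\mathcal C([0,1),\R)^{\Z_{>0}}$ carries the product topology, it is enough to prove, for every fixed $k\geq 1$, the joint convergence of $\big({\conti W}_n,{\conti Z}^{(\bm u_1)}_n,\dots,{\conti Z}^{(\bm u_k)}_n\big)$ towards $\big(\conti E_{\rho},\conti Z_{\rho,q}^{(\bm u_1)},\dots,\conti Z_{\rho,q}^{(\bm u_k)}\big)$ in $\czord\times\mathcal C([0,1),\R)^k$. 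So there are two genuine tasks: (i) the deterministic multi-point statement, and (ii) derandomising the starting points.

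For (i), fix distinct $u_1<\dots<u_k$ in $(0,1)$. I would run the argument in the proof of \cref{thm:fvwuofbgqfipqhfqfpq_strongb} with vector-valued processes. The marginals ${\conti W}_n$ and each ${\conti Z}^{(u_i)}_n$ are tight by \cref{prop:scaling_strong_walk} and the single–point results (which themselves rest on \cref{prop:skew_part_strong}), so the whole vector is tight, and the first coordinate of any subsequential limit is $\conti E_{\rho}$ by \cref{prop:scaling_strong_walk}. For a subsequential limit $\big(\conti E_{\rho},\widetilde{\conti Z}^{(1)},\dots,\widetilde{\conti Z}^{(k)}\big)$, I would take the filtration generated by $\conti E_{\rho}$ \emph{together with all} the $\widetilde{\conti Z}^{(i)}$ and check, exactly as in that proof, that the driving process is still an excursion-type semimartingale with respect to this enlarged filtration — the key point being that the increments of the underlying two-dimensional walk after time $\lfloor nt\rfloor$ are independent of everything observed up to that time, including all the discrete processes ${\conti Z}^{(u_i)}_n(r)$ with $r\leq t$. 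One then identifies each $\widetilde{\conti Z}^{(i)}$ as a solution of the SDE \cref{eq:flow_SDE_gen} with initial time $u_i$ driven by $\conti E_{\rho}$, and invokes pathwise uniqueness of the whole flow (\cite[Theorem 1.7]{borga2021skewperm}) to conclude $\widetilde{\conti Z}^{(i)}=\conti Z_{\rho,q}^{(u_i)}$ for all $i$ simultaneously; since all subsequential limits then have the same law, (i) follows. As in the passage from \cref{thm:fvwuofbgqfipqhfqfpq_strongb} to \cref{prop:jfwvbvouvwefuow}, I would actually establish this first in the unconditioned setting (replacing the excursion by a correlated Brownian motion, using \cite[Theorem 2.1]{borga2021skewperm}) and transfer it to the excursion setting via the absolute-continuity argument on $[\eps,1-\eps]$, $\eps\to 0$; the relevant Radon–Nikodym density depends only on the driving process, so it carries over the joint law of all $k+1$ components at once.

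For (ii), let $F$ be a bounded continuous functional on $\czord\times\mathcal C([0,1),\R)^k$. Since $\bm W_n$ — hence the coalescent-walk process $\bm Z_n=\wScbp(\bm W_n)$ and all the rescaled processes above — is independent of $(\bm u_i)_i$, conditioning on $(\bm u_1,\dots,\bm u_k)$ turns $\E\big[F({\conti W}_n,{\conti Z}^{(\bm u_1)}_n,\dots,{\conti Z}^{(\bm u_k)}_n)\big]$ into the average over $(u_1,\dots,u_k)\in[0,1]^k$ of $\E\big[F({\conti W}_n,{\conti Z}^{(u_1)}_n,\dots,{\conti Z}^{(u_k)}_n)\big]$. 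By (i) this integrand converges, for Lebesgue-a.e.\ $(u_i)_i$ (namely whenever the $u_i$ are distinct and lie in $(0,1)$), to $\E\big[F(\conti E_{\rho},\conti Z_{\rho,q}^{(u_1)},\dots,\conti Z_{\rho,q}^{(u_k)})\big]$, and it is bounded by $\|F\|_\infty$; moreover this limit is a measurable function of $(u_i)_i$ because the continuous coalescent-walk flow $\{\conti Z^{(u)}_{\rho,q}\}_{u\in[0,1]}$ is jointly measurable in $(u,\omega)$. Dominated convergence, together with the independence of $(\bm u_i)_i$ from $\conti E_{\rho}$, then gives convergence to $\E\big[F(\conti E_{\rho},\conti Z_{\rho,q}^{(\bm u_1)},\dots,\conti Z_{\rho,q}^{(\bm u_k)})\big]$, which is the claimed convergence in distribution. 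I expect the genuine work to lie entirely in task (i): re-running the identification step of \cref{thm:fvwuofbgqfipqhfqfpq_strongb} for the joint process (getting the enlarged filtration right and applying pathwise uniqueness to all coordinates at once) and handling the excursion conditioning. The reduction to finite $k$, the tightness inputs, and the Fubini/dominated-convergence bookkeeping in (ii) are all standard.
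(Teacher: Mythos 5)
Your proposal is correct, and step (ii) (conditioning on the $\bm u_i$, Fubini--Tonelli, dominated convergence) is exactly what the paper does. Where you diverge is step (i), the joint convergence for finitely many \emph{deterministic} starting points $u_1,\dots,u_k$. You propose to re-run the entire identification machinery of \cref{thm:fvwuofbgqfipqhfqfpq_strongb} in vector form — tightness of the $(k+1)$-tuple, an enlarged filtration containing all the $\widetilde{\conti Z}^{(i)}$, the stochastic-integral and local-time computations for each coordinate, pathwise uniqueness applied coordinate-wise, and then the unconditioned-to-conditioned transfer by absolute continuity. This works, but it is considerably heavier than necessary. The paper instead observes that, by strong existence and pathwise uniqueness of the SDE in \cref{eq:flow_SDE_gen}, each $\conti Z_{\rho,q}^{(u_i)}$ is a \emph{measurable function} of the driving process $\conti E_{\rho}$. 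Given this, the single-point convergence of \cref{prop:jfwvbvouvwefuow} plus tightness of the full vector already forces joint convergence: any subsequential limit $(\conti E_{\rho},\widetilde{\conti Z}^{(1)},\dots,\widetilde{\conti Z}^{(k)})$ has pairwise marginals $(\conti E_{\rho},\widetilde{\conti Z}^{(i)})\stackrel{d}{=}(\conti E_{\rho},\conti Z_{\rho,q}^{(u_i)})$, and since the latter law is concentrated on the graph of the solution map, $\widetilde{\conti Z}^{(i)}=\conti Z_{\rho,q}^{(u_i)}$ a.s., pinning down the joint law in two lines. What your route buys is self-containedness (it would still work if one only had weak, not strong, solutions, in which case the measurable-functional shortcut is unavailable — this is in fact the delicate point at $\rho=1$ mentioned in the paper); what the paper's route buys is brevity, by not re-proving for $k$ points what measurability of the solution map gives for free. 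Your added remark on joint measurability of the flow in $(u,\omega)$, needed to integrate over the $u_i$, is a legitimate technical point that the paper leaves implicit.
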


\subsection{The permuton limit of strong-Baxter permutations}\label{sect:permlim}
We now prove \cref{thm:strong-baxter}, which follows from \cref{prop:jfbeouwfboufwe}.

\medskip

We recall some notions and results related to permuton limits that we need in this section\footnote{Recall that a complete introduction to the theory of permutons can be found in \cite[Section 2.1]{borga2021random}.}. The space of permutons $\mathcal M$, endowed with the topology of weak convergence of measures, is a compact space. Furthermore, $\mathcal M$ is metrizable by the metric $d_\square$ that is defined as follows: 
\begin{equation}
	d_\square(\mu,\mu')
	\coloneqq
	\sup_{R\in\mathcal R}| \mu(R) - \mu'(R) |,\quad\text{for every pair of permutons } (\mu,\mu'),
\end{equation}
where $\mathcal R$ denotes the set of rectangles contained in $[0,1]^2.$

We also define the permutation induced by a collection of $k$ points in the square $[0,1]^2$. Consider a sequence of $k$ points $(X,Y)=((x_{1},y_{1}),\dots, (x_{k},y_{k}))$ in $[0,1]^2$ and assume that the $x$ and $y$ coordinates are distinct. 
We denote by $\left((x_{(1)},y_{(1)}),\dots, (x_{(k)},y_{(k)})\right)$ the \emph{$x$-reordering} of $(X,Y)$,
that is the only reordering of $((x_{1},y_{1}),\dots, (x_{k},y_{k}))$ such that
$x_{(1)} < \cdots < x_{(k)}$.
In this way, the values $(y_{(1)} , \ldots , y_{(k)})$ have the same
relative order as the values of a unique permutation of size $k$. We call this permutation the \emph{permutation induced by} the sequence $(X,Y)$.

Let $\mu\in \mathcal M$ be a permuton and $((\bm X_{i},\bm Y_{i}))_{i\in \zgz}$ be an i.i.d.\ sequence with distribution $ \mu$. We denote by $\Perm_{k}(\mu)$ the random permutation induced by the sequence $((\bm X_{i},\bm Y_{i}))_{i \in [k]}$.

\begin{lem}[{\cite[Lemma 2.3]{bassino2017universal}}]
	\label{lem:fibwebvfoubqe}
	There exists  a constant $k_0$ such that whenever $k>k_0$, 
	\begin{equation}
		\P\left(
		d_\square(\mu_{\Perm_k(\bm{\nu})},\bm{\nu})
		\geq 16 k^{-1/4}\right)
		\leq \frac{1}{2} e^{-\sqrt{k}}, \quad \text{for every random permuton $\bm{\nu}$.}
	\end{equation}
\end{lem}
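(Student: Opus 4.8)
The plan is to reduce to a deterministic permuton and then split the rectangle discrepancy into three pieces, each controlled by a classical concentration inequality whose constants are \emph{independent of the permuton}. First I would observe that it suffices to prove the bound, with the same absolute constants, for every fixed permuton $\nu$: for a random $\bm\nu$ one conditions on $\bm\nu$ (the sample defining $\Perm_k$ being i.i.d.\ from $\bm\nu$), applies the deterministic estimate, and integrates. So fix a permuton $\nu$, let $((\bm X_i,\bm Y_i))_{i\in[k]}$ be i.i.d.\ with law $\nu$ — almost surely all $2k$ coordinates are distinct since the marginals of $\nu$ are Lebesgue, so $\sigma\coloneqq\Perm_k(\nu)$ is well defined — and set $\mu^{\mathrm{emp}}_k\coloneqq\frac1k\sum_{i=1}^k\delta_{(\bm X_i,\bm Y_i)}$, with $\widehat F_x,\widehat F_y$ the empirical c.d.f.'s of the $x$- and $y$-coordinates. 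For $R=[a,b]\times[c,d]$ I would telescope $\mu_\sigma(R)-\nu(R)$ through $\mu^{\mathrm{emp}}_k(R^\star)$ and $\nu(R^\star)$, where $R^\star$ is the rectangle with corners at the sample quantiles $\bm X_{(\lceil ka\rceil)},\bm X_{(\lfloor kb\rfloor)},\bm Y_{(\lceil kc\rceil)},\bm Y_{(\lfloor kd\rfloor)}$ (the $(j)$-subscript denoting an order statistic).

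The first piece, $|\mu_\sigma(R)-\mu^{\mathrm{emp}}_k(R^\star)|$, is deterministic: since $\mu_\sigma$ spreads mass $1/k$ uniformly over each cell $[\tfrac{i-1}k,\tfrac ik]\times[\tfrac{\sigma(i)-1}k,\tfrac{\sigma(i)}k]$ and $\sigma$ records the $x$- and $y$-ranks of the sample, a direct count shows this difference is at most $C_0/k$ for an absolute constant $C_0$ — the losses coming only from cells meeting $\partial R$ and from off-by-one effects at the ranks. The second piece, $|\nu(R^\star)-\nu(R)|$, uses that $\nu$ has uniform marginals in two ways. On one hand, $(\bm X_i)_i$ and $(\bm Y_i)_i$ are i.i.d.\ uniform on $[0,1]$, so the Dvoretzky--Kiefer--Wolfowitz inequality gives $\P(\sup_t|\widehat F_x(t)-t|>\eps)\le 2e^{-2k\eps^2}$ and likewise in $y$; on the complementary event $|\bm X_{(j)}-j/k|\le\eps$ for every $j$ (because $\widehat F_x(\bm X_{(j)})=j/k$), so each corner of $R^\star$ lies within $\eps+1/k$ of the corresponding corner of $R$. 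On the other hand, a boundary strip of width $\delta$ has $\nu$-mass exactly $\delta$, so moving the four sides of a rectangle by $\eps+1/k$ changes its $\nu$-measure by at most $4(\eps+1/k)$.

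The third piece, $|\mu^{\mathrm{emp}}_k(R^\star)-\nu(R^\star)|$, I would bound by $d_\square(\mu^{\mathrm{emp}}_k,\nu)$ and control by a net argument: for the family $\mathcal R_k$ of the at most $k^4$ rectangles with corners in $\{0,\tfrac1k,\dots,1\}^2$, each $\mu^{\mathrm{emp}}_k(R)$ with $R\in\mathcal R_k$ is an average of $k$ i.i.d.\ $\mathrm{Bernoulli}(\nu(R))$'s, so Hoeffding's inequality and a union bound give $\P(\sup_{R\in\mathcal R_k}|\mu^{\mathrm{emp}}_k(R)-\nu(R)|>\eps)\le 2k^4e^{-2k\eps^2}$; sandwiching an arbitrary rectangle between two elements of $\mathcal R_k$ that differ from it by at most $1/k$ on each side and invoking once more the uniform-marginals property of $\nu$ upgrades this to $d_\square(\mu^{\mathrm{emp}}_k,\nu)\le\eps+4/k$ on the same event. (Alternatively one could quote the Vapnik--Chervonenkis inequality for axis-parallel rectangles, which have VC dimension $4$.)

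To conclude, I would take $\eps=k^{-1/4}$ in all three estimates and intersect the three good events; the triangle inequality then yields $d_\square(\mu_\sigma,\nu)\le C_0/k+(\eps+4/k)+4(\eps+1/k)=5k^{-1/4}+O(1/k)\le 16k^{-1/4}$ for $k\ge k_0$, while the total failure probability is at most $(4+2k^4)e^{-2\sqrt k}\le\tfrac12e^{-\sqrt k}$ once $k_0$ is large enough to swallow both the polynomial factor and the $O(1/k)$ terms; integrating over $\bm\nu$ gives the stated uniform bound. The main obstacle is not any single hard estimate but rather (i) the bookkeeping in the first piece, relating the discretised permuton $\mu_\sigma$ to the empirical measure through the sample quantiles, and (ii) keeping track of the fact that DKW, Hoeffding, and the strip-mass identity all use $\nu$ only through its uniform marginals — which is exactly what makes the bound uniform over all (possibly random) permutons; the $k^4$ from the union bound is the sole reason a threshold $k_0$ is unavoidable.
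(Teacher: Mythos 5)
Your argument is correct, and it is essentially the standard one behind the cited result: the paper does not prove this lemma but imports it from \cite[Lemma 2.3]{bassino2017universal}, whose proof likewise compares $\mu_{\Perm_k(\bm{\nu})}$ to the empirical measure of the sample and then to $\bm{\nu}$, using DKW/Hoeffding concentration together with a union bound over a polynomial-size grid of rectangles, with the choice $\eps=k^{-1/4}$ turning $e^{-2k\eps^2}$ into $e^{-2\sqrt k}$ so that the $k^4$ factor is absorbed for $k$ large. The two delicate points --- the $O(1/k)$ bookkeeping relating $\mu_{\sigma}$ to the empirical measure through ranks and sample quantiles, and the fact that every probabilistic input uses $\nu$ only through its uniform marginals (which is what makes the bound uniform over all random permutons after conditioning) --- are exactly the ones you identify, and your handling of them is sound.
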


We can now prove \cref{thm:strong-baxter}.

\begin{proof}[Proof of \cref{thm:strong-baxter}]
	Since from \cref{prop:jfbeouwfboufwe} we know that $\left({\conti W}_n,\left({\conti Z}^{(\bm u_i)}_n \right)_{i\in\zgz}\right)$ is a convergent sequence of random variables converging to $\left( \conti{E}_\rho,\left({\conti Z}^{(\bm u_i)}_{\rho,q}\right)_{i\in\zgz}\right)$ and the space of permutons $\mathcal M$ is compact, using Prokhorov's theorem, both $\left({\conti W}_n,\left({\conti Z}^{(\bm u_i)}_n\right)_{i\in\zgz}\right)$ and $\mu_{\bm \sigma_n}$ are tight sequences of random variables.
	Assume now that on a subsequence it holds that
	\begin{equation}\label{eq:wofhweihwep}
		\left(
		{\conti W}_n,\left({\conti Z}^{(\bm u_i)}_n\right)_{i\in\zgz},\mu_{\bm \sigma_n}
		\right)
		\xraninf{d}
		\left( \conti{E}_\rho,\left({\conti Z}^{(\bm u_i)}_{\rho,q}\right)_{i\in\zgz},\bm \nu
		\right).
	\end{equation}
	In order to complete the proof we need to show that $\bm \nu\stackrel{d}{=}\bm \mu_{\rho,q}$.
	Using Skorokhod's theorem, we can assume that the convergence in \cref{eq:wofhweihwep} holds almost surely. In particular, almost surely, $\mu_{\bm \sigma_n} \to \bm \nu$ in the space of permutons $\mathcal M$, and for every $i\in\zgz$, $\conti Z_{n}^{(\bm u_i)} \to \conti Z_{\rho,q}^{(\bm u_i )}$ uniformly on $[0,1)$, where we recall that $(\bm u_i)_{i\in\zgz}$ are i.i.d.\ uniform random variables on $[0,1]$.
	
	We now fix $k\in\zgz$ and denote by $\bm \rho_n^k$ the pattern induced by the permutation $\bm \sigma_n$ on the set of indices $\left\{ \lceil n \bm u_{1} \rceil , \ldots , \lceil n \bm u_{k} \rceil\right\}$ ($\bm \rho_n^k$ remains undefined whenever two indices are equal; this event has probability tending to zero). Thanks to the uniform convergence mentioned above, and recalling the definition of $\conti{Z}^{(u)}_{n}\left(\frac kn\right)$ given in \cref{eq:wifbjwbfowebn}, for all $1 \leq i < j \leq k$ it holds that
	\begin{equation}
		\sgn\left({\bm Z}_n^{( \intnui \wedge \intnuj)}(\intnui \vee \intnuj)\right)
		\xraninf{}
		\sgn(\conti Z_{\rho,q}^{(\bm u_i \wedge \bm u_j)}(\bm u_j \vee \bm u_i))\quad \text{ a.s.}
	\end{equation}
	We highlight that $\sgn(\cdot)$ is not continuous at zero, but from \cite[Lemma 3.2]{borga2021skewperm} we know that $\conti Z_{\rho,q}^{(\bm u_i \wedge \bm u_j)}(\bm u_j \vee \bm u_i)$ is almost surely nonzero and so a continuity point of $\sgn(\cdot)$.
	By \cref{prop:patternextraction}  and \cite[Lemma 3.2]{borga2021skewperm}, this means that $\bm \rho_n^k \xraninf{} \Perm_k(\bm \mu_{\rho,q})$, where we recall that $\Perm_k(\bm \mu_{\rho,q})$ denotes the permutation induced by $(\bm u_i, \varphi_{\conti Z_{\rho,q}}(\bm u_i))_{i \in [k]}$ (recall the definition of $\bm \mu_{\rho,q}$ given in \cref{defn:ibfwebfowfpwenfiweop}).
	Now, from \cref{lem:fibwebvfoubqe}, for $k$ large enough it holds that
	\begin{equation}\label{eq:first_perm_bound}
		\P\left(
		d_{\square}(\mu_{\bm \rho_n^k},\mu_{\bm \sigma_n})
		> 16k^{-1/4}\right)
		\leq \frac{1}{2} e^{-\sqrt k} + O(n^{-1}),
	\end{equation}
	where the $O(n^{-1})$ term is a consequence of the fact that $\bm \rho^k_n$ might be undefined.
	Since we know that $\bm \rho_n^k \xraninf{} \Perm_k(\bm \mu_{\rho,q})$ and also that $\mu_{\bm \sigma_n} \to \bm \nu$, then we have that
	\begin{equation}
		\P\left(
		d_{\square}(\mu_{\Perm_k(\bm \mu_{\rho,q})},\widetilde {\bm \mu})
		> 16k^{-1/4}\right)
		\leq \frac{1}{2} e^{-\sqrt k}.
	\end{equation}
	As a consequence, $\mu_{\Perm_k(\bm \mu_{\rho,q})}\xrightarrow[k\to\infty]{P}\widetilde {\bm \mu}$. Using again \cref{lem:fibwebvfoubqe}, we have that $\mu_{\Perm_k(\bm \mu_{\rho,q})} \xrightarrow[k\to\infty]{P}\bm  \mu_{\rho,q}$. The latter two limits imply that $\widetilde {\bm \mu} \stackrel{a.s.}{=} \bm \mu_{\rho,q}$ and this is enough to complete the proof.
\end{proof}

\section{The case of semi-Baxter permutations}

In this section we show that both the combinatorial constructions developed in \cref{sect:disc_obj_strong} and the probabilistic techniques used in \cref{sect:prob_part} are robust: we apply them to another family of permutations, called semi-Baxter permutations, proving \cref{thm:semi-baxter}.

\subsection{Discrete objects associated with semi-Baxter permutations}\label{sect:jiewvbfiuwebvof}

Recall the definition of semi-Baxter permutations from \cref{defn:semi-b}.
Also these permutations have been enumerated using $\Z^2$-labeled generating trees and therefore they can be bijectively encoded by a specific family of two-dimensional walks.

The goal of this section, as done in \cref{sect:disc_obj_strong} in the case of strong-Baxter permutations, is to specify the maps $\pw$ and $\cpp$ introduced in \cref{sect:defn_discrete} to semi-Baxter permutations ($\mathcal{S}_{sb}$) and the corresponding families of walks $(\mathcal{W}_{sb})$ and coalescent-walk processes ($\CCCC_{sb}$). While doing that, we further introduce a third map $\rews$ (different from $\wScbp$) between walks and coalescent-walk processes. Thus, we are going to properly define the following new diagram
\begin{equation}\label{eq:diagrm_semi_baxter}
\begin{tikzcd}
\mathcal{S}_{sb} \arrow{r}{\pw}  & \mathcal{W}_{sb} \arrow{d}{\rews} \\
& \CCCC_{sb} \arrow{ul}{\cpp}
\end{tikzcd}
\end{equation}
and again to show that this is a commutative diagram of bijections (see \cref{thm:The_diagram_commutes} below).

\bigskip

We highlight that the strategy used in this section is step-by-step the same as the one used for strong-Baxter permutations. Nevertheless, we need to repeat almost all the combinatorial constructions given in \cref{sect:disc_obj_strong} and to keep track of all the various little modifications compared to the case of strong-Baxter permutations. These little modifications will then be fundamental in \cref{sect:prob_part} to determine the correct values of the parameters $\rho$ and $q$ for the limiting skew Brownian permuton determined by semi-Baxter permutations. As already mentioned in the introduction, semi-Baxter permutations will present a different interesting phenomenon compared to the case of strong-Baxter permutations. This phenomenon is explained in \cref{rem:traj}.

\subsubsection{Succession rule for semi-Baxter permutations and a corresponding family of two-dimensional walks}\label{sect:semi_bac_obj}

Recall \cref{defn:appending}. We will adopt again the following useful convention. Given a semi-Baxter permutation $\pi\in \mathcal{S}_{sb}^n$ with $x+1$ active sites smaller than or equal to $\pi(n)$ and $y+1$ active sites greater than $\pi(n)$, we write
$$\text{AS}(\pi)=\{s_{-x}<\dots<s_0\}\cup\{s_{1}<\dots<s_{y+1}\},$$
where the first set corresponds to the $x+1$ active sites smaller than or equal to $\pi(n)$ and the second set corresponds to the $y+1$ active sites greater than $\pi(n)$.

\medskip

In \cite[Proposition 3]{MR3882946} it was shown that a generating tree for semi-Baxter permutations can be defined by the following succession rule\footnote{Note that the succession rule in our paper is obtained from the succession rule in \cite[Proposition 3]{MR3882946} by shifting all the labels by a factor $(-1,-1)$. This choice is more convenient for our purposes.}:
\begin{equation}
\begin{cases} 
\text{Root label}: (0,0) \\
(h,k)\to\begin{cases}
(0,k+1),(1,k+1),\dots,(h,k+1),\\ 
(h+k+1,0),(h+k,1)\dots,(h+1,k),
\end{cases}
\quad \text{for all }h,k\geq 0.
\end{cases}
\label{eq:laruletadesuccession_1423_4123}
\end{equation}
As in the case of strong-Baxter permutations, the $\Z^2_{\geq 0}$-valued statistics that determines this succession rule is defined on every permutation $\sigma\in\mathcal{S}_{sb}$ by
$$\Big(\# \{m\in\text{AS}(\sigma)|m\leq\sigma(n)\}-1\;,\;\# \{m\in\text{AS}(\sigma)|m>\sigma(n)\}-1\Big).$$

Using the strategy described in \cref{sect:bij_walk_perm}, we can define a bijection $\pw$ between semi-Baxter permutations and the set of two-dimensional walks in the non-negative quadrant, starting at $(0,0)$, with increments in
\begin{equation}\label{eq:semi_bax_increm}
I_{sb}\coloneqq\{(-i,1): i\geq 0\}\cup\{(i,-i+1): i\geq 1\}.
\end{equation}
We denote with $\mathcal{W}_{sb}$ the set of two-dimensional walks in the non-negative quadrant, starting at $(0,0)$, with increment in $I_{sb}$.

\medskip

We now investigate the relations between the increments of a walk $W\in\mathcal{W}^n_{sb}$ and the active sites of the sequence of permutations:

$$\Big(\pw^{-1}((W_i)_{i\in[1]}),\pw^{-1}((W_i)_{i\in[2]}),\dots,\pw^{-1}((W_i)_{i\in[n]})\Big).$$

First of all, it holds that $\pw^{-1}((W_i)_{i\in[1]})$ is the unique permutation of size 1 and its active sites are 1 and 2. Now assume that for some $m<n$, $W_m=(x,y)\in\Z^2_{\geq 0}$ and  $\pw^{-1}((W_i)_{i\in[m]})=\pi$. By definition, $\pi$ has $x+1$ active sites smaller or equal to $\pi(n)$ and $y+1$ active sites greater than $\pi(n)$, i.e.,
$$\text{AS}(\pi)=\{s_{-x}<\dots<s_0\}\cup\{s_{1}<\dots<s_{y+1}\}.$$
We now distinguish two cases (for a proof of the following results see the proof of \cite[Proposition 3]{MR3882946}, compare also with \cref{fig:schema_perm}):
\begin{itemize}
	\item \textbf{Case 1:} $W_{m+1}-W_m=(i,-i+1)$ for some $i\in [y+1]$. 
	
	\noindent In this case $\pw^{-1}((W_i)_{i\in[m+1]})=\pi^{*s_{i}}$ and the active sites of $\pi^{*s_{i}}$ are 
	$$\{s_{-x}<\dots<s_0<s_{1}<\dots <s_{i}\}\cup\{s_{i}+1,s_{i+1}+1<\dots<s_{y+1}+1\}.$$

	\item \textbf{Case 2:} $W_{m+1}-W_m=(-i,1)$ for some $i\in \{0\}\cup[x]$. 
	
	\noindent In this case $\pw^{-1}((W_i)_{i\in[m+1]})=\pi^{*s_{-i}}$ and the active sites of $\pi^{*s_{-i}}$ are 
	$$\{s_{-x}<\dots<s_{-i}\}\cup\{s_{-i}+1<s_{1}+1<\dots<s_{y+1}+1\}.$$
\end{itemize} 

An example of the construction above is given in \cref{fig:schema_perm}.

\begin{figure}[htbp]
	\centering
	\includegraphics[scale=.77]{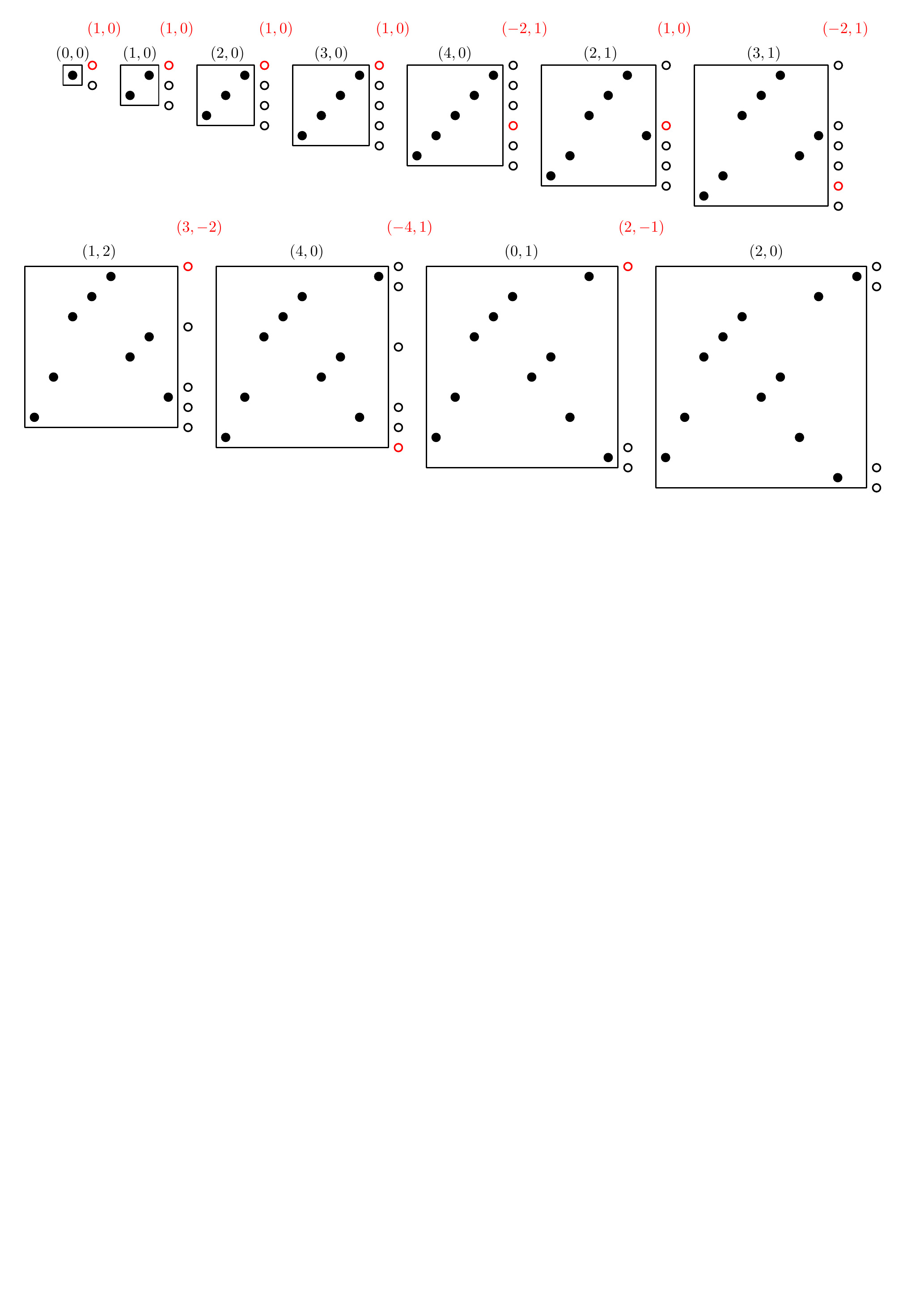}\\
	\caption{We consider the walk $W\in\mathcal{W}^{11}_{sb}$ given by the eleven black $\Z^2_{\geq 0}$-labels in the picture. The increments $W_{m+1}-W_{m}$ of the walk $W$ are written in red between two consecutive diagrams. For each black label $W_m$ we draw the diagram of the corresponding permutation $\pw^{-1}((W_i)_{i\in[m]})$. On the right-hand side of each diagram we draw with small circles the active sites of the permutation and we highlight in red the site that will be activated by the corresponding red increment $W_{m+1}-W_m$.  \label{fig:schema_perm}}
\end{figure}

\subsubsection{A coalescent-walk process for semi-Baxter permutations}\label{sect:coal_proc_for_semi_baxt}

We define a family of coalescent-walk processes driven by a set of two-dimensional walks that contains $\mathcal{W}_{sb}$. 
We fix a (finite or infinite) interval $I$ of $\Z$. 
Let $\mathfrak{W}_{sb}(I)$ denote the set of two-dimensional walks indexed by $I$, with increments in $I_{sb}$, and considered up to an additive constant.

\begin{defn}\label{eq:distrib_incr_coal_2}
	Let $W\in\mathfrak{W}_{sb}(I)$. The \emph{coalescent-walk process associated with} $W$
	is the family of walks $\rews(W) = \{\Ztp\}_{t\in I}$, defined for $t\in I$ by $\Ztp_t=0,$ and for all $\ell\geq t$ such that $\ell+1 \in I$,
	\begin{itemize}
		\item \textbf{Case 1:} $W_{\ell+1}-W_\ell=(i,-i+1)$ for some $i\geq 1$. 
		
		\begin{equation}
		\Ztp_{\ell+1}=
		\begin{cases}
		\Ztp_{\ell}-i+1, &\quad\text{if}\quad \Ztp_{\ell}\geq0\text{ and }\Ztp_{\ell}-i+1>0,\\
		\Ztp_{\ell}-i,&\quad\text{otherwise}.
		\end{cases}
		\end{equation} 
		
		\item \textbf{Case 2:} $W_{\ell+1}-W_\ell=(-i,1)$ for some $i\geq 0$. 
		
		\begin{equation}
		\Ztp_{\ell+1}=
		\begin{cases}
		\Ztp_{\ell}+i,&\quad\text{if}\quad \Ztp_{\ell}<0\text{ and }\Ztp_{\ell}+i<0,\\
		1,&\quad\text{if}\quad \Ztp_{\ell}<0\text{ and }\Ztp_{\ell}+i\geq 0,\\
		\Ztp_{\ell}+1, &\quad\text{otherwise}.
		\end{cases}
		\end{equation} 
	\end{itemize} 
\end{defn}

$\rews$ is a mapping form $\mathfrak{W}_{sb}(I)$ to $\Coals(I)$ .We set $\CCCC_{sb}= \rews(\mathcal W_{sb})$. Two examples, one for a walk in $\mathfrak{W}_{sb}(I)$ and one for a walk in $\mathcal W_{sb}$, can be found in \cref{fig:didwveiwvedbwedi} and \cref{fig:ievdiuwbduwob}. We also give the following equivalent definition for later convenience.

\begin{figure}[ht]
	\centering
	\includegraphics[scale=.77]{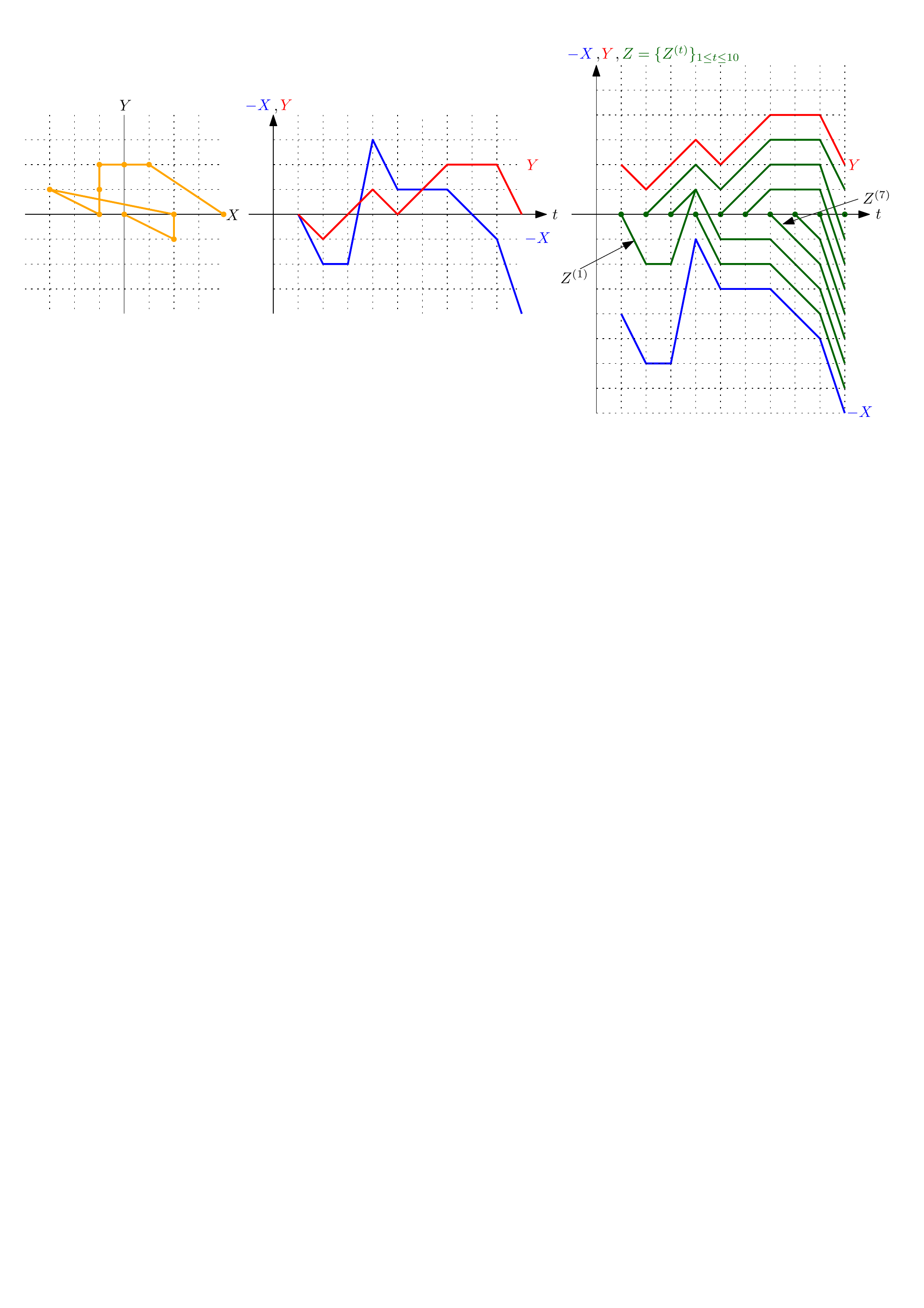}\\
	\caption{We explain the construction of a coalescent-walk process.
		\textbf{Left:} A two dimensional walk $W=(W_t)_{t\in[10]}=(X_t,Y_t)_{t\in[10]}\in \mathfrak{W}_{sb}([10])$.
		\textbf{Middle:} The two marginals $-X$ (in blue) and $Y$ (in red).
		\textbf{Right:} The two marginals are shifted and the ten walks of the coalescent-walk process are constructed in green. \label{fig:didwveiwvedbwedi}}
\end{figure}

\begin{figure}[ht]
	\centering
	\includegraphics[scale=.7]{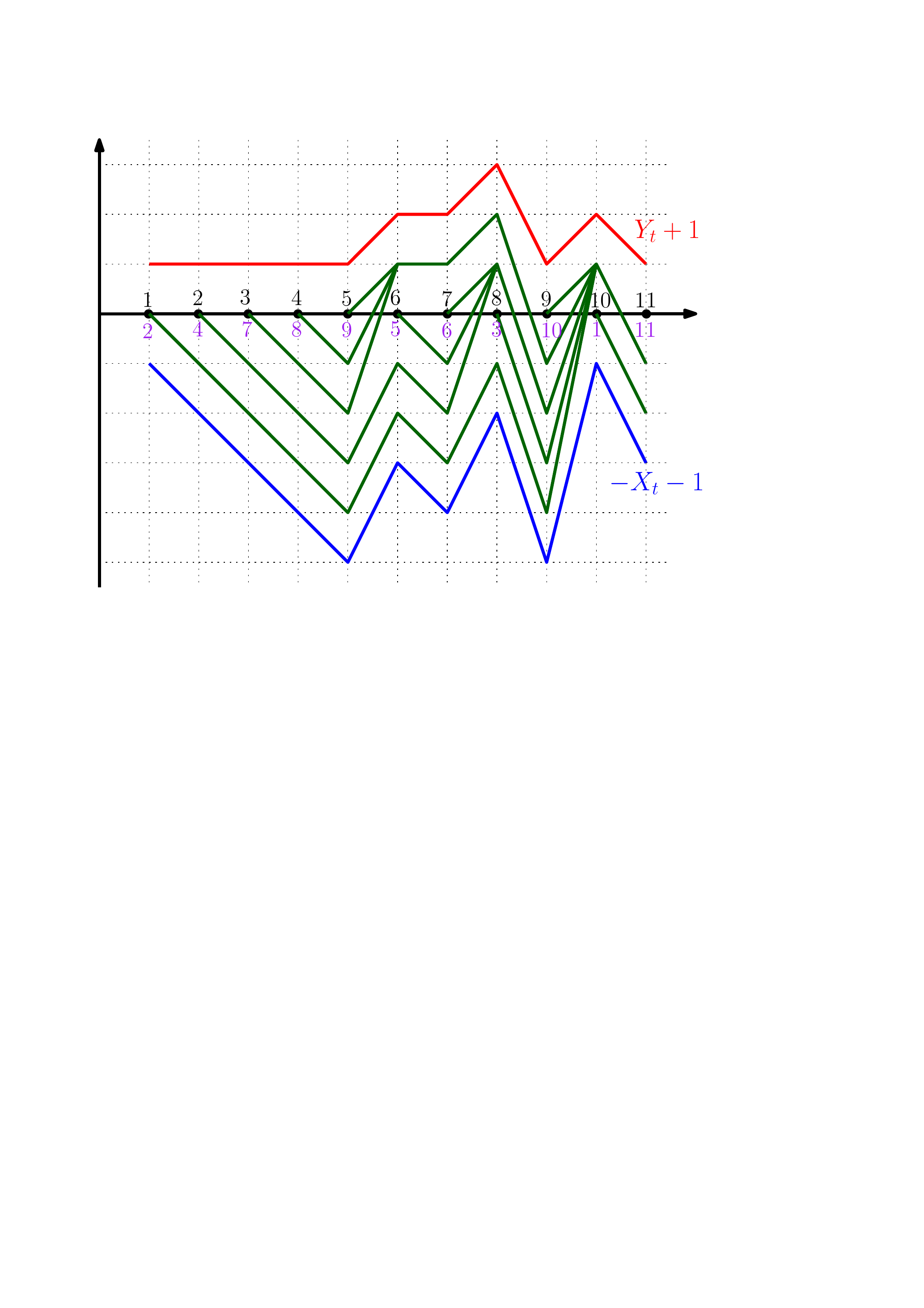}\\
	\caption{The coalescent walk process $\rews(W)$ for the walk $W$ considered in \cref{fig:schema_perm}. In purple we plot the corresponding semi-Baxter permutation $\cpp((\wScbp(W)))$. Note that the latter permutation is equal to the permutation $\pw^{-1}(W)$ obtained in the last diagram in \cref{fig:schema_perm}.
	\label{fig:ievdiuwbduwob}}
\end{figure}

\begin{defn}\label{eq:distrib_incr_coal}
	Let $W\in\mathfrak{W}_{sb}(I)$ and denote by $W_t = (X_t,Y_t)$ for $t\in I$. The \emph{coalescent-walk process associated with} $W$
	is the family of walks $\rews(W) = \{\Ztp\}_{t\in I}$, defined for $t\in I$ by $\Ztp_t=0,$ and for all $\ell\geq t$ such that $\ell+1 \in I$,
		\begin{equation}\label{eq:refrom_coal_semib}
		\Ztp_{\ell+1}=
		\begin{cases}
		\Ztp_{\ell}+(Y_{\ell+1}-Y_{\ell}), &\quad\text{if}\quad \Ztp_{\ell}\geq0\text{ and }\Ztp_{\ell}+(Y_{\ell+1}-Y_{\ell})>0,\\
		\Ztp_{\ell}-(X_{\ell+1}-X_{\ell}),&\quad\text{if}\quad 
		\begin{cases}
		\Ztp_{\ell}\geq0\text{ and }\Ztp_{\ell}+(Y_{\ell+1}-Y_{\ell})\leq0,\\
		\Ztp_{\ell}<0\text{ and }\Ztp_{\ell}-(X_{\ell+1}-X_{\ell})<0,
		\end{cases}\\
		1,&\quad\text{if}\quad \Ztp_{\ell}<0\text{ and }\Ztp_{\ell}-(X_{\ell+1}-X_{\ell})\geq 0.
		\end{cases}
		\end{equation} 
\end{defn}

\begin{obs}\label{obs:never_zero}
	By definition, for all $t\in I$, $\Ztp_{\ell}\neq 0$ for all $\ell>t$.
\end{obs}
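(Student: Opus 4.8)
The plan is to prove the slightly stronger claim that, for every $t\in I$ and every $\ell\in I$ with $\ell\geq t$ and $\ell+1\in I$, the value $\Ztp_{\ell+1}$ is either strictly positive or strictly negative. Since $I$ is an interval of $\Z$, every index $\ell'\in I$ with $\ell'>t$ is of the form $\ell+1$ with $\ell=\ell'-1\in I$ and $\ell\geq t$, so this immediately yields the observation. Note that no induction is needed: the quantity $\Ztp_{\ell+1}$ depends only on $\Ztp_{\ell}$ and on the increment $W_{\ell+1}-W_{\ell}$, so it suffices to check that the one-step update map described in \cref{eq:distrib_incr_coal_2} never outputs $0$, whatever the value of $\Ztp_{\ell}$ (including $\Ztp_{\ell}=0$, which occurs precisely when $\ell=t$).

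Concretely, I would split according to the two cases of \cref{eq:distrib_incr_coal_2}. In Case~1, where $W_{\ell+1}-W_\ell=(i,-i+1)$ with $i\geq 1$: if $\Ztp_{\ell}\geq 0$ and $\Ztp_{\ell}-i+1>0$ then $\Ztp_{\ell+1}=\Ztp_{\ell}-i+1>0$; otherwise $\Ztp_{\ell+1}=\Ztp_{\ell}-i$, which is $<0$, because either $\Ztp_{\ell}<0$ (and then $\Ztp_{\ell}-i<-i\leq -1$, using $i\geq 1$) or $\Ztp_{\ell}\geq 0$ together with $\Ztp_{\ell}-i+1\leq 0$, i.e.\ $\Ztp_{\ell}\leq i-1$, so that $\Ztp_{\ell}-i\leq -1$. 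In Case~2, where $W_{\ell+1}-W_\ell=(-i,1)$ with $i\geq 0$: if $\Ztp_{\ell}<0$ and $\Ztp_{\ell}+i<0$ then $\Ztp_{\ell+1}=\Ztp_{\ell}+i<0$; if $\Ztp_{\ell}<0$ and $\Ztp_{\ell}+i\geq 0$ then $\Ztp_{\ell+1}=1>0$; and otherwise $\Ztp_{\ell}\geq 0$ and $\Ztp_{\ell+1}=\Ztp_{\ell}+1\geq 1>0$. Since $I_{sb}$ contains only increments of these two shapes, the one-step map never hits $0$, which is what we wanted.

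There is no genuine obstacle here: the statement is an immediate consequence of the explicit definition of $\rews$, and the only point requiring care is to track all the subcases of \cref{eq:distrib_incr_coal_2} — in particular the two ``otherwise'' branches — and to use the constraints $i\geq 1$ in Case~1 and $i\geq 0$ in Case~2 to conclude strict inequalities. One could equally well phrase the argument by induction on $\ell$, carrying the invariant ``$\Ztp_{\ell}>0$ or $\Ztp_{\ell}<0$ for $\ell>t$'', but as explained above the inductive hypothesis is never actually used.
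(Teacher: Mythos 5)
Your proof is correct and is exactly the verification the paper leaves implicit when it says ``by definition'': a direct check that each branch of the one-step update in \cref{eq:distrib_incr_coal_2} outputs a strictly positive or strictly negative value, using $i\geq 1$ in Case~1 and $i\geq 0$ in Case~2. Nothing further is needed.
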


\begin{obs}\label{obs:alternating_excursions}
	By definition, the process $(\Ztp_{\ell})_{\ell\geq t}$ starts at zero at time $t$ and then it can be  decomposed in alternating strictly positive and strictly negative excursions (after a strictly positive excursion there is always a strictly negative excursion and vice-versa). Note also that when the process passes from a strictly negative excursion to a strictly positive excursion, the first value of the process in the new strictly positive excursion is always 1.
\end{obs}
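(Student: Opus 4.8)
The plan is to read both assertions straight off the recursion in \cref{eq:distrib_incr_coal}, using \cref{obs:never_zero} to control the zero set. That $\Ztp$ starts at $0$ at time $t$ is simply the initial condition $\Ztp_t=0$. For the alternating decomposition, recall from \cref{obs:never_zero} that $\Ztp_\ell\neq 0$ for every $\ell>t$, so the trajectory $(\Ztp_\ell)_{\ell>t}$ is $\Z\setminus\{0\}$-valued; I would partition it into its maximal intervals of constant sign, which are by definition the excursions, and note that $\Ztp_{t+1}\neq 0$ so the first excursion starts at time $t+1$. If two consecutive excursions had the same sign, their union would again be an interval of constant sign, contradicting maximality; hence consecutive excursions carry opposite signs, and in particular a strictly positive excursion is immediately followed (as long as $I$ is not yet exhausted) by a strictly negative one and vice versa.

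Next I would pin down the entrance value of a strictly positive excursion that is preceded by a strictly negative one. Suppose $\ell\geq t$ satisfies $\Ztp_\ell<0$ and $\Ztp_{\ell+1}>0$. In \cref{eq:distrib_incr_coal}, the first line and the first sub-case of the second line both require $\Ztp_\ell\geq 0$ and are thus excluded; the second sub-case of the second line gives $\Ztp_{\ell+1}=\Ztp_\ell-(X_{\ell+1}-X_\ell)<0$, contradicting $\Ztp_{\ell+1}>0$. Hence only the third line can apply, and it forces $\Ztp_{\ell+1}=1$. (This is consistent with the increment set $I_{sb}$ of \cref{eq:semi_bax_increm}: $\Ztp_\ell-(X_{\ell+1}-X_\ell)\geq 0$ together with $\Ztp_\ell<0$ forces $X_{\ell+1}-X_\ell<0$, i.e.\ an increment of type $(-i,1)$.) Therefore every time the walk leaves a strictly negative excursion and enters a strictly positive one, the entrance value equals $1$, which is the claim.

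I do not anticipate a real obstacle: both parts reduce to a short finite case analysis against \cref{eq:distrib_incr_coal}. The only mild subtlety is phrasing the excursion decomposition precisely and making sure it rests on \cref{obs:never_zero} — the process never revisits $0$ after time $t$ — so that the maximal constant-sign blocks genuinely tile the trajectory and alternate in sign.
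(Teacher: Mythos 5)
Your proof is correct and takes the same route the paper intends: the paper states this as an \emph{Observation} following directly from \cref{eq:distrib_incr_coal} together with \cref{obs:never_zero}, and your case analysis (only the third line of the recursion can produce a sign change from negative to positive, and it outputs the value $1$) is precisely the verification left implicit there.
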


\begin{obs}
	The coalescent points of a coalescent-walk process obtained in this way have $y$-coordinates that are always equal to 1. 
\end{obs}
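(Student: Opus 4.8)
The plan is to reduce the statement to an elementary analysis of the ``transition maps'' of the coalescent-walk process. For an increment $\delta\in I_{sb}$, let $\phi_\delta\colon\Z\to\Z$ be the map sending $Z^{(t)}_\ell$ to $Z^{(t)}_{\ell+1}$ when $W_{\ell+1}-W_\ell=\delta$, as prescribed by \cref{eq:distrib_incr_coal_2}; note that by that definition $\phi_\delta$ depends only on $\delta$ and on the current value. The key claim is: for every $\delta\in I_{sb}$ the map $\phi_\delta$ is injective, \emph{except} for the increments $\delta=(-i,1)$ with $i\geq 1$, for which $\phi_\delta$ is constant equal to $1$ on $\{-i,-i+1,\dots,-1,0\}$ and strictly increasing into disjoint ranges on each of the two complementary rays, so that $1$ is the only integer attained by $\phi_\delta$ more than once.

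Granting the claim, the observation follows at once. Let $t<t'$ and let $(s,Z^{(t)}_s)$ be the coalescent point of $Z^{(t)}$ and $Z^{(t')}$. By \cref{obs:never_zero} we have $Z^{(t)}_{t'}\neq 0=Z^{(t')}_{t'}$, hence $s>t'$, and by minimality of $s$ we get $Z^{(t)}_{s-1}\neq Z^{(t')}_{s-1}$ while $Z^{(t)}_s=Z^{(t')}_s$. Since both walks perform the same transition $\phi_{W_s-W_{s-1}}$ at that step, this map attains a common value at two distinct inputs; by the claim, $W_s-W_{s-1}=(-i,1)$ for some $i\geq 1$ and $Z^{(t)}_s=Z^{(t')}_s=1$, which is precisely the assertion that the $y$-coordinate of the coalescent point equals $1$.

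It remains to verify the claim, which is a short computation from \cref{eq:distrib_incr_coal_2} carried out for each of the two families of increments. For $\delta=(i,-i+1)$ with $i\geq 1$ (Case 1), the condition ``$z\geq 0$ and $z-i+1>0$'' is equivalent to $z\geq i$, so $\phi_\delta(z)=z-i+1$ for $z\geq i$ and $\phi_\delta(z)=z-i$ for $z\leq i-1$; the two branches are strictly increasing with ranges $\Z_{\geq 1}$ and $\Z_{\leq -1}$, hence $\phi_\delta$ is injective (and never $0$, which incidentally re-proves \cref{obs:never_zero}). For $\delta=(-i,1)$ with $i\geq 0$ (Case 2), one gets $\phi_\delta(z)=z+i$ for $z\leq -i-1$, $\phi_\delta(z)=1$ for $-i\leq z\leq -1$, and $\phi_\delta(z)=z+1$ for $z\geq 0$; when $i=0$ the middle range is empty and $\phi_\delta$ is injective, whereas for $i\geq 1$, since also $\phi_\delta(0)=1$, the map is constant equal to $1$ exactly on $\{-i,\dots,0\}$ and strictly increasing with disjoint ranges $\Z_{\leq -1}$ and $\Z_{\geq 2}$ off this set, as claimed. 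No real obstacle arises; the only point requiring a little care is the reduction step, where \cref{obs:never_zero} is used to ensure $s>\max\{t,t'\}$, so that the two walks are genuinely distinct one step before coalescing and the non-injectivity argument for a single transition applies.
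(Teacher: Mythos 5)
Your proof is correct: the case analysis of the transition maps from \cref{eq:distrib_incr_coal_2} is accurate, and the reduction via minimality of the coalescence time (using \cref{obs:never_zero} to guarantee the walks differ one step before coalescing) is exactly the argument implicit in the paper, which states this observation without proof. You have simply supplied the routine verification the paper leaves to the reader.
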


Note that \cref{defn:quantities_coal_walk_proc_strong} can be trivially extended to semi-Baxter walks.
We have the following analogue of \cref{lem:techn_for_comm_diagram_strong}.

\begin{lem}\label{lem:techn_for_comm_diagram}
	Let $W\in\mathcal{W}^n_{sb}$. Fix $m\in[n]$, and consider the corresponding coalescent-walk process $\cpp(W_{|_{[m]}})=\{\Ztp\}_{t\in [m]}\eqqcolon Z$ and the corresponding semi-Baxter permutation $\pw^{-1}(W_{|_{[m]}})=\pi$. Assume that $W_m=(x,y)\in\Z^2_{\geq 0}$, i.e.\ $\pi$ has $x+1$ active sites smaller than or equal to $\pi(m)$ and $y+1$ active sites greater than $\pi(m)$,  denoted by
	$$\{s_{-x}<\dots<s_0\}\cup\{s_{1}<\dots<s_{y+1}\}.$$ 
	Then
	$$\FV(Z)=\{f_{-x}<\dots<f_{-1}<f_{0}=0<f_1<\dots<f_{y}\}=[-x,y],$$
	and in particular, $f_\ell-f_{\ell-1}=1$, for all $\ell\in[-x+1,y]$.
	Moreover, it holds that 
	\begin{equation}\label{eq:rel_active_sites}
	s_\ell=1+\sum_{j\leq \ell-1}\mult(f_j),\quad\text{for all}\quad \ell\in[-x,y+1].
	\end{equation}
\end{lem}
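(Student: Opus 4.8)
The plan is to prove \cref{lem:techn_for_comm_diagram} exactly as in the strong-Baxter case (\cref{lem:techn_for_comm_diagram_strong}), namely by induction on $m$, using the combinatorial description of how active sites evolve under appending a new point (the two cases recalled in \cref{sect:semi_bac_obj}) together with the description of how the coalescent-walk process evolves (the two cases in \cref{eq:distrib_incr_coal_2}). The base case $m=1$ is immediate: here $x=y=0$, the unique permutation of size $1$ has active sites $\{s_0=1,s_1=2\}$, the coalescent-walk process is the single walk $Z^{(1)}$ with $Z^{(1)}_1=0$, so $\FV(Z)=\{f_0=0\}=[0,0]$, $\mult(f_0)=1$, and \cref{eq:rel_active_sites} reads $s_0=1$ and $s_1=1+\mult(f_0)=2$, which holds.

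For the inductive step, assume the statement holds for some $1\le m<n$ with data $\{s_\ell\}_{\ell\in[-x,y+1]}$, $\FV(Z)=[-x,y]$, and multiplicities $\mult(f_\ell)$; I set $Z'=\cpp(W_{|_{[m+1]}})$ and $\pi'=\pw^{-1}(W_{|_{[m+1]}})$. I would split into the two cases according to the increment $W_{m+1}-W_m$. In \textbf{Case 1}, $W_{m+1}-W_m=(i,-i+1)$ with $i\in[y+1]$: on the permutation side, $\pi'=\pi^{*s_i}$ and (from \cref{sect:semi_bac_obj}) the new active sites are $\{s'_{-x}<\dots<s'_0\}\cup\{s'_1<\dots<s'_{y-i+2}\}$ with $s'_\ell=s_{\ell+i}$ for $\ell\le 0$ — wait, more precisely reindexing so that the new ``final-value'' position is $s_i$ — and $s'_\ell=s_{\ell+i}+1$ for the shifted ones; on the coalescent side, reading Case 1 of \cref{eq:distrib_incr_coal_2}, each walk with final value $f_\ell$, $\ell\ge 1$, with $\ell-i+1>0$, i.e. $\ell\ge i$, is mapped to $f_\ell-i+1$ hence still positive, while all the others (those with $f_\ell\le i-1$, including all non-positive ones) are pushed to value $\le 0$, and the new walk $Z'^{(m+1)}$ starts fresh at $0$. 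This gives $\FV(Z')=[-x',y']$ with the right $x',y'$, $\mult(f'_0)=1$, and the remaining multiplicities are sums/relabellings of the old ones; one then checks term by term that \cref{eq:rel_active_sites} propagates, exactly mirroring the three displayed multiplicity identities in the proof of \cref{lem:techn_for_comm_diagram_strong}. In \textbf{Case 2}, $W_{m+1}-W_m=(-i,1)$ with $i\in\{0\}\cup[x]$: on the permutation side $\pi'=\pi^{*s_{-i}}$ with new active sites $\{s'_{-x+i}<\dots<s'_0\}\cup\{s'_1<\dots<s'_{y+2}\}$, $s'_\ell=s_{\ell-i}$ for $\ell\le 0$ and $s'_\ell=s_{\ell}+1$ (with the appropriate shift) for $\ell\ge 1$; on the coalescent side, reading Case 2 of \cref{eq:distrib_incr_coal_2}, all non-negative walks get shifted up by $1$, negative walks with $f_\ell+i<0$ (i.e. $f_\ell<-i$, i.e. $\ell<-i$) stay negative, negative walks with $f_\ell+i\ge 0$ (i.e. $\ell\ge -i$) jump to value $1$, and the new walk starts at $0$. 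Again this yields $\FV(Z')=[-x+i,y+1]$, $\mult(f'_0)=1+\sum_{\ell=-i}^{0}\mult(f_\ell)$ (the walks that collapsed onto the $\{0,1\}$-bottom together with the fresh one — here I must be careful that the fresh walk sits at $0$ while the collapsed negatives jump to $1$, so the bookkeeping differs slightly from strong-Baxter; I would use \cref{obs:never_zero} and \cref{obs:alternating_excursions} to pin down which values land where), and the rest are relabellings. A straightforward computation then verifies \cref{eq:rel_active_sites} for $\pi'$ and $Z'$.

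The main obstacle is purely bookkeeping: getting the index shifts and the multiplicity recursions right in each of the two cases, because the semi-Baxter coalescent rule in \cref{eq:distrib_incr_coal_2} differs from the strong-Baxter rule — in particular the positive excursions here start at $1$ rather than there being a separate ``$-1$'' layer (see \cref{obs:alternating_excursions} and the observation that coalescent points have $y$-coordinate $1$), so the role of the bottom of $\FV(Z')$ is played slightly differently. I would organize the proof as a clean induction with the two itemized cases, each presenting the new active sites (quoting \cref{sect:semi_bac_obj}), the new final-value set and multiplicities (reading off \cref{eq:distrib_incr_coal_2}), and then a one-line ``straightforward computation'' remark that \cref{eq:rel_active_sites} is preserved, exactly paralleling the write-up of \cref{lem:techn_for_comm_diagram_strong}; no genuinely new idea is needed beyond that template.
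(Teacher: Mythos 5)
Your proposal follows the paper's proof exactly: induction on $m$ with base case $m=1$, splitting the inductive step into the two increment types, reading off the new active sites from \cref{sect:semi_bac_obj} and the new final values and multiplicities from \cref{eq:distrib_incr_coal_2}, and then checking that \cref{eq:rel_active_sites} propagates. The only caveats are the bookkeeping slips you yourself flag — in Case 1 the lower block of active sites runs from $s'_{-x-i}$, and in Case 2 the correct identities are $\mult(f'_0)=1$ (the fresh walk alone) and $\mult(f'_1)=\sum_{\ell=-i}^{0}\mult(f_\ell)$ (the collapsed negatives plus the shifted old zero-walk), which is indeed where the semi-Baxter case departs from the strong-Baxter one.
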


\begin{proof}
	We prove the statement by induction over $m$.
	
	For $m=1$ then $x=0$, $y=0$, $\FV(Z)=\{0\}=\{f_0\}$ and $\mult(f_0)=1$. On the other hand, $\pi=1$ and the set of active sites is given by $\{s_0=1,s_1=2\}$. Note that  \cref{eq:rel_active_sites} holds.
	
	Now assume that $0\leq m<n$ and that $Z$ and $\pi$ verify the statement of the lemma. We are going to show that also $\cpp(W_{|_{[m+1]}})=\{{Z'}^{(t)}\}_{t\in [m+1]}\eqqcolon Z'$ and the corresponding semi-Baxter permutation $\pw^{-1}(W_{|_{[m+1]}})=\pi'$ also verify the statement of the lemma.  We distinguish two cases:
	\begin{itemize}
		\item \textbf{Case 1:} $W_{m+1}-W_m=(i,-i+1)$ for some $i\in [y+1]$ (see the right-hand side of \cref{fig:example_for_proof}). 
		
		\noindent As explained in \cref{sect:semi_bac_obj}, in this case $\pi'=\pi^{*s_{i}}$ and its active sites are 
		$$\{s'_{-x-i}<\dots <s'_0\}\cup\{s'_1<\dots<s'_{y-i+2}\}.$$
		
		where $s'_\ell=s_{\ell+i}$ for $\ell\in[-x-i,0]$ and $s'_\ell=s_{\ell+i-1}+1$ for $\ell\in[1,y-i+2]$.
		
		On the other hand, looking at Case 1 in \cref{eq:distrib_incr_coal_2}, we immediately have that 
		$$\FV(Z')=\{f'_{-x-i}<\dots<f'_{-1}<f'_{0}=0<f'_1<\dots<f'_{y-i+1}\}=[-x-i,y-i+1],$$
		and $\mult(f'_0)=1$, $\mult(f'_\ell)=\mult(f_{\ell+i})$ for all $\ell\in[-x-i,-1]$, and $\mult(f'_\ell)=\mult(f_{\ell+i-1})$ for all $\ell\in[1,y-i+1]$.

		\item \textbf{Case 2:} $W_{m+1}-W_m=(-i,1)$ for some $i\in \{0\}\cup[x]$ (see the left-hand side of \cref{fig:example_for_proof}). 
		
		\noindent  As explained in \cref{sect:semi_bac_obj}, in this case $\pi'=\pi^{*s_{-i}}$ and its active sites are 
		$$\{s'_{-x+i}<\dots <s'_0\}\cup\{s'_1<\dots<s'_{y+2}\},$$
		with $s'_{\ell}=s_{\ell-i}$ for all $\ell\in[-x+i,0]$, $s'_{1}=s_{-i}+1$, and $s'_{\ell}=s_{\ell-1}+1$ for all $\ell\in[2,y+2]$.
		
		On the other hand, looking at Case 2 in \cref{eq:distrib_incr_coal_2}, we immediately have that 
		$$\FV(Z')=\{f'_{-x+i}<\dots<f'_{-1}<f'_{0}=0<f'_1<\dots<f'_{y+1}\}=[-x+i,y+1],$$
		and $\mult(f'_0)=1$, $\mult(f'_1)=\sum_{\ell=-i}^{0}\mult(f_\ell)$, $\mult(f'_\ell)=\mult(f_{\ell-i})$ for all $\ell\in[-x+i,-1]$, and $\mult(f'_\ell)=\mult(f_{\ell-1})$ for all $\ell\in[2,y+1]$.
	\end{itemize} 
	With a straightforward computation, based on the expressions of the $s'_\ell$ and $\mult(f'_\ell)$ in terms of $s_\ell$ and $\mult(f_\ell)$, it can be checked that \cref{eq:rel_active_sites} holds.
\end{proof}

\begin{figure}[htbp]
	\centering
	\includegraphics[scale=.7]{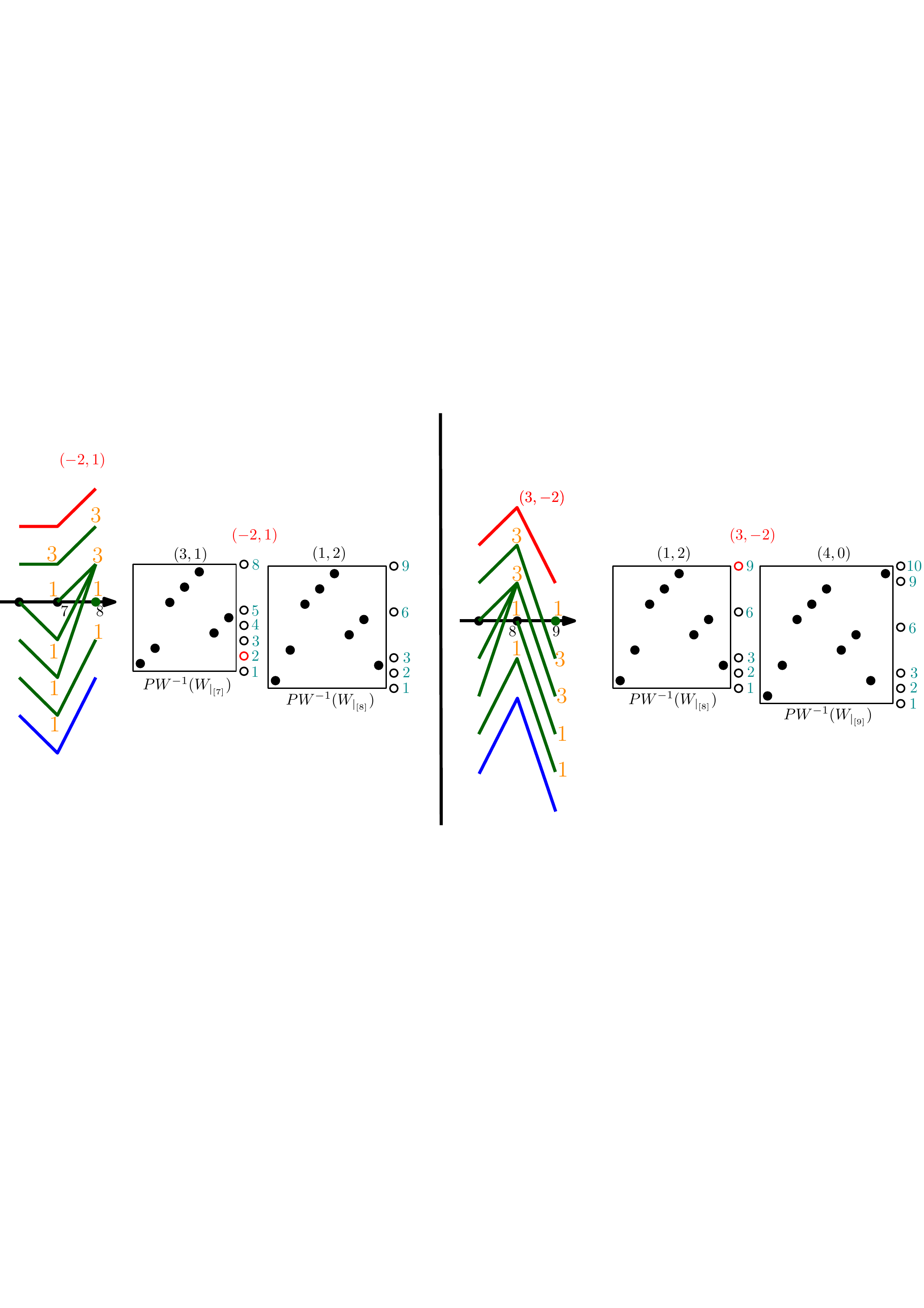}\\
	\caption{\textbf{Left:} The final steps of the coalescent-walk process $\rews(W_{|_{[8]}})$ in \cref{fig:ievdiuwbduwob} and the corresponding permutations $\pw^{-1}(W_{|_{[7]}})$ and $\pw^{-1}(W_{|_{[8]}})$ from \cref{fig:schema_perm} with the values of the active sites highlighted in cyan. We have that $W_8-W_7=(-2,1)$. Note that $\FV(\rews(W_{|_{[7]}}))=[-3,1]$ and from \cref{fig:ievdiuwbduwob}, we can determine that $\mult(-3)=\mult(-2)=\mult(-1)=\mult(0)=1$ and $\mult(1)=3$ (these numbers are plotted in orange close to the final values of the various walks). Note also that $\FV(\rews(W_{|_{[8]}}))=[-1,2]$ and we have that $\mult(-1)=\mult(0)=1$, $\mult(1)=3$ and $\mult(2)=3$.
		\textbf{Right: }The final steps of the coalescent-walk process $\rews(W_{|_{[9]}})$ in \cref{fig:ievdiuwbduwob} and the corresponding permutations $\pw^{-1}(W_{|_{[8]}})$ and $\pw^{-1}(W_{|_{[9]}})$ from \cref{fig:schema_perm}. We have that $W_9-W_8=(3,-2)$. As above, $\FV(\rews(W_{|_{[8]}}))=[-1,2]$ and $\mult(-1)=\mult(0)=1$, and  $\mult(1)=\mult(2)=3$. Note also that $\FV(\rews(W_{|_{[9]}}))=[-4,0]$ and we have that $\mult(-4)=\mult(-3)=1$, $\mult(-2)=\mult(-1)=3$, and $\mult(0)=1$. It is easy to check, comparing the orange and cyan numbers, that in both cases \cref{eq:rel_active_sites} holds.
		\label{fig:example_for_proof}}
\end{figure}

\begin{thm}\label{thm:The_diagram_commutes}
	The diagram in \cref{eq:diagrm_semi_baxter} commutes.
\end{thm}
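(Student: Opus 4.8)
The plan is to transcribe the proof of \cref{thm:The_diagram_commutes_strong} to the semi-Baxter setting: one shows that $\pw^{-1} = \cpp \circ \rews$ on $\mathcal{W}_{sb}$, which is exactly the commutativity of the diagram in \cref{eq:diagrm_semi_baxter}. Since $\pw$ is a size-preserving bijection and $\rews$ is surjective onto $\CCCC_{sb}$ by the definition $\CCCC_{sb}=\rews(\mathcal{W}_{sb})$, this identity moreover forces $\rews|_{\mathcal{W}_{sb}}$ and $\cpp|_{\CCCC_{sb}}$ to be bijections, so the diagram is indeed a commutative diagram of bijections. Fix $n\in\N$ and $W\in\mathcal{W}^n_{sb}$, and argue by induction on $m\in[n]$ that $\pw^{-1}((W_i)_{i\in[m]}) = \cpp\circ\rews((W_i)_{i\in[m]})$; the base case $m=1$ is immediate, since both sides equal the unique permutation of size $1$.

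For the inductive step, assume the equality at level $m$. Since appending a new final value changes the permutation only in its last entry, it suffices to check that $\pw^{-1}((W_i)_{i\in[m+1]})(m+1) = \cpp\circ\rews((W_i)_{i\in[m+1]})(m+1)$. Write $\text{AS}(\pw^{-1}((W_i)_{i\in[m+1]})) = \{s_{-x}<\dots<s_0\}\cup\{s_1<\dots<s_{y+1}\}$. On the combinatorial side, the two-case analysis of \cref{sect:semi_bac_obj} — the exact analogue of \cref{obs:final_point_strong_bax}, since in Case~1 the inserted value $s_i$ becomes the new $s_0$ and in Case~2 the inserted value $s_{-i}$ becomes the new $s_0$ — gives $\pw^{-1}((W_i)_{i\in[m+1]})(m+1) = s_0$. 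On the coalescent side, set $Z = \rews((W_i)_{i\in[m+1]}) = \{\Ztp\}_{t\in[m+1]}$; by \cref{lem:techn_for_comm_diagram} we have $\FV(Z) = [-x,y]$ with $\mult(f_0)=1$, so \cref{defn:coalandpermrel} together with \cref{defn:quantities_coal_walk_proc_strong} (extended to semi-Baxter walks as noted before \cref{lem:techn_for_comm_diagram}) yields
$$\cpp\circ\rews((W_i)_{i\in[m+1]})(m+1) = \#\{t\in[m+1]:\Ztp_{m+1}\leq 0\} = \sum_{\ell\leq 0}\mult(f_\ell) = 1 + \sum_{\ell\leq -1}\mult(f_\ell).$$
Applying \cref{eq:rel_active_sites} of \cref{lem:techn_for_comm_diagram} with $\ell=0$ identifies $1 + \sum_{\ell\leq-1}\mult(f_\ell)$ with $s_0$, which closes the induction.

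All the substantive bookkeeping has already been carried out in \cref{lem:techn_for_comm_diagram}, which is proved, so I do not anticipate a real obstacle here; the only differences to keep track of when adapting the strong-Baxter argument are that the increment set $I_{sb}$ and the update rules in \cref{eq:distrib_incr_coal_2} involve two cases rather than three, and that by \cref{obs:never_zero} no walk of $\rews(W)$ returns to $0$ after its starting time, so that $f_0=0$ is attained only by $Z^{(m+1)}$ and $\mult(f_0)=1$. These features are precisely what \cref{lem:techn_for_comm_diagram} encodes, hence the proof is essentially a verbatim transcription of the proof of \cref{thm:The_diagram_commutes_strong} with each strong-Baxter object replaced by its semi-Baxter counterpart.
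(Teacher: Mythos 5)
Your proposal is correct and matches the paper's proof, which is stated exactly as "identical to the one of \cref{thm:The_diagram_commutes_strong}, replacing \cref{lem:techn_for_comm_diagram_strong} with \cref{lem:techn_for_comm_diagram}." Your explicit verification of the semi-Baxter analogue of \cref{obs:final_point_strong_bax} (that the inserted site becomes the new $s_0$ in both cases) and of $\mult(f_0)=1$ via \cref{obs:never_zero} fills in precisely the details the paper leaves implicit.
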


The proof is identical to the one of \cref{thm:The_diagram_commutes_strong}, replacing \cref{lem:techn_for_comm_diagram_strong} with \cref{lem:techn_for_comm_diagram}.

\subsection{Probabilistic results for semi-Baxter permutations}\label{sect:semi_baxter_prob_res}

Here we follow the same steps as in \cref{sect:prob_part} specializing to the case of semi-Baxter permutations. We will only explain the key-differences between the case of strong-Baxter permutations and semi-Baxter permutations.

\subsubsection{Sampling a uniform semi-Baxter permutation as a conditioned two-dimensional walk}

We proceed as in \cref{sect:strong_bax_walks}.
Consider the following probability measure on  $I_{sb}$ (see also the left-hand side of \cref{fig:jumps_semi_baxter}):
\begin{equation}\label{eq:distristep_semibaxter}
\mu_{sb}=\sum_{i=0}^{\infty}\alpha\gamma^{i}\cdot \delta_{(-i,1)}
+\sum_{i=1}^{\infty}\alpha\gamma^{i}\cdot\delta_{(i,-i+1)},
\end{equation}
where $\alpha=\sqrt{5}-2$ and $\gamma=\frac{\sqrt{5}-1}{2}$, and $\delta$ denotes the delta-Dirac measure. Let $(\bm X,\bm Y)$ be a random variable such that $\mathcal{L}aw(\bm X,\bm Y)=\mu_{sb}$.  With standard computations it is easy to verify that:
\begin{equation}
\E[\bm X]=\E[\bm Y]=0, \quad\quad \E[\bm X^2]=2(2+\sqrt 5),\quad\quad \E[\bm Y^2]=1+\sqrt 5,\quad\quad\E[\bm X\bm Y]=-(2+\sqrt 5).
\end{equation}
Therefore
\begin{equation}\label{eq:cov_semi-baxter}
\Var((\bm X,\bm Y))=
\begin{pmatrix}
2(2+\sqrt 5) & -(2+\sqrt 5) \\
-(2+\sqrt 5) & 1+\sqrt 5 
\end{pmatrix}
\end{equation}
and so $\rho=\Cor((\bm X,\bm Y))=-\frac{1+\sqrt 5}{4}$.
We now denote by
\begin{equation}
\stackrel{\leftarrow}{I_{sb}}\coloneqq\{(i,-1): i\geq 0\}\cup\{(-i,i-1): i\geq 1\},
\end{equation}
i.e.\ the set of ``reversed" increments (recall the definition of the set $I_{sb}$ in \cref{eq:semi_bax_increm}). We further denote by 
\begin{equation}
\stackrel{\leftarrow}{\mu_{sb}}=\sum_{i=0}^{\infty}\alpha\gamma^{i}\cdot \delta_{(i,-1)}
+\sum_{i=1}^{\infty}\alpha\gamma^{i}\cdot\delta_{(-i,i-1)},
\end{equation}
the ``reversed" distribution on $\stackrel{\leftarrow}{I_{sb}}$ induced by $\mu_{sb}$ (see also the right-hand side of \cref{fig:jumps_semi_baxter}). 
\begin{figure}[htbp]
	\centering
	\hspace{3.1cm}\includegraphics[scale=0.7]{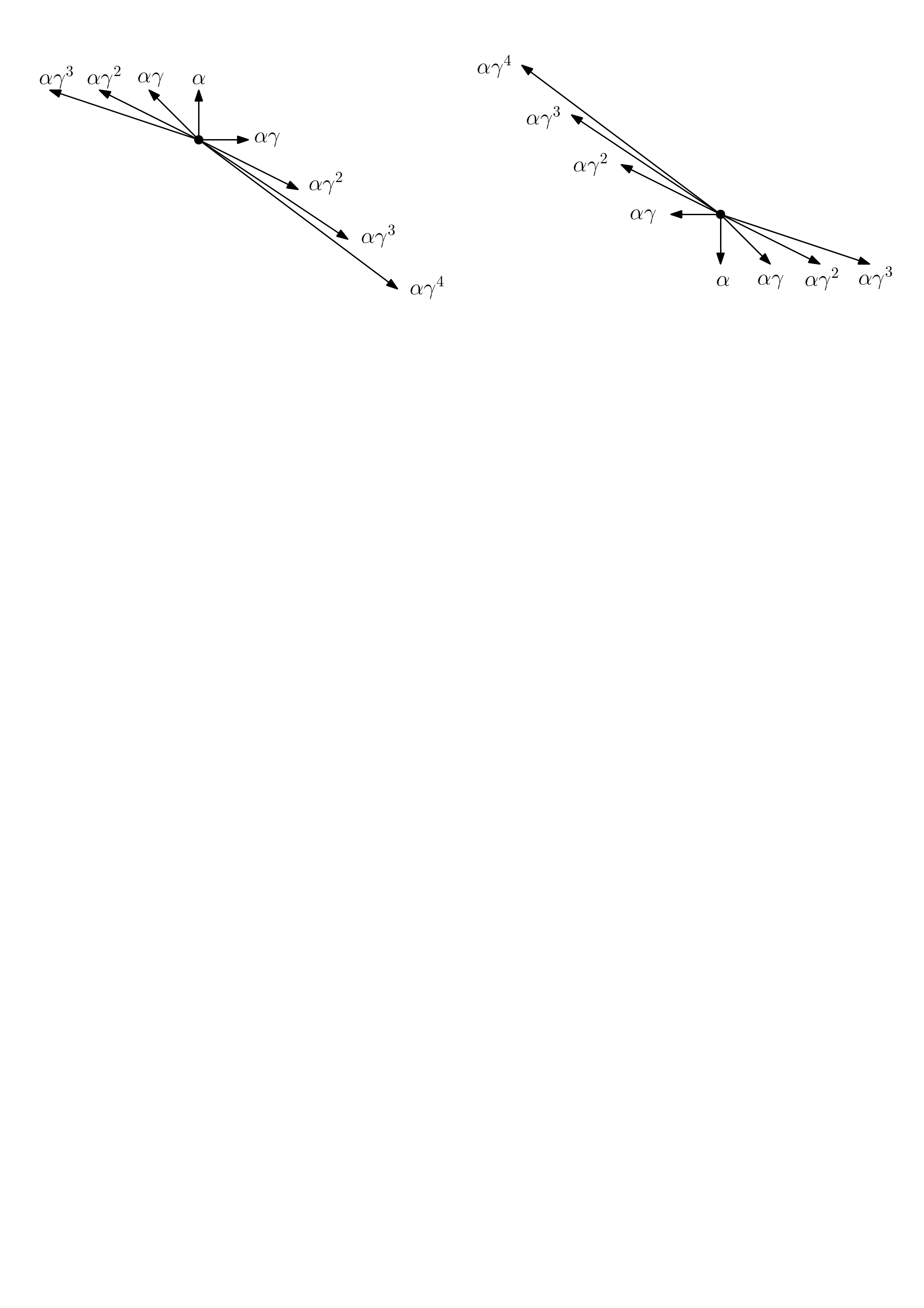}
	\caption{\textbf{Left:} Some of the increments in the set $I_{sb}$ are plotted together with the corresponding probability weights given by $\mu_{sb}$. \textbf{Right:} Some increments in the set $\stackrel{\leftarrow}{I_{sb}}$ are plotted together with the corresponding probability weights given by $\stackrel{\leftarrow}{\mu_{sb}}$.\label{fig:jumps_semi_baxter}}
\end{figure}

For all $n\in\zgz$, we define the following additional probability measure
\begin{equation}
\nu_{sb}^n=\frac{1}{Z_n}\sum_{(h,\ell)\in\mathcal{L}^n_{sb}}\gamma^{2\ell+h} \delta_{(h,\ell)},
\end{equation}
where 
$
\mathcal{L}_{sb}^n\coloneqq\{\text{Labels at level $n$ in the generating tree for semi-Baxter permutations}\}
$
and the normalizing constant satisfies
$
Z_n=\sum_{(h,\ell)\in\mathcal{L}^n_{sb}}\gamma^{2\ell+h}.
$

Let $(\stackrel{\leftarrow}{\bm W_n}(i))_{i\geq 1}$ be a two-dimensional random walk with increments distributed as $\stackrel{\leftarrow}{\mu_{sb}}$ and starting probability $\nu_{sb}^n(h,\ell)=\P(\stackrel{\leftarrow}{\bm W_n}(1)=(h,\ell))$.
Denote by $\stackrel{\leftarrow}{\mathcal{W}_{sb}^n}$ the set of two-dimensional walks $(x_i)_{i\in[n]}$ in the non-negative quadrant with increments in $\stackrel{\leftarrow}{I_{sb}}$ and such that $x_n=(0,0)$.
\begin{prop}\label{prop:the_walk_is_uniform}
	Conditioning on the event $\left\{(\stackrel{\leftarrow}{\bm W_n}(i))_{i\in [n]}\in\stackrel{\leftarrow}{\mathcal{W}_{sb}^n}\right\}$, the walk $(\stackrel{\leftarrow}{\bm W_n}(i))_{i\in [n]}$ is a uniform walk in $\stackrel{\leftarrow}{\mathcal{W}_{sb}^n}$.
\end{prop}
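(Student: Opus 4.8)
The plan is to mimic exactly the proof of \cref{prop:the_walk_is_uniform_strong}, exploiting the fact that the Boltzmann-type weight $\gamma^{2\ell+h}$ on the labels is designed to telescope along any walk. First I would fix a target walk $(x_i)_{i\in[n]}\in\stackrel{\leftarrow}{\mathcal W_{sb}^n}$ with $x_1=(h,\ell)$ and $x_n=(0,0)$, and observe that, since the starting distribution is $\nu_{sb}^n$ and the increments are i.i.d.\ with law $\stackrel{\leftarrow}{\mu_{sb}}$, the probability $\P\big((\stackrel{\leftarrow}{\bm W_n}(i))_{i\in[n]}=(x_i)_{i\in[n]}\big)$ factors as $\nu_{sb}^n(h,\ell)$ times the product of the $n-1$ increment probabilities along the path.

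The key computation is that the increment weights conspire to cancel the factor $\gamma^{2\ell+h}$ coming from $\nu_{sb}^n(h,\ell)=\gamma^{2\ell+h}/Z_n$. Here one tracks how the two coordinates $(h,\ell)$ change under the two types of reversed increments in $\stackrel{\leftarrow}{I_{sb}}$: a step $(i,-1)$ (weight $\alpha\gamma^{i}$) changes the quantity $2\ell+h$ by $2(-1)+i=i-2$, while a step $(-i,i-1)$ (weight $\alpha\gamma^{i}$) changes it by $2(i-1)+(-i)=i-2$. In both cases the increment weight is exactly $\alpha\gamma^{(\Delta(2\ell+h))+2}$, i.e.\ $\alpha\gamma^{2}$ times $\gamma$ raised to the change in $2\ell+h$. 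Hence the product over the $n-1$ steps equals $(\alpha\gamma^{2})^{n-1}$ times $\gamma$ raised to the total change $\big(2\cdot 0+0\big)-\big(2\ell+h\big)=-(2\ell+h)$. Multiplying by $\nu_{sb}^n(h,\ell)=\gamma^{2\ell+h}/Z_n$ gives $\P\big((\stackrel{\leftarrow}{\bm W_n}(i))_{i\in[n]}=(x_i)_{i\in[n]}\big)=(\alpha\gamma^{2})^{n-1}/Z_n$, which is manifestly independent of the chosen walk. Dividing by $\P\big((\stackrel{\leftarrow}{\bm W_n}(i))_{i\in[n]}\in\stackrel{\leftarrow}{\mathcal W_{sb}^n}\big)$ then yields the uniform distribution, as in the strong-Baxter case.

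There is essentially no hard obstacle here: the only thing to be careful about is the bookkeeping of the exponent $2\ell+h$ under each increment type — one must verify that the specific choice of Boltzmann weight (with the factor $2$ on $\ell$) is precisely the one for which the per-step contribution $\gamma^{\Delta(2\ell+h)}$ is a \emph{constant} multiple of the increment probability $\alpha\gamma^{i}$, uniformly over $i$. This is the analogue of the identity used implicitly in \cref{prop:the_walk_is_uniform_strong}, where the label weight was $\gamma^{h}\theta^{\ell}$; here the degeneracy $\alpha\gamma^{i}$ appears on \emph{both} families of increments, which forces $\theta=\gamma^{2}$ in the earlier notation, explaining the single-parameter form $\gamma^{2\ell+h}$. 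Once this linear-algebra check is done, the proof is a two-line computation exactly parallel to the one already in the paper, and I would write it in the same compressed style.

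\begin{proof}
	Fix $(x_i)_{i\in[n]}\in\stackrel{\leftarrow}{\mathcal{W}_{sb}^n}$. It is enough to show that $\P\left((\stackrel{\leftarrow}{\bm W_n}(i))_{i\in [n]}=(x_i)_{i\in[n]}\right)$ is independent of the choice of $(x_i)_{i\in[n]}$. Assume $x_1=(h,\ell)$ and recall $x_n=(0,0)$. For every increment in $\stackrel{\leftarrow}{I_{sb}}$ of the form $(i,-1)$ (with $i\geq 0$) or $(-i,i-1)$ (with $i\geq 1$), the quantity $2(\text{second coord.})+(\text{first coord.})$ changes by $i-2$, while the $\stackrel{\leftarrow}{\mu_{sb}}$-weight of the increment equals $\alpha\gamma^{i}=\alpha\gamma^{2}\cdot\gamma^{i-2}$. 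Hence, writing $\Delta_j$ for the change of $2(\text{second})+(\text{first})$ along the $j$-th step, the product of the increment weights along the path equals $(\alpha\gamma^{2})^{n-1}\gamma^{\sum_{j=1}^{n-1}\Delta_j}$, and $\sum_{j=1}^{n-1}\Delta_j=\big(2\cdot 0+0\big)-\big(2\ell+h\big)=-(2\ell+h)$. Therefore, by definition of $\stackrel{\leftarrow}{\mu_{sb}}$ and $\nu_{sb}^n$,
	\begin{equation}
	\P\left((\stackrel{\leftarrow}{\bm W_n}(i))_{i\in [n]}=(x_i)_{i\in[n]}\right)=\nu_{sb}^n(h,\ell)\cdot(\alpha\gamma^{2})^{n-1}\gamma^{-(2\ell+h)}=\frac{\gamma^{2\ell+h}}{Z_n}\cdot(\alpha\gamma^{2})^{n-1}\gamma^{-(2\ell+h)}=\frac{(\alpha\gamma^{2})^{n-1}}{Z_n},
	\end{equation}
	which is independent of the choice of $(x_i)_{i\in[n]}$. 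This concludes the proof.
\end{proof}
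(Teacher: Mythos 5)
Your proof is correct and follows essentially the same approach as the paper: both factor the path probability as the starting weight $\nu_{sb}^n(h,\ell)$ times the product of increment weights and show that this product depends only on the endpoints, arriving at the same value $\alpha^{n-1}\gamma^{2n-2}/Z_n$. Your per-step telescoping of the functional $2\ell+h$ is just a cleaner bookkeeping of the paper's computation, which instead sums the first coordinates of each increment type separately and solves for the totals from the endpoint condition $x_n=(0,0)$.
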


\begin{proof}
	Fix $(x_i)_{i\in[n]}\in\stackrel{\leftarrow}{\mathcal{W}_{sb}^n}$. It is enough to show that $\P\left((\stackrel{\leftarrow}{\bm W_n}(i))_{i\in [n]}=(x_i)_{i\in[n]}\right)$ is independent of the choice of $(x_i)_{i\in[n]}$. To do that, assume that the walk $(x_i)_{i\in[n]}$ is formed by $k$ increments of the form $(-i,i-1)$ and $n-k-1$ increments of the form $(i,-1)$. Moreover, assume that the sum of the $k$ increments of the form $(-i,i-1)$ is equal to $(-s,s-k)$ and the sum of the $n-k-1$ increments of the form $(i,-1)$ is equal to $(q,-n+k+1)$. Under these assumptions we have that if $x_1=(h,\ell)$ then $x_n=(h-s+q,\ell+s-n+1)$. Since $(x_i)_{i\in[n]}\in\stackrel{\leftarrow}{\mathcal{W}_{sb}^n}$, it must hold that $(h-s+q,\ell+s-n+1)=(0,0)$ and so $s=n-1-\ell$ and $q=n-1-\ell-h$, in particular $s+q=2n-2\ell-h-2$.
	
	Under these assumptions,
	\begin{equation}
	\P\left((\stackrel{\leftarrow}{\bm W_n}(i))_{i\in [n]}=(x_i)_{i\in[n]}\right)=\nu_{sb}^n(h,\ell)\cdot\alpha^{n-1}\cdot\gamma^{s+q}=\frac{\gamma^{2\ell+h}}{Z_n}\cdot\alpha^{n-1}\cdot\gamma^{2n-2\ell-h-2}=\frac{\alpha^{n-1}\cdot\gamma^{2n-2}}{Z_n},
	\end{equation}
	and this concludes the proof.
\end{proof}

Let $(\bm W_n(i))_{i\geq 1}$ be the reversed walk obtained from $(\stackrel{\leftarrow}{\bm W_n}(i))_{i\geq 1}$. An easy consequence of \cref{prop:the_walk_is_uniform} is the following.

\begin{cor}\label{cor:the_walk_is_uniform}
	Conditioning on the event $\left\{(\bm W_n(i))_{i\in[n]}\in\mathcal{W}_{sb}^n\right\}$, the walk $(\bm W_n(i))_{i\in [n]}$ is a uniform walk in $\mathcal{W}_{sb}^n$.
\end{cor}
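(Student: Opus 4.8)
The plan is to reduce this to \cref{prop:the_walk_is_uniform} via the time-reversal bijection. The first observation is that $\stackrel{\leftarrow}{I_{sb}}=-I_{sb}$ (compare \cref{eq:semi_bax_increm} with the definition of $\stackrel{\leftarrow}{I_{sb}}$), so the map $\mathcal R$ sending a finite walk $(x_i)_{i\in[n]}$ to its reversal $(x_{n+1-i})_{i\in[n]}$ turns increments in $\stackrel{\leftarrow}{I_{sb}}$ into increments in $I_{sb}$, and clearly preserves the property of lying in the non-negative quadrant. Since a walk of $\stackrel{\leftarrow}{\mathcal{W}_{sb}^n}$ ends at $(0,0)$, its reversal starts at $(0,0)$; hence $\mathcal R$ restricts to a bijection $\stackrel{\leftarrow}{\mathcal{W}_{sb}^n}\to\mathcal{W}_{sb}^n$, with inverse given by the analogous reversal (using $I_{sb}=-\stackrel{\leftarrow}{I_{sb}}$).

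Next I would record that, by construction, $(\bm W_n(i))_{i\in[n]}$ is the image under $\mathcal R$ of $(\stackrel{\leftarrow}{\bm W_n}(i))_{i\in[n]}$, i.e.\ $\bm W_n(i)=\stackrel{\leftarrow}{\bm W_n}(n+1-i)$ for $i\in[n]$ (shifting, if one prefers, so that $\bm W_n(1)=(0,0)$ identically). Because $\mathcal R$ is a bijection between the two finite sets, the events $\{(\stackrel{\leftarrow}{\bm W_n}(i))_{i\in[n]}\in\stackrel{\leftarrow}{\mathcal{W}_{sb}^n}\}$ and $\{(\bm W_n(i))_{i\in[n]}\in\mathcal{W}_{sb}^n\}$ coincide, and conditionally on this common event the law of $(\bm W_n(i))_{i\in[n]}$ is the push-forward under $\mathcal R$ of the conditional law of $(\stackrel{\leftarrow}{\bm W_n}(i))_{i\in[n]}$.

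The conclusion is then immediate: by \cref{prop:the_walk_is_uniform} the latter conditional law is uniform on $\stackrel{\leftarrow}{\mathcal{W}_{sb}^n}$, and the push-forward of the uniform measure on a finite set under a bijection is the uniform measure on the image, so $(\bm W_n(i))_{i\in[n]}$ is uniform on $\mathcal{W}_{sb}^n$. There is no genuine obstacle here; the argument is the exact analogue of the one for \cref{cor:the_walk_is_uniform_strong} in the strong-Baxter setting, and the only point requiring a little attention is the bookkeeping of the reversal map --- in particular checking that it exchanges the ``starts at $(0,0)$'' constraint defining $\mathcal{W}_{sb}^n$ with the ``ends at $(0,0)$'' constraint defining $\stackrel{\leftarrow}{\mathcal{W}_{sb}^n}$.
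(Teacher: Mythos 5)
Your argument is correct and is exactly the intended one: the paper gives no proof of this corollary beyond calling it "an easy consequence" of \cref{prop:the_walk_is_uniform}, and your reduction via the time-reversal bijection $\stackrel{\leftarrow}{\mathcal{W}_{sb}^n}\to\mathcal{W}_{sb}^n$ (using $\stackrel{\leftarrow}{I_{sb}}=-I_{sb}$ and the exchange of the start/end constraints at $(0,0)$) together with the push-forward of the uniform measure under a bijection is precisely what is meant. No gaps.
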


\subsubsection{Scaling limit of the conditioned two-dimensional walks for semi-Baxter permutations}

We define a rescaled version of the walk $(\bm W_n(i))_{i\geq 1}=(\bm X_{n}(i),\bm Y_{n}(i))_{i\geq 1}$: for all $n\geq 1,$ let $\conti W_n:[0,1]\to \R^2$ be the continuous process that linearly interpolates the following points
$$
\conti W_n\left(\frac kn\right) =  \left(\frac {\bm X_n(k)} {\sqrt {2(2+\sqrt 5) n}},\frac {\bm Y_n(k)} {\sqrt {(1+\sqrt 5) n}}\right),\quad\text{ for all } k\in [n].
$$

\begin{prop}\label{prop:weiugfweougfwoeu}
	Conditioning on the event $\left\{(\bm W_n(i))_{i\in[n]}\in\mathcal{W}_{sb}^n\right\}$, the following convergence in the space $\czord$ holds 
	\begin{equation}\label{eq:conv_to_brow_excurs}
	\conti W_n
	\xraninf{d}
	\conti E_{\rho},
	\end{equation}
	where $\rho=-\frac{1+\sqrt 5}{4}$.
\end{prop}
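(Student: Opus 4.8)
The plan is to transcribe, essentially verbatim, the proof of \cref{prop:scaling_strong_walk}, replacing the data attached to strong-Baxter permutations by the data attached to semi-Baxter permutations: the step distribution $\mu_{sb}$ of \eqref{eq:distristep_semibaxter}, the starting measure $\nu_{sb}^n$, the set $\mathcal{W}_{sb}^n$, the weights $\gamma^{2\ell+h}$ in place of $\gamma^h\theta^\ell$, and the correlation $\rho=-(1+\sqrt5)/4$ read off from \eqref{eq:cov_semi-baxter}. By \cref{cor:the_walk_is_uniform}, conditionally on $\{(\bm W_n(i))_{i\in[n]}\in\mathcal{W}_{sb}^n\}$ the walk $(\bm W_n(i))_{i\in[n]}$ is uniform in $\mathcal{W}_{sb}^n$, so the statement depends only on the combinatorial class $\mathcal{W}_{sb}$; the measures $\stackrel{\leftarrow}{\mu_{sb}}$ and $\nu_{sb}^n$ serve only to realize this uniform law (cf.\ \cref{prop:the_walk_is_uniform}). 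First I would invoke \cite[Theorem 6]{MR4102254}: since $\mu_{sb}$ is a centered probability measure on $\Z^2$ with the nondegenerate covariance matrix \eqref{eq:cov_semi-baxter}, for every fixed $(h,\ell)\in\Z_{\geq 0}^2$ the rescaled walk $\conti W_n$ converges in $\czord$ to $\conti E_\rho$ once one conditions additionally on $\{\bm W_n(n)=(h,\ell)\}$. Consequently, it suffices to show that the endpoint does not escape to infinity, namely
\[
\sum_{(h,\ell)\in\Z_{\geq 0}^2}\liminf_{n\to\infty}\P\!\left(\bm W_n(n)=(h,\ell)\,\middle|\,(\bm W_n(i))_{i\in[n]}\in\mathcal{W}_{sb}^n\right)=1 .
\]

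To prove this identity I would argue exactly as after \eqref{eq:fbbfowenfwe}. Write $\P(\bm W_n(n)=(h,\ell)\mid \cdots)$ via Bayes' formula as the ratio of $\P((\bm W_n(i))_{i\in[n]}\in\mathcal{W}_{sb}^n\mid\bm W_n(n)=(h,\ell))\cdot\P(\bm W_n(n)=(h,\ell))$ to $\P((\bm W_n(i))_{i\in[n]}\in\mathcal{W}_{sb}^n)$. For the conditional probability in the numerator I would use the semi-Baxter analogues of \cref{cor:upper_bound_walks_strong} (a bound of the form $C(1+|(h,\ell)|^p)n^{-p-1}$) that are proved in \cref{sect:prob_walk_cones} from the Denisov--Wachtel estimates \cite{MR3342657}. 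For the denominator I would expand it using the definition of $\nu_{sb}^n$, retain a single conveniently chosen fixed label $\bar\lambda$ that belongs to $\mathcal{L}^n_{sb}$ for all $n$ large (such a label exists by inspection of the succession rule \eqref{eq:laruletadesuccession_1423_4123}), bound $Z_n\leq\frac{1}{(1-\gamma^2)(1-\gamma)}$, and apply the matching lower bound (the analogue of \cref{cor:upper_bound_walks_strong2}) to get $\P((\bm W_n(i))_{i\in[n]}\in\mathcal{W}_{sb}^n)\geq C' n^{-p-1}$. Together with the trivial bound $\P(\bm W_n(n)=(h,\ell))\leq C''\gamma^{2\ell+h}$ coming from \eqref{eq:distristep_semibaxter}, this yields a domination $\P(\bm W_n(n)=(h,\ell)\mid\cdots)\leq C'''(1+|(h,\ell)|^p)\gamma^{2\ell+h}$ uniform in $n$, whose right-hand side is summable over $\Z_{\geq0}^2$; the $\liminf$ identity then follows by the same $\varepsilon$/compact-set argument that closes the proof of \cref{prop:scaling_strong_walk}.

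The main obstacle is not in this reduction, which is purely formal, but in establishing the two-sided local estimates for the walk confined to the quarter plane that play the role of \cref{cor:upper_bound_walks_strong,cor:upper_bound_walks_strong2}: one must check that $\mu_{sb}$ meets the hypotheses of \cite{MR3342657,MR4102254} --- zero drift (already verified), a nondegenerate covariance (clear from \eqref{eq:cov_semi-baxter}, whose determinant equals $(2+\sqrt5)\sqrt5>0$), finite exponential moments of the required order for the geometric tails of $\mu_{sb}$, the relevant non-lattice/aperiodicity conditions in the cone, and the correct value of the exit exponent $p$ coming from the harmonic function attached to the quarter plane and to the linear map diagonalizing \eqref{eq:cov_semi-baxter}. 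Once these cone estimates are in hand (in \cref{sect:prob_walk_cones}), everything else is a routine transcription of the strong-Baxter argument with $\mathcal{W}_{Sb}$ replaced by $\mathcal{W}_{sb}$ and $\gamma^h\theta^\ell$ replaced by $\gamma^{2\ell+h}$.
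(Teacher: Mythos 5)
Your proposal is correct and follows essentially the same route as the paper, which in fact disposes of this proposition by declaring its proof identical to that of \cref{prop:scaling_strong_walk} with the semi-Baxter data ($\mu_{sb}$, $\nu_{sb}^n$, the weights $\gamma^{2\ell+h}$, the bound $Z_n\leq\frac{1}{(1-\gamma^2)(1-\gamma)}$, and $\rho=-\frac{1+\sqrt5}{4}$ from \cref{eq:cov_semi-baxter}) substituted throughout. Your identification of the only genuinely nontrivial inputs --- the two-sided local estimates in the quarter plane from \cref{sect:prob_walk_cones} and the verification of the Denisov--Wachtel/Duraj--Wachtel hypotheses for $\mu_{sb}$ --- matches what the paper relies on.
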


The proof of this proposition is identical to the proof of \cref{prop:scaling_strong_walk} using the expression for $\rho$ given below \cref{eq:cov_semi-baxter}.

\subsubsection{Scaling limit of coalescent-walk processes and semi-Baxter permutations}

Let $\overline{\bm W} = (\obmx,\obmy) =(\obmx(k),\obmy(k))_{k\in \Z}$ be a random bi-infinite two-dimensional walk with step distribution $\mu_{sb}$ defined in \cref{eq:distristep_semibaxter}, and let $\ovbmZ = \wcp(\overline{\bm W})$ be the corresponding discrete coalescent-walk process. For convenience, we set $\ovbmZ^{(j)}_i = 0$ for $i,j \in \Z$, $i<j$.

We further define the following rescaled processes: for all $n\geq 1, u\in \R$, let $\overline {\conti W}_n:\R\to \R^2$, and $\contzu_n:\R\to\R$ be the continuous processes that interpolate the following points:
\begin{equation}\label{eq:rescaled_version}
\ovconw_n\left(\frac kn\right) = \left(\frac {\obmx(k)} {\sqrt {2(2+\sqrt 5)n}},\frac {\obmy(k)} {\sqrt {(1+\sqrt 5)n}}\right), \quad\text{for all}\quad k\in \Z,
\end{equation}
and
\begin{equation}\label{eq:rescaled_version2}
\contzu_{n}\left(\frac kn\right) =
\begin{cases}
\frac {\ovbmZ^{(\lceil nu\rceil)}_{k}} { \sqrt {(1+\sqrt 5)n}},\quad&\text{when}\quad\ovbmZ^{(\lceil nu\rceil)}_{k}\geq 0,\\
\frac {\ovbmZ^{(\lceil nu\rceil)}_{k}} {\sqrt {2(2+\sqrt 5) n}},\quad&\text{when}\quad\ovbmZ^{(\lceil nu\rceil)}_{k}\leq 0,
\end{cases}
\quad\text{for all}\quad k\in \Z.
\end{equation}

Recall that $\ovconw_{\rho} = (\overline{\conti X}_{\rho},\overline{\conti Y}_{\rho})$ denotes a two-dimensional Brownian motion of correlation $\rho$.
\begin{thm}\label{thm:fvwuofbgqfipqhfqfpq_semib}
	Fix $u\in \R$. 
	The following joint convergence in the space $\mathcal C(\R,\R)^{3}$ holds: 
	\begin{equation}
	\label{eq:fvwuofbgqfipqhfqfpq_semib}
	\left(\ovconw_n,\contzu_n\right) 
	\xraninf{d}
	\left(\ovconw_{\rho},\ovconz_{\rho,q}^{(u)}\right),
	\end{equation}
	where
	\begin{equation}
		\rho=-\frac{1+\sqrt 5}{4}
		\quad\text{and}\quad
		q=\frac{1}{2},
	\end{equation} 
	and $\ovconz_{\rho,q}^{(u)}$ is the solution of the SDE in \cref{eq:flow_SDE_inf_vol_strong} driven by $\ovconw_{\rho}$.
\end{thm}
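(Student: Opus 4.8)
The plan is to reproduce, essentially line by line, the proof of Theorem~\ref{thm:fvwuofbgqfipqhfqfpq_strongb}, feeding in at each step the semi-Baxter combinatorial data assembled in Section~\ref{sect:jiewvbfiuwebvof} — in particular the coalescent increments of \cref{eq:refrom_coal_semib} and the excursion structure recorded in Observations~\ref{obs:never_zero} and~\ref{obs:alternating_excursions}. The first ingredient is Donsker's theorem: applied to the bi-infinite walk $\overline{\bm W}=(\obmx,\obmy)$ with step law $\mu_{sb}$ and the covariance matrix in \cref{eq:cov_semi-baxter}, it gives $\ovconw_n\xraninf{d}\ovconw_\rho$ in $\mathcal C(\R,\R)^2$, where $\ovconw_\rho$ is a two-dimensional Brownian motion of correlation $\rho=-\tfrac{1+\sqrt5}{4}$.

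The heart of the matter is the semi-Baxter analogue of Proposition~\ref{prop:skew_part_strong}: the rescaled one-dimensional walk $\contzu_n$ converges in $\mathcal C(\R,\R)$ to a skew Brownian motion of parameter $q$ started at time $u$, and one must identify $q=\tfrac12$. As for strong-Baxter this is proved by the three-step scheme (convergence of one-dimensional marginals, then of finite-dimensional marginals, then tightness), and only the one-dimensional marginals require real work. The structural difference is that, by Observation~\ref{obs:never_zero}, $\Ztp$ never returns to $0$ after leaving it, and, by Observation~\ref{obs:alternating_excursions}, it decomposes into strictly alternating strictly-positive and strictly-negative excursions, each new positive excursion beginning at the value $1$. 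Hence the renewal/stopping times in the decomposition analogous to \cref{eq:webhfyuwheribfiuewbfoweuf_strong} must be taken at the instants of sign change rather than at hitting times of $0$, and the estimates of Lemma~\ref{ijrnfr9fnr39} and Corollary~\ref{cor:asympt_estim_rew_strong} are to be replaced by the corresponding $k^{-3/2}$ and $k^{-1/2}$ asymptotics for these crossing times, again obtained from the local limit theorems of \cref{sect:prob_walk_cones} and the techniques of \cite{MR4105264}. Carrying out the resulting computation with $\gamma=\tfrac{\sqrt5-1}{2}$ (so that $\gamma^2+\gamma=1$), which makes the within-excursion increments of $\Ztp$ on the positive side and of $-\Ztp$ on the negative side both centred, and using the two-speed rescaling of \cref{eq:rescaled_version2}, the positive and negative macroscopic excursions end up with equal asymptotic weight, forcing $q=\tfrac12$; in contrast with the strong-Baxter case this value is the manifestation of the phenomenon announced in Remark~\ref{rem:traj}.

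Granting this, \cref{eq:fvwuofbgqfipqhfqfpq_semib} follows exactly as in the proof of Theorem~\ref{thm:fvwuofbgqfipqhfqfpq_strongb}: componentwise convergence of $(\ovconw_n,\contzu_n)$ comes from Donsker and the analogue of Proposition~\ref{prop:skew_part_strong}; Prokhorov's theorem gives tightness of the pair; and for an arbitrary subsequential limit $(\ovconw_\rho,\wconz)$ one argues, via Skorokhod coupling and the local constancy of $\contzu_n-\overline{\conti Y}_n$ on positive excursions and of $\contzu_n+\overline{\conti X}_n$ on negative ones (read off from \cref{eq:refrom_coal_semib}), that $\ovconw_\rho$ is a Brownian motion in the natural filtration and that $\wconz$ solves \cref{eq:flow_SDE_inf_vol_strong} with $q=\tfrac12$; pathwise uniqueness from \cite[Theorem~2.1]{borga2021skewperm} then gives $\wconz=\ovconz_{\rho,1/2}^{(u)}$ a.s. Note that for $q=\tfrac12$ the local-time term in \cref{eq:flow_SDE_inf_vol_strong} drops out, so $\ovconz_{\rho,1/2}^{(u)}$ is a standard Brownian motion started at time $u$ by Lévy's characterisation, which shortens this last step. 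I expect the single genuine obstacle to be the semi-Baxter version of Proposition~\ref{prop:skew_part_strong}: redoing the delicate renewal computation of Lemma~\ref{ijrnfr9fnr39} with sign-change times in place of hitting times of $0$, keeping the error terms in the underlying local limit theorems uniform in the spatial variables, and in particular checking that the strictly alternating discrete excursion structure genuinely degenerates in the scaling limit to a skew Brownian motion of parameter exactly $\tfrac12$; everything else is a routine transcription of Section~\ref{sect:prob_part}.
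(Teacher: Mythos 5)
Your architecture is exactly the paper's: Theorem~\ref{thm:fvwuofbgqfipqhfqfpq_semib} is reduced to the semi-Baxter analogue of Proposition~\ref{prop:skew_part_strong} (this is Proposition~\ref{prop:skew_part}), whose one-dimensional marginals are handled by a renewal decomposition at the sign-change times $\bm \tau^{+}_{\ell+1}=\inf\{i\geq\bm \tau^{+}_{\ell}:\ovbmZ_{i-1}<0,\ovbmZ_{i}=1\}$, and the identification of subsequential limits then runs verbatim as for strong-Baxter (your remark that the local-time term drops out at $q=1/2$ is consistent with the paper). One small caveat: since the first excursion starts from $0$ while all later positive excursions start from $1$, the variable $\bm\tau^+_1$ is not equidistributed with $\bm\tau^+_\ell$ for $\ell\geq 2$; the paper estimates $\P(\bm\tau^+_2=k)$ and checks separately that $\sqrt k\,\P(\bm\tau^+_1=k)\to 0$ before applying the renewal theorem of \cite{MR1440141}. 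Also, the local estimates you need here are the one-dimensional ones of Lemma~\ref{lem:local_est_one_dim}, not the cone estimates of \cref{sect:prob_walk_cones}, which enter only in Proposition~\ref{prop:weiugfweougfwoeu}.

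The genuine gap is your justification of $q=\tfrac12$. The mechanism you invoke --- centred within-excursion increments together with the two-speed rescaling of \cref{eq:rescaled_version2} forcing ``equal asymptotic weight'' of the two signs --- proves too much: both properties hold verbatim for the strong-Baxter coalescent walks, where $q\approx 0.3008\neq\tfrac12$. There is no symmetry to appeal to (the discrete walks are not even martingales, cf.\ Remark~\ref{rem:traj}); the value of $q$ can only come out of the renewal computation. Concretely, the paper (Lemma~\ref{lem:comput_param_semi}) computes
\begin{equation}
\beta=\frac{\E[-\bm S_{\bm N}]}{\sqrt{2\pi}}\left(\frac{1}{\sigma'}+\frac{1+\gamma}{\sigma}\right),
\qquad
q=\frac{1/\sigma'}{1/\sigma'+(1+\gamma)/\sigma},
\end{equation}
which requires the explicit harmonic-type function $h'(x)=\gamma+(\gamma-1)x$ of the negative-side walk, obtained by summing the renewal series of its geometric ladder heights (Lemma~\ref{lem:local_est_one_dim_expect}); the identity $q=\tfrac12$ is then equivalent to $\sigma=(1+\gamma)\sigma'$, i.e.\ to $(1+\sqrt5)^3=8(2+\sqrt5)$. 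Until you carry out this computation, the parameter of the limiting skew Brownian motion is undetermined in your write-up, and the heuristic you offer in its place is not a valid substitute.
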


\cref{thm:fvwuofbgqfipqhfqfpq_semib} follows from the following lemma, as \cref{thm:fvwuofbgqfipqhfqfpq_strongb} follows from \cref{prop:skew_part_strong}. Therefore we just give the proof of the following result.

\begin{prop}\label{prop:skew_part}
	Fix $u\in \R$. 
	We have the following convergence in $\mathcal C(\R,\R)$: 
	\begin{equation}
	\contzu_n 
	\xraninf{d} \overline{\conti B}^{(u)},
	\end{equation}
	where $\overline{\conti B}^{(u)}(t)=0$ for $t<u$ and $\overline{\conti B}_{q}^{(u)}(t)$ is a  Brownian motion for $t\geq u$.
\end{prop}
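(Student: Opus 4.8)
The plan is to follow the same \emph{three-steps standard approach} used in the proof of \cref{prop:skew_part_strong} (following \cite{MR4105264}): one-dimensional marginal convergence, finite-dimensional marginal convergence, and tightness. As in the strong-Baxter case, the last two steps are routine once the first is established, so I will concentrate on the convergence of one-dimensional marginals, and I take $u=0$ (the general case being a trivial shift). The structural input is \cref{obs:never_zero} and \cref{obs:alternating_excursions}: the walk $\ovbmZ = \ovbmZ^{(0)}$ never returns to $0$ after time $0$, decomposes into alternating strictly positive and strictly negative excursions, and a new strictly positive excursion always starts at value $1$. The key difference from the strong-Baxter case is that here the symmetry of the limiting picture forces $q=1/2$, i.e.\ one expects an unbiased Brownian motion. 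This is already visible from the step distribution $\mu_{sb}$ in \cref{eq:distristep_semibaxter}: the weights $\alpha\gamma^i$ on $(-i,1)$ and $\alpha\gamma^i$ on $(i,-i+1)$ match, which after passing to the coalescent-walk process (\cref{eq:refrom_coal_semib}) should make the positive and negative excursion regimes symmetric.

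Concretely, I would fix a non-negative Lipschitz $\varphi$ with compact support and, exactly as in \cref{eq:jbfgweiufbowfwfnoiweb_strong}–\cref{eq:bkjfkwebfoewnfpoew}, reduce to showing
\begin{equation}
\E\left[\varphi\left(\frac {\ovbmZ_{\lfloor nt\rfloor}} {\sqrt{(1+\sqrt5)n}}\right);\ovbmZ_{\lfloor nt\rfloor}> 0\right]\to \tfrac12\int_{0}^{+\infty}\varphi(y)\frac{2e^{-y^2/2t}}{\sqrt{2\pi t}}dy,
\end{equation}
and the symmetric statement for $\ovbmZ_{\lfloor nt\rfloor}<0$. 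Decompose according to the last zero-crossing time (here the relevant stopping times are the successive times the walk changes excursion sign, $\bm\tau_0=0$, $\bm\tau_{\ell+1}=\inf\{i\geq\bm\tau_\ell:\ovbmZ_i \text{ and } \ovbmZ_{i-1} \text{ have opposite signs, or }\ovbmZ_i \text{ starts a new excursion}\}$); between consecutive such times, by \cref{eq:refrom_coal_semib}, the process behaves like a one-dimensional walk driven by $\obmy$-increments (in a positive excursion) or by $-\obmx$-increments (in a negative excursion), started at $1$ and killed when it would leave the half-line. Using \cref{lem:local_est_one_dim} one gets $\P(\bm\tau^{+}_1=k)\sim \beta_{sb}k^{-3/2}$ for an explicit constant $\beta_{sb}$, then \cite[Theorem B]{MR1440141} gives $\sum_{\ell}\P(\bm\tau^{+}_\ell=k)\sim (\beta_{sb})^{-1}(2\pi\sqrt k)^{-1}$, and the Brownian-meander convergence of the rescaled killed walk (cf.\ \cite{MR803677}, \cref{buibdqweobdpqwdbnwq}) combined with the survival asymptotics $\P((\bm S_i)_{i\in[n]}>0)\sim c\, n^{-1/2}$ lets one write the expectation as $\int_0^t f_n^+(s)\,ds + O(n^{-1/2})$ with $f_n^+(s)$ converging to an explicit density, so that dominated convergence yields the limit with prefactor $q_{sb}=\frac{\alpha\gamma^{-1}\text{-type constant}}{\cdots}$; the relations $\alpha=\sqrt5-2$, $\gamma=\frac{\sqrt5-1}{2}$ and the matching weights in $\mu_{sb}$ will collapse this to exactly $1/2$. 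The negative-side expectation is handled identically by symmetry, giving the other $1/2$.

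The main obstacle, as in the strong-Baxter case, is the bookkeeping of the excursion-straddling decomposition together with the correct identification of which one-dimensional step law governs each excursion type, so that the local limit estimates of \cref{lem:local_est_one_dim} and the summation asymptotics of \cref{prop:asympt_estim_sum} can be applied with the right constants. Here there is a genuine additional subtlety alluded to in \cref{rem:traj}: because every new positive excursion starts at $1$ and the negative excursions are driven by the \emph{jump-heavy} increments $(i,-i+1)$ of $\mu_{sb}$ (which have only a geometric, not bounded, negative $x$-component on the $-\obmx$ side), one must double-check that the relevant one-dimensional walks have finite variance and that the first-passage / overshoot quantities $\E[-\bm S_{\bm N}]$, $\E[\bm S'_{\bm N'}]$ are finite and computed correctly via \cref{lem:local_est_one_dim_expect}; the finiteness is guaranteed by $\gamma<1$. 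Once the prefactor is shown to be $1/2$ on each side, the finite-dimensional distributions and tightness follow verbatim as in \cite{MR4105264} and \cref{prop:skew_part_strong}, and then \cref{thm:fvwuofbgqfipqhfqfpq_semib} follows from \cref{prop:skew_part} exactly as \cref{thm:fvwuofbgqfipqhfqfpq_strongb} followed from \cref{prop:skew_part_strong}, noting that a skew Brownian motion of parameter $1/2$ is just a standard Brownian motion and that the $(2q-1)$ local-time term in \cref{eq:flow_SDE_inf_vol_strong} vanishes.
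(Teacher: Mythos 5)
Your overall strategy coincides with the paper's: reduce to one-dimensional marginals at $u=0$, decompose at the renewal times at which a new positive excursion begins, feed the local estimates of \cref{lem:local_est_one_dim} and the renewal asymptotics of \cite[Theorem B]{MR1440141} into \cref{prop:asympt_estim_sum}, and identify the prefactor as $1/2$ by dominated convergence. There is, however, one concrete point where your sketch as written would go wrong, and it is precisely the ingredient that distinguishes the semi-Baxter computation from the strong-Baxter template you are copying. You assert that between renewal times the process is a one-dimensional walk ``started at $1$ and killed when it would leave the half-line''. That is true only for the \emph{positive} excursions (by \cref{obs:alternating_excursions} a new positive excursion always starts at $1$). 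A \emph{negative} excursion starts at the random value $\ovbmZ_{\ell}-(\obmx(\ell+1)-\obmx(\ell))$ reached when the process first drops below zero (see \cref{eq:refrom_coal_semib}), and this entry point is not concentrated at $-1$ — contrast \cref{obs:alternating_excursions_strong}, where for strong-Baxter the negative excursions do start at $-1$. Consequently the renewal constant $\beta$ cannot be extracted from \cref{lem:local_est_one_dim} with fixed endpoints: in \cref{lem:comput_param_semi} one must sum over the random entry point $y'<0$ the estimate $\P(y'+\bm S'_m=y'',\dots)\sim \frac{c_2}{\sigma\sqrt{2\pi}}\frac{h'(y')\tilde h'(y'')}{m^{3/2}}$ weighted by the overshoot law, which forces an explicit evaluation of the harmonic-type function $h'$. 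The paper computes $h'(x)=\gamma+(\gamma-1)x$ for $x<0$ (\cref{eq:euidfweuibdfweopbdfiopwe}) via \cref{lem:local_est_one_dim_expect} and a binomial identity, and only after this substitution does $\beta$ collapse to $\frac{1}{\sqrt{2\pi}}\frac{1}{1-\gamma}\left(\frac{1}{\sigma'}+\frac{1+\gamma}{\sigma}\right)$ and hence $q=1/2$. Your heuristic that the ``matching weights'' of $\mu_{sb}$ make the two regimes symmetric is not a substitute for this: as \cref{rem:traj} stresses, the discrete walks here are not symmetric and not even martingales, so $q=1/2$ is the output of an exact cancellation, not of a symmetry.

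Two smaller corrections. First, the renewal times must be the successive entries into a positive excursion, $\bm\tau^+_{\ell+1}=\inf\{i\geq\bm\tau^+_\ell:\ovbmZ_{i-1}<0,\ \ovbmZ_i=1\}$, not all sign changes; moreover $\bm\tau^+_1$ has a different law from the later increments (the walk starts at $0$, not at $1$), so the $k^{-3/2}$ asymptotics are established for $\bm\tau^+_2$ while one checks separately that $\sqrt k\,\P(\bm\tau^+_1=k)\to0$ before invoking \cite[Theorem B]{MR1440141}. Second, the negative-side expectation is not handled ``by symmetry'' — the two excursion types are governed by genuinely different step laws — but by repeating the same estimates, which happen to produce $1-q=1/2$.
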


\begin{rem}\label{rem:traj}
	We highlight a remarkable difference with the case of strong-Baxter permutations: the walks of the continuous coalescent-walk process for semi-Baxter permutations are classical Brownian motions. This was also the case of Baxter permutations (see \cite[Theorem 4.5 and Remark 4.3]{borga2020scaling}), but there is a substantial difference between the walks of the  discrete coalescent-walk process associated with Baxter permutations and the ones associated with semi-Baxter permutations: in \cite[Proposition 3.3.]{borga2020scaling} we proved that the walks of the discrete coalescent-walk process associated with Baxter permutations are simple random walks with a specific step distribution that is centered. On the contrary the walks of the discrete coalescent-walk process associated with semi-Baxter permutations are not even martingales (this can be checked with a simple computation using \cref{eq:distristep_semibaxter} and \cref{eq:distrib_incr_coal_2}). This will force us to first show that $\contzu_n$ converges in distribution to a skew Brownian motion (using some arguments similar to the ones already used for strong-Baxter permutations) and then to deduce that its parameter is $1/2$, concluding that it is actually a classical Brownian motion.
	
	In simple words, we can summarize the discussion above as follows:
	\begin{itemize}
		\item walks of the coalescent-walk process for Baxter permutations are symmetric both in the discrete and in the continuum limit;
		\item walks of the coalescent-walk process for semi-Baxter permutations are not symmetric in the discrete case but they become symmetric in the continuum limit;
		\item walks of the coalescent-walk process for strong-Baxter permutations are not symmetric both in the discrete and in the continuum limit.
	\end{itemize} 
\end{rem}

\begin{proof}[Proof of \cref{prop:skew_part}]
	As explained in \cref{rem:traj}, we prove that $\contzu_n 
	\xraninf{d} \overline{\conti B}_{1/2}^{(u)}$, where $\overline{\conti B}_{1/2}^{(u)}(t)=0$ for $t<u$ and $\overline{\conti B}_{1/2}^{(u)}(t)$ is a  skew Brownian motion for $t\geq u$ of parameter $1/2$.
		
	We only consider the case $u=0$, the general proof being similar. Again we just prove convergence of one-dimensional marginal distributions. We recall that $\overline {\conti Z}_n\coloneqq\overline {\conti Z}^{(0)}_n:\R\to\R$ is the continuous process defined by linearly interpolating the following points:
	\begin{equation}
	\ovconz_{n}\left(\frac kn\right) =
	\begin{cases}
	\frac {\ovbmZ_{k}} {\sigma'\sqrt {n}},\quad&\text{when}\quad\ovbmZ_{k}\geq 0,\\
	\frac {\ovbmZ_{k}} {\sigma\sqrt {n}},\quad&\text{when}\quad\ovbmZ_{k}\leq 0,
	\end{cases}
	\quad\text{for all}\quad k\in \Z.
	\end{equation}
	where ${\ovbmZ_{k}}\coloneqq {\ovbmZ^{(0)}_{k}}$, and $\sigma'=\sqrt{1+\sqrt 5}$ and $\sigma=\sqrt{2(2+\sqrt 5)}$.
	For the rest of the proof we fix a non-negative and Lipschitz continuous function $\varphi$ with compact support.
	We want to show that for all $t\geq 0$,
	\begin{equation}
	\E\left[\varphi\left(\overline {\conti Z}_n(t)\right)\right]\to
	1/2\int_{0}^{+\infty}\varphi(y)\frac{2e^{-y^2/2t}}{\sqrt{2\pi t}}dy+1/2\int_{-\infty}^{0}\varphi(y)\frac{2e^{-y^2/2t}}{\sqrt{2\pi t}}dy.
	\end{equation}
	Since $\varphi$ is Lipschitz continuous it is enough to study
	\begin{equation}\label{eq:jbfgweiufbowfwfnoiweb}
	\E\left[\varphi\left(\frac {\ovbmZ_{\lfloor nt\rfloor}} {\sigma'\sqrt  n}\right);\ovbmZ_{\lfloor nt\rfloor}>0\right]
	\quad\text{and}\quad
	\E\left[\varphi\left(\frac {\ovbmZ_{\lfloor nt\rfloor}} {\sigma\sqrt  n}\right);\ovbmZ_{\lfloor nt\rfloor}<0\right].
	\end{equation}
	As before, we just explicitly detail the computations for the first expectation. It can be decomposed as (recall \cref{obs:alternating_excursions})
	\begin{equation}
	\E\left[\varphi\left(\frac {\ovbmZ_{\lfloor nt\rfloor}} {\sigma'\sqrt  n}\right);\ovbmZ_{\lfloor nt\rfloor}>0\right]=
	\sum_{k=0}^{\lfloor nt\rfloor}\sum_{\ell=0}^{+\infty}\E\left[\varphi\left(\frac {\ovbmZ_{\lfloor nt\rfloor}} {\sigma'\sqrt  n}\right);\bm \tau^{+}_{\ell}=k,(\ovbmZ_{i})_{i\in[k+1,\lfloor nt\rfloor]}>0\right],
	\end{equation}
	where $\bm \tau^{+}_{0}=0$ and $\bm \tau^{+}_{\ell+1}=\inf\{i\geq\bm \tau^{+}_{\ell}:\ovbmZ_{i-1}<0,\ovbmZ_{i}=1\}$ for all $\ell\in\Z_{\geq 0}$. Note that for all $\ell\in\Z_{\geq 0}$, using the definition in \cref{eq:refrom_coal_semib} we have that
	$$(\ovbmZ_{\lfloor nt\rfloor};\bm \tau^{+}_{\ell}=k,(\ovbmZ_{i})_{i\in[k+1,\lfloor nt\rfloor]}>0)\stackrel{d}{=}(1+\bm S_{\lfloor nt\rfloor-k};\tau^{+}_{\ell}=k,(1+\bm S_{i})_{i\in[\lfloor nt\rfloor-k]}> 0),$$ 
	where $(\bm S_i)_{i\geq 0}$ denotes a random walk started at zero at time zero, with step distribution equal to the distribution of $\obmy(1)-\obmy(0)$, and independent of $\bm \tau^{+}_{\ell}$.Therefore we can write
	\begin{multline}\label{eq:webhfyuwheribfiuewbfoweuf}
	\E\left[\varphi\left(\frac {\ovbmZ_{\lfloor nt\rfloor}} {\sigma'\sqrt  n}\right);\ovbmZ_{\lfloor nt\rfloor}>0\right]=
	\sum_{k=0}^{\lfloor nt\rfloor}\left(\sum_{\ell=0}^{+\infty}\P(\bm \tau^{+}_{\ell}=k)\right)\E\left[\varphi\left(\frac {1+\bm S_{\lfloor nt\rfloor-k}} {\sigma'\sqrt  n}\right);(1+\bm S_{i})_{i\in[\lfloor nt\rfloor-k]}> 0\right]\\
	=\sum_{k=0}^{\lfloor nt\rfloor}\left(\sum_{\ell=0}^{+\infty}\P(\bm \tau^{+}_{\ell}=k)\right)\E\left[\varphi\left(\frac {1+\bm S_{\lfloor nt\rfloor-k}} {\sigma'\sqrt  n}\right)\middle |(1+\bm S_{i})_{i\in[\lfloor nt\rfloor-k]}> 0\right]\P\left((1+\bm S_{i})_{i\in[\lfloor nt\rfloor-k]}> 0\right).
	\end{multline}
	We now focus on studying $\P(\bm \tau^{+}_{\ell}=k)$. Note that $(\bm \tau^{+}_{\ell})_{\ell\in\Z_{\geq 2}}$ are all equidistributed, but $\bm \tau^{+}_{1}$ has a slightly different distribution (because $\ovbmZ_{\bm \tau^{+}_{0}}=\ovbmZ_{0}=0$ but $\ovbmZ_{\bm \tau^{+}_{\ell}}=1$ for all $\ell\in\Z_{\geq 1}$).  We have the following result.
	
	\begin{lem}\label{lem:comput_param_semi}
		As $k\to\infty$,
		\begin{equation}
			\P(\bm \tau^{+}_{2}=k)\sim\frac{\beta}{k^{3/2}},
		\end{equation}
		where
		\begin{equation}\label{eq:efwbjue8wifbew9nf}
		\beta=\frac{1}{\sqrt{2\pi}}\frac{1}{1-\gamma}\left(\frac{1}{\sigma'}
		+
		\frac{1+\gamma}{\sigma}
		\right).
		\end{equation}
	\end{lem}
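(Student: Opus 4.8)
The plan is to adapt the proof of \cref{ijrnfr9fnr39} to the semi-Baxter increment set $I_{sb}$, the essential change being that the strictly negative excursions of $\ovbmZ^{(0)}$ now start at a \emph{variable} level rather than always at $-1$. First I would record that, by \cref{obs:alternating_excursions}, the excursions of $\ovbmZ^{(0)}$ alternate, every strictly positive excursion starts at $1$, and — since the increments $\bm\tau^+_\ell-\bm\tau^+_{\ell-1}$ are i.i.d.\ for $\ell\ge 2$ — $\bm\tau^+_2-\bm\tau^+_1$ has the law of one full \emph{cycle}: a maximal strictly positive excursion started at $1$, immediately followed by the ensuing maximal strictly negative excursion, after which $\ovbmZ^{(0)}$ returns to $1$. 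Decomposing $\{\bm\tau^+_2=k\}$ over the value of the (a.s.\ finite) first up-crossing time $\bm\tau^+_1=j$ and the subsequent cycle of length $k-j$, the problem reduces to the tail of the cycle length.

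Next I would write the cycle length as $a+b$, where $a$ is the length of the strictly positive excursion and $b$ that of the strictly negative one, and express the relevant probabilities through two independent one-dimensional walks started at $0$, exactly as in the passage around \cref{eq:wefiuwebfgrtgrtgfewbfqewbofq}: $(\bm S_i)_i$ with step law that of $\obmy(1)-\obmy(0)$, so that the positive excursion is $1+\bm S$ required to stay positive for $a-1$ steps and to end at some $y\ge 1$; and $(\bm S'_i)_i$ with step law that of $-(\obmx(1)-\obmx(0))$, so that the negative excursion is $z+\bm S'$ required to stay negative for $b-1$ steps and to end at some $w\le -1$, where the entry level $z\le -1$ is the value produced by the jump out of the positive excursion, carrying weight $\alpha\gamma^{y-z}$, and the final jump $w\mapsto 1$ carries weight $\tfrac{\alpha\gamma^{|w|}}{1-\gamma}$; all these weights are read off directly from \cref{eq:distristep_semibaxter} and \cref{eq:distrib_incr_coal_2}. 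This yields $\P(\bm\tau^+_2-\bm\tau^+_1=m)$ as a sum over $a+b=m$ and over $y\ge 1,\ z\le -1,\ w\le -1$ of a product of a constrained local probability for $1+\bm S$, the weight $\alpha\gamma^{y-z}$, a constrained local probability for $z+\bm S'$, and the weight $\tfrac{\alpha\gamma^{|w|}}{1-\gamma}$.

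Then I would insert the local limit estimates of \cref{lem:local_est_one_dim} for walks killed on leaving a half-line — in the version with a free (variable) starting point, which is needed here for the $z+\bm S'$ factor and introduces a renewal-function factor at the entry level $z$ — and use \cref{prop:asympt_estim_sum} to perform the convolution over $a+b=m$, just as in the computation following \cref{eq:ewvifewvifewnfieowpnfpewi}. Only two regimes contribute at order $m^{-3/2}$: the positive excursion macroscopic (producing the $1/\sigma'$ term in $\beta$) and the negative excursion macroscopic (producing the $(1+\gamma)/\sigma$ term); the regime where both are macroscopic is $O(m^{-2})$, and the uniform bounds in \cref{lem:local_est_one_dim} legitimate exchanging the limit with the sums over $y,z,w$. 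To identify $\beta$ I would use the telescoping identities $\sum_{z\le -1}\alpha\gamma^{y-z}=\P(\obmy(1)-\obmy(0)\le -y)$ and $\tfrac{\alpha\gamma^{|w|}}{1-\gamma}=\P(-(\obmx(1)-\obmx(0))\ge |w|)$, which collapse the $y$- and $w$-sums (each summing to $1$ because a centered walk a.s.\ leaves any half-line), the definitions of $c_1,c_2$ in \cref{eq:corrconstants}, and \cref{lem:local_est_one_dim_expect} to evaluate $\E[-\bm S_{\bm N}]=\tfrac{1}{1-\gamma}$ and $\E[\bm S'_{\bm N'}]$; substituting $\alpha=\sqrt5-2$, $\gamma=\tfrac{\sqrt5-1}{2}$ (hence $\gamma^2=1-\gamma$), $\sigma^2=2(2+\sqrt5)$ and $\sigma'^2=1+\sqrt5$ then gives the stated $\beta=\tfrac{1}{\sqrt{2\pi}}\tfrac{1}{1-\gamma}\big(\tfrac{1}{\sigma'}+\tfrac{1+\gamma}{\sigma}\big)$.

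The main obstacle is precisely the variable entry level of the strictly negative excursions. In \cref{ijrnfr9fnr39} this level was pinned to $-1$, so the associated renewal-function factor was an absorbed constant; here it is a factor $V'(|z|)$ at an unbounded level $z$, and one must show that the extra sum over $z$ (weighted by $\alpha\gamma^{y-z}$, with the positive-excursion endpoint $y$ also summed) converges and recombines — through the renewal identities for $\bm S'$ and the fact that $-(\obmx(1)-\obmx(0))$ has a symmetric step law — into the closed-form contribution above. Carrying out this summation cleanly, together with the bookkeeping that reduces all the renewal functions and ladder-height expectations to elementary expressions in $\alpha$ and $\gamma$, is where essentially all the work lies; the remaining steps are a transcription of the strong-Baxter argument with the modified increments.
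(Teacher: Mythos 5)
Your proposal follows essentially the same route as the paper's proof: the same excursion-plus-jump decomposition of the cycle (positive excursion from $1$, transition jump of weight $\alpha\gamma^{y-z}$, negative excursion from a variable entry level, final jump of weight $\tfrac{\alpha\gamma^{|w|}}{1-\gamma}$), the same local limit inputs with a renewal-function factor at the variable entry level, the convolution via \cref{prop:asympt_estim_sum}, and the same telescoping and ladder-height identities, and you correctly locate the main new work in summing out the entry level, which the paper handles by computing $h'(x)=\gamma+(\gamma-1)x$ explicitly. The only caveat is that your opening reduction of $\P(\bm \tau^{+}_{2}=k)$ to the cycle-length tail by convolving with the law of $\bm \tau^{+}_{1}$ would, taken literally, pick up an extra contribution from the $k^{-3/2}$ tail of $\bm \tau^{+}_{1}$ itself; the paper's proof (and its use in \cref{cor:asympt_estim_rew}) makes clear that the quantity actually estimated is the common increment law $\P(\bm \tau^{+}_{2}-\bm \tau^{+}_{1}=k)$, which is exactly the cycle length you compute.
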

	\begin{proof}
		Recall that $\ovbmZ_{\bm \tau^{+}_{1}}=1$. Using Observations \ref{obs:never_zero} and \ref{obs:alternating_excursions}, we can write (see also \cref{fig:schema_prob_event})
		\begin{multline}\label{eq:wfbweiubfwepifbwepfnewf}
		\P(\bm \tau^{+}_{2}=k)=
		\sum_{s=0}^{k-2}\sum_{\substack{y\in\zgz\\
		y',y''\in\Z_{< 0}}}\P(\ovbmZ_{s}=y,(\ovbmZ_{i})_{i\in[s]}> 0)
		\P(\ovbmZ_{s+1}-\ovbmZ_{s}=y'-y)\\
		\P(\ovbmZ_{k-1}=y'',(\ovbmZ_{i})_{i\in[s+1,k-1]}<0|\ovbmZ_{s+1}=y')
		\P(\ovbmZ_{k}-\ovbmZ_{k-1}=1-y'').
		\end{multline}
		\begin{figure}[htbp]
			\centering
			\includegraphics[scale=1]{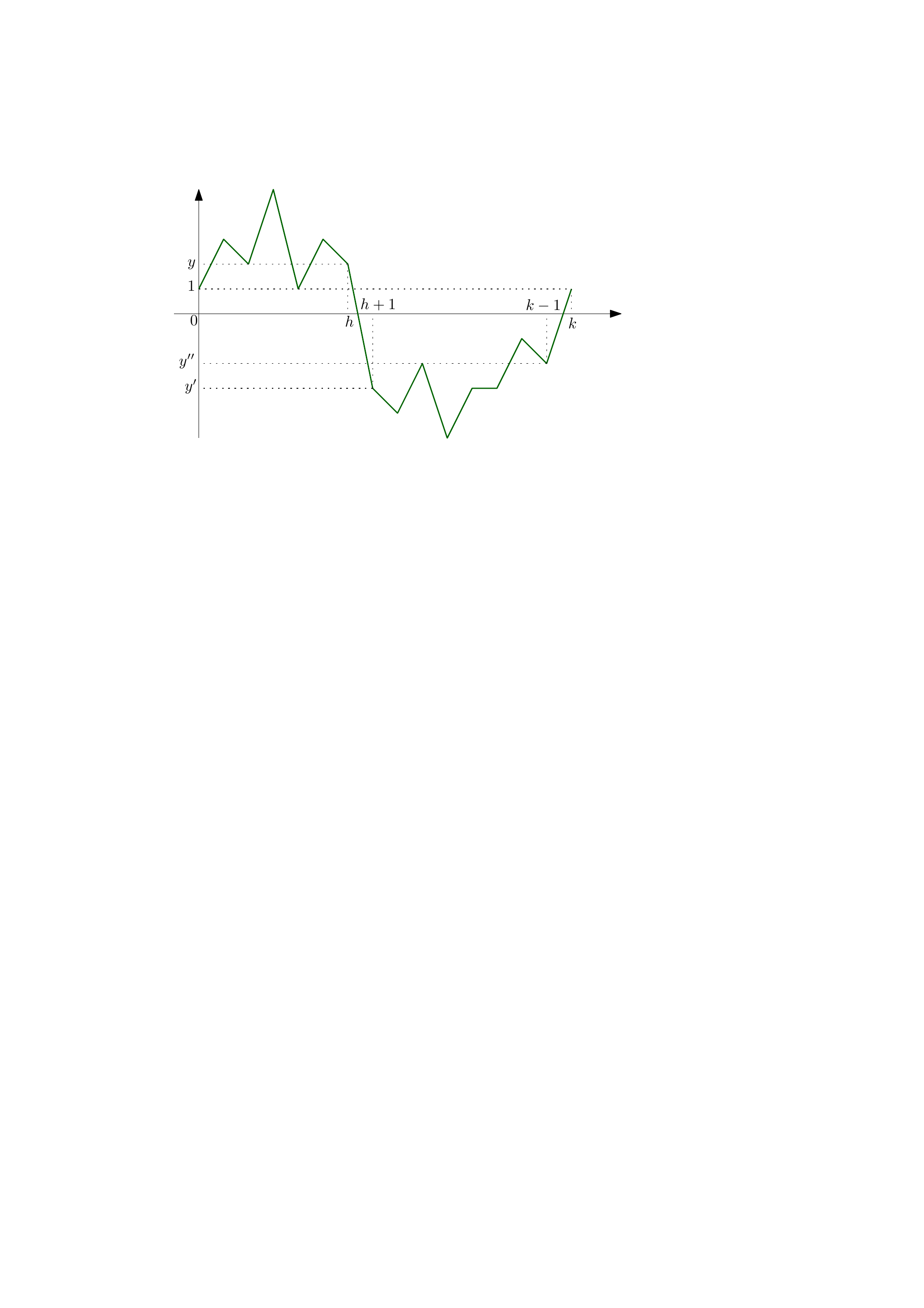}
			\caption{A schema for the event $\left\{\bm \tau^{+}_{2}=k\right\}$.\label{fig:schema_prob_event}}
		\end{figure}

		Using the definition in \cref{eq:refrom_coal_semib} and letting:
		\begin{itemize}
			\item  $(\bm S_i)_{i\geq 0}$ be a random walk started at zero at time zero, with step distribution equal to the distribution of $\obmy(1)-\obmy(0)$,
			\item  $(\bm S'_i)_{i\geq 0}$ be a random walk started at zero at time zero, with step distribution equal to the distribution of $-(\obmx(1)-\obmx(0))$,
			\item $(\bm S_i)_{i\geq 0}$ is independent of $(\bm S'_i)_{i\geq 0}$,
		\end{itemize}
		we can rewrite the previous expression as
		\begin{multline}\label{eq:wefiuwebfewbfqewbofq}
		\P(\bm \tau^{+}_{2}=k)=\sum_{s=0}^{k-2}\sum_{\substack{y\in\zgz\\
				y',y''\in\Z_{< 0}}}\P(1+\bm S_{h}=y,(1+\bm S_{i})_{i\in[s]}> 0)
		\P(\obmy(1)-\obmy(0)=y'-y+1)\\
		\P(y'+\bm S'_{k-s-2}=y'',(y'+\bm S'_{i})_{i\in[k-s-2]}< 0)
		\P(-(\obmx(1)-\obmx(0))\geq -y'').
		\end{multline}
		We now focus on
		$\sum_{s=0}^{k-2}\P(1+\bm S_{s}=y,(1+\bm S_{i})_{i\in[s]}> 0)
		\P(y'+\bm S'_{k-s-2}=y'',(y'+\bm S'_{i})_{i\in[k-s-2]}< 0)$. From \cite[Theorem 6]{MR3342657} we know that as $m\to \infty$,
		\begin{align}\label{eq:kfjbouefboewnfewiopknf}
		&\P(1+\bm S_{m}=y,(1+\bm S_{i})_{i\in[m]}> 0)\sim\frac{c_1}{\sigma'\sqrt{2\pi}}\frac{\tilde{h}(y)}{m^{3/2}},\\
		&\P(y'+\bm S'_{m}=y'',(y'+\bm S'_{i})_{i\in[m]}< 0)\sim\frac{c_2}{\sigma\sqrt{2\pi}}\frac{h'(y')\tilde{h'}(y'')}{m^{3/2}},\\
		&\P(1+\bm S_{m}=y,(1+\bm S_{i})_{i\in[m]}> 0)\leq C\frac{\tilde{h}(y)}{m^{3/2}},\quad\text{for all}\quad y\in\Z_{> 0},\\
		&\P(y'+\bm S'_{m}=y'',(y'+\bm S'_{i})_{i\in[m]}< 0)\leq C\frac{h'(y')\tilde{h'}(y'')}{m^{3/2}},\quad\text{for all}\quad y',y''\in\Z_{< 0},
		\end{align}
		where $\tilde{h}$, $h'$ and $\tilde{h'}$ are the functions defined in \cref{eq:efdbuebfwebfowe} for the walks $-\bm S_{m}$, $\bm S'_{m}$ and $-\bm S'_{m}$ respectively, and
		\begin{equation}\label{eq:corrconstants_semi}
			c_1=\frac{\E[-\bm{S}_{\bm N}]}{\sum_{z\in\zgz} \tilde{h}(z)\P(\bm{S}_1\leq -z)}\quad\text{ and }\quad c_2=\frac{\E[\bm{S}'_{\bm N'}]}{\sum_{z\in\Z_{<0}} \tilde{h}'(z)\P(\bm{S}'_1\geq -z)},
		\end{equation}
		with $\bm N\coloneqq\inf\{n>0|\bm{S}_n< 0\}$ and $\bm N'\coloneqq\inf\{n>0|\bm{S'}_n> 0\}$.
		
		Using \cref{prop:asympt_estim_sum} and the estimates above, we have, as $k\to \infty$,
		\begin{multline}
		\sum_{s=0}^{k-2}\P(1+\bm S_{s}=y,(1+\bm S_{i})_{i\in[s]}> 0)
		\P(y'+\bm S'_{k-s-2}=y'',(y'+\bm S'_{i})_{i\in[k-s-2]}< 0)\\
		\sim\frac{1}{\sqrt{2\pi}k^{3/2}}C(y,y',y''),
		\end{multline}
		where $C(y,y',y'')$ is equal to
		\begin{multline}
			\frac{c_1\cdot\tilde{h}(y)}{\sigma'}\left(\sum_{s=0}^{\infty}\P(y'+\bm S'_{s}=y'',(y'+\bm S'_{i})_{i\in[s]}< 0)\right)\\
			+\frac{c_2\cdot h'(y')\tilde{h'}(y'')}{\sigma}\left(\sum_{s=0}^{\infty}\P(1+\bm S_{s}=y,(1+\bm S_{i})_{i\in[s]}> 0)\right).
		\end{multline}
		Therefore, using the last two uniform bounds in \cref{eq:kfjbouefboewnfewiopknf}, we can conclude that
		\begin{equation}
		\P(\bm \tau^{+}_{2}=k)
		\sim\frac{1}{k^{3/2}}
		\frac{1}{\sqrt{2\pi}}
		\sum_{\substack{y\in\zgz\\
				y',y''\in\Z_{< 0}}}
		\P(\obmy(1)-\obmy(0)=y'-y+1)
		\P(-(\obmx(1)-\obmx(0))\geq -y'')C(y,y',y'')=\frac{\beta}{k^{3/2}}.
		\end{equation}
		We finally simplify the expression for $\beta$ in order to obtain the  expression in \cref{eq:efwbjue8wifbew9nf}. Note that
		\begin{multline}
		\sum_{s=0}^{\infty}\sum_{y''\in\Z_{< 0}}\P(-(\obmx(1)-\obmx(0))\geq -y'')\P(y'+\bm S'_{s}=y'',(y'+\bm S'_{i})_{i\in[s]}< 0)\\
		=
		\sum_{s=0}^{\infty}
		\P(y'+\bm S'_{s+1}\geq0,(y'+\bm S'_{i})_{i\in[s]}< 0)
		=1,
		\end{multline}
		because a symmetric random walk started at $y'$ becomes eventually non-negative almost surely.
		Using the latter identity and substituting the expressions of $c_1$ and $c_2$ given in \cref{eq:corrconstants_semi}, we obtain that
		\begin{multline}
		\beta=\frac{1}{\sqrt{2\pi}}
		\Bigg(\frac{\E\left[\bm S_{\bm N}\right]}{\sigma'}
		+\frac{\E\left[\bm S'_{\bm N'}\right]}{\sigma}
		\sum_{\substack{y\in\zgz\\
				y'\in\Z_{< 0}}}
		h'(y')\P(\obmy(1)-\obmy(0)=y'-y+1)
		\sum_{s=0}^{\infty}\P(1+\bm S_{s}=y,(1+\bm S_{i})_{i\in[s]}> 0)\Bigg).
		\end{multline}
		Noting that
		\begin{equation}
			\sum_{y\in\zgz}\P(\obmy(1)-\obmy(0)=y'-y+1)
			\P(1+\bm S_{s}=y,(1+\bm S_{i})_{i\in[s]}> 0)=\P(\bm S_{s+1}=y',(\bm S_{i})_{i\in[s]}\geq 0),
		\end{equation} 
		we obtain the new simplified expression
		\begin{equation}
		\beta=\frac{1}{\sqrt{2\pi}}
		\left(\frac{\E\left[\bm S_{\bm N}\right]}{\sigma'}
		+\frac{\E\left[\bm S'_{\bm N'}\right]}{\sigma}
		\sum_{s=0}^{\infty}\sum_{y'\in\Z_{< 0}}
		h'(y')
		\P(\bm S_{s+1}=y',(\bm S_{i})_{i\in[s]}\geq 0)\right).
		\end{equation}
		We now compute an expression for the function $h'(x)$ defined in \cref{eq:efdbuebfwebfowe} for the walk $\bm S'_{m}$,
		\begin{equation}
			h'(x)=
			\begin{cases}
				1+\sum_{j=1}^{+\infty}\P(\bm {S'}^1_{\bm N'}+\dots +\bm {S'}^j_{\bm N'}< -x) 			\quad&\text{if}\quad x< 0,\\
				0 \quad&\text{otherwise},
			\end{cases}
		\end{equation} 
		where the $(\bm {S'}^j_{\bm N'})_{j\in\Z_{\geq 1}}$ are i.i.d.\ copies of $\bm {S'}_{\bm N'}$.
		Recall that 
		\begin{equation}
		\mu_{sb}=\sum_{i=0}^{\infty}\alpha\gamma^{i}\cdot \delta_{(-i,1)}
		+\sum_{i=1}^{\infty}\alpha\gamma^{i}\cdot\delta_{(i,-i+1)},
		\end{equation}
		where $\alpha=\sqrt{5}-2$ and $\gamma=\frac{\sqrt{5}-1}{2}$. 			Therefore 
		\begin{equation}\label{eq:weiuvgfbfowehnfeiwbfuewobf}
		\P(\obmx(1)-\overline{\bm 	X}_{0}=z)=\alpha\gamma^{|z|}\quad \text{for all} \quad z\in\Z,
		\end{equation}
		and
		\begin{align}\label{eq:fewviiuqewbfopeiqwbfiwpqehbf}
		&\P(\obmy(1)-\obmy(0)=z)=\alpha\gamma^{|z|+1}\quad \text{for all} \quad z\in\Z_{\leq 0},\\
		&\P(\obmy(1)-\obmy(0)=1)=\frac{\alpha}{1-\gamma}.
		\end{align}
		From \cref{lem:local_est_one_dim_expect} we have that $\P(\bm{S'}_{\bm N'}=x)=(1-\gamma)\gamma^{x-1}$, for all $x\in\zgz$. Therefore, for all $j,z\in\zgz$,
		\begin{multline}
			\P(\bm {S'}^1_{\bm N'}+\dots +\bm {S'}^j_{\bm N'}= z)=\sum_{\substack{z_1,\dots,z_j>0\\
					z_1+\dots+z_j=z}}\P(\bm {S'}^1_{\bm N'}=z_1,\dots,\bm {S'}^j_{\bm N'}=z_j)\\
			=\sum_{\substack{z_1,\dots,z_j>0\\
					z_1+\dots+z_j=z}}(1-\gamma)^j\gamma^{z-j}=\mathds{1}_{\{j\leq z\}}(1-\gamma)^j\gamma^{z-j}\binom{z-1}{j-1}
		.
		\end{multline}
		Therefore for all $x\in\Z_{<0}$,
		\begin{multline}
			\sum_{j=1}^{+\infty}\P(\bm {S'}^1_{\bm N'}+\dots +\bm {S'}^j_{\bm N'}< -x)=\sum_{j=1}^{+\infty}\sum_{z=1}^{-x-1}\P(\bm {S'}^1_{\bm N'}+\dots +\bm {S'}^j_{\bm N'}=z)\\
			=\sum_{z=1}^{-x-1}\gamma^{z}\sum_{j=1}^{z}(1-\gamma)^j\gamma^{-j}\binom{z-1}{j-1}=(1+x)(\gamma-1),
		\end{multline}
		and we can conclude that 
		\begin{equation}\label{eq:euidfweuibdfweopbdfiopwe}
			h'(x)=
			\begin{cases}
				\gamma + (\gamma-1) x \quad&\text{if}\quad x< 0,\\
				0 \quad&\text{otherwise}.
			\end{cases}
		\end{equation} 
	Therefore
	\begin{multline}\label{eq:ewbjfeiwufbnoewbnfeiwpo0}
		\beta
		=\frac{1}{\sqrt{2\pi}}\left(\frac{\E[-\bm{S}_{\bm N}]}{\sigma'}
		+
		\frac{\E[\bm{S}'_{\bm N'}]}{\sigma}
		\sum_{y'\in\Z_{< 0}}
		(\gamma + (\gamma-1) y')\sum_{s=1}^{\infty}\P(\bm S_{s}=y',(\bm S_{i})_{i\in[s-1]}\geq 0)\right)\\
		=\frac{\E[-\bm{S}_{\bm N}]}{\sqrt{2\pi}}\left(\frac{1}{\sigma'}
		+
		\frac{(1-\gamma)}{\sigma}
		\sum_{y'\in\Z_{< 0}}
		(\gamma + (\gamma-1) y')\gamma^{-y'-1}\right)\\
		=\frac{\E[-\bm{S}_{\bm N}]}{\sqrt{2\pi}}\left(\frac{1}{\sigma'}
		+
		\frac{1+\gamma}{\sigma}
		\right)
		=\frac{1}{\sqrt{2\pi}}\frac{1}{1-\gamma}\left(\frac{1}{\sigma'}
		+
		\frac{1+\gamma}{\sigma}
		\right),
	\end{multline}
	where we also used that $\P(\bm{S}_{\bm N}=-x)=(1-\gamma)\gamma^{x-1}=\P(\bm{S}'_{\bm N'}=x)$ and that $\E[-\bm{S}_{\bm N}]=\frac{1}{1-\gamma}$ as a consequence of \cref{lem:local_est_one_dim_expect} togheter with the expressions in \cref{eq:weiuvgfbfowehnfeiwbfuewobf,eq:fewviiuqewbfopeiqwbfiwpqehbf}.
	\end{proof}

	\begin{cor}\label{cor:asympt_estim_rew}
		As $k\to+\infty$,
		\begin{equation}
			\sum_{\ell=0}^{+\infty}\P(\bm \tau^{+}_{\ell}=k)\sim\frac{1}{\beta}\frac{1}{2\pi\sqrt k}\;.
		\end{equation}
	\end{cor}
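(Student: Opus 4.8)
The plan is to transcribe, essentially verbatim, the proof of \cref{cor:asympt_estim_rew_strong}, substituting \cref{lem:comput_param_semi} for \cref{ijrnfr9fnr39}. The first step I would carry out is to make explicit the renewal structure of $(\bm\tau^+_\ell)_{\ell\ge0}$. By the Markov property of the bi-infinite coalescent-walk process $\ovbmZ=\ovbmZ^{(0)}$, together with \cref{obs:never_zero} and \cref{obs:alternating_excursions} — which ensure that at every time $\bm\tau^+_\ell$ with $\ell\ge1$ the walk $\ovbmZ$ is restarted from the deterministic value $1$, at the onset of a fresh strictly positive excursion — the inter-arrival times $(\bm\tau^+_{\ell+1}-\bm\tau^+_\ell)_{\ell\ge1}$ are i.i.d., while the first one $\bm\tau^+_1=\bm\tau^+_1-\bm\tau^+_0$ has a slightly different law because $\ovbmZ_0=0\neq1$. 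Thus $(\bm\tau^+_\ell)_{\ell\ge0}$ is a strictly increasing (possibly delayed) renewal process on $\Z_{\ge0}$, and in particular its potential $\sum_{\ell\ge0}\P(\bm\tau^+_\ell=k)$ is finite for every $k$.

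The second step is to invoke the strong renewal theorem. By \cref{lem:comput_param_semi} the generic inter-arrival law has a polynomial tail of index $-3/2$, with $\P(\bm\tau^+_2=k)\sim\beta k^{-3/2}$ and therefore $\P(\bm\tau^+_2>k)\sim 2\beta k^{-1/2}$; in particular it lies in the domain of attraction of a $1/2$-stable law, and the Garsia–Lamperti regularity condition
\[
\sup_{n\in\Z_{\ge0}}\frac{n\cdot\P(\bm\tau^+_2=n)}{\P(\bm\tau^+_2>n)}<\infty
\]
holds trivially (the ratio converges to $\tfrac12$). Applying \cite[Theorem B]{MR1440141} — with the constant in its Equation~(1.10) read as $\tfrac{1}{\Gamma(1-\alpha)\Gamma(\alpha)}$, cf.\ the footnote to \cref{cor:asympt_estim_rew_strong}, and specialised to $\alpha=\tfrac12$, for which $\Gamma(\tfrac12)^2=\pi$ — then yields
\[
\sum_{\ell=0}^{+\infty}\P(\bm\tau^+_\ell=k)\sim\frac1\beta\,\frac1{2\pi\sqrt k},
\]
which is the claim.

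The only point not already handled in the strong-Baxter case, and the one I expect to be the (minor) obstacle, is that here $\bm\tau^+_1$ is not an i.i.d.\ copy of the generic inter-arrival, so \cite[Theorem B]{MR1440141} applies in the first instance only to the non-delayed renewal sequence. I would dispose of this by writing
\[
\sum_{\ell=0}^{+\infty}\P(\bm\tau^+_\ell=k)=\idf_{\{k=0\}}+\sum_{m\ge0}\P\!\left(\bm\tau^+_1+\bm\sigma_m=k\right),
\]
where $\bm\sigma_m$ denotes the sum of $m$ i.i.d.\ generic inter-arrivals, independent of $\bm\tau^+_1$; applying the strong renewal theorem to $V(k):=\sum_{m\ge0}\P(\bm\sigma_m=k)$ and then noting that the law of $\bm\tau^+_1$ is a fixed probability distribution whose tail is again $O(k^{-3/2})$ — hence negligible when convolved against a regularly varying sequence of index $-1/2$ — leaves the leading-order asymptotics unchanged. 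Apart from this routine convolution estimate, the argument is a direct copy of the proof of \cref{cor:asympt_estim_rew_strong}, so I anticipate no genuine difficulty.
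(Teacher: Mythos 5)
Your proposal is correct and follows essentially the same route as the paper: the paper's proof also reduces to the strong-Baxter argument, i.e.\ it applies \cite[Theorem B]{MR1440141} after checking the Garsia--Lamperti ratio condition via the tail asymptotics of \cref{lem:comput_param_semi}, and then disposes of the delayed first inter-arrival separately. The only (cosmetic) difference is in that last step: the paper shows $\sqrt{k}\cdot\P(\bm\tau^{+}_{1}=k)\to 0$ by modifying the computations of \cref{lem:comput_param_semi}, whereas you claim the stronger bound $\P(\bm\tau^{+}_{1}=k)=O(k^{-3/2})$ and run an explicit convolution estimate against the non-delayed renewal measure --- if anything, your version spells out more carefully why the delay does not perturb the leading order.
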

	\begin{proof}
		With some simple modifications of the computations in the proof of \cref{lem:comput_param_semi} it is easy to show that $\sqrt k\cdot \P(\bm \tau^{+}_{1}=k)\to 0$. Then, it is enough to use the same proof used for \cref{cor:asympt_estim_rew_strong}.
	\end{proof}
	
	With this result in our hands we can now continue to estimate \cref{eq:webhfyuwheribfiuewbfoweuf}. With the same argument used for \cref{eq:fbwebfowenfpwenmfpew}, we have that
	\begin{multline}
	\E\left[\varphi\left(\frac {\ovbmZ_{\lfloor nt\rfloor}} {\sigma'\sqrt  n}\right);\ovbmZ_{\lfloor nt\rfloor}>0\right]\\
	=
	\sum_{k=0}^{\lfloor nt\rfloor}\left(\sum_{\ell=0}^{+\infty}\P(\bm \tau^{+}_{\ell}=k)\right)\E\left[\varphi\left(\frac {1+\bm S_{\lfloor nt\rfloor-k}} {\sigma'\sqrt  n}\right)\middle|(1+\bm S_{i})_{i\in[\lfloor nt\rfloor-k]}> 0\right]\P\left((1+\bm S_{i})_{i\in[\lfloor nt\rfloor-k]}> 0\right)\\
	=\int_0^tf_n^+(s) ds+O(1/\sqrt n),
	\end{multline}
	where for $1\leq k\leq \lfloor nt\rfloor-1$ and $s\in[\frac k n,\frac{k+1}{n})$,
	\begin{equation}
	f^+_n(s)\coloneqq n\left(\sum_{\ell=0}^{+\infty}\P(\bm \tau^{+}_{\ell}=k)\right)\E\left[\varphi\left(\frac {1+\bm S_{\lfloor nt\rfloor-k}} {\sigma'\sqrt  n}\right)\middle|(1+\bm S_{i})_{i\in[\lfloor nt\rfloor-k]}> 0\right]\P\left((1+\bm S_{i})_{i\in[\lfloor nt\rfloor-k]}> 0\right).
	\end{equation}
	and $f^+_n(s)\coloneqq 0$ on $(0,\frac 1 n)\cup[\frac{\lfloor nt\rfloor-1}{n},t)$.
	Using that (as in \cref{buibdqweobdpqwdbnwq})
	\begin{equation}
	\E\left[\varphi\left(\frac {1+\bm S_{\lfloor nt\rfloor}} {\sigma'\sqrt  n}\right)\middle|(1+\bm S_{i})_{i\in[\lfloor nt\rfloor]}> 0\right]\sim\frac{1}{t}\int_0^{+\infty}\varphi(u)ue^{-u^2/2t}du,
	\end{equation}
	and that from \cref{lem:local_est_one_dim},
	\begin{equation}
	\P\left((1+\bm S_{i})_{i\in[n]}> 0\right)\sim 2\frac{\E[-\bm{S}_{\bm N}]}{\sigma'\sqrt{2\pi}} \frac{1}{\sqrt{n}}, 
	\end{equation}
	together with \cref{cor:asympt_estim_rew}, we obtain that for all $s\in (0,t)$,
	\begin{multline}
	f^+_n(s)\sim n\frac{1}{\beta}\frac{1}{2\pi\sqrt n\sqrt s}\frac{1}{t-s}\int_0^{+\infty}\varphi(u)ue^{-u^2/2(t-s)}du\cdot 2 \frac{\E[-\bm{S}_{\bm N}]}{\sigma'\sqrt{2\pi}} \frac{h(1)}{\sqrt n\sqrt{t-s}}\\
	=\frac{\E[-\bm{S}_{\bm N}]}{\sigma'\sqrt{2\pi}\beta}\frac{1}{\pi\sqrt{s(t-s)}(t-s)}\int_0^{+\infty}\varphi(u)ue^{-u^2/2(t-s)}du.
	\end{multline}
	Therefore, using the same arguments used for \cref{eq:weijbfuiwefbewonfpewinf}, we can conclude that
	\begin{equation}
	\E\left[\varphi\left(\frac {\ovbmZ_{\lfloor nt\rfloor}} {\sigma'\sqrt  n}\right);\ovbmZ_{\lfloor nt\rfloor}>0\right]
	\xrightarrow{n\to\infty}
	\frac{\E[-\bm{S}_{\bm N}]}{\sigma'\sqrt{2\pi}\beta}\int_{0}^{+\infty}\varphi(y)\frac{2e^{-y^2/2t}}{\sqrt{2\pi t}}dy.
	\end{equation}
	Using the expression $\beta=\frac{\E[-\bm{S}_{\bm N}]}{\sqrt{2\pi}}\left(\frac{1}{\sigma'}
		+
		\frac{1+\gamma}{\sigma}
		\right)$ in \cref{eq:ewbjfeiwufbnoewbnfeiwpo0}, we can conclude that
	\begin{equation}
		q=\frac{\E[-\bm{S}_{\bm N}]}{\sigma'\sqrt{2\pi}\beta}=\frac{\frac{1}{\sigma'}}{\frac{1}{\sigma'}
			+
			\frac{1+\gamma}{\sigma}}
		=\frac{1}{2},
	\end{equation}
	where in the last equality we used that $\sigma'=\sqrt{1+\sqrt 5}$, $\sigma=\sqrt{2(2+\sqrt 5)}$, and $\gamma=\frac{\sqrt{5}-1}{2}$.	
\end{proof}

The proof of \cref{thm:semi-baxter} follows from \cref{thm:fvwuofbgqfipqhfqfpq_semib} using exactly the same strategy explained in \cref{sec:cond_conv,sect:permlim}.

\appendix

\section{An upper-bound for some local probabilities for random walks in cones}\label{sect:prob_walk_cones}

We fix some notation being consistent with the notation used in \cite{MR3342657} in order to help the reader to follow the proof of \cref{prop:upper_bound_walks}. Let $\bm S(n)$ be a random walk on $\R^d$, $d\geq 1$, where
$$\bm S(n)\coloneqq\bm X_1+\dots+\bm X_n,$$
and $(\bm X_i)_i$ is a family of i.i.d.\ copies of a random variable $\bm X=(\bm X^{(1)},\dots,\bm X^{(d)})$ with values in $\R^d$. Denote by $\mathbb{S}^{d-1}$ the unit sphere of $\R^d$ and $\Sigma$ an open and connected subset of $\mathbb{S}^{d-1}$. Let $K$ be the cone generated by the rays emanating from the origin and passing through $\Sigma$, that is, $\Sigma = K \cap \mathbb{S}^{d-1}$.
Let $\bm \tau_x$ be the exit time from $K$ of the random walk with starting point $x\in K$,
that is,
$$\bm \tau_x\coloneqq\inf\{n\geq 1 :x+\bm S(n)\notin K\}.$$

We make the same assumptions on $K$ as in \cite{MR3342657} (see in particular bottom of page 994). We also recall that the choice of the cone $K$ determines a corresponding parameter $p$ (see the equations from (2) to (3) in \cite{MR3342657} for a precise definition) that plays an important role in many formulas in \cite{MR3342657}.

We also impose, as in \cite{MR3342657}, the following assumptions on the increments of the random walk:
\begin{itemize}
	\item $\E[\bm X^{(j)}] = 0$, $\E[(\bm X^{(j)})^2] = 1$, $j=1,\dots, d$. In addition, we assume that\footnote{See the bottom part of  page 996 in \cite{MR3342657} for a discussion on the case $\cov(\bm X_i, \bm X_j ) \neq 0$ and the fact that assuming $\cov(\bm X_i, \bm X_j ) = 0$ does not restrict the generality of the results.} $\cov(\bm X_i, \bm X_j ) = 0$.
	\item We assume that $\E[\bm X^\alpha] = 0$ with $\alpha = p$ if $p > 2$ and some
	$\alpha > 2$ if $ p \leq 2$. 
\end{itemize}

\medskip

In \cite[Lemma 28]{MR3342657} it was shown that
$$\P(x+\bm S(n)=y,\bm \tau_x>n)\leq C(x,y) n^{-p-d/2},\quad\text{for all}\quad n\in \Z_{> 0} \text{ and } x,y\in K.$$
In this section, we want to make explicit the dependence in $x$ and $y$ of the constant $C(x,y)$.

\begin{prop}\label{prop:upper_bound_walks}
	There exists a constant $C>0$ independent of $x,y,n$ such that
	$$\P(x+\bm S(n)=y,\bm \tau_x>n)\leq C (1+|x|^p)(1+|y|^p) n^{-p-d/2},\quad\text{for all}\quad n\in \Z_{> 0} \text{ and } x,y\in K.$$
\end{prop}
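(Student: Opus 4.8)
The plan is to reduce the local-limit bound with explicit $x,y$-dependence to the already available estimate of \cite[Lemma 28]{MR3342657} by inserting one intermediate time. First I would split the time interval $[0,n]$ at the midpoint: writing $m=\lfloor n/2\rfloor$ and decomposing according to the position $z$ of the walk at time $m$, we have
\begin{equation}
\P(x+\bm S(n)=y,\bm \tau_x>n)=\sum_{z\in K}\P(x+\bm S(m)=z,\bm \tau_x>m)\,\P(z+\widetilde{\bm S}(n-m)=y,\widetilde{\bm \tau}_z>n-m),
\end{equation}
where $\widetilde{\bm S}$ is an independent copy of $\bm S$ and I used the Markov property together with the fact that staying in $K$ up to time $n$ implies staying in $K$ on both halves. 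The key asymmetry to exploit is that in the \emph{first} factor we keep the dependence on the starting point $x$ but integrate out the endpoint, while in the \emph{second} factor we do the opposite. For the second factor one applies \cite[Lemma 28]{MR3342657} in the time-reversed walk (whose increments satisfy the same moment hypotheses, since $\bm X$ and $-\bm X$ have the same covariance structure and the reversed cone is handled identically), giving $\P(z+\widetilde{\bm S}(n-m)=y,\widetilde{\bm \tau}_z>n-m)\leq C(1+|y|^p)(n-m)^{-p-d/2}$ uniformly in $z$; here the point is that $C(z,y)$ from that lemma is, after reversal, bounded by a constant times $V(y)$, the harmonic function evaluated at $y$, which in turn is $O(1+|y|^p)$ by the polynomial growth of $V$ established in \cite{MR3342657}.

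\textbf{Completing the bound.} It then remains to control $\sum_{z\in K}\P(x+\bm S(m)=z,\bm \tau_x>m)=\P(\bm \tau_x>m)$, the survival probability of the walk started at $x$ up to time $m$. By \cite[Theorem 1]{MR3342657} (or the cruder upper bound in the same paper), $\P(\bm \tau_x>m)\leq C\,V(x)\,m^{-p/2}$, and again $V(x)=O(1+|x|^p)$. Multiplying the two estimates and using $m\asymp n-m\asymp n$ yields
\begin{equation}
\P(x+\bm S(n)=y,\bm \tau_x>n)\leq C(1+|x|^p)\,m^{-p/2}\cdot C(1+|y|^p)\,(n-m)^{-p-d/2}\leq C'(1+|x|^p)(1+|y|^p)\,n^{-3p/2-d/2}.
\end{equation}
This is in fact stronger than claimed in the exponent; to get exactly $n^{-p-d/2}$ one instead uses the weaker but cleaner split where one half contributes $\P(\bm\tau_x>m)\le C\, m^{-p/2}V(x)$ absorbed as a uniform $O(1)$ times $m^{-p/2}$ is replaced by just keeping the endpoint estimate on $[m,n]$ and the trivial bound $\P(x+\bm S(m)=z,\bm\tau_x>m)\le \P(x+\bm S(m)=z)$ summed to $1$ on the first half — that already gives $C(1+|y|^p)n^{-p-d/2}$. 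Symmetrically reversing time gives the $(1+|x|^p)$ factor, and one takes the minimum of the two (or, to get the product form directly, splits into thirds and applies the endpoint-type bound on the outer two pieces). I would present the thirds version: split $[0,n]$ into three blocks of length $\asymp n/3$, apply \cite[Lemma 28]{MR3342657} (forward) on the last block to extract $(1+|y|^p)(n/3)^{-p-d/2}$, apply it (reversed) on the first block to extract $(1+|x|^p)(n/3)^{-p-d/2}$, and bound the middle transition probability by the global maximum $O((n/3)^{-d/2})$ of a single-step-rescaled local limit theorem — wait, that overcounts the exponent, so instead bound the middle block's contribution trivially by $1$ after summing, absorbing only one of the polynomial decay rates. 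Concretely: use the forward bound on $[2n/3,n]$ and the reversed bound on $[0,n/3]$, and on the middle block use $\sum_z \P(\cdot)\le 1$; since each of the two extreme bounds already carries a full $n^{-p-d/2}$ this is wasteful, so in the write-up I will simply use the two-block split twice and take the minimum, then note $\min\{a(1+|y|^p),\,a(1+|x|^p)\}\le a\sqrt{(1+|x|^p)(1+|y|^p)}\le a(1+|x|^p)(1+|y|^p)$.

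\textbf{Main obstacle.} The real work is bookkeeping: verifying that the constant $C(x,y)$ appearing in \cite[Lemma 28]{MR3342657} is genuinely bounded by $C\,V(x)V(y)$ (so that the polynomial growth $V(x)=O(1+|x|^p)$ from \cite{MR3342657} can be invoked), and checking that the time-reversed walk satisfies all the hypotheses on the cone and on the moments imposed at the start of this appendix — in particular that the reversed cone $K$ has the same associated exponent $p$. Both points are essentially contained in \cite{MR3342657} but require tracing through the definitions of $V$ and of the dual cone; this is the step I expect to be the most delicate, whereas the Markov-splitting and the arithmetic of the exponents are routine. (The acknowledgement to D.~Denisov in the paper for pointers on exactly this proposition confirms that the nontrivial content lies in extracting the explicit constant from \cite{MR3342657}.)
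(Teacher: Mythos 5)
Your high-level architecture is essentially the paper's: the paper also starts from \cite[Lemma 27]{MR3342657}, which gives $\P(x+\bm S(n)=y,\bm\tau_x>n)\le C\,n^{-d/2}\,\P(\bm\tau_x>n/2)$, reduces everything to a survival-probability bound, and then invokes the time-reversal argument of \cite[Lemma 28]{MR3342657} to extract the $(1+|y|^p)$ factor. So the splitting/reversal scheme is fine. The problem is that at the two places where the actual content of the proposition lives, you cite results that do not exist in the form you need, and this is a genuine gap rather than bookkeeping.

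First, you assert that the constant $C(z,y)$ of \cite[Lemma 28]{MR3342657}, after time reversal, is bounded by $C\,V(y)$ \emph{uniformly in the starting point} $z$. It is not: that constant depends polynomially on both endpoints, and making its dependence explicit and product-form is precisely the statement you are trying to prove, so this step is circular. Second, you invoke ``\cite[Theorem 1]{MR3342657} (or the cruder upper bound in the same paper)'' for $\P(\bm\tau_x>m)\le C\,V(x)\,m^{-p/2}$. Theorem 1 there is an asymptotic equivalence for \emph{fixed} $x$ as $n\to\infty$; it does not give a bound uniform over all $x\in K$ and all $n$, and no such uniform bound is stated off the shelf. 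Establishing
$$\P(\bm\tau_x>n)\le C(1+|x|^p)\,n^{-p/2}\quad\text{for all }n\text{ and }x\in K$$
is exactly what the paper's proof does, and it is not short: one splits on whether the walk reaches the shrunken cone $K_{n,\varepsilon}$ by time $n^{1-\varepsilon}$, uses \cite[Lemma 14]{MR3342657} for the bad event, and then must bound $\E[u(x+\bm S(\bm\nu_n));\bm\tau_x>\bm\nu_n,\bm\nu_n\le n^{1-\varepsilon}]$ and $\E[|x+\bm S(\bm\nu_n)|^p;\dots]$ by $C(1+|x|^p)$, which requires going into the martingale $\bm Y_n=v(x+\bm S(n))-\sum_{k<n}f(x+\bm S(k))$ and tracking the $x$-dependence through Equations (22), (24), (50), (52), (59) and (63) of \cite{MR3342657}. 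Your proposal correctly identifies this as ``the most delicate step'' but then leaves it entirely to a citation that does not cover it; as written, the proof does not go through. (The final ``thirds versus halves'' discussion is also internally inconsistent, but that is secondary: once the uniform survival bound is available, the two-block split plus reversal already yields the claim.)
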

In what follows $C$ denotes any constant (possibly different from place to place) independent of $x,y,n$.
\begin{proof}
	From \cite[Lemma 27]{MR3342657}, 
	$$\P(x+\bm S(n)=y,\bm \tau_x>n)\leq C \cdot n^{-d/2} \cdot \P(\bm \tau_x >n/2).$$
	If we show that
	\begin{equation}\label{eq:bgrebfuirbfo}
	\P(\bm \tau_x >n)\leq C (1+|x|^p)n^{-p/2},
	\end{equation}
	then using exactly the same proof as in \cite[Lemma 28]{MR3342657} we can conclude that 
	$$\P(x+\bm S(n)=y,\bm \tau_x>n)\leq C (1+|x|^p)(1+|y|^p) n^{-p-d/2},\quad\text{for all}\quad n\in \Z_{> 0} \text{ and } x,y\in K.$$	
	Therefore it is enough to prove \cref{eq:bgrebfuirbfo}. Note that for all $\varepsilon>0$,
	\begin{equation}
	\P(\bm \tau_x >n)=\P(\bm \tau_x >n,\bm \nu_n\leq n^{1-\varepsilon})+\P(\bm \tau_x >n,\bm \nu_n> n^{1-\varepsilon}),
	\end{equation}
	where
	$$\bm \nu_n\coloneqq\min\{k\geq 1: x+\bm S(k)\in K_{n,\varepsilon}\},$$
	and
	$$K_{n,\varepsilon}\coloneqq \{x\in K: \dist(x,\partial K)\geq n^{1/2-\varepsilon}\}.$$
	By \cite[Lemma 14]{MR3342657}
	\begin{equation}
	\P(\bm \tau_x >n,\bm \nu_n> n^{1-\varepsilon})\leq \exp\{-C n^{\varepsilon}\}.
	\end{equation}
	Therefore, in order to conclude the proof, we have to show that
	\begin{equation}\label{eq:fvgwry8bfvruqbfrqp}
	\P(\bm \tau_x >n,\bm \nu_n\leq n^{1-\varepsilon})\leq C (1+|x|^p) n^{-p/2}.
	\end{equation}
	By  \cite[Equation (50)]{MR3342657},
	\begin{multline}\label{eq:webfbuowebfoeqbfiq}
	\P(\bm \tau_x >n,\bm \nu_n\leq n^{1-\varepsilon})
	\leq 
	C \cdot n^{-p/2} \Big(\E\left[u(x+\bm S(\bm \nu_n));\bm\tau_x>\bm\nu_n,\bm \nu_n\leq n^{1-\varepsilon}\right]\\
	+\E\left[|x+\bm S(\bm \nu_n)|^p;\bm\tau_x>\bm\nu_n,|x+\bm S(\bm \nu_n)|>n^{-\varepsilon/8}\sqrt{n},\bm \nu_n\leq n^{1-\varepsilon}\right]\Big),
	\end{multline}
	where $u(x)$ is a function defined in \cite[Equation (3)]{MR3342657} that satisfies $0\leq u(x)\leq C |x|^p$.
	We have the following results (whose proof is postponed after the end of the proof of this proposition).
	\begin{claim}\label{clm:bound_on_first_expect}  It holds that
		$$\E\left[u(x+\bm S(\bm \nu_n));\bm\tau_x>\bm\nu_n,\bm \nu_n\leq n^{1-\varepsilon}\right]\leq C (1+|x|^p).$$
	\end{claim}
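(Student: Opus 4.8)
The plan is to run an optional-stopping argument against the positive harmonic function of the random walk killed outside $K$. Recall that \cite{MR3342657} constructs a function $V\colon K\to\R_{\ge0}$ with $V(z)=\lim_{m\to\infty}\E[u(z+\bm S(m));\,\bm\tau_z>m]$, which satisfies the harmonicity identity $\E[V(z+\bm X);\,\bm\tau_z>1]=V(z)$ for all $z\in K$, the global bound $V(z)\le C(1+|z|^p)$, and the comparison $u(z)\le C\,(1+V(z))$ whenever $\dist(z,\partial K)\ge1$ (the last being extracted from the fine estimates of \cite{MR3342657} relating $V$ to $u$ away from $\partial K$). The harmonicity identity says precisely that $M_m\coloneqq V(x+\bm S(m))\,\idf_{\{\bm\tau_x>m\}}$ is a nonnegative $(\mathcal F_m)_{m\ge0}$-martingale; it is integrable at each finite time because $V(z)\le C(1+|z|^p)$ and $\bm X$ has a finite moment of order $p$.

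First I would dispose of the case of $n$ bounded, where the statement is trivial: then $\bm\nu_n\le n^{1-\varepsilon}$ is bounded, $u(x+\bm S(\bm\nu_n))\le C\bigl(|x|+\max_{k\le n^{1-\varepsilon}}|\bm S(k)|\bigr)^p$, and the bound $C(1+|x|^p)$ follows from Doob's maximal inequality and $\E[|\bm X|^p]<\infty$. So I may assume $n$ large, in particular $n^{1/2-\varepsilon}\ge1$. By the definition of $\bm\nu_n$ the point $x+\bm S(\bm\nu_n)$ lies in $K_{n,\varepsilon}$, hence at distance $\ge1$ from $\partial K$, so the comparison above applies and
\[
\E\bigl[u(x+\bm S(\bm\nu_n));\,\bm\tau_x>\bm\nu_n,\,\bm\nu_n\le n^{1-\varepsilon}\bigr]\le C+C\,\E\bigl[V(x+\bm S(\bm\nu_n));\,\bm\tau_x>\bm\nu_n,\,\bm\nu_n\le n^{1-\varepsilon}\bigr].
\]

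To bound the remaining expectation I would apply the optional stopping theorem to the martingale $M$ at the bounded stopping time $\bm\nu_n\wedge\lceil n^{1-\varepsilon}\rceil$, which gives $\E[M_{\bm\nu_n\wedge\lceil n^{1-\varepsilon}\rceil}]=M_0=V(x)$. On the event $\{\bm\tau_x>\bm\nu_n,\ \bm\nu_n\le n^{1-\varepsilon}\}$ one has $\bm\nu_n\wedge\lceil n^{1-\varepsilon}\rceil=\bm\nu_n$ and $\bm\tau_x>\bm\nu_n$, so the stopped martingale there equals $V(x+\bm S(\bm\nu_n))$; since $M\ge0$ everywhere, $\E[V(x+\bm S(\bm\nu_n));\,\bm\tau_x>\bm\nu_n,\,\bm\nu_n\le n^{1-\varepsilon}]\le V(x)\le C(1+|x|^p)$. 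Combining the two displays proves the claim.

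The step I expect to be the main obstacle is the uniform comparison $u(z)\le C(1+V(z))$ on $\{\dist(z,\partial K)\ge1\}$: $u$ and $V$ are individually well understood, but one needs to control their ratio simultaneously for points within distance $1$ of $\partial K$ and for points of arbitrarily large modulus — the regime for which the delicate estimates of \cite{MR3342657} were designed (and which may require taking $\varepsilon$ small in terms of the moment exponent of $\bm X$). The optional-stopping step and the reduction to large $n$ are routine.
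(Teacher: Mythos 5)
Your overall strategy (optional stopping against an exactly harmonic function $V$ of the killed walk) is sound in outline and genuinely different from the paper's argument, but it hinges on a comparison that you do not prove and that is not available off the shelf from \cite{MR3342657}: the inequality $u(z)\le C(1+V(z))$. As you state it (for all $z$ with $\dist(z,\partial K)\ge 1$) it is essentially unjustifiable: for $z$ at distance of order $1$ from $\partial K$ but with $|z|$ large, $u(z)$ still grows like a positive power of $|z|$, while \cite{MR3342657} provides no matching lower bound on $V$ there --- their asymptotic equivalence $V\sim u$ is proved only deep inside the cone. Even in the weaker form you actually use (at the point $z=x+\bm S(\bm\nu_n)\in K_{n,\varepsilon}$, so $\dist(z,\partial K)\ge n^{1/2-\varepsilon}$), the equivalence $V(z)\sim u(z)$ in \cite{MR3342657} is established only for $|z|$ of order at most $\sqrt n$, whereas on the event $\{\bm\nu_n\le n^{1-\varepsilon}\}$ the point $x+\bm S(\bm\nu_n)$ can have arbitrarily large modulus; controlling precisely this large-modulus regime is why the starting bound (50) of \cite{MR3342657}, quoted in the proof of \cref{prop:upper_bound_walks}, carries the separate second expectation restricted to $|x+\bm S(\bm\nu_n)|>n^{-\varepsilon/8}\sqrt n$ (handled in \cref{clm:bound_on_second_expect}). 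So the step you flag as ``the main obstacle'' is a genuine gap, not a routine extraction; flagging it does not close it. The optional-stopping computation itself, the bound $V(z)\le C(1+|z|^p)$, and the reduction to large $n$ are all fine.

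The paper closes the claim by a route that needs no lower bound on $V$: it invokes the martingale $\bm Y_m=v(x+\bm S(m))-\sum_{k=0}^{m-1}f(x+\bm S(k))$ of \cite[Eq.~(20)]{MR3342657}, built from the approximately harmonic function $v$ and its correction $f$, stops it at $\bm\tau_x\wedge\bm\nu_n\wedge n^{1-\varepsilon}$, and bounds each of the resulting terms by $C(1+|x|^p)$ using the explicit estimates (22) and (24) of \cite{MR3342657} on $\bm Y_{\bm\tau_x}$ and on $\E\big[\sum_{k<\bm\tau_x}|f(x+\bm S(k))|\big]$. If you want to salvage your version you would have to prove the comparison $u\le C(1+V)$ on $K_{n,\varepsilon}$ uniformly in the modulus, which is essentially as delicate as the estimates you are trying to bypass.
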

	\begin{claim}\label{clm:bound_on_second_expect} It holds that
		$$\E\left[|x+\bm S(\bm \nu_n)|^p;\bm\tau_x>\bm\nu_n,|x+\bm S(\bm \nu_n)|>n^{-\varepsilon/8}\sqrt{n},\bm \nu_n\leq n^{1-\varepsilon}\right]\leq C (1+|x|^p).$$
	\end{claim}
	Note that \cref{eq:fvgwry8bfvruqbfrqp} then follows from \cref{eq:webfbuowebfoeqbfiq} and the two claims above. This concludes the proof of \cref{prop:upper_bound_walks}.
\end{proof}

It remains to prove Claims \ref{clm:bound_on_first_expect} and \ref{clm:bound_on_second_expect}.

\begin{proof}[Proof of \cref{clm:bound_on_first_expect}]
	From the proof of \cite[Lemma 21]{MR3342657} we have that
	\begin{multline}\label{eq:dvfuwbfcibfpwernf}
	\E\left[u(x+\bm S(\bm \nu_n));\bm\tau_x>\bm\nu_n,\bm \nu_n\leq n^{1-\varepsilon}\right]=
	\E\left[\bm Y_{\bm\nu_n};\bm\tau_x>\bm\nu_n,\bm \nu_n\leq n^{1-\varepsilon}\right]\\
	+
	\E\left[\sum_{k=0}^{\bm\nu_n-1}f(x+\bm S(k));\bm\tau_x>\bm\nu_n,\bm \nu_n\leq n^{1-\varepsilon}\right],
	\end{multline}
	where $\bm Y_{n}$ is a martingale defined by (see \cite[Equation (20)]{MR3342657})
	\begin{align}\label{eq:rfgewbfhfe}
	&\bm Y_{0}=v(x),\\
	&\bm Y_{n}=v(x+\bm S(n))-\sum_{k=0}^{n-1}f(x+\bm S(k)), \quad n\geq 0,
	\end{align}
	with $v(\cdot)$ and $f(\cdot)$ two functions defined in \cite[Section 1.3]{MR3342657}.
	We have that the second expectation in the right-hand side of \cref{eq:dvfuwbfcibfpwernf} is bounded by
	\begin{equation}\label{eq:ewfbbfiewnhfpiewfpe}
		\E\left[\sum_{k=0}^{\bm\nu_n-1}f(x+\bm S(k));\bm\tau_x>\bm\nu_n,\bm \nu_n\leq n^{1-\varepsilon}\right]
		\leq 
		\E\left[\sum_{k=0}^{\bm\tau_x-1}|f(x+\bm S(k))|;\bm\tau_x>\bm\nu_n\right]\leq C(1+|x|^p),
	\end{equation}
	where the last bound follows from \cite[Equation (24)]{MR3342657}. For the first expectation in the right-hand side of \cref{eq:dvfuwbfcibfpwernf}, we can write using the computations after \cite[Equation (52)]{MR3342657},
	\begin{multline}\label{eq:errifbibfofohfewf}
	\E\left[\bm Y_{\bm\nu_n};\bm\tau_x>\bm\nu_n,\bm \nu_n\leq n^{1-\varepsilon}\right]=
	u(x)
	-
	\E\left[\bm Y_{\bm\tau_x};\bm\tau_x\leq\bm\nu_n\wedge n^{1-\varepsilon}\right]\\
	-
	\E\left[u(x+\bm S(n^{1-\varepsilon}));\bm\tau_x>n^{1-\varepsilon}, \bm\nu_n>n^{1-\varepsilon}\right]
	+
	\E\left[\sum_{k=0}^{n^{1-\varepsilon}-1}f(x+\bm S(k));\bm\tau_x>n^{1-\varepsilon},\bm \nu_n>n^{1-\varepsilon}\right],
	\end{multline}
	where $x\wedge y=\min\{x,y\}$.
	It remains to bound the four terms above. As we already mentioned $0\leq u(x)\leq C |x|^p$, so it is enough to find an appropriate bound for the second and fourth expectations . Recalling the definition of $\bm Y_n$ in \cref{eq:rfgewbfhfe} and using the bounds in \cite[Equations (22) and (24)]{MR3342657} we have that
	\begin{equation}
	\left|\E\left[\bm Y_{\bm\tau_x};\bm\tau_x\leq\bm\nu_n\wedge n^{1-\varepsilon}\right]\right|\leq C(1+|x|^p).
	\end{equation}
	Finally, for the fourth expectation in the left-hand side of \cref{eq:errifbibfofohfewf} we can use the same arguments already used for \cref{eq:ewfbbfiewnhfpiewfpe} to obtain that 
	\begin{equation}
	\E\left[\sum_{k=0}^{n^{1-\varepsilon}-1}f(x+\bm S(k));\bm\tau_x>n^{1-\varepsilon},\bm \nu_n>n^{1-\varepsilon}\right]\leq C(1+|x|^p).
	\end{equation}
	This concludes the proof.
\end{proof}

\begin{proof}[Proof of \cref{clm:bound_on_second_expect}]
	As in the proof of \cite[Lemma 24]{MR3342657} we set
	\begin{equation}
	\bm \mu_n\coloneqq \min\{j\geq 1: |\bm X_j|>n^{1/2-\varepsilon/4}  \},
	\end{equation}
	and we write
	\begin{align}
	\E\Big[|x+\bm S(\bm \nu_n)|^p;&\bm\tau_x>\bm\nu_n,|x+\bm S(\bm \nu_n)|> n^{-\varepsilon/8}\sqrt{n},\bm \nu_n\leq n^{1-\varepsilon}\Big]\\
	=
	&\E\left[|x+\bm S(\bm \nu_n)|^p;\bm\tau_x>\bm\nu_n,|x+\bm S(\bm \nu_n)|> n^{-\varepsilon/8}\sqrt{n},\bm \nu_n\leq n^{1-\varepsilon},\bm\mu_n\leq\bm\nu_n\right]\\
	+
	&\E\left[|x+\bm S(\bm \nu_n)|^p;\bm\tau_x>\bm\nu_n,|x+\bm S(\bm \nu_n)|> n^{-\varepsilon/8}\sqrt{n},\bm \nu_n\leq n^{1-\varepsilon},\bm\mu_n>\bm\nu_n\right].
	\end{align}
	From the beginning of the proof of \cite[Lemma 24]{MR3342657} we have that the second expectation in the right-hand side of the equation above is bounded by $\exp\{-Cn^{\varepsilon/8}\}$, therefore we focus on the first expectation above. Using the first displayed equation after \cite[Equation (59)]{MR3342657} we have that
	\begin{multline}
		\E\left[|x+\bm S(\bm \nu_n)|^p;\bm\tau_x>\bm\nu_n,|x+\bm S(\bm \nu_n)|> n^{-\varepsilon/8}\sqrt{n},\bm \nu_n\leq n^{1-\varepsilon},\bm\mu_n\leq\bm\nu_n\right]\\
		\leq 
		\sum_{j=1}^{n^{1-\varepsilon}}\E\left[|x+\bm S(\bm \nu_n)|^p;\bm\tau_x>j,\bm \nu_n\leq n^{1-\varepsilon},j\leq \bm \nu_n,\bm\mu_n=j\right].
	\end{multline}
	We split the sum above in three parts using the bound
	\begin{equation}
		|x+\bm S(\bm \nu_n)|^p\leq C \left(|x+\bm S(j-1)|^p+|\bm X_j|^p+|\bm S(\bm \nu_n)-\bm S(j)|^p\right).
	\end{equation}
	
	For the third term we have that
	\begin{multline}\label{eq:ewfgbwgfo2hbfeqhfoe}
	\sum_{j=1}^{n^{1-\varepsilon}}\E\left[|\bm S(\bm \nu_n)-\bm S(j)|^p;\bm\tau_x>j,\bm \nu_n\leq n^{1-\varepsilon},j\leq \bm \nu_n,\bm\mu_n=j\right]\\
	\leq
	\begin{cases}
	C n^{-p\varepsilon/4}\E[|\bm X|^p]\E[\bm \tau_x]\leq C n^{-p \varepsilon/4} (1+|x|^2),\quad&\text{if}\quad p>2,\\
	C (1+|x|^p)n^{-\delta},\text{ with }\delta>0,\quad&\text{if}\quad p\leq2,
	\end{cases}
	\end{multline}
	where the bound for $p>2$ is obtained using the first and third equations at page 1026 of \cite{MR3342657} and \cite[Lemma 10]{MR3342657}, while the bound for $p\leq 2$ is obtained using the first, fourth and fifth equation at page 1026 of \cite{MR3342657} and \cite[Lemma 10]{MR3342657}.
	
	For the second term we have that
	\begin{equation}
	\sum_{j=1}^{n^{1-\varepsilon}}\E\left[|\bm X_j|^p;\bm\tau_x>j,\bm \nu_n\leq n^{1-\varepsilon},j\leq \bm \nu_n,\bm\mu_n=j\right]
	\leq
	\begin{cases}
	C (1+|x|^2)\cdot o(1),\quad&\text{if}\quad p>2,\\
	C (1+|x|^p)n^{-\delta},\text{ with }\delta>0,\quad&\text{if}\quad p\leq2,
	\end{cases}
	\end{equation}
	where the bounds are obtained from the equations on top of page 1027 in \cite{MR3342657}.
	
	For the first term we have that
	\begin{multline}
	\sum_{j=1}^{n^{1-\varepsilon}}\E\left[|x+\bm S(j-1)|^p;\bm\tau_x>j,\bm \nu_n\leq n^{1-\varepsilon},j\leq \bm \nu_n,\bm\mu_n=j\right]\\
	\leq
	\begin{cases}
	2^{p}(|x|^pdn^{-\varepsilon/2}+Cn^{3p/2+1}\exp\{-Cn^{\varepsilon/8}\}+Cn^{-p\varepsilon/4}(1+|x|^2)),\quad&\text{if}\quad p>2,\\
	2^{p}(|x|^pdn^{-\varepsilon/2}+Cn^{3p/2+1}\exp\{-Cn^{\varepsilon/8}\}+C(1+|x|^p)n^{-\delta}),\quad&\text{if}\quad p\leq2,
	\end{cases}
	\end{multline}
	where the bounds are obtained from Equation (63) and the three consecutive equations in \cite{MR3342657} and the same bounds used for \cref{eq:ewfgbwgfo2hbfeqhfoe}. These three bounds conclude the proof.
\end{proof}

\section{A toolbox for strong-Baxter walks}

In this section we rewrite \cref{prop:upper_bound_walks} using the notation and the specific random walks considered in \cref{sect:strong_bax_walks}.

Recall that $(\bm W_n(i))_{i\geq 1}$ is a random walk with increments in 
\begin{equation}
I_{Sb}\coloneqq\{(-i,0): i\geq 1\}\cup\{(0,1)\}\cup\{(1,-i): i\geq 0\}.
\end{equation}
and with step distribution 
\begin{equation}
\mu_{Sb}=\sum_{i=1}^{\infty}\alpha\gamma^{i}\cdot \delta_{(-i,0)}
+\alpha\theta^{-1}\cdot \delta_{(0,1)}+\sum_{i=0}^{\infty}\alpha\gamma^{-1}\theta^{i}\cdot\delta_{(1,-i)},
\end{equation}
where the various parameters are given below \cref{eq:system_strong}.
Recall also that $\mathcal{W}^n_{Sb}$ denotes the set of two-dimensional walks in the non-negative quadrant, starting at $(0,0)$, with increments in $I_{Sb}$.
Therefore, setting $K=\{(x_1,x_2)\in\R^{2}:x_1\geq 0, x_2\geq 0\}$, we have that
\begin{equation}\label{eq:fieuywfgvdweqbgf_strong}
\P\left((\bm W_n(i))_{i\in[n]}\in\mathcal{W}_{Sb}^n\middle|\bm W_n(n)=(h,\ell)\right)=\P\left(\stackrel{\leftarrow}{\bm W_n}(n)=(0,0),(\stackrel{\leftarrow}{\bm W_n}(i))_{i\in [n]}\in K\middle|\stackrel{\leftarrow}{\bm W_n}(1)=(h,\ell)\right),
\end{equation}
where we recall that $(\stackrel{\leftarrow}{\bm W_n}(i))_{i\in [n]}$ is the time-reversed walk obtained from $(\bm W_n(i))_{i\in[n]}$.
Therefore we have the following reinterpretation of \cref{prop:upper_bound_walks}.

Let $p>0$ be the parameter previously described in \cref{sect:prob_walk_cones} associated with the walk $(\bm W_n(i))_{i\geq 1}$ and the cone $K$ (we do not need its explicit expression).

\begin{cor}\label{cor:upper_bound_walks_strong}
	There exists a constant $C>0$ independent of $h,\ell,n$ such that
	\begin{equation}
	\P\left((\bm W_n(i))_{i\in[n]}\in\mathcal{W}_{Sb}^n\middle|\bm W_n(n)=(h,\ell)\right)\leq  C(1+ |(h,\ell)|^p) n^{-p-1}, \quad\text{for all}\quad n,h,\ell\in \Z_{\geq 0},
	\end{equation}
	where $|(h,\ell)|$ denotes the Euclidean norm of the vector $(h,\ell)$.
\end{cor}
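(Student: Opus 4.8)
The plan is to translate Corollary~\ref{cor:upper_bound_walks_strong} directly into an instance of Proposition~\ref{prop:upper_bound_walks}, after normalizing the random walk so that its increments satisfy the hypotheses imposed in Appendix~\ref{sect:prob_walk_cones}. First I would use the identity \eqref{eq:fieuywfgvdweqbgf_strong}, which rewrites the conditional probability on the left-hand side as
\[
\P\left(\stackrel{\leftarrow}{\bm W_n}(n)=(0,0),\,(\stackrel{\leftarrow}{\bm W_n}(i))_{i\in[n]}\in K\,\middle|\,\stackrel{\leftarrow}{\bm W_n}(1)=(h,\ell)\right).
\]
Since $\stackrel{\leftarrow}{\bm W_n}(1)=(h,\ell)$ and the increments of $\stackrel{\leftarrow}{\bm W_n}$ are i.i.d.\ with law $\stackrel{\leftarrow}{\mu_{Sb}}$ (independent of the starting point), this is exactly $\P(x+\bm S(n-1)=y,\ \bm\tau_x>n-1)$ with $x=(h,\ell)$, $y=(0,0)$, and $\bm S$ the random walk with step law $\stackrel{\leftarrow}{\mu_{Sb}}$ — except for one subtlety: staying in $K$ must be understood as the walk being in $K$ for \emph{all} the first $n$ steps including the starting point, and $x=(h,\ell)\in K$ always holds since $h,\ell\ge 0$, so the exit-time formulation matches.

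The next step is to check that $\stackrel{\leftarrow}{\mu_{Sb}}$, after an affine change of coordinates, meets the standing assumptions of Appendix~\ref{sect:prob_walk_cones}: mean zero (already true, since $\mu_{Sb}$ is centered by construction, see \eqref{eq:computations_parm_strong}), finite moments of the required order (the increments have geometric tails by \eqref{eq:distristep_strongbaxter}, so all polynomial moments are finite, covering both the $p>2$ and $p\le 2$ cases), and — this is the only nontrivial normalization — unit variances and zero covariance. The covariance matrix $\Var((\bm X,\bm Y))$ is given explicitly in \eqref{eq:cov_strong-baxter} and is nondegenerate; as noted in the footnote on page~996 of \cite{MR3342657}, a linear change of variables $T$ bringing it to the identity does not restrict generality, and $T$ maps the quadrant cone $K$ to another cone $TK$ satisfying the same regularity assumptions, with the same exponent $p$. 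Under such a linear map, $|T x|\le \|T\|\,|x|$ and $|T^{-1}y|\le\|T^{-1}\|\,|y|$, so the bound $C(1+|Tx|^p)(1+|Ty|^p)(n-1)^{-p-1}$ produced by Proposition~\ref{prop:upper_bound_walks} becomes $C'(1+|x|^p)(1+|y|^p)(n-1)^{-p-1}$ with a new constant absorbing $\|T\|^p$ and $\|T^{-1}\|^p$.

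Finally I would assemble the pieces: apply Proposition~\ref{prop:upper_bound_walks} with $d=2$, $x=(h,\ell)$, $y=(0,0)$ (so the factor $(1+|y|^p)$ is just a constant), and $n$ replaced by $n-1$, then observe that $(n-1)^{-p-1}\le C'' n^{-p-1}$ for $n\ge 2$ and that the case $n=1$ is trivial (the probability is either $0$ or $1$, hence $\le C$). This yields
\[
\P\left((\bm W_n(i))_{i\in[n]}\in\mathcal{W}_{Sb}^n\,\middle|\,\bm W_n(n)=(h,\ell)\right)\le C(1+|(h,\ell)|^p)\,n^{-p-1}
\]
uniformly in $n,h,\ell\in\Z_{\ge 0}$, as claimed. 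I expect the main obstacle to be purely bookkeeping: verifying carefully that the regularity hypotheses on the cone $K$ in \cite{MR3342657} (non-emptiness, the condition at the bottom of page~994, the $C^2$ boundary condition away from the apex) genuinely hold for the first quadrant in $\R^2$ and continue to hold after the linear normalization, and tracking that the parameter $p$ is the \emph{same} object referred to in Corollary~\ref{cor:upper_bound_walks_strong}. No new probabilistic idea is needed beyond Proposition~\ref{prop:upper_bound_walks} and the time-reversal identity \eqref{eq:fieuywfgvdweqbgf_strong}.
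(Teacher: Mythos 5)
Your proposal is correct and follows exactly the route the paper intends: the paper derives \cref{cor:upper_bound_walks_strong} from \cref{prop:upper_bound_walks} precisely via the time-reversal identity \cref{eq:fieuywfgvdweqbgf_strong}, treating the corollary as an immediate reinterpretation with $d=2$, $x=(h,\ell)$, $y=(0,0)$. Your additional care about the variance normalization, the index shift from $n$ to $n-1$, and the cone hypotheses is sound bookkeeping that the paper leaves implicit.
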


We also state the following consequence of \cite[Theorem 6]{MR3342657}.

\begin{cor}\label{cor:upper_bound_walks_strong2}
	Fix $h,\ell\in \Z_{\geq 0}$. There exists a constant $C>0$ independent of $n$ such that
	\begin{equation}
	\P\left((\bm W_n(i))_{i\in[n]}\in\mathcal{W}_{Sb}^n\middle|\bm W_n(n)=(h,\ell)\right)\geq  C n^{-p-1}, \quad\text{for all}\quad n\in \Z_{\geq 0}.
	\end{equation}
\end{cor}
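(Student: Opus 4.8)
The plan is to obtain the lower bound from the local limit theorem for random walks confined to a cone, \cite[Theorem 6]{MR3342657}, in exact parallel with the way the upper bound \cref{cor:upper_bound_walks_strong} was derived from \cref{prop:upper_bound_walks}.

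First I would reduce the statement to one about an honest (unconditioned) random walk. By the identity \cref{eq:fieuywfgvdweqbgf_strong}, the quantity in the statement equals $\P\big(\stackrel{\leftarrow}{\bm W_n}(n)=(0,0),\ (\stackrel{\leftarrow}{\bm W_n}(i))_{i\in[n]}\in K\ \big|\ \stackrel{\leftarrow}{\bm W_n}(1)=(h,\ell)\big)$ with $K=\{(x_1,x_2)\in\R^2:\ x_1,x_2\ge 0\}$. Conditioning on $\stackrel{\leftarrow}{\bm W_n}(1)=(h,\ell)$ only fixes the starting point: the increments of $\stackrel{\leftarrow}{\bm W_n}$ remain i.i.d.\ with law $\stackrel{\leftarrow}{\mu_{Sb}}$ (the starting measure $\nu_{Sb}^n$ plays no further role). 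Writing $x=(h,\ell)$, letting $\stackrel{\leftarrow}{\bm S}$ be a random walk started at the origin with i.i.d.\ increments distributed as $\stackrel{\leftarrow}{\mu_{Sb}}$, and $\bm\tau_x=\inf\{k\ge 1:\ x+\stackrel{\leftarrow}{\bm S}(k)\notin K\}$, the above conditional probability equals
\[
\P\big(x+\stackrel{\leftarrow}{\bm S}(n-1)=(0,0),\ \bm\tau_x>n-1\big),
\]
the $\pm1$ shift in the number of steps being immaterial.

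Next I would invoke \cite[Theorem 6]{MR3342657} for the walk $\stackrel{\leftarrow}{\bm S}$ and the cone $K$; the hypotheses on $K$ and on the increments are precisely those already checked in order to apply \cref{prop:upper_bound_walks} (note that the reversed increment law $-\stackrel{\leftarrow}{\mu_{Sb}}$ is exactly $\mu_{Sb}$). Since $d=2$, so that $p+d/2=p+1$, this gives, for each fixed $x,y\in K$,
\[
\P\big(x+\stackrel{\leftarrow}{\bm S}(m)=y,\ \bm\tau_x>m\big)\ \sim\ \kappa\, V(x)\, V'(y)\, m^{-p-1},\qquad m\to\infty,
\]
for some constant $\kappa>0$, where $V$ and $V'$ are the non-negative harmonic functions associated, respectively, with the $\stackrel{\leftarrow}{\mu_{Sb}}$-walk and the $\mu_{Sb}$-walk killed on exiting $K$. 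Applying this with $y=(0,0)$ and $m=n-1$, the corollary follows for all large $n$ once we know $V(x)>0$ and $V'((0,0))>0$. The first is automatic for any $(h,\ell)\in K$ from which the walk can reach the interior of $K$. For the second, the harmonicity relation of $V'$ at the corner reads $V'((0,0))=\alpha\theta^{-1}V'((0,1))+\alpha\gamma^{-1}V'((1,0))$ — only the steps $(0,1)$ and $(1,0)$ of $\mu_{Sb}$ keep the walk in $K$ when started at $(0,0)$ — and the right-hand side is strictly positive because $(0,1)$ and $(1,0)$ lie in the interior-reachable part of $K$. This is the one place where the fact that $(0,0)$ sits at the corner of the quadrant must be handled with care, and I expect it to be the main (though still mild) obstacle.

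Finally, for the finitely many small values of $n$ not covered by the asymptotics, I would observe that whenever $\P(\bm W_n(n)=(h,\ell))>0$ the conditional probability in the statement is itself a strictly positive number, being the total weight of the non-empty finite set of walks in $\mathcal{W}_{Sb}^n$ ending at the label $(h,\ell)$; decreasing the constant $C$ then yields the bound for all such $n$. Everything else is a direct citation of \cite{MR3342657} together with the elementary bookkeeping around the time reversal recorded in \cref{eq:fieuywfgvdweqbgf_strong}.
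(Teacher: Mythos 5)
Your proposal is correct and follows exactly the route the paper intends: the paper offers no proof beyond declaring the corollary "a consequence of \cite[Theorem 6]{MR3342657}", and you supply precisely the missing bookkeeping — the time reversal via \cref{eq:fieuywfgvdweqbgf_strong}, the identification $p+d/2=p+1$ for $d=2$, the positivity of the harmonic functions (including the corner value $V'((0,0))$ via the one-step harmonicity relation), and the finitely many small $n$. All of these details are handled correctly, so nothing further is needed.
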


\section{Asymptotic estimates}

\begin{prop}\label{prop:asympt_estim_sum}
	Let $(a_n)_{n\in\Z_{\geq 0}}$ and $(b_n)_{n\in\Z_{\geq 0}}$ be two sequences of positive numbers such that for some $\alpha>1$:
	\begin{itemize}
		\item $a_n\sim a\cdot n^{-\alpha}$, for some $a>0$;
		\item $b_n\sim b\cdot n^{-\alpha}$, for some $b>0$;
	\end{itemize}
	Then as $n\to \infty$,
	$$\sum_{k=0}^n a_k\cdot b_{n-k}\sim n^{-\alpha}\left(a\cdot \sum_{k=0}^{\infty}b_k+b\cdot\sum_{k=0}^{\infty}a_k\right).$$
\end{prop}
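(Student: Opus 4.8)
The statement is a standard fact about convolutions of regularly varying sequences with exponent $\alpha > 1$ (so that both series $\sum a_k$ and $\sum b_k$ converge). The plan is to split the convolution sum at the midpoint and treat the two halves symmetrically. Fix $\eps > 0$. First I would use the hypothesis $a_n \sim a\cdot n^{-\alpha}$ to choose $N$ such that $|a_k k^\alpha - a| < \eps$ for all $k \geq N$, and similarly for $b$; also record the crude bounds $a_k \leq C k^{-\alpha}$ and $b_k \leq C k^{-\alpha}$ valid for all $k \geq 1$, plus boundedness of both sequences.

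The main step is the decomposition
\begin{equation}
\sum_{k=0}^n a_k b_{n-k} = \sum_{k=0}^{\lfloor n/2\rfloor} a_k b_{n-k} + \sum_{k=\lfloor n/2\rfloor + 1}^{n} a_k b_{n-k}.
\end{equation}
For the first sum, when $k \leq n/2$ we have $n - k \geq n/2$, so $b_{n-k} = (n-k)^{-\alpha}(b + o(1))$ uniformly in $k$ as $n\to\infty$; moreover $(n-k)^{-\alpha} = n^{-\alpha}(1 - k/n)^{-\alpha}$ and $(1-k/n)^{-\alpha} \to 1$ for each fixed $k$ while staying bounded by $2^\alpha$ on the range $k \le n/2$. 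Combining these with dominated convergence for series (the dominating summable sequence being $k \mapsto C' a_k$, using $a_k \le C k^{-\alpha}$ and boundedness), one gets
\begin{equation}
\sum_{k=0}^{\lfloor n/2\rfloor} a_k b_{n-k} \sim b\, n^{-\alpha} \sum_{k=0}^{\infty} a_k.
\end{equation}
By the symmetric argument (substituting $k \mapsto n-k$), the second sum is asymptotic to $a\, n^{-\alpha}\sum_{k=0}^\infty b_k$. Adding the two gives the claim. I would make the $\eps$-bookkeeping precise by writing, for the first sum, $\sum_{k=0}^{\lfloor n/2 \rfloor} a_k b_{n-k} n^\alpha = \sum_{k=0}^{\lfloor n/2\rfloor} a_k (1-k/n)^{-\alpha} \big( b + r_{n-k} \big)$ with $|r_m| < \eps$ for $m \ge N$, then bounding the finitely many terms with $n-k < N$ separately (they are $O(n^{-\alpha})$ in total, hence negligible after multiplying by $n^\alpha$... actually they contribute $o(1)$ since each $a_k \to 0$... more carefully: only finitely many $k$, and for those $a_k$ is bounded while $b_{n-k}n^\alpha \to 0$).

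The only mild subtlety — and the thing to be careful about rather than a genuine obstacle — is justifying the interchange of limit and infinite sum in the first half: one needs a summable dominating function uniform in $n$. This is supplied by $a_k (1-k/n)^{-\alpha} \le 2^\alpha a_k$ on $k \le n/2$ together with $\sum_k a_k < \infty$ (which follows from $a_k \sim a k^{-\alpha}$ and $\alpha > 1$), so the dominated convergence theorem for series applies directly. Everything else is routine; there is no hard analytic input.
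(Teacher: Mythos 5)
Your proof is correct and takes essentially the same route as the paper: split the convolution at the midpoint so that on each half the large-index factor supplies the $n^{-\alpha}$ asymptotics while the other factor is summed, the only difference being that you invoke dominated convergence for series (dominating by $2^{\alpha}C\,a_k$ on $k\le n/2$) where the paper inserts an extra sublinear cutoff and carries out the $\varepsilon$-bookkeeping by hand. A minor remark: in the first half the terms with $n-k<N$ that you propose to treat separately do not occur once $n\ge 2N$ (there $n-k\ge n/2\ge N$), so that hedge is vacuous and your dominated-convergence argument already covers everything.
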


\begin{proof}
	We split the sum in four parts as follows:
	$\sum_{k=0}^na_kb_{n-k} =t_n+u_n+v_n+w_n \quad $, where
	\begin{equation}
		t_n= \sum_{k=0}^{\lfloor n^{\alpha}\rfloor}a_kb_{n-k}, \quad u_n=  \sum_{k=\lfloor n^{\alpha}\rfloor+1}^{\lfloor n/2\rfloor }a_kb_{n-k},\quad v_n = \sum_{k=\lfloor n/2\rfloor +1}^{n-\lfloor n^{\alpha}\rfloor} a_kb_{n-k},\quad w_n=\sum_{k=n-\lfloor n^{\alpha}\rfloor +1}^n a_kb_{n-k}.
	\end{equation}
	By assumption, there exists $B>0$ such that for all $n\geq 1$, $b_n\leq B\cdot n^{-\alpha}$. Then 
	$$ u_n\leq B \sum_{k= \lfloor n^{\alpha}\rfloor+1}^{\lfloor n/2\rfloor }  a_k\cdot    (n-\lfloor n/2\rfloor)^{-\alpha}=o(n^{-\alpha}).$$
	Similarly,  $v_n=o\left( n^{-\alpha}\right)$.
	It remains to show that
	\begin{equation}
	t_n \sim n^{-\alpha} \cdot b\sum_{k=0}^{+\infty} a_k,\qquad
	w_n \sim n^{-\alpha} \cdot a\sum_{k=0}^{+\infty} b_k.
	\end{equation}
	We can write 
    $t_n- n^{-\alpha} \cdot b\sum_{k=0}^{+\infty} a_k = x_n+y_n+z_n$, where
	$$x_n= b\sum_{k=0}^{\lfloor n^{\alpha}\rfloor} a_k \left((n-k)^{-\alpha}-n^{-\alpha} \right)  , \quad y_n= \sum_{k=0}^{\lfloor n^{\alpha}\rfloor} a_k\left(b_{n-k}- b(n-k)^{-\alpha}\right), \quad   z_n=-n^{-\alpha}\cdot b\sum_{k=\lfloor n^{\alpha} \rfloor +1}^{+\infty} a_k.$$
	Obviously $\ z_n=o\left(n^{-\alpha}\right)$. For $x_n$ note that
	$$ 0\leq x_n\leq b\sum_{k=0}^{\lfloor n^{\alpha}\rfloor} a_k \left((n-\lfloor n^{\alpha}\rfloor)^{-\alpha}-n^{-\alpha} \right) \leq n^{-\alpha} \left( \left( 1- \frac{\lfloor n^{\alpha}\rfloor }{n}\right)^{-\alpha}-1\right)b\sum_{k=0}^{+\infty}a_k .$$ 
	Hence $\ x_n=o\left(n^{-\alpha}\right)$. Finally for $y_n$ we have that
	$$|y_n| \leq  \sum_{k=0}^{\lfloor n^{\alpha}\rfloor} a_k(n-k)^{-\alpha} \left| (n-k)^{\alpha}b_{n-k}-b\right| \leq (n-n^{\alpha})^{-\alpha}\left(\sup_{\ell\geq n-\lfloor n^{\alpha}\rfloor} |\ell^{\alpha}b_\ell-b|\right)\sum_{k=0}^{+\infty}a_k.$$
	Therefore  $\ y_n=o\left(n^{-\alpha}\right)$. This proves that
	$t_n \sim n^{-\alpha} \cdot b\sum_{k=0}^{+\infty} a_k.$ Similarly, we have that $w_n \sim n^{-\alpha} \cdot a\sum_{k=0}^{+\infty} b_k$.
\end{proof}

\section{Local estimates for one-dimensional random walks conditioned to be positive}

Let $(\bm S_{n})_{n\in\Z_{\geq 0}}$ be a one-dimensional random walk started at zero at time zero. Assume that the increments of $(\bm S_{n})_{n\in\Z_{\geq 0}}$ are i.i.d., centered and with finite variance $\sigma^2$. Let 
$$\bm \tau=\inf\{n>0|\bm{S}_n< 0\},$$ 
and $h$ be the function defined by
\begin{equation}\label{eq:efdbuebfwebfowe}
h(x)=
\begin{cases}
1+\sum_{j=1}^{+\infty}\P(\bm S^1_{\bm \tau}+\dots +\bm S^j_{\bm \tau}> -x) \quad&\text{if}\quad x> 0,\\
0 \quad&\text{otherwise},
\end{cases}
\end{equation} 
where the random variables $\bm S^i_{\bm \tau}$ are independent copies of $\bm S_{\bm \tau}$.
We have the following local estimates (see for instance\footnote{We remark that the expression in Eq.(5) in \cite{MR4105264} is wrong. For a correct expression see \cite[Eq. (7)]{MR2449127}.} \cite[Lemma 2.2]{MR4105264} or \cite[Theorem 4]{MR2449127}):
\begin{lem}\label{lem:local_est_one_dim}
	The following estimates hold
	\begin{align}
	&\P\left((x+\bm S_{i})_{i\in[n]}> 0\right)\sim 2\frac{\E[-\bm{S}_{\bm \tau}]}{\sigma\sqrt{2\pi}} \frac{h(x)}{\sqrt{n}},\quad\text{for all}\quad x\in\Z_{> 0},\\
	&\P(x+\bm S_{n}=y,(x+\bm S_{i})_{i\in[n]}> 0)\sim\frac{c}{\sigma\sqrt{2\pi}}\frac{h(x)\tilde{h}(y)}{n^{3/2}},\quad\text{for all}\quad x,y\in\Z_{> 0},\\
	&\P(x+\bm S_{n}=y,(x+\bm S_{i})_{i\in[n]}> 0)\leq C\frac{h(x)\tilde{h}(y)}{n^{3/2}},\quad\text{for all}\quad x,y\in\Z_{> 0}.
	\end{align}
	where $c=\frac{\E[-\bm{S}_{\bm \tau}]}{\sum_{z\in\zgz} \tilde{h}(z)\P(\bm{S}_1\leq -z)}$, $C$ is another constant independent of $n,x,y$, and $\tilde{h}$ is the function defined in \cref{eq:efdbuebfwebfowe} for the walk $(-\bm S_{n})_{n\in\Z_{\geq 0}}$.
\end{lem}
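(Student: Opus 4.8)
The plan is to deduce all three displays from the now-classical local limit theorems of Vatutin and Wachtel \cite{MR2449127} (see also Ngo and Peign\'e \cite{MR4105264}) for a one-dimensional mean-zero finite-variance random walk killed on leaving the half-line $(0,\infty)$; the only real work is reconciling normalizations. First I would recall the relevant fluctuation-theory framework. Since the increments of $(\bm S_n)_n$ are i.i.d., centered, with finite variance $\sigma^2$ and --- as is the case for all the walks to which the lemma is applied --- aperiodic on $\Z$, the function $h$ of \cref{eq:efdbuebfwebfowe} is, up to the boundary convention at the renewal epochs, the renewal function of the strict descending ladder height process of $(\bm S_n)_n$. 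Indeed, writing $\bm H_j\coloneqq-\bm S^j_{\bm\tau}>0$ for the i.i.d.\ absolute ladder heights and $\bm G_j\coloneqq \bm H_1+\dots+\bm H_j$, one has $h(x)=1+\sum_{j\geq1}\P(\bm G_j<x)=\sum_{j\geq0}\P(\bm G_j<x)$, which is the renewal measure of $(-x,0]$. In particular $h>0$ is harmonic for the walk killed at leaving $(0,\infty)$, i.e.\ $\E[h(x+\bm S_1);\,x+\bm S_1>0]=h(x)$; symmetrically, $\tilde h$ is the analogous renewal/harmonic function for $(-\bm S_n)_n$, governing the walk staying below its running maximum.

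Granting these identifications, the first display is precisely the sharp lattice asymptotics of the survival probability of a centered finite-variance walk, $\P((x+\bm S_i)_{i\in[n]}>0)\sim 2\frac{\E[-\bm S_{\bm\tau}]}{\sigma\sqrt{2\pi}}\frac{h(x)}{\sqrt n}$; I would cite \cite[Theorem 4 and Eq.\ (7)]{MR2449127} for this, taking care (as the footnote warns) to use the corrected normalization of \cite[Eq.\ (7)]{MR2449127} rather than \cite[Eq.\ (5)]{MR4105264}. The second display is the corresponding two-point local limit theorem: the plan is to decompose the trajectory at time $\lfloor n/2\rfloor$, apply the survival estimate with $h$ on the first half (started from $x$), the dual survival estimate with $\tilde h$ on the time-reversed second half (ending at $y$), and a Gaussian local CLT for the middle portion; this is exactly the statement proved in \cite[Theorem 4]{MR2449127}, from which one reads off $c=\E[-\bm S_{\bm\tau}]/\sum_{z\in\Z_{>0}}\tilde h(z)\P(\bm S_1\leq -z)$. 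The third display, the uniform upper bound, follows from the same dyadic decomposition together with the uniform versions of the two halves' survival bounds and an $O(n^{-1/2})$ local CLT bound on the overlap --- or is simply quoted, being already part of \cite{MR2449127,MR4105264}.

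The main obstacle --- and essentially the only one --- is the bookkeeping of multiplicative constants: I must check that the harmonic function produced by the series in \cref{eq:efdbuebfwebfowe} is normalized in exactly the way the renewal function is normalized in \cite{MR2449127}, and, because of the misprint flagged in the footnote, verify the constants against \cite[Eq.\ (7)]{MR2449127} and \cite[Theorem 4]{MR2449127} rather than against \cite[Eq.\ (5)]{MR4105264}. One must also confirm the aperiodicity/lattice-span hypothesis, which holds here because the support of each increment distribution appearing in the paper generates $\Z$. In the actual write-up I would therefore (i) record the identification of $h$ and $\tilde h$ with the ladder-height renewal functions, (ii) invoke \cite[Theorem 4, Eq.\ (7)]{MR2449127} for the three asymptotics and the uniform bound, and (iii) read off and state the resulting values of $c$ and of the constant in the first line.
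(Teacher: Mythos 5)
Your proposal is correct and coincides with the paper's treatment: the paper gives no proof of this lemma, simply citing \cite[Lemma 2.2]{MR4105264} and \cite[Theorem 4]{MR2449127} with the same footnote warning that Eq.\ (5) of \cite{MR4105264} is misprinted and that the correct normalization is \cite[Eq.\ (7)]{MR2449127}. Your additional identification of $h$ and $\tilde h$ with the descending ladder-height renewal functions, and your check of the constants and the lattice-span hypothesis, are exactly the reconciliation one would need to make the citation rigorous.
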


We also explicitly compute $\E[-\bm S_{\bm \tau}]$ in a specific case.

\begin{lem}\label{lem:local_est_one_dim_expect}
	Assume that\footnote{Note that we are not assuming any restriction on the expression for $\P(\bm S_{1}= y)$ when $y\in \Z_{\geq0}$.} there exists two constants $\alpha>0$ and $\gamma>0$ such that $\P(\bm S_{1}= y)=\alpha\gamma^{-y}$ for all $y\in \Z_{<0}$. Then
	\begin{equation}
	\P(\bm{S}_{\bm \tau}=x)=(1-\gamma)\gamma^{-x-1}, \quad\text{for all}\quad x\in\Z_{<0},\qquad\text{and}\qquad\E[-\bm S_{\bm \tau}]=\frac{1}{1-\gamma}.
	\end{equation}
\end{lem}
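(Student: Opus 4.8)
The plan is to exploit the memorylessness of the geometric negative tail of the increment distribution: once this is in place the claimed law of $\bm S_{\bm\tau}$ is forced, and $\E[-\bm S_{\bm\tau}]$ becomes a one-line geometric-series computation.

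First I would record that $\bm\tau<\infty$ almost surely. Indeed the increments are centered, have finite variance, and are not a.s.\ zero (since $\P(\bm S_1<0)=\sum_{j\ge 1}\alpha\gamma^j=\frac{\alpha\gamma}{1-\gamma}>0$), so the one-dimensional walk $(\bm S_n)$ oscillates, in particular $\liminf_n \bm S_n=-\infty$ a.s.; hence $\bm S_{\bm\tau}$ is a well-defined $\Z_{<0}$-valued random variable, and by definition of $\bm\tau$ we also have $\bm S_{\bm\tau-1}\ge 0$.

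Next I would condition on the pair $(\bm\tau,\bm S_{\bm\tau-1})$. Let $\bm X_1,\bm X_2,\dots$ be the i.i.d.\ increments of $(\bm S_n)$, and for $n\ge 1$, $k\ge 0$ set
\[
A_{n,k}\coloneqq\{\bm S_j\ge 0\text{ for }1\le j\le n-1\}\cap\{\bm S_{n-1}=k\},
\]
which is measurable with respect to $\bm X_1,\dots,\bm X_{n-1}$. For $x\in\Z_{<0}$ one has the exact identity
\[
\{\bm\tau=n,\ \bm S_{\bm\tau-1}=k,\ \bm S_{\bm\tau}=x\}=A_{n,k}\cap\{\bm X_n=x-k\},
\]
because the step $\bm X_n=x-k<0$ automatically produces the first negative value of the walk. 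Since $\bm X_n$ is independent of $A_{n,k}$ and $\P(\bm X_n=x-k)=\alpha\gamma^{k-x}$ (as $x-k\le -1$), this gives $\P(\bm\tau=n,\bm S_{\bm\tau-1}=k,\bm S_{\bm\tau}=x)=\alpha\gamma^{k-x}\,\P(A_{n,k})$; summing over $x'\in\Z_{<0}$ and using $\P(\bm X_n<-k)=\sum_{j\ge k+1}\alpha\gamma^j=\frac{\alpha\gamma^{k+1}}{1-\gamma}$ gives $\P(\bm\tau=n,\bm S_{\bm\tau-1}=k)=\frac{\alpha\gamma^{k+1}}{1-\gamma}\,\P(A_{n,k})$. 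Dividing, the conditional law
\[
\P\big(\bm S_{\bm\tau}=x\,\big|\,\bm\tau=n,\ \bm S_{\bm\tau-1}=k\big)=(1-\gamma)\gamma^{-x-1}
\]
does not depend on $(n,k)$; averaging over $(n,k)$ against the weights $\P(\bm\tau=n,\bm S_{\bm\tau-1}=k)$, which sum to $1$ since $\bm\tau<\infty$ a.s., yields $\P(\bm S_{\bm\tau}=x)=(1-\gamma)\gamma^{-x-1}$ for all $x\in\Z_{<0}$. Finally $\E[-\bm S_{\bm\tau}]=\sum_{m\ge 0}(m+1)(1-\gamma)\gamma^m=\frac{1-\gamma}{(1-\gamma)^2}=\frac{1}{1-\gamma}$, using $\sum_{m\ge 0}(m+1)\gamma^m=(1-\gamma)^{-2}$.

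The computations here are entirely routine; the only point that requires a word of justification rather than pure algebra is the almost-sure finiteness of $\bm\tau$, i.e.\ the recurrence/oscillation of the walk, which follows from the standing hypotheses of this section (centered increments of finite variance) together with $\P(\bm S_1<0)>0$. So I do not expect any real obstacle.
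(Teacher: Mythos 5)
Your proof is correct and follows essentially the same route as the paper: both decompose over the time and position of the walk just before the first negative value and exploit the geometric form of the negative tail of the increment distribution to see that $\P(\bm S_{\bm\tau}=x)\propto\gamma^{-x}$. The only cosmetic difference is that you evaluate the constant directly via the conditional law given $(\bm\tau,\bm S_{\bm\tau-1})$, while the paper fixes it by normalizing $\sum_{x<0}\P(\bm S_{\bm\tau}=x)=1$; you also spell out the a.s.\ finiteness of $\bm\tau$, which the paper asserts without comment.
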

\begin{proof}
	Since $\bm \tau$ is a.s.\ finite, for every $x\in\Z_{<0}$,
	\begin{multline}
	\P(\bm{S}_{\bm \tau}=x)=\sum_{h=1}^{\infty}\P(\bm S_{h}=x,(\bm S_{i})_{i\in[h-1]}\geq 0)\\
	=\sum_{h=1}^{\infty}\sum_{x'\in\Z_{\geq 0}}\P(\bm S_{h-1}=x',(\bm S_{i})_{i\in[h-1]}\geq 0)\P(\bm S_{h}=x|\bm S_{h-1}=x').
	\end{multline}
	Using that by assumption  $\P(\bm S_{h}=x|\bm S_{h-1}=x')=\alpha\gamma^{x'-x}$ and setting 
	$$C=\sum_{h=1}^{\infty}\sum_{x'\in\Z_{\geq 0}}\P(\bm S_{h-1}=x',(\bm S_{i})_{i\in[h-1]}\geq 0)\alpha\gamma^{x'},$$ 
	we get that $\P(\bm{S}_{\bm \tau}=x)=C\cdot\gamma^{-x}$. Using that $1=\sum_{x\in\Z_{<0}}\P(\bm{S}_{\bm \tau}=x)=\sum_{x\in\Z_{<0}}C\cdot\gamma^{-x}=\frac{C\gamma}{1-\gamma}$. We conclude that $\P(\bm{S}_{\bm \tau}=x)=(1-\gamma)\gamma^{-x-1}$. The statement for the expectation follows with a basic computation.
\end{proof}

\section{Technical proofs}\label{sect:techproofs}

We give here the details of the proofs of Propositions \ref{prop:jfwvbvouvwefuow} and \ref{prop:jfbeouwfboufwe} that we skipped in \cref{sec:cond_conv}.

\begin{proof}[Proof of \cref{prop:jfwvbvouvwefuow}]
	The convergence in distribution ${\conti W}_n\xraninf{d}\conti E_{\rho}$ follows from \cref{prop:scaling_strong_walk} together with \cref{cor:the_walk_is_uniform_strong}.
	
	Now let $0<\varepsilon<\min\{u,(1-u)\}$. Note that $\left(({\conti W}_n-{\conti W}_n(u))|_{[u,1-\varepsilon]},\conti{Z}^{(u)}_n|_{[u,1-\varepsilon]}\right)$
	is a measurable functional of $(\bm W_{n}(k) - \bm W_{n}(\lfloor \varepsilon n \rfloor))_{\lfloor \varepsilon n \rfloor\leq k \leq \lfloor (1-\varepsilon)n \rfloor}$. Using \cref{thm:fvwuofbgqfipqhfqfpq_strongb} together with standard absolute continuity arguments (see for instance
	\cite[Proposition A.1]{borga2021skewperm}), we obtain that 
	\begin{equation}
		\left(({\conti W}_n - \conti W_n(u))|_{[u,1-\varepsilon]},\conti{Z}^{(u)}_n|_{[u,1-\varepsilon]} \right)
		\xraninf{d}
		\left((\conti{E}_\rho - \conti{E}_\rho(u))|_{[u,1-\varepsilon]},\conti Z_{\rho,q}^{(u)}|_{[u,1-\varepsilon]}\right).
	\end{equation}
	Using two times Prokorov's theorem, as already done in \cref{thm:fvwuofbgqfipqhfqfpq_strongb}, we have that the sequence
	\begin{equation}\label{eq:certwv_1}
		\left( \conti W_n,\,\,\left(({\conti W}_n - \conti W_n(u))|_{[u,1-\varepsilon]},{\conti Z}^{( u)}_n|_{[u,1-\varepsilon]}\right)_{\varepsilon \in \mathbb Q \cap (0,u\wedge 1-u)}
		\right)
	\end{equation}
	is tight and its limit in distribution must be
	\begin{equation}\label{eq:cafafv_2}
		\left( \conti{E}_\rho,\,\,\left((\conti{E}_\rho - \conti{E}_\rho(u))|_{[u,1-\varepsilon]},\conti Z_{\rho,q}^{(u)}|_{[u,1-\varepsilon]}\right)_{\varepsilon \in \mathbb Q \cap (0,u\wedge 1-u)}
		\right)
	\end{equation}
	because ${\bm Z}^{( u)}_n|_{[u,1-\varepsilon]}$ is a measurable functional of $(\bm W_{n}(k) - \bm W_{n}(\lfloor \varepsilon n \rfloor))_{\lfloor \varepsilon n \rfloor\leq k \leq \lfloor (1-\varepsilon)n \rfloor}$ and the restriction operation $\conti W_n\mapsto(\conti W_n - \conti W_n(u))|_{[u,1-\varepsilon]}$ is continuous. Therefore we obtain convergence in distribution of the sequence in \cref{eq:certwv_1} to the limit in \cref{eq:cafafv_2}. By Skorokhod's theorem we can now assume that a.s., we have uniform convergence on $[0,1]$ of ${\conti W}_n$ to $\conti{E}_\rho$, and uniform convergence on $[u,1-\varepsilon]$ of $\conti{Z}^{(u)}$ to $\conti Z_{\rho,q}^{(u)}$ for every $\varepsilon>0, \varepsilon\in\mathbb{Q}$. This is enough to conclude the proof.
\end{proof}

\begin{proof}[Proof of \cref{prop:jfbeouwfboufwe}]	
	Fix $u_1,\ldots,u_k\in (0,1)$. The fact that $\conti Z_{\rho,q}^{(u)}$ is a measurable functions of $\conti{E}_\rho$ and tightness, give that convergence in distribution in \cref{eq:coal_con_cond} holds jointly for $u\in \{u_1,\ldots,u_k\}$. Therefore, for every continuous bounded $\varphi: \czord\times (\mathcal C([0,1],\R)\times \mathcal C([0,1),\R))^{\zgz}\to \R$, it holds that 
	\begin{equation}
		\E\left[\varphi\left(
		{\conti W}_n,\left({\conti Z}^{( u_i)}_n\right)_{i\in [k]}
		\right)
		\right] \to 
		\E\left[\varphi
		\left( \conti{E}_\rho,\left({\conti Z}^{( u_i)}_{\rho,q}, \right)_{i\in [k]}\right)	
		\right].
	\end{equation}
	Using dominated convergence theorem, it is possible to integrate this over $u_1,\ldots,u_k\in [0,1]$. Therefore, using Fubini--Tonelli's theorem, we can conclude that
	\begin{equation}
		\E\left[\varphi\left(
		{\conti W}_n,\left({\conti Z}^{(\bm u_i)}_n\right)_{i\in [k]}
		\right)
		\right] \to 
		\E\left[\varphi
		\left( \conti{E}_\rho,\left({\conti Z}^{(\bm u_i)}_{\rho,q}\right)_{i\in [k]}\right)	
		\right].
	\end{equation}
	Noting that $k$ is arbitrary, we obtain convergence in distribution in the product topology.	
\end{proof}

\bibliography{mybib}
\bibliographystyle{alpha}

\end{document}